\newtheorem{theorem}{Theorem}[section]
\newtheorem{definition}[theorem]{Definition}
\newtheorem{lemma}[theorem]{Lemma}
\newtheorem{corollary}[theorem]{Corollary}
\newtheorem{proposition}[theorem]{Proposition}
\newtheorem*{notheorem}{Theorem}
\newtheorem*{nonotation}{Notation}
\newtheorem*{sublemma}{Sublemma}
\newtheorem*{claim}{Claim}
\theoremstyle{definition}
\newtheorem{remark}[theorem]{Remark}
\newtheorem{example}[theorem]{Example} 
\newtheorem{hyp}[theorem]{Hypothesis}
\newcommand{\ainf}{A_{\mathrm{inf}}({{\mathcal{O}}_{\mathbb{C}_p}}/{\mathcal{O}_K})}
\newcommand{\ainfcohen}{A_{\mathrm{inf}}({{\mathcal{O}}_{\mathbb{C}_p}}/{\mathcal{O}_{{K}_{0}}})}
\newcommand{\aplus}{\widetilde{\mathbb{A}}^{+}}
\newcommand{\bdr}{B_{\mathrm{dR}}^{+}}
\newcommand{\cohen}{K_{0}}
\newcommand{\eplus}{\widetilde{\mathbb{E}}^{+}}
\newcommand{\intcohen}{\mathcal{O}_{K_{0}}}
\newcommand{\ocp}{{\mathcal{O}}_{\mathbb{C}_p}}
\newcommand{\ok}{\mathcal{O}_{K}}
\newcommand{\hok}[1]{\mathcal{O}^{(#1)}_{K}}
\newcommand{\hol}[1]{\mathcal{O}^{(#1)}_{L}}
\newcommand{\cp}{\mathbb{C}_p}
\newcommand{\okbar}{\mathcal{O}_{\overline{K}}}
\newcommand{\hokbar}[1]{\mathcal{O}_{\overline{K}}^{(#1)}}
\newcommand{\kbar}{\overline{K}}
\newcommand{\intring}[1]{\mathcal{O}_{#1}}
\newcommand{\ol}{\mathcal{O}_{L}}
\newcommand{\of}{\mathcal{O}_{F}}
\newcommand{\qp}{\mathbb{Q}_p}
\newcommand{\zp}{\mathbb{Z}_p}
\newcommand{\iplus}{I_{+}}
\newcommand{\hiplus}[1]{I_{+}^{#1}}
\newcommand{\gk}{G_{K}}
\newcommand{\gl}{G_{L}}
\newcommand{\coh}[3]{\mathrm{H}^{#1}(#2,#3)}
\newcommand{\lkahler}{\Omega^1_{\mathcal{O}_L/\mathcal{O}_K}}
\newcommand{\llkahler}{\Omega^1_{\mathcal{O}_{L'}/\mathcal{O}_K}}
\newcommand{\hkahler}[1]{\Omega^{(#1)}_{\mathcal{O}_{\overline{K}}/\mathcal{O}_K}}
\newcommand{\olkahler}{\Omega^{(k)}_{\mathcal{O}_{L}/\mathcal{O}_K}}
\newcommand{\holkahler}[1]{\Omega^{(#1)}_{\mathcal{O}_{L}/\mathcal{O}_K}}
\newcommand{\threekahler}[3]{\Omega^{#1}_{{#2}/{#3}}}
\newcommand{\km}{K[X]_{\mathfrak{m}}}
\newcommand{\mkbar}{\mathfrak{m}_{\overline{K}}}
\newcommand{\lhat}[1]{\widehat{L}_{#1}}
\newcommand{\drcoh}[1]{\mathrm{H}_{dR}^{(#1)}(L/K)}
\newcommand{\olhat}{\mathcal{O}_{\widehat{L}}}
\newcommand{\divi}[1]{(#1)_{\mathrm{div}}}
\newcommand{\jl}[1]{J_{#1}\cap\widehat{L}_{#1}}
\newcommand{\A}{A_{\mathrm{inf}}}
\newcommand{\ak}{A_k}
\newcommand{\AAA}[1]{A_{#1}}
\newcommand{\B}{B}
\newcommand{\Bk}{B_k}
\newcommand{\BB}[1]{B_{#1}}
\newcommand{\I}{I}
\newcommand{\II}[1]{I^{#1}}
\newcommand{\tr}[2]{\mathrm{Tr}_{#1/#2}}
\newcommand{\dif}[2]{\mathfrak{D}_{#1/#2}}
\newenvironment{enuroman}{%
\begin{enumerate}%
}{\end{enumerate}}
\newenvironment{enuRoman}{%
\begin{enumerate}%
}{\end{enumerate}}
\begin{document}
\title[Galois Theory of $\boldsymbol{B_{dR}^{+}}$ in the imperfect residue field case]{Galois Theory of $\boldsymbol{B_{dR}^{+}}$ in the imperfect residue field case}
\author{Shun Ohkubo}
\date{}
\address{Graduate School of Mathematical Sciences, University of Tokyo, 3-8-1 Komaba, Meguro-ku, Tokyo 153-8914, Japan}
\email{shuno@ms.u-tokyo.ac.jp}
\maketitle

\begin{abstract}
We generalize a work of Iovita-Zaharescu on the Galois theory of $\bdr$ to the imperfect residue field case. The proof is based on a structure theorem of Colmez's higher K\"{a}hler differentials.
\end{abstract}

\section*{Introduction}\label{introduction}
Let $K$ be a CDVF of characteristic $(0,p)$ with perfect residue field. Let $\bdr$ be the ring of $p$-adic periods of $K$ defined by Fontaine. It is a complete discrete valuation ring with residue field $\cp$. Let $I$ be the maximal ideal of $\bdr$ and let $B_k=\bdr/I^{k+1}$. Then they are endowed with a topology induced by the $p$-adic topology of $\cp$. For an algebraic extension $L/K$, put $G_L=\mathrm{Gal} (\kbar/L)$ and let $\widehat{L}_k$ $(\mathrm{resp}.\ \widehat{L}_{\infty})$ be the topological closure of $L$ in $B_k$ $(\mathrm{resp}.\ \bdr)$ with respect to this topology.

Iovita-Zaharescu studied in \cite{IZ} the following problem:

\begin{center}
For an algebraic extension $L/K$, does one have $\widehat{L}_k=B_k^{G_L}$ $(\mathrm{resp}.\ \widehat{L}_{\infty}=(\bdr)^{G_L})$ ?
\end{center}

\noindent (In fact, they assume $K=\qp$ and $\mathbb{Q}_p^{\mathrm{ur}}\subset L$, but, by slight modifications, the same proofs work for general local fields $K$ with perfect residue field.) When $k=0$, Ax-Sen-Tate proved that this is always true, namely, $\widehat{L}=\cp^{G_L}$, where $\widehat{\ }$ denotes the $p$-adic completion (actually, \cite{Sen1}, \cite{Tate} proved it in the perfect residue field case and \cite{Ax} proved it in the general residue field case). However, in general, this is not always true: In order that this is true, we need some conditions which involve the canonical derivation $d:\ol=\mathcal{O}_{L}^{(0)}\to\threekahler{1}{\ol}{\ok}=\threekahler{(1)}{\ol}{\ok}$ and its higher analogue $d^{(k)}:\mathcal{O}_{L}^{(k-1)}\to\threekahler{(k)}{\ol}{\ok}$ introduced in \cite{Col}. The main theorem of \cite{IZ} is

\begin{notheorem}[\cite{IZ}, Theorem $0.1$, $0.2$, $4.2$]
Let $\cp (1)$ be the Tate twist of $\cp$ by the cyclotomic character.
\begin{enuRoman}
\item If $\coh{0}{\gl}{\cp (1)}=0$, then $\lhat{k}=B_k^{\gl}$ for all $k$ and $\widehat{L}_{\infty}=(\bdr)^{\gl}$. Moreover, these rings are isomorphic to $\widehat{L}$.

\item If $\coh{0}{\gl}{\cp (1)}\neq 0$, then 
$\lhat{k}=B_k^{\gl}$ if and only if $T_p(\threekahler{(n)}{\ol}{\ok})\neq 0$ $($where $T_p(\threekahler{(n)}{\ol}{\ok})$ denotes the Tate module associated to $\threekahler{(n)}{\ol}{\ok})$ and the canonical derivation $d^{(n)}:\mathcal{O}_{L}^{(n-1)}\to\threekahler{(n)}{\ol}{\ok}$ is surjective for $1\le n\le k$. $\widehat{L}_{\infty}=(\bdr)^{\gl}$ if and only if $\widehat{L}_{k}=B_k^{\gl}$ for all $k$.
\end{enuRoman}
\end{notheorem}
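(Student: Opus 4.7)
The argument proceeds by induction on $k$ using the short exact sequence of $\gl$-modules
\[
0 \to \cp(k) \to \Bk \to \BB{k-1} \to 0,
\]
obtained from $I^{k}/I^{k+1}\xrightarrow{\sim}\cp(k)$ via $t^{k}$. The long exact cohomology sequence, combined with the natural inclusion $\lhat{k}\subseteq \Bk^{\gl}$ and the short exact sequence $0\to \lhat{k}\cap\cp(k)\to \lhat{k}\to \lhat{k-1}\to 0$, organizes the passage from $k-1$ to $k$. The base case $k=0$ is Ax-Sen-Tate: $B_{0}^{\gl}=\cp^{\gl}=\widehat{L}$.

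For (I), the assumption $\coh{0}{\gl}{\cp(1)}=0$ forces $\coh{0}{\gl}{\cp(n)}=0$ for every $n\ge 1$ by Sen-Tate: a continuous character $\eta:\gl\to\zp^{\times}$ satisfies $\cp(\eta)^{\gl}\ne 0$ iff $\eta$ is locally trivial on some open subgroup, and this property is preserved between the cyclotomic character and all its nonzero powers. The vanishing of $\cp(k)^{\gl}$ then makes $\lhat{k}\to\lhat{k-1}$ an isomorphism and $\Bk^{\gl}\hookrightarrow\BB{k-1}^{\gl}$ injective; combining with the inductive hypothesis $\lhat{k-1}=\BB{k-1}^{\gl}$ and the inclusion $\lhat{k}\subseteq\Bk^{\gl}$ forces $\lhat{k}=\Bk^{\gl}\cong\widehat{L}$.

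For (II), where $\coh{0}{\gl}{\cp(1)}\ne 0$, the space $\cp(k)^{\gl}$ is a non-trivial $\widehat{L}$-line for every $k$, and lifting a $\gl$-invariant $\beta\in \Bk^{\gl}$ to $\lhat{k}$ is obstructed by a class in $\cp(k)^{\gl}/(\lhat{k}\cap\cp(k))$. The key structural input, supplied by Colmez's higher K\"ahler differentials, identifies the ``representable'' part of $\cp(k)^{\gl}$ with $\tp{\threekahler{(k)}{\ol}{\ok}}\otimes_{\zp}\qp$: nonvanishing of $\tp{\threekahler{(k)}{\ol}{\ok}}$ produces a generator of this line as a ``differential period'' $at^{k}$ with $a\in\hol{k-1}$, while surjectivity of $d^{(k)}:\hol{k-1}\to\threekahler{(k)}{\ol}{\ok}$ forces this period to actually approximate an element of $\lhat{k}$. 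Threading these two inputs through the induction delivers both directions of the equivalence, and the assertion on $\lhat{\infty}$ then follows by passing to $\varprojlim_{k}$ after checking the relevant $\varprojlim^{1}$ vanishes.

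\textbf{Main obstacle.} The structural input in (II) is the crux. The hardest direction is ``$\tp{\threekahler{(n)}{\ol}{\ok}}\ne 0$ and $d^{(n)}$ surjective for $n\le k$ $\Rightarrow \lhat{k}=\Bk^{\gl}$'', which demands an explicit construction of $\gl$-invariant differential periods realized inside $\lhat{k}$. In Iovita-Zaharescu this construction rests on Sen-theoretic computations specific to the perfect residue field case; the present paper replaces this step with a structure theorem for Colmez's higher K\"ahler differentials valid in the imperfect residue field setting.
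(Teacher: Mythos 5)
Your outline correctly identifies the strategy the paper also uses: induct along the filtration via $0\to J_k\to B_k\to B_{k-1}\to 0$ (with $J_k\cong\cp(k)$ in the perfect case), take Ax--Sen--Tate as the base, and use Colmez's higher K\"ahler differentials to interpret invariants of $J_k$. In the paper this is packaged as diagram~\Ref{diag} in the proof of Theorem~\ref{thm:main}, with the two branches of the IZ theorem corresponding to the shallowly ramified case (Theorem~\ref{thm shal}, via Theorem~\ref{cohnonram} giving $J_k^{\gl}=0$) and the deeply ramified case (Theorem~\ref{thm:more}). So the plan is the right one.

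However there is a genuine gap already in your part~(I). From $\cp(k)^{\gl}=0$ and the inductive hypothesis $\lhat{k-1}=B_{k-1}^{\gl}$ you get inclusions $\lhat{k}\subseteq B_k^{\gl}\hookrightarrow B_{k-1}^{\gl}=\lhat{k-1}$, but to conclude $\lhat{k}=B_k^{\gl}$ you still need every $x\in B_k^{\gl}$ to lie in $\lhat{k}$; this reduces to the surjectivity of the projection $\lhat{k}\to\lhat{k-1}$, which is \emph{not} automatic — a $B_{k-1}$-Cauchy sequence in $L$ need not be $B_k$-Cauchy. In the paper this surjectivity is Corollary~\ref{de Rham is surj}, and its proof requires the ``de Rham at level $k$'' condition (Lemma~\ref{valuationw}); in case~(I) that condition holds vacuously because $\divi{\holkahler{k}}=0$, but that observation is precisely the missing step, and it in turn relies on the structure theorem (Theorem~\ref{str}) to pass from $V_p(\holkahler{k})=0$ to divisible part zero. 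You assert ``$\lhat{k}\to\lhat{k-1}$ an isomorphism'' without supplying this. The same surjectivity is also what makes the identification $\lhat{k}\cong\widehat{L}$ as topological rings (not just abstractly) work.

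Part~(II) is, as you yourself note, only a gesture: the identification of the ``representable part'' of $J_k^{\gl}$ with $V_p(\threekahler{(k)}{\ol}{\ok})$, and the role of $d^{(n)}$-surjectivity in lifting across all levels $1\le n\le k$, is exactly where the paper's new input (Theorem~\ref{fund kah} and the good-module structure theorem, used in the directions (ii)$\Leftrightarrow$(iii) of Theorem~\ref{thm:main}) does the work. Finally, for the $\lhat{\infty}$ statement the paper avoids any $\varprojlim^1$ computation: the $\Leftarrow$ direction is a direct commutation of limits, and the $\Rightarrow$ direction runs through the equivalent condition that $I\cap\lhat{\infty}$ generates $I$ (Theorem~\ref{thm:more}(II)), proved by a Nakayama argument at level~$1$. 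Your $\varprojlim^1$ remark is therefore not how the argument actually closes.
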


The aim of this paper is to prove a generalization of their result to the case where the residue field of $K$ has a finite $p$-basis. To overcome the technical difficulties caused by imperfectness, we prove a structure theorem of the higher K\"{a}hler differentials, which is one of the new ingredients of this paper. Although Iovita-Zaharescu's proof is very complicated, this structure theorem makes the proofs drastically simple.
\section*{Plan of the paper}
In $\S \ref{pre}$, we define rings of $p$-adic periods of Fontaine and state their basic properties. In $\S \ref{sec:kah}$, we define the higher K\"{a}hler differentials of Colmez and generalize his results in the imperfect residue case. In $\S \ref{sec:good}$, we prove a structure theorem for the higher K\"{a}hlaer differentials. In $\S \ref{sec:main}$, we prove the main theorem (Theorem~\ref{thm:main}) using the results in previous sections. Finally, in $\S \ref{sec:deep}$, we will state a refined version of the main theorem in particular cases and give some examples.

\section*{Acknowledgements}
The author wishes to thank his advisor Professor Atsushi Shiho for his continuing advices and encouragement. The author also thanks to Professor Pierre Colmez for sending an unpublished version of his paper \cite{Col} and allowing the author to include an argument there in this paper.

\section*{Notation}
Throughout this paper, $p$ denotes a fixed prime number. A local field $K$ is a CDVF of mixed characteristic $(0,p)$ with residue field $k_K$ satisfying $[k_K:k_K^p]=p^d<\infty$ and $K$ denotes always a local field if there is no particular mention. Let $\ok$, $\pi_K$, $\mathfrak{m}_K$, $e_K$, $k_K$, $v_K$ be the integer ring, a uniformizer, the maximal ideal, the absolute ramification index, the residue field, the valuation normalized by $v_K(\pi_K)=1$ of $K$ and let $K_0$ be a Cohen subfield of $K$. For an algebraic extension $L/K$, denote its $p$-adic completion by $\widehat{L}$, the relative ramification index by $e_{L/K}$ and put $\gl=\mathrm{Gal}(\kbar/L)$. $\kbar$ denotes an algebraic closure of $K$ and $\cp$ denotes the $p$-adic completion of $\kbar$. $\ocp$ denotes the integer ring of $\cp$ and $v_p$ denotes the $p$-adic valuation of $\cp$ normalized by $v_p(p)=1$. All cohomologies are assumed to be the continuous ones. 

For an abelian group $M$, put $M[p^n]=\ker{(p^n:M\to M)}$, $M_{\mathrm{div}}$ the maximal $p$-divisible subgroup of $M$, $T_pM=\varprojlim{M[p^n]}$, $V_pM=\qp\otimes_{\zp}T_pM$. For a ring $R$ and an $R$-module $M$, put $\mu(M)$ the infimum of the number of generators of $M$ as an $R$-module.

For $n=(n_1,\dotsc,n_d)\in\mathbb{N}^{d}$ and elements $x_1,\dotsc ,x_d$ of a commutative ring $R$, we use multi-index notation, i.e., write $x^n$ for $x_1^{n_1}\dotsm x_d^{n_d}$.

\section{Preliminaries}\label{pre}
\subsection{Basic definitions}
Put $\eplus=\varprojlim_{x\mapsto x^p}{\ocp/p\ocp}$, which is a perfect ring of characteristic $p$. This ring is identified with the set $\{(x^{(n)})\in\ocp^{\mathbb{N}}\mid (x^{(n+1)})^p=x^{(n)}\ \mathrm{for}\  \mathrm{all}\ n\}$ as follows: 
For $x=(x_n)\in\eplus$, choose a lift $\widehat{x_n}\in\ocp$ of $x_n$  for each $n$ and put 
\[
x^{(n)}=\lim_{m\to\infty}{\widehat{x_{n+m}}^{p^m}}.
\]
Then the map $(x_n)\mapsto (x^{(n)})$ gives a bijection. If $x=(x^{(n)})\in\eplus$, let $v_{\mathbb{E}}(x)=v_p(x^{(0)})$. This is a valuation of $\eplus$, for which $\eplus$ is complete. For $x\in\ocp$, $\tilde{x}$ denotes an element of $\eplus$ such that $\tilde{x}^{(0)}=x$.

Put $\aplus=W(\eplus)$ and let $\theta:\aplus\to\ocp$ be the ring homomorphism given by
\[
\theta\left( \sum_{n\in\mathbb{N}}{p^n[x_n]}\right) =\sum_{n\in\mathbb{N}}{p^nx_n^{(0)}}
\]
for $x_n\in\eplus$. Here $W(R)$ denotes the Witt ring of a commutative ring $R$ and $[x]$ denotes the Teichm\"{u}ller lift of $x\in R$. Note that $\ker{\theta}=(p-[\widetilde{p}])$.

Let $\theta:\ok\otimes_{\mathbb{Z}}\aplus\to\ocp$ be the linear extension of the above $\theta$. Put $\ainf=\varprojlim{\ok\otimes_{\mathbb{Z}}\aplus /(p,\ker{\theta})^{k+1}}$ and $\theta:\ainf\to\ocp$ denotes the homomorphism induced by $\theta$. Denote the canonical homomorphism $\ainf [p^{-1}]\to\cp$ also by $\theta$. Put $\iplus=\ker{\theta}\subset\ainf$, $\ak=\ainf/\hiplus{k+1}$.

Put $\bdr=\displaystyle\varprojlim{\ainf [p^{-1}]/(\ker{\theta})^{k+1}}$ and denote the canonical homomorphism $\bdr\to\cp$ also by $\theta$. Put $\I=\ker{\theta}\subset\bdr$, $J_k=\II{k}/\II{k+1}$, $\Bk=\bdr/\II{k+1}$.

\subsection{Some properties of $\boldsymbol{\ainf}$}
For general properties of topological rings, see \cite[Chapitre $0$, $\S 7$]{EGA0} for example.

Recall (\cite[$\S 1$]{Fon1}) that $\ainf$ (resp. $A_k$) is Hausdorff complete with respect to $(p,\iplus)$, $p$, $\iplus$-adic topology (resp. $p$-adic topology) and that $\ainf$ $($resp. $A_k)$ is the universal pro-infinitesimal $\ok$-thickening of $\ocp$ (resp. the universal infinitesimal $\ok$-thickening of $\ocp$ of order $\le k$), which means the initial object in the category consisting of the following objects $(D,\theta)$:
$D$ is an $\ok$-algebra endowed with a surjective $\ok$-homomorphism $\theta:D\to\ocp$ which is $(p,\ker{\theta})$-adically Hausdorff complete $($resp. and $(\ker{\theta})^{k+1}=0$$)$.

In the following, we regard $\ainf$ as an $\ok$-algebra and an $\aplus$-algebra.

\begin{lemma}\label{lem3.1}
The canonical homomorphism
\[
\ok\otimes_{\mathcal{O}_{K_0}}\ainfcohen\to\ainf
\]
is an isomorphism.
\end{lemma}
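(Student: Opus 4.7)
The plan is to verify that $B := \ok \otimes_{\intcohen} \ainfcohen$, equipped with the surjection $\theta_B: B \twoheadrightarrow \ocp$ defined by $x \otimes a \mapsto x \theta(a)$, satisfies the universal property of $\ainf$. The canonical map $B \to \ainf$ (coming from universality of $\ainfcohen$ applied to $\ainf$, together with $\ok \hookrightarrow \ainf$) will then be an isomorphism automatically. The formal half of the universal property is straightforward: given any $(p, \ker\theta_D)$-adically Hausdorff complete $\ok$-thickening $(D,\theta_D)$ of $\ocp$, view $D$ as an $\intcohen$-thickening via $\intcohen \subset \ok \subset D$ and apply universality of $\ainfcohen$ to obtain a unique $\intcohen$-algebra morphism $\ainfcohen \to D$ compatible with $\theta$; combined with $\ok \subset D$ this induces the unique $\ok$-algebra morphism $B \to D$ compatible with $\theta$.

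The substantive point is that $B$ must itself be $(p, \ker\theta_B)$-adically Hausdorff complete. Since $\ok$ is free of rank $e_K$ over $\intcohen$, the ring $B$ is free of rank $e_K$ over $\ainfcohen$, hence Hausdorff complete for the $(p, I_0 B)$-adic topology, where $I_0 := \ker(\ainfcohen \to \ocp)$. It suffices to show that this topology coincides with the $(p, \ker\theta_B)$-adic one. The inclusion $I_0 B \subseteq \ker\theta_B$ is obvious; for the reverse, I would show that some power of $\ker\theta_B$ lies in $(p, I_0 B)$. Modulo $I_0 B$ there is an isomorphism
\[
B/I_0 B \;\cong\; \ok \otimes_{\intcohen} \ocp \;\cong\; \ocp[X]/f(X),
\]
where $f$ is the Eisenstein polynomial of $\pi_K$ over $K_0$ and $X$ corresponds to $\pi_K \otimes 1$. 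The image of $\ker\theta_B$ in this quotient is the principal ideal $(X - \pi_K)$, so the task reduces to showing $(X - \pi_K)^N \in p \cdot \ocp[X]/f(X)$ for some $N$.

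The key estimate, and the main obstacle, is that $N = 2e_K - 1$ works. Reducing further modulo $p$ and using $f(X) \equiv X^{e_K} \pmod p$ (as $f$ is Eisenstein) yields
\[
\ocp[X] / \bigl(p,\, f(X)\bigr) \;\cong\; (\ocp/p)[X] / (X^{e_K}),
\]
in which $X^{e_K} = 0$ and $\pi_K^{e_K} = 0$ (the latter because $p = u \pi_K^{e_K}$ for some unit $u \in \ok^\times$, forcing $\pi_K^{e_K} \in p\,\ocp$). Every monomial $X^i \pi_K^{2e_K-1-i}$ arising in the binomial expansion of $(X - \pi_K)^{2e_K-1}$ has either $i \ge e_K$ or $2e_K-1-i \ge e_K$, hence vanishes. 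This yields $(\ker\theta_B)^{2e_K-1} \subseteq (p, I_0 B)$ inside $B$, from which $(p, \ker\theta_B)^{(2e_K-1)n} \subseteq (p, I_0 B)^n$ follows by routine manipulation of ideals, establishing the equality of topologies and completing the reduction to the universal property.
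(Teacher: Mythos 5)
Your proposal is correct and follows the same two-step plan as the paper's own proof: establish that $B=\ok\otimes_{\intcohen}\ainfcohen$ is $(p,\ker\theta_B)$-adically Hausdorff complete by comparing that topology with the $(p,I_0B)$-adic one, then conclude by universality. The only differences are cosmetic: the paper asserts the topology comparison via two ideal inclusions involving $\pi_K\otimes 1-1\otimes[\widetilde{\pi_K}]$ and closes by running universality in both directions, whereas you make the topology comparison concrete by reducing mod $I_0B$ and then mod $p$ to a binomial computation in $(\ocp/p)[X]/(X^{e_K})$ and verify the universal property of $\ainf$ for $B$ directly.
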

\begin{proof}[Proof $(${\cite[The proof of Proposition~2.1.5]{Bri1}}$)$]
Let $\theta$ (resp. $\theta_{\ok}$) be the canonical projection $\ainfcohen\to\ocp$ (resp. the linear extension of $\theta$ to $\ok\otimes_{\mathcal{O}_{K_0}}\ainfcohen$). First, we claim that $\ok\otimes_{\mathcal{O}_{K_0}}\ainfcohen$ is ($p,\ker{\theta}_{\ok}$)-adically Hausdorff complete. To prove this, we prove the ($p,\ker{\theta_{\ok}}$)-adic topology and the ($p,1\otimes\ker{\theta}$)-adic topology are the same. Since $\ker{\theta_{\ok}}=(1\otimes\ker{\theta},\pi_K\otimes 1-1\otimes [\widetilde{\pi_K}])$, we have $(p,\ker{\theta_{\ok}})^{e_K}\subset (\pi_K\otimes 1,1\otimes\ker{\theta})$ and $(\pi_K\otimes 1,1\otimes\ker{\theta})^{e_{K/K_0}}\subset (p,1\otimes\ker{\theta})$. This claim implies that $\ok\otimes_{\mathcal{O}_{K_0}}A_{\mathrm{inf}}(\ocp/\mathcal{O}_{K_0})$ is a pro-infinitesimal thickening of $\ocp$ over $\mathcal{O}_{K}$, hence the universality of $\ainf$ induces a map $A_{\mathrm{inf}}(\ocp/\ok)\to\ok\otimes_{\mathcal{O}_{K_0}}A_{\mathrm{inf}}(\ocp/\mathcal{O}_{K_0})$. By the universality of $\ainf$, the composition map
\[
\ainf \to\ok\otimes_{\mathcal{O}_{K_0}}\ainfcohen  \stackrel{\mathrm{can.}}{\to}\ainf
\]
is identity. On the other hand, by the universality of $\ainfcohen$, the composition map
\[
\ok\otimes_{\mathcal{O}_{K_0}}\ainfcohen \stackrel{\mathrm{can.}}{\to}\ainf  \to\ok\otimes_{\mathcal{O}_{K_0}}\ainfcohen\\
\]
is also identity, which implies the assertion.
\end{proof}

We say $t_1,\dotsc,t_d\in\ok$ is a $p$-basis of $K$ if $\overline{t_1},\dotsc,\overline{t_{d}}$ is a $p$-basis of $k_K$. Similarly, $t_1,\dotsc,t_{d'}\in\ol$ is a $p$-basis of $L/K$ if $\overline{t_1},\dotsc,\overline{t_{d'}}$ is a $p$-basis of $k_L/k_K$.

\begin{proposition}\label{prop ainf}
Let $W$ be the Witt ring of the maximal perfect subfield of $k_K$. Fix a $p$-basis $t_1,\dotsc,t_d$ of $K_0$ and put $u_i=t_i-[\widetilde{t_i}]\in\ainfcohen$. Let $\theta:\aplus [\![u_1,\dotsc,u_d]\!]\to\ocp$ be the composite $\aplus [\![u_1,\dotsc,u_d]\!]\stackrel{u_i\mapsto 0}{\to}\aplus\stackrel{\theta}{\to}\ocp$.
\begin{enuroman}
\item\label{prop:ainf1} There exists a unique injective $W$-algebra homomorphism $\intcohen\to\aplus [\![u_1,\dotsc,u_d]\!];t_j\mapsto [\widetilde{t_j}]+u_j$. In the rest of this proposition, fix this $\intcohen$-algebra structure on $\aplus [\![u_1,\dotsc,u_d]\!]$.

\item\label{prop:ainf2} The canonical $\aplus$-algebra homomorphism $\aplus [\![u_1,\dotsc,u_d]\!]\to\ainfcohen$ is an $\intcohen$-algebra homomorphism and induces, for $k\in\mathbb{N}$, $\ok$-algebra isomorphisms
\begin{align*}
\ok\otimes_{\mathcal{O}_{K_0}}\aplus [\![u_1,\dotsc,u_d]\!]/(\ker{\theta_{\ok}})^{k+1} & \to\ainf/(\ker{\theta})^{k+1},\\
\ok\otimes_{\intcohen}\aplus [\![u_1,\dotsc,u_d]\!] & \to \ainf,
\end{align*}
where $\theta_{\ok}:\ \ok\otimes_{\mathcal{O}_{K_0}}\aplus [\![u_1,\dotsc,u_d]\!]\to\ocp$ is the linear extension of $\theta$.

\item\label{prop:ainf3} $\ker{\theta_{\ok}}=(1\otimes u_1,\dotsc,1\otimes u_d,\ \pi_K\otimes 1-1\otimes [\widetilde{\pi_K}])$ and $\{1\otimes u_1,\dotsc,1\otimes u_d,\ \pi_K\otimes 1-1\otimes [\widetilde{\pi_K}]\}$ is an $\ok\otimes_{\mathcal{O}_{K_0}}\aplus [\![u_1,\dotsc,u_d]\!]$-regular sequence.
\end{enuroman}
\end{proposition}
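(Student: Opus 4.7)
The plan is to prove the three parts in order, invoking the universal property of the Cohen ring for (i), the universal property of $\ainfcohen$ combined with Lemma~\ref{lem3.1} for (ii), and an Eisenstein-polynomial base-change for (iii).

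For (i), I would invoke the universal property of the Cohen ring $\intcohen$: since $\aplus[\![u_1,\dotsc,u_d]\!]$ is $p$-torsion free (inherited from $\aplus=W(\eplus)$) and $p$-adically complete (as a power series ring over a complete ring), and since $[\widetilde{t_j}]+u_j$ reduces to $\overline{t_j}$ under the evident surjection onto $k_K$ (kill the $u_i$, reduce $\aplus\to\eplus$ modulo $p$, and send $\widetilde{t_j}\mapsto t_j\bmod\mathfrak{m}_{\ocp}$), there exists a unique continuous $W$-algebra homomorphism $\intcohen\to\aplus[\![u_1,\dotsc,u_d]\!]$ sending $t_j\mapsto[\widetilde{t_j}]+u_j$. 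Injectivity is inherited from the injectivity of the induced map on residue fields together with the fact that $p$ is a non-zero-divisor on both sides.

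For (ii), I would first define $\aplus[\![u_1,\dotsc,u_d]\!]\to\ainfcohen$ by sending $u_i\mapsto t_i-[\widetilde{t_i}]$, which is well-defined because $t_i-[\widetilde{t_i}]$ lies in the topologically nilpotent ideal $\ker\theta$. Compatibility of the $\intcohen$-algebra structure is checked by tracing $t_j\mapsto [\widetilde{t_j}]+u_j\mapsto t_j$. By Lemma~\ref{lem3.1}, it suffices to establish isomorphism in the case $K=K_0$, where I would verify that $\aplus[\![u_1,\dotsc,u_d]\!]$ is the universal pro-infinitesimal $\intcohen$-thickening of $\ocp$. The key point is that $\ker\theta=(u_1,\dotsc,u_d,p-[\widetilde{p}])$, and since $p-[\widetilde{p}]$ already lies in $(p,u_1,\dotsc,u_d)$, the $(p,\ker\theta)$-adic topology coincides with the $(p,u_1,\dotsc,u_d)$-adic topology, relative to which the ring is manifestly Hausdorff complete. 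The two maps are then mutually inverse by uniqueness in the universal property. Tensoring with $\ok$ over $\intcohen$ and invoking Lemma~\ref{lem3.1} then yields both stated isomorphisms (finite-order and limit).

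For (iii), the inclusion $(1\otimes u_1,\dotsc,1\otimes u_d,\pi_K\otimes 1-1\otimes[\widetilde{\pi_K}])\subset\ker\theta_{\ok}$ is immediate. For the reverse inclusion, I would base-change the description $\ker\theta=(u_1,\dotsc,u_d,p-[\widetilde{p}])$ along the presentation $\ok=\intcohen[x]/f(x)$ with $f$ the Eisenstein minimal polynomial of $\pi_K$ over $K_0$; then $1\otimes(p-[\widetilde{p}])$ is expressed in terms of $\pi_K\otimes 1-1\otimes[\widetilde{\pi_K}]$ modulo the $1\otimes u_i$ by the Eisenstein relation $f(\pi_K)=0$ together with a Taylor expansion of $f$ at $[\widetilde{\pi_K}]$. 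For regularity, I would quotient successively: killing the $1\otimes u_i$ reduces the problem to showing $\pi_K\otimes 1-1\otimes[\widetilde{\pi_K}]$ is a non-zero-divisor in $\ok\otimes_{\intcohen}\aplus\cong\aplus[x]/f(x)$, which is a finite free $\aplus$-module over the domain $\aplus=W(\eplus)$.

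The main obstacle I anticipate is the universal-property verification in (ii): the topology comparison and the construction of compatible inverse maps must be handled carefully, in the style of Brinon's argument already cited in the proof of Lemma~\ref{lem3.1}. Once (ii) is in hand, part (iii) reduces to a diagram-chase through the Eisenstein presentation of $\ok/\intcohen$.
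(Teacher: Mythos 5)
Your overall plan tracks the paper's fairly closely: reduce to the absolutely unramified case via Lemma~\ref{lem3.1}, handle (i)--(ii) by universal-property/pro-infinitesimal-thickening arguments (the paper cites Brinon's Lemme~2.1.3 where you attempt to re-derive its content), and attack (iii) through the Eisenstein presentation $\ok\cong\intcohen[X]/Q(X)$. For the description of $\ker\theta_{\ok}$ you propose a Taylor-expansion of $Q$ at $[\widetilde{\pi_K}]$, whereas the paper reduces modulo $\pi_K\otimes 1$ and uses $(\pi_K\otimes 1)$-adic completeness of $\ok\otimes_{\intcohen}\aplus$; both routes are workable, though your version requires verifying that $Q([\widetilde{\pi_K}])$ actually generates $\ker(\theta\colon\aplus\to\ocp)=(p-[\widetilde p])$ (a valuation computation in $\eplus$ showing $v_{\mathbb E}(\widetilde{\pi_K}^{\,e_K})=1$), a step you should make explicit.

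There is a genuine gap at the end of (iii). To show that $\pi_K\otimes 1-1\otimes[\widetilde{\pi_K}]$ is a non-zero-divisor in $\ok\otimes_{\intcohen}\aplus\cong\aplus[X]/Q(X)$, you offer the reason that this ring ``is a finite free $\aplus$-module over the domain $\aplus=W(\eplus)$.'' That does not give the conclusion: a ring can be finite free over a domain and still have zero-divisors (for instance $\mathbb{Z}[X]/(X^2-X)$), and torsion-freeness over $\aplus$ only tells you that multiplication by nonzero scalars from $\aplus$ is injective, not multiplication by the ring element $\pi_K\otimes 1-1\otimes[\widetilde{\pi_K}]$, which does not live in $\aplus$. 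The paper's actual ingredient is that $Q(X)\in\aplus[X]$ remains Eisenstein with respect to the prime ideal $(p)$ of $\aplus$, from which one concludes that $\aplus[X]/Q(X)$ is an \emph{integral domain}; once that is in hand, $\pi_K\otimes 1-1\otimes[\widetilde{\pi_K}]\neq 0$ is automatically regular. You had the Eisenstein polynomial on the table (you used it for the generator computation) but did not invoke it where it is actually needed. A secondary, smaller caveat: in (i), the ``universal property of the Cohen ring'' is normally formulated for complete local (or at least Noetherian) targets, whereas $\aplus[\![u_1,\dotsc,u_d]\!]$ is neither, so the lifting requires the more hands-on argument of Brinon's Lemme~2.1.3 rather than a black-box appeal.
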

\begin{proof}
(i) Since $\aplus [\![u_1,\dotsc,u_d]\!]$ is $\ker{\theta}=(p-[\widetilde{p}],u_1,\dotsc,u_d)$-adic Hausdorff complete, we can replace $\aplus [\![u_1,\dotsc,u_d]\!]$ by $\aplus [\![u_1,\dotsc,u_d]\!]/(\ker{\theta})^{k+1}$. This is shown in the proof of \cite[Lemme~2.1.3]{Bri1}. 
  
(iii) We regard $\aplus=\aplus [\![u_1,\dotsc,u_d]\!]/(u_1,\dotsc,u_d)$ as an $\intcohen$-algebra. For the first assertion, it has only to prove $\ker{(\theta_{\ok}:\ok\otimes_{\intcohen}\aplus\to\ocp)}=(\pi_K\otimes 1-1\otimes [\widetilde{\pi_K}])$. Moreover, we can check it after taking mod$(\pi_K\otimes 1)$ of both sides since $\ok\otimes_{\intcohen}\aplus$ is $(\pi_K\otimes 1)$-adically Hausdorff complete. Then the assertion is immediate. For the latter assertion, obviously $\{1\otimes u_1,\dotsc,1\otimes u_d\}$ is a regular sequence. Since $\ok\cong\intcohen [X]/Q(X)$ with an Eisenstein polynomial $Q(X)\in\intcohen [X]$ and $Q(X)\in\aplus [X]$ is also an Eisenstein polynomial for the prime ideal $(p)$, $\ok\otimes_{\intcohen}\aplus$ is an integral domain. Therefore $\pi_K\otimes 1-1\otimes [\widetilde{\pi_k}]\neq 0$ is a regular element of $\ok\otimes_{\cohen}\aplus$.

(ii) Note that $\ok\otimes_{\intcohen}\aplus [\![u_1,\dotsc,u_d]\!]$  (resp. $\ainfcohen$) is $\ker{\theta}_{\ok}$-adic (resp. $(\ker{\theta})$-adic) Hausdorff complete. When $K=K_0$, we obtain the assertion by applying the inverse limit to the isomorphism of \cite[Lemme~2.1.3]{Bri1}. For general $K$, it follows from the equalities
\begin{align*}
\ker{(\theta_{\ok}:{\ok\otimes_{\mathcal{O}_{K_0}}\aplus [\![u_1,\dotsc,u_d]\!]\to\ocp})} & =(1\otimes\ker{\theta},\pi_K\otimes 1-1\otimes [\widetilde{\pi_K}])\\
\ker{(\theta_{\ok}:{\ok\otimes_{\mathcal{O}_{K_0}}\ainfcohen\to\ocp})} & =(1\otimes\ker{\theta},\pi_K\otimes 1-1\otimes [\widetilde{\pi_K}])
\end{align*}
and Lemma~\ref{lem3.1}.
\end{proof}

To simplify the notation, put $\A=\ainf$.

\begin{corollary}\label{gra}
\begin{enuroman}
\item There exists an isomorphism of graded $\ocp$-algebras 
\[
\mathrm{gr}_{\iplus}\A\cong\ocp [X_0,\dotsc,X_d].
\]
\item $\iplus=(\pi_K-[\widetilde{\pi_K}],t_1-[\widetilde{t_1}],\dotsc,t_d-[\widetilde{t_d}])$ where $t_1,\dotsc,t_d$ a $p$-basis of $K$.
\end{enuroman}
\end{corollary}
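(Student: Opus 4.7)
Both statements will be deduced from Proposition~\ref{prop ainf}; the input for (i) is a standard commutative-algebra fact about ideals generated by regular sequences, and (ii) requires an additional flexibility statement about the choice of Cohen subfield.

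For (i), the plan is to invoke the following standard result: if an ideal $I$ of a commutative ring $R$ is generated by a regular sequence $x_0,\dotsc,x_d$, then the canonical graded $R/I$-algebra homomorphism
\[
(R/I)[X_0,\dotsc,X_d]\longrightarrow \mathrm{gr}_I R,\qquad X_i\longmapsto x_i\bmod I^2,
\]
is an isomorphism. By Proposition~\ref{prop ainf}(ii),(iii), transported along the isomorphism $\ok\otimes_{\intcohen}\aplus[\![u_1,\dotsc,u_d]\!]\cong\A$, the ideal $\iplus$ is generated by the regular sequence $\{1\otimes u_1,\dotsc,1\otimes u_d,\pi_K\otimes 1-1\otimes[\widetilde{\pi_K}]\}$ of length $d+1$, and $\A/\iplus=\ocp$. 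Part (i) is then immediate.

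For (ii), the key observation is that $\A=\ainf$ depends only on $\ok$, not on the choice of Cohen subfield used to set up Proposition~\ref{prop ainf}. Given a $p$-basis $t_1,\dotsc,t_d$ of $K$, I would invoke the standard Cohen structure theory to choose a Cohen subfield $K_0\subset K$ such that the prescribed lift $t_1,\dotsc,t_d$ lies in $\intcohen$ and forms a $p$-basis of $K_0$. Applying Proposition~\ref{prop ainf}(iii) to this $K_0$ and transporting through the isomorphism of Proposition~\ref{prop ainf}(ii)---under which, thanks to the embedding $t_j\mapsto[\widetilde{t_j}]+u_j$ of Proposition~\ref{prop ainf}(i), the element $1\otimes u_i$ corresponds to $t_i-[\widetilde{t_i}]$ and $\pi_K\otimes 1-1\otimes[\widetilde{\pi_K}]$ to $\pi_K-[\widetilde{\pi_K}]$---yields the claimed description of $\iplus$.

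The main obstacle is the flexibility statement used in (ii): one must guarantee that any prescribed lift in $\ok$ of a $p$-basis of $k_K$ can be realized inside some Cohen subfield. If one prefers to bypass this, an alternative route is to apply Nakayama's lemma to the finitely generated ideal $\iplus\subset\A$: part (i) identifies $\iplus/\iplus^2$ with a free $\ocp$-module of rank $d+1$, and one checks directly that the $d+1$ classes $\pi_K-[\widetilde{\pi_K}]$ and $t_i-[\widetilde{t_i}]$ still form a basis modulo $(p,\iplus)\iplus$, using that the Jacobian of a change of $p$-basis is invertible in $k_K$; this would give (ii) without having to move the Cohen subfield.
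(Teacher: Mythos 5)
Your treatment of (i) coincides with the paper's, which simply asserts ``we have only to prove (ii)'' and leaves the standard regular-sequence argument implicit. Your two routes to (ii) are both workable in principle, but neither matches the paper's argument, which sidesteps the very obstacle you flag. Instead of moving the Cohen subfield to contain the prescribed $t_j$'s --- true, but a structural fact about coefficient rings that would need its own justification --- the paper fixes $K_0$ arbitrarily, picks a $p$-basis $t'_1,\dotsc,t'_d$ of $K_0$ with $t'_j\equiv t_j\pmod{\mathfrak{m}_K}$ (always possible, since one only needs to lift residues), and then compares the two families of generators directly. Because $t_j$ and $t'_j$ reduce to the same element of $k_K$, one has $t'_j-[\widetilde{t'_j}]-(t_j-[\widetilde{t_j}])\in\iplus\cap(\pi_K,[\widetilde{\pi_K}])\A=(\pi_K-[\widetilde{\pi_K}])\A+[\widetilde{\pi_K}]\iplus$, so the two families of putative generators agree modulo $[\widetilde{\pi_K}]\iplus$, and since $\{\pi_K-[\widetilde{\pi_K}],t'_j-[\widetilde{t'_j}]\}$ generates $\iplus$ by Proposition~\ref{prop ainf}, Nakayama's lemma finishes. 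Your alternative Nakayama route is close in spirit, but the description is slightly off: once the $p$-bases are aligned as above, the relevant change-of-basis matrix is not the Jacobian of a change of $p$-basis of $k_K$ (that would be the identity), but the perturbation coming from the two different lifts of the same $p$-basis; and the paper's choice of $[\widetilde{\pi_K}]\iplus$ (rather than $(p,\iplus)\iplus$) as the Nakayama modulus is precisely what makes this perturbation vanish outright, rather than leaving a matrix of the form $\mathrm{Id}+\pi_K M$ whose invertibility over $\ocp/p\ocp$ one would still have to remark on.
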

\begin{proof}
We have only to prove (ii). Choose a $p$-basis $t'_1,\dotsc,t'_d$ of $K_0$ such that $t'_j\equiv t_j\mod{\mathfrak{m}_{K}}$. It is easy to see $t'_j-[\widetilde{t'_j}]\equiv t_j-[\widetilde{t_j}]\mod{(\pi_K,[\widetilde{\pi_K}])\A}$. We have
\begin{align*}
t'_j-[\widetilde{t'_j}]-(t_j-[\widetilde{t_j}]) & \in\iplus\cap(\pi_K,[\widetilde{\pi_K}])\A=\iplus\cap (\pi_K-[\widetilde{\pi_K}],[\widetilde{\pi_K}])\A\\
 & = (\pi_K-[\widetilde{\pi_K}])\A+[\widetilde{\pi_K}]\iplus.
\end{align*}
Since this implies $\pi_K-[\widetilde{\pi_K}],t'_1-[\widetilde{t'_1}],\dotsc,t'_d-[\widetilde{t'_d}]$ generates $\iplus/[\widetilde{\pi_K}]\iplus$, we obtain the assertion by Nakayama's lemma.
\end{proof}

The followings are immediate:

\begin{remark}\label{rem:ainf}
\begin{enuroman}
\item The canonical maps $A_k\to A_k[p^{-1}]\to B_k$ are injective and $A_k[p^{-1}]=B_k$.
\item $\bdr$ and $B_k$ have a canonical $\kbar$-algebra structure and $B_k$ is a $p$-adic Banach algebra with a unit disc $A_k$.
\item $\iplus^k\cap I^{k+1}=\iplus^{k+1}$, $\iplus^{k+1}\cap\pi_K^n\A=\pi_K^n\iplus^{k+1}$ for $n,k\in\mathbb{N}$.
\item There exist canonical isomorphisms 
\begin{align*}
\cp\otimes_{\ocp} (\iplus^k/\iplus^{k+1}) & \cong J_k \\
\cp/\ocp\otimes_{\ocp} (\iplus^k/\iplus^{k+1}) & \cong I^k/(I^{k+1}+\iplus^k).
\end{align*}

\item There exist $($non-canonical$)$ ring isomorphisms
\begin{align*}
\bdr & \cong\cp [\![X_0,\dotsc,X_d]\!] \\
\B_k & \cong\cp [\![X_0,\dotsc,X_d]\!]/(X_0,\dotsc,X_d)^{k+1}.
\end{align*}
\end{enuroman}
\end{remark}

Since $\bdr\cong\varprojlim{B_k}$, we call the inverse limit topology of the $p$-adic Banach topology of $B_k$ the canonical topology of $\bdr$. In the following, we put $B_k$ the $p$-adic Banach topology and $\bdr$ the canonical topology if particular mention is not stated. Note that we have a canonical isomorphism $B_0\cong\cp$ as topological rings and the induced topology on $J_k$ as a subspace of $B_k$ coincides with the $p$-adic topology.

We call $V$ a $B_k$-representation of $\gl$ if $V$ is a finitely generated $B_k$-module with a continuous semilinear $\gl$-action and $V$ a $\bdr$-representation if $V$ is a $B_k$-representation for some $k\in\mathbb{N}$. $B_k$-representation $V$ is said to be $B_k$-admissible if the canonical map

\[
B_k\otimes_{B_k^{\gl}} V^{\gl}\to V
\]

is an isomorphism.

\begin{definition}
For $x\in B_k$, put $w_k(x)=\sup{\{m\in\mathbb{Z}\mid x\in\pi_K^mA_k\}}$.
\end{definition}
The following theorem will be used without citations.
\begin{theorem}[\cite{Ax}, \cite{Sen1}, \cite{Tate}]\label{ax}
For all algebraic extensions $L/K$,
\[
\widehat{L}=\cp^{\gl}.
\]
\end{theorem}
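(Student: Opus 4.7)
The plan is to follow Ax's classical argument, which is purely valuation-theoretic and makes no use of the residue field being perfect; in particular the finite $p$-basis hypothesis on $k_K$ plays no role. The inclusion $\widehat{L} \subseteq \cp^{\gl}$ is immediate from the continuity of the $\gl$-action, so everything lies in the reverse inclusion.

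I would reduce the theorem to the following key estimate due to Ax: for every finite Galois extension $M/L$ inside $\kbar$ with group $G$ and every $x \in M$,
\[
\inf_{y \in L} v_p(x - y) \;\geq\; \min_{\sigma \in G} v_p(\sigma(x) - x) - c,
\]
for a universal constant $c$ (independent of $K$, $L$, $M$, and $x$). Granting this, one argues as follows. Given $x \in \cp^{\gl}$, choose $x_n \in \kbar$ with $v_p(x - x_n) \geq n$ and pick a finite Galois extension $M_n/L$ containing $x_n$. For $\sigma \in \mathrm{Gal}(M_n/L)$, lift $\sigma$ to $\tilde\sigma \in \gl$; since $\tilde\sigma(x) = x$, we have
\[
\sigma(x_n) - x_n \;=\; \tilde\sigma(x_n - x) - (x_n - x),
\]
so $v_p(\sigma(x_n) - x_n) \geq n$. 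Applying the key estimate to $x_n \in M_n$ over $L$ produces $y_n \in L$ with $v_p(x - y_n) \geq n - c$, hence $y_n \to x$ in $\cp$ and $x \in \widehat{L}$.

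The main obstacle is the key estimate itself, which I would prove by induction on $[M:L]$. If $p \nmid [M:L]$, one takes $y = [M:L]^{-1} \mathrm{Tr}_{M/L}(x)$ and applies the ultrametric inequality to $x - y = [M:L]^{-1}\sum_{\sigma \neq 1}(x - \sigma(x))$ to obtain $v_p(x - y) \geq \min_\sigma v_p(\sigma(x) - x)$ with no loss. If $p \mid [M:L]$, one chooses a normal subgroup $H \triangleleft G$ of index $p$, applies the inductive hypothesis to $x$ over $L' := M^H$ to get an approximation $x' \in L'$ of $x$, and then treats the degree-$p$ extension $L'/L$ separately: a direct trace/Lagrange-resolvent computation controls $\inf_{y \in L} v_p(x' - y)$ in terms of $\min_\tau v_p(\tau(x') - x')$ up to a bounded loss. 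The universal constant $c$ arises as the sum of a geometric series recording the losses incurred at each successive degree-$p$ reduction. Since the argument uses only general properties of discrete valuations, traces, and the ultrametric inequality, it goes through verbatim in the imperfect residue field setting, giving $\widehat{L} = \cp^{\gl}$ for every algebraic extension $L/K$.
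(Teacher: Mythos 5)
The paper does not give a proof of Theorem~\ref{ax} at all; it is the classical Ax--Sen--Tate theorem, stated with citations to \cite{Ax}, \cite{Sen1}, \cite{Tate} and used as a black box. So there is no ``paper's proof'' to compare against. Your overall strategy is nevertheless the right one: the theorem does reduce to Ax's valuation-theoretic estimate, and your reduction (approximate $x\in\cp^{\gl}$ by $x_n\in\kbar$, observe $v_p(\sigma x_n-x_n)\ge n$ for all $\sigma$, apply the estimate) is correct, as is the prime-to-$p$ case of the estimate via the averaged trace.

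The sketch of the inductive step when $p\mid[M:L]$ has a genuine gap. You want a normal subgroup $H\triangleleft G$ of index $p$, but such a subgroup need not exist --- indeed $A_5$ has no subgroup of index $2$ or of index $3$ whatsoever, and its index-$5$ subgroups are not normal. This is not vacuous in the present setting: unramified extensions of $L$ correspond to separable extensions of the residue field $k_L$, and when $k_K$ is imperfect (say $k_K=\mathbb{F}_p(t)$) these residue fields admit Galois extensions with arbitrary finite Galois group, $A_5$ included; hence $\mathrm{Gal}(\kbar/L)$ is not pro-solvable and your descent cannot even begin for some $M/L$. Ironically this is the one place where the residue field hypothesis matters: for perfect $k_K$ the absolute Galois group is pro-solvable and a degree-$p$ Galois descent is always available. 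There is also a quantitative problem: the plain trace step $y=\tfrac1p\mathrm{Tr}_{L'/L}(x')$ costs a full $v_p(p)=1$ each time, so iterating $v_p([M:L])$ times gives an unbounded total loss, not the geometric series you invoke; obtaining a universal constant requires more than Lagrange resolvents. Ax's actual argument sidesteps both issues by never leaving $L[X]$: he works with the minimal polynomial $f$ of $\alpha$ over $L$ and its divided Hasse derivatives $D^{(j)}f=\tfrac1{j!}f^{(j)}\in L[X]$, choosing $j$ so that the degree drops by a $p$-power factor while the roots move by a geometrically shrinking amount, which is what produces the uniform constant $c=\tfrac{p}{(p-1)^2}$. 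Your proposal needs to be rewritten along those lines (or the group-theoretic reduction replaced by one that is actually available in the non-solvable case).
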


\begin{remark}
Since a $p$-adic Banach space has an orthonormal basis (\cite[Lemma~1]{Se2}), a surjection of $p$-adic Banach spaces has continuous section. In particular, we can consider the usual continuous Galois cohomology theory for $\bdr$-representations.
\end{remark}

\section{Higher K\"{a}hler differentials}\label{sec:kah}
In this section, we will study Colmez's higher K\"{a}hler differentials $\olkahler$ of an algebraic extension $L/K$. We generalize the results of \cite{Col} to the imperfect residue field case: $\S 2.1$ corresponds to $\S$A$1$, $\S 2.2$, $\S 2.3$ corresponds to $\S$A$2$ in \cite{Col}.

\subsection{Definitions and basic facts}
\begin{definition}\label{higher kahler}
Define a family of $\ok$-algebras $\{\hokbar{k} \}_{k\in \mathbb{N}}$ and $\okbar$ -modules $\{\hkahler{k} \}_{k\in \mathbb{N}_{>0}}$ as follows:

Let $\hokbar{0}=\okbar$. For $k\ge 1$, set inductively $\hkahler{k}=\okbar\otimes_{\hokbar{k-1}}{\threekahler{1}{\hokbar{k-1}}{\ok}}$ and $\hokbar{k}=\ker{d^{(k)}}$, where $d^{(k)}$ is the canonical derivation $d^{(k)}:\hokbar{k-1}\to \hkahler{k}$.

Similarly, for an algebraic extension $L/K$, put $\hol{k}=\hok{k}\cap\ol$ and $\olkahler$ the $\ol$-submodule of $\hkahler{k}$ generated by $d^{(k)}(\hol{k-1})$. We put $d^{(k)}=d^{(k)}\!\!\mid _{\hol{k-1}}$ by abuse of notation.
\end{definition}

\begin{remark}
For an algebraic extension $L/K$, we have a canonical isomorphism $\holkahler{1}\cong~\Omega_{\ol/\ok}^1$. This follows from the following lemma:
\begin{lemma}[{\cite[The footnote of p.420]{Sch2}}]\label{exactness kahler}
 For algebraic extensions $L'/L/K$,
\[\xymatrix{
0 \ar[r] & \mathcal{O}_{L'}\otimes\Omega_{\ol/\ok}^1 \ar[r] &\Omega_{\mathcal{O}_{L'}/\ok}^1 \ar[r] & \Omega_{\mathcal{O}_{L'}/\ol}^1 \ar[r] & 0 \\
}\]
is exact.
\end{lemma}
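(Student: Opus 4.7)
The right-exactness of the sequence follows from the classical first fundamental exact sequence for Kähler differentials applied to the tower $\mathcal{O}_K \to \mathcal{O}_L \to \mathcal{O}_{L'}$, so only the injectivity of the left-most map $\alpha:\mathcal{O}_{L'}\otimes_{\mathcal{O}_L}\Omega^1_{\mathcal{O}_L/\mathcal{O}_K}\to\Omega^1_{\mathcal{O}_{L'}/\mathcal{O}_K}$ requires a new argument.

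My plan is first to reduce to the finite case: since Kähler differentials commute with filtered direct limits and every algebraic extension is the filtered union of its finite subextensions, we may assume that both $L'/L$ and $L/K$ are finite. In that situation I would exhibit a presentation $\mathcal{O}_{L'}\cong\mathcal{O}_L[X_0,\ldots,X_m]/\mathfrak{a}$ by sending $X_0$ to a uniformizer of $L'$ and $X_1,\ldots,X_m$ to lifts of a generating set of $k_{L'}$ as a $k_L$-algebra (which exist by Nakayama, using $[k_{L'}:k_{L'}^p]<\infty$).

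The key technical assertion is that in this setting the fiber $\mathcal{O}_{L'}/\pi_L\mathcal{O}_{L'}$ is a complete intersection over $k_L$, so that the finite flat local homomorphism $\mathcal{O}_L\to\mathcal{O}_{L'}$ is lci and $\mathfrak{a}$ is generated by a regular sequence in $\mathcal{O}_L[X_0,\ldots,X_m]$. With this in hand, both conormal sequences
\begin{align*}
0\to\mathfrak{a}/\mathfrak{a}^2 &\to\mathcal{O}_{L'}\otimes_{\mathcal{O}_L[X]}\Omega^1_{\mathcal{O}_L[X]/\mathcal{O}_K}\to\Omega^1_{\mathcal{O}_{L'}/\mathcal{O}_K}\to 0,\\
0\to\mathfrak{a}/\mathfrak{a}^2 &\to\mathcal{O}_{L'}\otimes_{\mathcal{O}_L[X]}\Omega^1_{\mathcal{O}_L[X]/\mathcal{O}_L}\to\Omega^1_{\mathcal{O}_{L'}/\mathcal{O}_L}\to 0
\end{align*}
become short exact, and the smoothness of $\mathcal{O}_L\to\mathcal{O}_L[X_0,\ldots,X_m]$ yields the split short exact sequence
\[
0\to\mathcal{O}_{L'}\otimes_{\mathcal{O}_L}\Omega^1_{\mathcal{O}_L/\mathcal{O}_K}\to\mathcal{O}_{L'}\otimes_{\mathcal{O}_L[X]}\Omega^1_{\mathcal{O}_L[X]/\mathcal{O}_K}\to\mathcal{O}_{L'}\otimes_{\mathcal{O}_L[X]}\Omega^1_{\mathcal{O}_L[X]/\mathcal{O}_L}\to 0.
\]
A snake-lemma chase through the resulting commutative diagram then delivers the injectivity of $\alpha$.

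The main obstacle lies in establishing the lci property: one must verify that the Artinian local ring $\mathcal{O}_{L'}/\pi_L\mathcal{O}_{L'}$ is a complete intersection over $k_L$, which requires a careful structural analysis of finite extensions of CDVFs in the presence of a possibly inseparable residue field extension. Once this lci assertion is secured, the rest is purely formal homological algebra.
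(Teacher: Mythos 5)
The paper itself does not prove this lemma --- it simply quotes Scholl's footnote --- so your argument stands on its own. Your overall strategy (reduce to finite extensions, present $\mathcal{O}_{L'}$ over $\mathcal{O}_L[X_0,\dots,X_m]$, compare the two conormal sequences against the split sequence coming from the smooth polynomial ring) is a workable one, but two points need correcting.

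First, the step you single out as the ``main obstacle'' is in fact immediate and needs no structural analysis of the residue extension. The localization $\mathcal{O}_L[X_0,\dots,X_m]_{\mathfrak{m}}\twoheadrightarrow\mathcal{O}_{L'}$ is a surjection between \emph{regular} local rings (a polynomial ring over a DVR localizes to a regular local ring, and $\mathcal{O}_{L'}$ is itself a DVR), so its kernel $\mathfrak{a}_{\mathfrak{m}}$ is automatically generated by part of a regular system of parameters, hence by a regular sequence of length $m+1$; in particular $\mathfrak{a}/\mathfrak{a}^2$ is free of rank $m+1$ over $\mathcal{O}_{L'}$. Equivalently, the fiber $\mathcal{O}_{L'}/\pi_L\mathcal{O}_{L'}=\mathcal{O}_{L'}/(\pi_{L'}^{e_{L'/L}})$ is visibly a regular local ring modulo one non-zero-divisor. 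No Avramov-type theorem and no case analysis over inseparability are required here; and note that the regular-sequence statement holds only after localizing at $\mathfrak{m}$, not globally in $\mathcal{O}_L[X]$, though that is all the conormal sequence needs.

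Second, and this is the genuine gap: the lci property alone does \emph{not} make the conormal sequences short exact. As a counterexample, $k\to k[x]/(x^2)$ is lci, yet the conormal map sends the free generator $x^2$ of $\mathfrak{a}/\mathfrak{a}^2$ to $2x\,dx$, which is annihilated by $x$, so left-exactness fails; in general the obstruction is $H_1$ of the cotangent complex, which vanishes only when the map is smooth, and $\mathcal{O}_L\to\mathcal{O}_{L'}$ is of course not smooth. What actually saves you is a further input specific to this setting: $\mathcal{O}_{L'}$ is a domain, $\mathfrak{a}/\mathfrak{a}^2$ is free hence torsion-free, and after inverting $p$ the extension $L'/L$ is finite separable (characteristic $0$), hence \'{e}tale, so that the conormal map $\mathfrak{a}/\mathfrak{a}^2\to\mathcal{O}_{L'}\otimes\Omega^1_{\mathcal{O}_L[X]/\mathcal{O}_L}$ becomes an isomorphism of $L'$-vector spaces of the same dimension $m+1$; torsion-freeness then forces integral injectivity. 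With that injectivity established, your diagram chase against the split sequence does yield injectivity of $\alpha$ (and, only as a \emph{consequence}, left-exactness of the first conormal sequence, which you should not take as input since $\mathcal{O}_L[X]$ is not smooth over $\mathcal{O}_K$). The reduction to finite extensions by filtered colimits and the final chase are fine.
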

\end{remark}

\begin{theorem}[cf. {\cite[Th\'{e}or\`{e}me~1]{Col}}]\label{thm Col}
\begin{enuroman}
\item For $k\in \mathbb{N}$, we have $\hokbar{k}=\kbar\cap (\A+I^{k+1})$ and the canonical isomorphisms $\hokbar{k}/p^n\hokbar{k}\cong \ak /p^n\ak$ for $n\in \mathbb{N}$. 

\item Let $\partial^{(k)}:\hokbar{k-1} \to I^k/(I^{k+1}+\hiplus{k})$ be a $($well-defined$)$ derivation sending $x\in \hokbar{k-1}$ to $x-\tilde{x}$ with $\tilde{x}\in \A$ such that $x-\tilde{x}\in I^k$. Then we have a commutative diagram:

\[\xymatrix{ 
\hokbar{k-1} \ar[r]^{d^{(k)}} \ar[dr]_{\partial^{(k)}} & \hkahler{k} \ar@{-->}[d]^{\iota}_{\cong} \\
 & I^k/(I^{k+1}+\hiplus{k})  \\
}\]
where $\iota$ is the $\okbar$-module isomorphism induced by the universality of K\"{a}hler differentials.

\item $d^{(k)}$ is surjective.

\item $\kbar$ is dense in $\bdr$ $(resp.\ B_k)$ and $\bdr$ $(resp.\ B_k)$ is the Hausdorff completion of $\kbar$ for the topology whose fundamental neighborhood at $0$ is $\{p^n\hokbar{k}\}_{n,k\in\mathbb{N}}$ $(resp.\ \{p^n\hokbar{k}\}_{n\in\mathbb{N}})$.
\end{enuroman}
\end{theorem}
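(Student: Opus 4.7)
The plan is simultaneous induction on $k\ge 0$ for all four statements. The case $k=0$ is immediate: $\hokbar{0}=\okbar$, and via $\theta:A_0\xrightarrow{\sim}\ocp$ one identifies $\kbar\cap(\A+I)$ with $\okbar$, while $\okbar/p^n\okbar\cong\ocp/p^n\ocp\cong A_0/p^nA_0$, and density of $\kbar$ in $\cp$ is Ax--Sen--Tate.

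For the inductive step, I would first construct $\partial^{(k)}$ and prove (ii). Given $x\in\hokbar{k-1}=\kbar\cap(\A+I^k)$, any two lifts in $\A$ differ by an element of $\A\cap I^k=\hiplus{k}$ by Remark~\ref{rem:ainf}(iii), so $\partial^{(k)}(x):=x-\tilde x\in I^k/(I^{k+1}+\hiplus{k})$ is well-defined, and a short calculation shows it is an $\ok$-derivation. The universal property of K\"ahler differentials then yields the $\okbar$-linear factorization $\iota:\hkahler{k}\to I^k/(I^{k+1}+\hiplus{k})$ making the diagram commute.

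The crux is to prove $\iota$ is an isomorphism. By Remark~\ref{rem:ainf}(iv) the target is $(\cp/\ocp)\otimes_{\ocp}(\hiplus{k}/\hiplus{k+1})$, and by Corollary~\ref{gra} the second factor is free over $\ocp$ on the degree-$k$ monomials in the $d+1$ generators $\pi_K-[\widetilde{\pi_K}],\,t_1-[\widetilde{t_1}],\dotsc,t_d-[\widetilde{t_d}]$. Surjectivity of $\iota$ is established by producing explicit preimages: for each such monomial and any $N\ge 1$, multiply the corresponding monomial in $\pi_K$ and $t_j\in\okbar\subset\hokbar{k-1}$ by $p^{-N}$ and verify that its $\partial^{(k)}$-image coincides with $p^{-N}$ times the target monomial modulo lower-order terms absorbed by the inductive hypothesis. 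Injectivity, which I expect to be the main obstacle, is proved by showing that $\hkahler{k}$ is generated as an $\okbar$-module by the $d^{(k)}$-images of these same algebraic generators; this reduces, via the exact sequence of Lemma~\ref{exactness kahler} and the inductive description of $\threekahler{1}{\hokbar{k-1}}{\ok}$, to transporting a relation in $\ker\iota$ to a relation in $\hiplus{k}/\hiplus{k+1}$ incompatible with Corollary~\ref{gra}. Proposition~\ref{prop ainf}(iii) guarantees that these generators form a regular sequence in $\A$, which is the essential imperfect-residue-field refinement needed to run this argument with $d+1$ variables rather than one.

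Once $\iota$ is an isomorphism, the remaining statements follow quickly. The identity $\ker\partial^{(k)}=\kbar\cap(\A+I^{k+1})$ is immediate from the definition of $\partial^{(k)}$, giving the first part of (i); the bijection $\hokbar{k}/p^n\hokbar{k}\cong A_k/p^nA_k$ then follows from a short diagram chase using the case $k-1$ of (i) together with Remark~\ref{rem:ainf}(iii). Part (iii) is exactly the surjectivity already shown, since $\iota\circ d^{(k)}=\partial^{(k)}$ is surjective and $\iota$ is an isomorphism. For (iv), density of $\kbar$ in $A_k/p^nA_k$ is the surjective half of (i), which gives density in the $p$-adic Banach space $B_k$ with the claimed neighborhood basis; passing to the inverse limit over $k$ then gives density of $\kbar$ in $\bdr$ with the stated topology.
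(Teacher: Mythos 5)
Your proposal inverts the logical order of the paper's proof, and the shortcut does not survive scrutiny. The paper establishes (i) first, by a substantial two-step argument: Lemma~\ref{collem2} and Lemma~\ref{lemdense} (built around the auxiliary polynomial $S_m(X)=X^{p^m}+\pi_K X$) show that $\hokbar{k}$ is $p$-adically dense in $A_k$, and Lemma~\ref{lem:thick} together with Corollary~\ref{cor:thick} then invoke Fontaine's universality of infinitesimal thickenings to identify $\hokbar{k}/p^n\hokbar{k}$ with $A_k/p^nA_k$ and to conclude $\hokbar{k}=\oku{k}$. Only then does Lemma~\ref{lem parsurj} derive the surjectivity of $\partial^{(k)}$ and $d^{(k)}$, by a Tate-module comparison between the two exact sequences $0\to\oku{k}\to\hokbar{k-1}\to\mathrm{Im}\,\partial^{(k)}\to 0$ and $0\to \iplus^k/\iplus^{k+1}\to A_k\to A_{k-1}\to 0$ modulo $p^n$. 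Your plan is to begin with (ii) and obtain surjectivity of $\iota$ (and hence of $\partial^{(k)}$) by hand, which is precisely the part the paper goes to such lengths to avoid doing directly.

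The explicit preimages you propose do not exist in the domain. You suggest applying $\partial^{(k)}$ to $p^{-N}$ times a degree-$k$ monomial in $\pi_K$ and the $t_j$; but such an element lies in $\kbar\setminus\okbar$ once $N>0$, whereas $\hokbar{k-1}\subset\hokbar{0}=\okbar$, so it is not in the domain of $\partial^{(k)}$. Worse, your injectivity argument asserts that $\hkahler{k}$ is generated over $\okbar$ by the $d^{(k)}$-images of these same $\pi_K,t_j$; but $\pi_K,t_j\in\ok$ and $d^{(k)}$ is $\ok$-linear, so $d^{(k)}(\pi_K)=d^{(k)}(t_j)=0$ and these images generate nothing. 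The non-zero degree-$k$ monomials live in $\iplus^k/\iplus^{k+1}$ inside $\A$, built out of $\pi_K-[\widetilde{\pi_K}]$ and $t_j-[\widetilde{t_j}]$, and these are not of the form $\partial^{(k)}(x)$ for any obvious $x$; one must instead look at radicals such as $\pi_K^{1/p^m}$, and the bookkeeping needed to control this through all levels up to $k$ is exactly what Lemma~\ref{collem2} handles, with its exponentially growing constants $r_k=(3^k-1)r/2$. Your sketch makes no contact with this density step nor with the universality of infinitesimal thickenings, which are the two real ideas of the proof; the regular-sequence observation from Proposition~\ref{prop ainf}(iii) is correctly identified as the imperfect-residue ingredient, but it enters only after those two steps, through Corollary~\ref{gra} and Remark~\ref{rem:ainf}(iv).

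Once (i) is in hand, your final paragraph (deriving the mod $p^n$ isomorphism by a five-lemma comparison of the two short exact sequences, and then (iv) as a consequence) is essentially correct as a diagram chase, but it cannot serve as a substitute for the density argument, since the surjectivity of $\partial^{(k)}$ that feeds into it is precisely what you have not established.
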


In the rest of this subsection, we give the proof of Theorem~\ref{thm Col}. The proof is identical to that of \cite{Col}, but we reproduce the proof for the convenience of the readers.

\newcommand{\kur}{K^{\mathrm{um}}}
\newcommand{\okur}{\mathcal{O}_{\kur}}
\newcommand{\hd}[1]{d^{(#1)}}
\newcommand{\oku}[1]{\mathcal{O}^{(#1)}}

For $x\in\okbar$, let $P(X)\in\ok[X]$ be the minimal polynomial of $x$ over $K$ and let $r\in\mathbb{N}$ be a natural number such that $r\ge v_p(P'(X))$. Define $r_k\in\mathbb{N}$ inductively by $r_0=0$, $r_k=3r_k+r$, i.e., $r_k=(3^k-1)r/2$. For $a,k\in\mathbb{N}$, put $r_k(a)=\inf{(r_k,v_p(a))}$ and $z_{k,a}=p^{r_k-r_k(a)}x^a$. 
\begin{lemma}\label{collem2}
For $a,k\in\mathbb{N}$, we have $z_{k,a}\in\mathcal{O}_{\kbar}^{(k)}$.
\end{lemma}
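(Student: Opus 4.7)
The strategy is induction on $k$. The base case $k=0$ is immediate: $r_0=0$, so $z_{0,a}=x^a\in\mathcal{O}_{\overline{K}}=\mathcal{O}_{\overline{K}}^{(0)}$.

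For the inductive step, I assume $z_{k-1,b}\in\mathcal{O}_{\overline{K}}^{(k-1)}$ for every $b\in\mathbb{N}$, and aim to show $z_{k,a}\in\mathcal{O}_{\overline{K}}^{(k)}=\ker d^{(k)}$. First I verify that $z_{k,a}$ lies in the domain $\mathcal{O}_{\overline{K}}^{(k-1)}$ of $d^{(k)}$. Since $r_k(a)-r_{k-1}(a)\leq r_k-r_{k-1}=2r_{k-1}+r$, the factorisation
\[
z_{k,a}=p^{(r_k-r_{k-1})-(r_k(a)-r_{k-1}(a))}\cdot z_{k-1,a}
\]
exhibits $z_{k,a}$ as an $\mathcal{O}_K$-multiple of $z_{k-1,a}$, which lies in $\mathcal{O}_{\overline{K}}^{(k-1)}$ by the inductive hypothesis.

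The heart of the argument is to exploit $P(x)=0$, where $P(X)=\sum_{i=0}^{n}c_iX^i\in\mathcal{O}_K[X]$ is the minimal polynomial of $x$. Multiplying by $p^{r_{k-1}}$ yields the identity
\[
\sum_{i=0}^{n}c_i\,p^{r_{k-1}}x^{i}=0
\]
in $\mathcal{O}_{\overline{K}}^{(k-1)}$ (each $p^{r_{k-1}}x^i=p^{r_{k-1}(i)}z_{k-1,i}$ is in the domain by the inductive hypothesis, and the coefficients $c_i\in\mathcal{O}_K$ are killed by $d^{(k)}$). Multiplying further by factors of the form $p^{r_{k-1}}x^{j}$---all in the domain by induction---gives a family of relations of the shape $\sum_i c_i\,p^{mr_{k-1}}x^{i+j}=0$; the Leibniz rule for $d^{(k)}$ on $\mathcal{O}_{\overline{K}}^{(k-1)}$ then converts these into linear relations among the $d^{(k)}(p^{r_{k-1}}x^{i})$'s. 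Combined with the fact that $p^r\in P'(x)\cdot\mathcal{O}_{\overline{K}}$ (which follows from $r\geq v_p(P'(x))$), these relations extract a vanishing of the form $p^{r+2r_{k-1}}\cdot d^{(k)}(p^{r_{k-1}}x^a)=0$, which is precisely the annihilation needed to conclude $d^{(k)}(z_{k,a})=0$ after matching the $p$-powers.

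The main obstacle is the combinatorial bookkeeping of $p$-adic valuations. Because $d^{(k)}$ is a derivation only on $\mathcal{O}_{\overline{K}}^{(k-1)}$ and not on all of $\mathcal{O}_{\overline{K}}$, every Leibniz expansion requires first shifting each factor into the domain by inserting a $p^{r_{k-1}}$, and the trade $P(x)=0\rightsquigarrow P'(x)$ via derivation consumes a third $p^{r_{k-1}}$. This triple insertion of $p^{r_{k-1}}$, together with the exchange $p^r\leftrightarrow P'(x)$, is exactly what dictates the recursion $r_k=3r_{k-1}+r$, equivalently $r_k=(3^k-1)r/2$.
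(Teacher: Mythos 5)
Your overall strategy matches the paper's: induction on $k$, membership in $\mathcal{O}_{\overline{K}}^{(k-1)}$ via the recursion $r_k=3r_{k-1}+r$, and the use of the identity $P(x)=0$ plus the Leibniz rule to extract an annihilation of $d^{(k)}(z_{k-1,a})$ by a suitable $p$-power. Up to the stated conclusion $p^{2r_{k-1}+r}\,d^{(k)}(z_{k-1,a})=0$ your sketch follows the paper's proof.

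The gap is in the final passage from that annihilation to $d^{(k)}(z_{k,a})=0$. Writing $z_{k,a}=p^{\,r_k-r_k(a)-r_{k-1}+r_{k-1}(a)}\,z_{k-1,a}$, the $\mathcal{O}_K$-linearity of $d^{(k)}$ gives $d^{(k)}(z_{k,a})=p^{\,r_k-r_k(a)-r_{k-1}+r_{k-1}(a)}d^{(k)}(z_{k-1,a})$, and that exponent is $\ge 2r_{k-1}+r$ \emph{if and only if} $r_{k-1}(a)\ge r_k(a)$, i.e. $v_p(a)\le r_{k-1}$. When $v_p(a)>r_{k-1}$ the exponent drops below $2r_{k-1}+r$ (for $v_p(a)\ge r_k$ the element $z_{k,a}=z_{k-1,a}=x^a$ carries no extra $p$ at all), and since $\Omega^{(k)}_{\mathcal{O}_{\overline{K}}/\mathcal{O}_K}$ is a $p$-torsion module you cannot cancel $p$-powers. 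The phrase ``after matching the $p$-powers'' hides exactly this case, and your target inequality $p^{r+2r_{k-1}}\,d^{(k)}(p^{r_{k-1}}x^a)=0$, which amounts to $d^{(k)}(p^{r_k}x^a)=0$, is strictly weaker than $d^{(k)}(z_{k,a})=0$ as soon as $r_k(a)>0$.

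The paper handles this by a separate argument: when $v_p(a)>r_{k-1}$, write $a=p^{r_{k-1}}b$ and note $z_{k,a}=p^{\,r_k-r_k(a)}(z_{k-1,p^{r_{k-1}}})^{b}$, with $z_{k-1,p^{r_{k-1}}}=x^{p^{r_{k-1}}}\in\mathcal{O}_{\overline{K}}^{(k-1)}$ by the inductive hypothesis. Applying Leibniz to the $b$-th power produces a coefficient of $p$-adic valuation $r_k-r_k(a)+v_p(b)$, which one checks is $\ge 2r_{k-1}+r$, and only then does the annihilation $p^{2r_{k-1}+r}\,d^{(k)}(z_{k-1,p^{r_{k-1}}})=0$ apply. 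This $p$-power-of-$x$ device is a genuinely different use of the Leibniz rule than the polynomial manipulations you describe, and it is needed to close the proof; without it the lemma is only established for $a$ with $v_p(a)\le r_{k-1}$.
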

\begin{proof}
We will prove the assertion by induction on $k$. The case $k=0$ is trivial. Suppose the assertion is true for $k$. Using the relation $p^{r_k+r_k(a)}z_{k,a}=z_{k,1}(p^{r_k(a)}z_{k,a-1})$, we have the following relation by induction on $a$:
\begin{equation}\label{equcolone}
p^{r_k+r_k(a)}d^{(k+1)}(z_{k,a})=p^{r_k}ax^{a-1}d^{(k+1)}(z_{k,1}).
\end{equation}
For a polynomial $A(X)\in\ok [X]$, we have $p^{r_k}d^{(k+1)}(p^{r_k}A(x))=p^{2r_k}A'(x)d^{(k+1)}(z_{k,1})$. In particular, by putting $A(X)=P(X)$, we obtain $p^{r_k}P'(x)d^{(k+1)}(z_{k,1})=0$ and hence we have $p^{r_k+r}d^{(k+1)}(z_{k,1})=0$. Since $r_k(a)\le r_k$, we have 
\begin{equation}\label{equcoltwo}
p^{2r_k+r}d^{(k+1)}(z_{k,a})=0\ \text{for all}\ a\in\mathbb{N}
\end{equation}
by (\ref{equcolone}). Now we prove the assertion for $z_{k+1,a}$, that is, $d^{(k+1)}(z_{k+1,a})=0$. There are two cases: If $x_p(a)\le r_k$, then we have $z_{k+1,a}=p^{2r_k+r}z_{k,a}$, so we have $\hd{k+1}(z_{k+1,a})=0$ by (\ref{equcoltwo}). If $v_p(a)>r_k$, write $a=p^{r_k}b$. Then we have 
\[
\hd{k+1}(z_{k,a})=p^{r_{k+1}-r_{k+1}(a)}\hd{k+1}((z_{k,p^{r_k}})^b)=p^{r_{k+1}-r_{k+1}(a)}b(z_{k,p^{r_k}})^{b-1}\hd{k+1}(z_{k,p^{r_k}})=0
\]
since we have $v_p(b)+r_{k+1}-r_{k+1}(a)\le 2r_{k}+r$.
\end{proof}

Put $\oku{k}=\kbar\cap (\A+I^{k+1})$. We will prove the assertion (i) by induction on $k$\ : If $k=0$, there is nothing to prove. Assume $\hokbar{k-1}=\oku{k-1}=\kbar\cap (\A+I^k)$ and $A_{k-1}/p^nA_{k-1}\cong\hokbar{k-1}/p^n\hokbar{k-1}$.

\begin{lemma}\label{leminc}
We have $\hokbar{k}\subset\oku{k}$.
\end{lemma}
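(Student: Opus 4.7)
The plan is to construct the derivation $\partial^{(k)}$ promised in Theorem~\ref{thm Col}(ii), factor it through $d^{(k)}$ by the universal property of K\"{a}hler differentials, and then use vanishing of $d^{(k)}(x)$ to express $x$ as a sum of an element of $\A$ and an element of $I^{k+1}$.

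First, I would define $\partial^{(k)}$. For $x\in\hokbar{k-1}$, the induction hypothesis gives $\hokbar{k-1}=\oku{k-1}$, so there exists $\tilde{x}\in\A$ with $x-\tilde{x}\in I^k$; set $\partial^{(k)}(x):=[x-\tilde{x}]\in I^k/(I^{k+1}+\iplus^k)$. To see this is independent of $\tilde{x}$, note that two lifts differ by an element of $\A\cap I^k$, and I claim $\A\cap I^k=\iplus^k$: indeed, by Remark~\ref{rem:ainf}(i) the map $A_{k-1}\to B_{k-1}$ is injective, which is exactly the statement $\A\cap I^k\subset\iplus^k$ (the reverse inclusion is tautological).

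Next I would check that $\partial^{(k)}$ is an $\ok$-linear derivation. Additivity and $\ok$-linearity are clear (for $a\in\ok$ take $\tilde{a}=a$). The Leibniz rule follows from
\[
xy-\tilde{x}\tilde{y}=x(y-\tilde{y})+(x-\tilde{x})y-(x-\tilde{x})(y-\tilde{y}),
\]
where the last term lies in $I^{2k}\subset I^{k+1}$ since $k\ge 1$; using that the $\okbar$-action on $I^k/(I^{k+1}+\iplus^k)$ factors through $\okbar/I$-type relations so that $x$ and $\tilde{x}$ act identically on $I^k/I^{k+1}$, this gives $\partial^{(k)}(xy)=x\partial^{(k)}(y)+y\partial^{(k)}(x)$. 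By the universal property of $\Omega^{1}_{\hokbar{k-1}/\ok}$, $\partial^{(k)}$ factors through a map from $\Omega^{1}_{\hokbar{k-1}/\ok}$, and extending scalars to $\okbar$ yields an $\okbar$-linear map $\iota:\hkahler{k}\to I^k/(I^{k+1}+\iplus^k)$ with $\iota\circ d^{(k)}=\partial^{(k)}$.

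Now I can finish: if $x\in\hokbar{k}=\ker d^{(k)}$, then $\partial^{(k)}(x)=0$, so $x-\tilde{x}\in I^{k+1}+\iplus^k$. Writing $x-\tilde{x}=z+w$ with $z\in I^{k+1}$ and $w\in\iplus^k\subset\A$, I obtain $x=(\tilde{x}+w)+z\in\A+I^{k+1}$, and since $x\in\kbar$ already, $x\in\kbar\cap(\A+I^{k+1})=\oku{k}$.

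The main obstacle is verifying the well-definedness of $\partial^{(k)}$, i.e.\ the identity $\A\cap I^k=\iplus^k$, which is not isolated as a standalone statement in the preliminaries but is an immediate consequence of the injectivity $A_{k-1}\hookrightarrow B_{k-1}$ in Remark~\ref{rem:ainf}(i). Everything else (the Leibniz rule, the universal property step, and the final decomposition) is formal once this point is secured. Note that the companion Lemma~\ref{collem2} is not needed here; it is reserved for the reverse inclusion $\oku{k}\subset\hokbar{k}$, which will be established separately to conclude Theorem~\ref{thm Col}(i).
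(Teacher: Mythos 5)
Your proof is correct and follows essentially the same route as the paper: construct $\partial^{(k)}$, pass to $\iota$ via the universal property of K\"ahler differentials, and read off $\ker d^{(k)}\subset\ker\partial^{(k)}=\oku{k}$. The only (cosmetic) difference is that the paper invokes Remark~\ref{rem:ainf}(iii) ($\iplus^k\cap I^{k+1}=\iplus^{k+1}$) for well-definedness where you use Remark~\ref{rem:ainf}(i) ($A_{k-1}\hookrightarrow B_{k-1}$); both amount to the same identity $\A\cap I^k=\iplus^k$.
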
  
\begin{proof}
By Remark~\ref{rem:ainf}(iii), it is easy to see that $\partial^{(k)}:\hokbar{k-1}\to I^k/(I^{k+1}+I_{+}^{k})$ is a well-defined  $\ok$-derivation. Then we have the $\okbar$-linear map $\iota:\threekahler{(k)}{\okbar}{\ok}\to I^k/(I^{k+1}+I_{+}^{k})$ induced by the universal property of K\"{a}hler differentials which makes the diagram in Theorem~\ref{thm Col}(ii) commutative. Since $\ker{\partial^{(k)}}=\oku{k}$, we have the conclusion. 
\end{proof}

Since the canonical map $\oku{k}\to A_k$ is injective by Remark~\ref{rem:ainf}(iii), we regard $\hokbar{k}$ and $\oku{k}$ as subrings of $A_k$.

\begin{lemma}\label{lemdense}
For all $k\in\mathbb{N}$, $\hokbar{k}$, hence $\oku{k}$, is dense in $A_k$ endowed with the $p$-adic topology. 
\end{lemma}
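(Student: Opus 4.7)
The argument will be by induction on $k$. The base case $k=0$ says $\hokbar{0}=\okbar$ is $p$-adically dense in $A_0=\ocp$, which is immediate from the construction of $\ocp$ as the $p$-adic completion of $\okbar$.

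For the inductive step, given $y\in A_k$ and $n\in\mathbb{N}$, I want to find $x\in\hokbar{k}$ with $y-x\in p^n A_k$. I would exploit the short exact sequence
\[
0\to\iplus^k/\iplus^{k+1}\to A_k\xrightarrow{\pi} A_{k-1}\to 0
\]
combined with the inductive hypothesis in two stages. First I would show that the image of $\hokbar{k}$ in $A_{k-1}$ is $p$-adically dense. The derivation $\partial^{(k)}$ of Theorem~\ref{thm Col}(ii) has kernel exactly $\hokbar{k}$ (since $\partial^{(k)}(x)=0$ iff $x\in\A+I^{k+1}$) and takes values in $I^k/(I^{k+1}+\iplus^k)\cong(\cp/\ocp)\otimes_{\ocp}(\iplus^k/\iplus^{k+1})$, which is $p$-divisible by Remark~\ref{rem:ainf}(iv). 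Granting that the image of $\partial^{(k)}$ is itself $p$-divisible (which I would verify by producing $p$-power preimages inside $\hokbar{k-1}$ using Lemma~\ref{collem2}), the quotient $\hokbar{k-1}/\hokbar{k}$ is $p$-divisible, so every element of $\hokbar{k-1}$ is congruent modulo $p^n\hokbar{k-1}$ to one of $\hokbar{k}$, and density propagates from level $k-1$. Choosing $x_1\in\hokbar{k}$ with $\pi(y-x_1)\in p^n A_{k-1}$, we obtain $y-x_1\in p^n A_k+\iplus^k/\iplus^{k+1}$.

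The problem thus reduces to approximating elements of $\iplus^k/\iplus^{k+1}\subset A_k$ by $\hokbar{k}$ modulo $p^n A_k$. Using the free $\ocp$-basis $\{v^\alpha:|\alpha|=k\}$ of Corollary~\ref{gra} together with density of $\okbar$ in $\ocp$, this further reduces to finding, for each $a\in\okbar$ and each $\alpha$ with $|\alpha|=k$, an element of $\hokbar{k}$ congruent to $av^\alpha$ modulo $p^n A_k$. This last step is the main obstacle. I would invoke Lemma~\ref{collem2}, which supplies $z^{p^N}\in\hokbar{k}$ for any $z\in\okbar$ and any $N\ge r_k(z)$. The plan is to take $z\in\okbar$ as a suitable (slightly perturbed) $p^N$-th root of a product built from $a$ and the $s_i^{\alpha_i}$, and then expand $z^{p^N}$ using $s_i=[\widetilde{s_i}]+v_i$: the binomial expansion will produce the targeted $av^\alpha$-contribution modulo $p^n A_k$, alongside other terms that are polynomial in $\hokbar{k}$-elements and can be absorbed into $\hokbar{k}$ via its subring structure and the inclusion $\ok\subset\hokbar{k}$. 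The delicate point is controlling the Teichm\"uller cross-terms appearing at intermediate orders in $\iplus^j/\iplus^{j+1}$ for $j<k$; I expect to handle them by iterating the density already established at lower levels, together with the density of $\hokbar{k}$ in $\okbar$ which follows from the $p$-divisibility of $\okbar/\hokbar{k}$ obtained from the chain of $p$-divisible successive quotients $\hokbar{j-1}/\hokbar{j}$.
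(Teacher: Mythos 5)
Your proposal takes a genuinely different route from the paper, but it contains two gaps that I believe are fatal as written.

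\textbf{Gap 1: the $p$-divisibility of $\mathrm{Im}\,\partial^{(k)}$ is unjustified and dangerously close to circular.}
You identify $\ker\partial^{(k)}$ with $\hokbar{k}$, but what one actually gets from the definition of $\partial^{(k)}$ (under the inductive hypothesis $\hokbar{k-1}=\oku{k-1}$) is $\ker\partial^{(k)}=\hokbar{k-1}\cap(\A+I^{k+1})=\oku{k}$; the equality $\oku{k}=\hokbar{k}$ is Corollary~\ref{corequ}, which the paper deduces \emph{from} the density lemma. More seriously, the $p$-divisibility of $\mathrm{Im}\,\partial^{(k)}$ that drives your Stage~1 is equivalent (after the fact) to the surjectivity of $\partial^{(k)}$ onto $I^k/(I^{k+1}+\iplus^k)$, and that is exactly Lemma~\ref{lem parsurj}, which in the paper is proved \emph{after} density via the chain Lemma~\ref{lemdense}$\Rightarrow$Lemma~\ref{lem:thick}$\Rightarrow$Corollary~\ref{cor:thick}$\Rightarrow$Corollary~\ref{corequ}$\Rightarrow$Lemma~\ref{lem parsurj}. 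Your plan to extract it instead ``by producing $p$-power preimages inside $\hokbar{k-1}$ using Lemma~\ref{collem2}'' is not substantiated: Lemma~\ref{collem2} only guarantees that specific elements of the form $p^{r_k-r_k(a)}x^a$ lie in $\hokbar{k}$ (and note the exponent threshold is $N\ge r_k$, depending on $v_p(P'(x))$, not $N\ge r_k(z)$ as you wrote); it does not produce, for an arbitrary $y\in\hokbar{k-1}$, an $y'\in\hokbar{k-1}$ with $y-py'\in\hokbar{k}$.

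\textbf{Gap 2: Stage 2 is sketched but not a proof.}
To hit $a\,v^\alpha$ modulo $p^nA_k$ you expand $z^{p^N}$ with $s_i=[\widetilde{s_i}]+v_i$, but the binomial expansion produces the Teichm\"uller leading term $a\,[\widetilde{\prod s_i^{\alpha_i}}]$ and cross-terms $a\,c_\beta v^\beta$ with $|\beta|<k$. Approximating the Teichm\"uller part modulo $p^n$ by something in $\hokbar{k}$ is precisely the hard content of the lemma, and your proposal has pushed it here without an argument; the cross-terms are not visibly ``absorbable via the subring structure.'' The claim that $\okbar/\hokbar{k}$ is $p$-divisible (used to control the Teichm\"uller part) again depends on the Gap~1 divisibility statement.

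For comparison, the paper's proof of Lemma~\ref{lemdense} avoids both issues. By Proposition~\ref{prop ainf}(ii), $A_k$ is topologically generated as an $\ok$-algebra by the Teichm\"uller lifts $[x]$, $x\in\eplus$, so (the closure of $\hokbar{k}$ being a subring containing $\ok$) it suffices to place each $[x]$ in the closure. This is done head-on: with $P$ the minimal polynomial of $x^{(0)}$ and $S_m(X)=X^{p^m}+\pi_KX$, one picks roots $x_{n,m}$ of $S_m(X)=x^{(n)}$, computes $v_p(P'_{n,m}(x_{n,m}))$ to be independent of $m$, invokes Lemma~\ref{collem2} to conclude $(x_{n,m})^{p^m}\in\hokbar{k}$ for $m$ large, and then shows $(x_{n,m})^{p^{n+m}}\equiv[x]\bmod\pi_K^{n-(k-1)}\A+I^{k+1}$. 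No induction on $k$, no use of $\mathrm{Im}\,\partial^{(k)}$, no decomposition along $\iplus^k/\iplus^{k+1}$ is needed. If you want to salvage your decomposition approach, you would need an independent proof that $\hokbar{k-1}/\hokbar{k}$ is $p$-divisible and a genuine argument for approximating elements of $\iplus^k/\iplus^{k+1}$ by $\hokbar{k}$; as things stand, both are assumed rather than proved.
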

\begin{proof}
By Proposition~\ref{prop ainf}(ii), we have only to prove that the topological closure of $\hokbar{k}$ in $A_k$ contains the set $\{[x]|x\in\eplus\}$. Write $x=(x^{(n)})\in\eplus$ and let $P(X)\in\ok[X]$ be the  minimal polynomial of $x^{(0)}$ over $K$. For $m\in\mathbb{N}_{>0}$, let $S_m(X)=X^{p^m}+\pi_K X$ and let $x_{n,m}\in\okbar$ be an element satisfying $S_{m}(x_{n,m})=x^{(n)}$. The minimal polynomial of $x_{n,m}$ over $K$ divides $P_{n,m}(X)=P(S_{m}(X)^{p^n})$. We have $P'_{n,m}=p^nS_{m}^{'}S_{m}^{p^n-1}P'((S_m)^{p^n})$ and so $v_p(P'_{n,m}(x_{n,m}))=n+1/e_K+(1-1/p^n)v_p(x^{(0)})+v_p(P'(x^{(0)}))$ which is independent of $m$: We put this $u_n$. By Lemma~\ref{collem2}, $y_{n,m}=(x_{n,m})^{p^m}\in\hokbar{k}$ for $m\ge (3^k-1)(u_n+1)/2$. Since $\theta(y_{n,m}-[x^{p^{-n}}])=-\pi_Kx_{n,m}$, we have
\[
y_{n,m}\equiv [x^{p^{-n}}]\mod{\pi_K\A+\iplus+I^{k+1}}.
\]
Note that for a commutative ring $A$ and an ideal $\mathfrak{a}$, if $\alpha,\beta\in A$ with $\alpha\equiv \beta\mod{\mathfrak{a}}$, then we have ${\alpha}^{p^n}\equiv {\beta}^{p^n}\mod{\mathfrak{a}(p,\mathfrak{a})^n}$. Hence we have $(y_{n,m})^{p^{n}}=(x_{n,m})^{p^{n+m}}\equiv [x]\mod{\pi_K^{n-(k-1)}\A+I^{k+1}}$. Hence we can conclude that $[x]$ is in the closure of $\hokbar{k}$.
\end{proof}

\newcommand{\qok}[1]{\hokbar{#1}/p^n\hokbar{#1}}
\newcommand{\qa}[1]{A_{#1}/p^nA_{#1}}

\begin{lemma}\label{lem:thick}
For $n\in\mathbb{N}$, $\qok{k}$ and $\oku{k}/p^n\oku{k}$ are infinitesimal thickenings of $\ocp/p^n\ocp$ over $\ok/p^n\ok$ of order $k$.
\end{lemma}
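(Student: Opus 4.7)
The plan is to check the two defining properties of an infinitesimal thickening of order $k$ over $\ok/p^n\ok$: the surjection onto $\ocp/p^n\ocp$ exists, and its kernel is nilpotent with $(k{+}1)$-th power zero. In both cases the map $\bar\theta$ will be induced by restricting $\theta:A_k\to\ocp$ to the subrings $\hokbar{k}\subset\oku{k}\subset A_k$; the embedding $\oku{k}\hookrightarrow A_k$ is obtained by factoring $\kbar\hookrightarrow\bdr\twoheadrightarrow B_k$ through $A_k=(\A+I^{k+1})/I^{k+1}$, with injectivity coming from $\kbar\cap I=0$ (since $\theta$ is injective on $\kbar$). Surjectivity of both $\bar\theta$'s is then immediate from Lemma~\ref{lemdense}: density of $\hokbar{k}$ in $A_k$ gives $\hokbar{k}+p^nA_k=A_k$, hence $\theta(\hokbar{k})+p^n\ocp=\ocp$.

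For $\oku{k}/p^n\oku{k}$, the kernel $J$ is the image of $\oku{k}\cap(\iplus+p^nA_k)$. Expanding $(\iplus+p^nA_k)^{k+1}$ and using $\iplus^{k+1}=0$ in $A_k$ shows $J^{k+1}\subset(\oku{k}\cap p^nA_k)/p^n\oku{k}$. The equality $\oku{k}\cap p^nA_k=p^n\oku{k}$ is easy: since $A_k$ is $p$-torsion free (Remark~\ref{rem:ainf}(iii)) and $\oku{k}$ identifies with $\kbar\cap A_k$ inside $B_k$, if $x\in\oku{k}$ equals $p^ny$ in $A_k$ then $y=x/p^n\in\kbar\cap A_k=\oku{k}$. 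Hence $J^{k+1}=0$.

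The case $\hokbar{k}/p^n\hokbar{k}$ is the main obstacle, because the naive analogue $\hokbar{k}\cap p^nA_k=p^n\hokbar{k}$ is not manifest: dividing an element of $\hokbar{k}$ by $p^n$ produces a $y$ with $p^nd^{(j)}(y)=0$, and we have no a priori control on $p^n$-torsion in $\hkahler{j}$. I would therefore argue directly. Kernel elements lift to $x=p^ny$ with $y\in\okbar$ and $p^ny\in\hokbar{k}$, so a product of $k{+}1$ such generators equals $p^n\cdot w_0$ with $w_0=p^{nk}y_1\cdots y_{k+1}\in\okbar$, and it remains to show $w_0\in\hokbar{k}$. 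This I would prove by induction on $k$ via the following claim: for $a_1,\dotsc,a_{k+1}\in\hokbar{k}$ with $v_p(a_i)\ge n$, one has $a_1\cdots a_{k+1}/p^n\in\hokbar{k}$. The inductive hypothesis applied to the $k$ elements $a_2,\dotsc,a_{k+1}\in\hokbar{k-1}$ yields $u:=a_2\cdots a_{k+1}/p^n\in\hokbar{k-1}$, and writing $w_0=a_1u\in\hokbar{k-1}$ and using $a_1=p^ny_1$ together with $d^{(k)}(a_1)=0$, Leibniz gives
\[
d^{(k)}(w_0)=a_1\,d^{(k)}(u)=y_1\bigl(p^nd^{(k)}(u)\bigr)=y_1\,d^{(k)}(a_2\cdots a_{k+1})=0,
\]
the last equality because $a_2\cdots a_{k+1}\in\hokbar{k}$. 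The delicate step is exactly this pairing of one factor $p^n$ with a derivation, reducing to the vanishing of $d^{(k)}$ on an element already known to lie in $\hokbar{k}$; once this is in hand, the membership $w_0\in\hokbar{k}$ gives $(J')^{k+1}=0$ and the lemma is complete.
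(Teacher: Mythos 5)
Your proof is correct, and both halves differ somewhat from the paper's treatment, which argues for $\qok{k}$ first and then says the same computation works for $\oku{k}/p^n\oku{k}$ upon replacing $d^{(k)}$ by $\partial^{(k)}$. For $\oku{k}/p^n\oku{k}$ you dispense with the derivation entirely, working inside $A_k$ and expanding $(\iplus A_k+p^nA_k)^{k+1}$: since $\iplus^{k+1}=0$ in $A_k$, every surviving term carries a factor $p^n$, and the remaining identity $\oku{k}\cap p^nA_k=p^n\oku{k}$ is the same clean intersection fact the paper records at the start of the proof of Corollary~\ref{cor:thick}. For $\qok{k}$, your key Leibniz computation --- pairing the $p^n$ coming from one kernel element with $d^{(k)}$ applied to a level-$(k-1)$ element, so as to produce $d^{(k)}$ of something already known to lie in $\hokbar{k}$ --- is exactly the paper's. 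The difference is only in the scaffolding around it: the paper's proof is embedded in the global induction of Theorem~\ref{thm Col}, factors $\theta$ through $\theta_1:\qok{k}\to\qok{k-1}\cong\qa{k-1}$, appeals to the already-established fact that $\qa{k-1}$ is a thickening of order $k-1$ to conclude $(\ker\theta)^{k}\subset\ker\theta_1$, and then uses the Leibniz trick once to kill $(\ker\theta)\cdot(\ker\theta_1)$. You instead isolate the divisibility statement $a_1\cdots a_{k+1}/p^n\in\hokbar{k}$ for $a_i\in\hokbar{k}\cap p^n\okbar$ and prove it by its own short induction on $k$, which makes the lemma logically self-contained rather than a cog in the Theorem~\ref{thm Col} machine; the paper's version is a touch more economical in context, yours is more portable.
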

\begin{proof}
First, consider $\qok{k}$. By Lemma~\ref{lemdense}, the canonical map $\qok{k}\to\qa{k}$ is surjective, so the composition map $\theta:\qok{k}\to\qa{k}\to\ocp/p^n\ocp$ is surjective. We have only to prove $(\ker{\theta})^{k+1}=0$. Let $\theta_1:\qok{k}\to\qok{k-1}\cong\qa{k-1}$ be the canonical projection and let $\theta_2:\qa{k-1}\to\ocp/p^n\ocp$ be the map induced by $\theta :A_{k-1}\to \ocp$, we have $\theta=\theta_2\circ\theta_1$. Since $\qa{k-1}$ is an infinitesimal thickening of $\ocp/p^n\ocp$ over $\ok/p^n\ok$ of order $(k-1)$, we have $(\ker{\theta})^{k}\subset \ker{\theta_1}$. Thus, we have only to prove that if $\bar{x}\in\ker{\theta},\bar{y}\in\ker{\theta_1}$, then $\bar{x}\bar{y}=0$. By definition, we have $x\in\hokbar{k}\cap p^n\okbar$ and $y\in\hokbar{k}\cap p^n\hokbar{k-1}$. Since we have $p^n\hd{k}(p^{-n}y)=0$ and $x\in p^n\okbar$, we have $\hd{k}(xp^{-n}y)=x\hd{k}(p^{-n}y)=0$, which implies $xy\in p^n\hokbar{k}$, i.e., $\bar{x}\bar{y}=0$.

For $\oku{k}/p^n\oku{k}$, the same proof as above works if we replace $d^{(k)}$ by $\partial^{(k)}$.
\end{proof}

By the universality of infinitesimal thickening (\cite[$\S 1.1$]{Fon1}), we have

\begin{corollary}\label{cor:thick}
$\qa{k}$, $\qok{k}$, $\oku{k}/p^n\oku{k}$ are isomorphic to each other.
\end{corollary}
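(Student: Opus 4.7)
The plan is to invoke the universal property of infinitesimal thickenings from \cite[\S 1.1]{Fon1} by showing that all three rings are initial in a common category, hence pairwise canonically isomorphic. Let $\mathcal{C}_{k,n}$ denote the category whose objects are pairs $(D,\theta_D)$ with $D$ an $\ok/p^n\ok$-algebra and $\theta_D\colon D\twoheadrightarrow \ocp/p^n\ocp$ a surjective $\ok/p^n\ok$-algebra morphism satisfying $(\ker\theta_D)^{k+1}=0$. By Lemma~\ref{lem:thick}, both $\qok{k}$ and $\oku{k}/p^n\oku{k}$ are objects of $\mathcal{C}_{k,n}$, and $\qa{k}$ is too, since $(\ker\theta)^{k+1}=0$ in $A_k$.

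First I would show that $\qa{k}$ is initial in $\mathcal{C}_{k,n}$. Given $(D,\theta_D)\in\mathcal{C}_{k,n}$, the ring $D$ is automatically $(p,\ker\theta_D)$-adically Hausdorff complete (both $p$ and $\ker\theta_D$ are nilpotent in $D$), so the universal property of $A_k$ recalled in \S\ref{pre} produces a unique $\ok$-algebra morphism $A_k\to D$ compatible with $\theta$; because $D$ is $p^n$-torsion, this factors uniquely through $\qa{k}$.

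Next I would exploit the natural inclusions $\hokbar{k}\hookrightarrow\oku{k}\hookrightarrow A_k$ from Lemma~\ref{leminc}, which descend to morphisms in $\mathcal{C}_{k,n}$
\[
\qok{k}\longrightarrow\oku{k}/p^n\oku{k}\longrightarrow\qa{k},
\]
surjective by the $p$-adic density of Lemma~\ref{lemdense}. Composing either of the canonical maps out of $\qa{k}$ (from the first step) with these surjections yields an endomorphism of $\qa{k}$ in $\mathcal{C}_{k,n}$, which must be the identity by initiality. Thus both surjections are split, with sections produced by the initial maps from $\qa{k}$.

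The main obstacle will be promoting these split surjections to genuine isomorphisms, equivalently showing that $\qok{k}$ and $\oku{k}/p^n\oku{k}$ are themselves initial in $\mathcal{C}_{k,n}$. My plan is the following rigidity argument: for any morphism $f\colon\qok{k}\to D$ in $\mathcal{C}_{k,n}$, precomposition with the section $\qa{k}\to\qok{k}$ gives a morphism $\qa{k}\to D$ which must coincide with the unique map furnished by initiality of $\qa{k}$; using the $p$-adic density of $\hokbar{k}$ in $A_k$ (Lemma~\ref{lemdense}) together with nilpotence of $\ker\theta_{\qok{k}}$ to control the difference of any two such $f$'s, this will pin down $f$ uniquely. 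The same argument applies to $\oku{k}/p^n\oku{k}$, and then the uniqueness of the initial object in $\mathcal{C}_{k,n}$ gives the required canonical isomorphisms.
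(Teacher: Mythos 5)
Your categorical framing is a reasonable start and matches the first half of the paper's argument: Lemma~\ref{lem:thick} plus the universal property of $A_k$ does give maps $\alpha_n\colon\qa{k}\to\qok{k}$ and $\alpha_n'\colon\qa{k}\to\oku{k}/p^n\oku{k}$, and composing with the canonical maps back to $\qa{k}$ gives the identity, so both $\alpha_n,\alpha_n'$ are split injections. The paper also proves directly (and easily) that $\oku{k}/p^n\oku{k}\to\qa{k}$ is bijective, using the equality $\oku{k}\cap p^n(\A+I^{k+1})=p^n\oku{k}$; you should not expect the analogous computation for $\hokbar{k}$ at this stage, since $\hokbar{k}=\oku{k}$ (Corollary~\ref{corequ}) is only deduced \emph{after} the present corollary.

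The genuine gap is your ``rigidity argument.'' You want to conclude that $\qok{k}$ is initial in $\mathcal{C}_{k,n}$, i.e.\ that any morphism $f\colon\qok{k}\to D$ is determined by $f\circ\alpha_n$, using ``density of $\hokbar{k}$ in $A_k$ and nilpotence of $\ker\theta$.'' But this combination has no obvious teeth at the level of the $p^n$-torsion quotients: $\text{Im}(\alpha_n)$ together with the nilpotent ideal $\ker\theta_{\qok{k}}$ additively spans $\qok{k}$, yet nothing constrains $f$ on $\ker\theta_{\qok{k}}$ from its values on $\text{Im}(\alpha_n)$ alone; the $p$-adic density of $\hokbar{k}$ in $A_k$ is a statement about $A_k$, not about the discrete module $\qok{k}$; and the natural ``uniqueness of a root of the minimal polynomial with prescribed image in $\ocp/p^n$'' fails because $P'(x)$ need not be a unit modulo $p^n$. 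In fact, what you are trying to prove (uniqueness of maps out of $\qok{k}$) is essentially equivalent to the surjectivity of $\alpha_n$, so the argument risks circularity. The missing idea in the paper's proof is to pass to the $p$-inverted completed rings $A_k[p^{-1}]$ and $\widehat{\mathcal{O}_{\kbar}^{(k)}}[p^{-1}]$: there both are local $K$-algebras with residue field $\cp$ and nilpotent maximal ideal, so the \emph{ind-\'etaleness of} $\kbar/K$ lifts the $\kbar$-algebra structure of $\cp$ uniquely to each, forcing $\alpha\vert_{\kbar}$ and $\beta\vert_{\kbar}$ to be mutually inverse; density of $\kbar$ in both rings (the second following from surjectivity of $\beta$, which uses Lemma~\ref{lemdense}) then pins down $\alpha$ and $\beta$ globally. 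You need some such Hensel/\'etale lifting input over $K$, not just nilpotence over $\ok/p^n\ok$, to close the gap.
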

\begin{proof}
First, we prove the canonical map $\oku{k}/p^n\oku{k}\to\qa{k}$ is an isomorphism. The surjectivity follows by Lemma~\ref{lemdense} and the injectivity is from
\[
\mathcal{O}^{(k)}\cap (p^n\A+I^{k+1})=\kbar\cap p^n(\A+I^{k+1})=p^n(\kbar\cap (\A+I^{k+1}))=p^n\mathcal{O}^{(k)}.
\]
Let $\beta_n:\qok{k}\to\qa{k}$ be the canonical map. By Lemma~\ref{lem:thick} and the universality of infinitesimal thickenings, we have a canonical $\ok /p^n\ok$-homomorphism $\alpha_n:\qa{k}\to\qok{k}$ with $\beta_n\circ\alpha_n =\mathrm{id}$. Let $\widehat{\mathcal{O}_{\kbar}^{(k)}}=\varprojlim{{\mathcal{O}}_{\kbar}^{(k)} /p^n\mathcal{O}_{\kbar}^{(k)}}$. Since $\alpha_n$, $\beta_n$ are compatible with the inverse systems, there exists $K$-algebra homomorphisms $\alpha:A_k[p^{-1}]\to\widehat{{\mathcal{O}}_{\kbar}^{(k)}}[p^{-1}]$, $\beta:\widehat{{\mathcal{O}}_{\kbar}^{(k)}}[p^{-1}]\to A_k[p^{-1}]$ with $\beta\circ\alpha=\mathrm{id}$. To prove that $\alpha_n$, $\beta_n$ are inverse to each other, we have only to prove $\alpha$, $\beta$ are inverse to each other. Since the kernel of the canonical projections $\theta:A_k/p^nA_k\to\ocp/p^n\ocp$, $\theta:{\mathcal{O}}_{\kbar}^{(k)}/p^n{\mathcal{O}}_{\kbar}^{(k)}\to\ocp/p^n\ocp$ are nilpotent of exponent $k+1$, $A_k[p^{-1}]$ and ${\widehat{\mathcal{O}}}_{\kbar}^{(k)}[p^{-1}]$ are local rings with the same residue field $\cp$, whose maximal ideals are nilpotent of exponent $k+1$. By the ind-\'{e}taleness of $\kbar/K$, the $\kbar$-algebra structure of $\cp$ uniquely lifts to $\kbar$-algebra structures of $\widehat{{\mathcal{O}}_{\kbar}^{(k)}}[p^{-1}]$ and $A_k[p^{-1}]$ and $\alpha |_{\kbar}$, $\beta |_{\kbar}$ are inverse to each other. On the other hand, $\kbar={\mathcal{O}}_{\kbar}^{(k)}[p^{-1}]$ is a dense subring of $\widehat{{\mathcal{O}}_{\kbar}^{(k)}}[p^{-1}]$ and $\kbar$ is also dense in $A_{k}[p^{-1}]$ by the surjectivity of $\beta$. This denseness of $\kbar$ implies that $\alpha$, $\beta$ are inverse to each other.
\end{proof}

\begin{corollary}\label{corequ}
$\hokbar{k}=\oku{k}$.
\end{corollary}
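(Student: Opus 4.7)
My plan is to establish the non-trivial inclusion $\oku{k}\subset\hokbar{k}$ (the reverse inclusion being Lemma~\ref{leminc}) by combining two complementary properties of $\hokbar{k}$: (a) after inverting $p$ it already recovers all of $\kbar$, and (b) it is $p$-adically saturated inside $A_k$ by virtue of the isomorphism supplied by Corollary~\ref{cor:thick}. Both $\hokbar{k}$ and $\oku{k}$ embed into $A_k$ via the canonical map $\kbar\hookrightarrow B_k$: this map is injective because $\kbar\hookrightarrow B_0=\cp$ is (Theorem~\ref{ax}), and for $x=a+y\in\oku{k}=\kbar\cap(\A+I^{k+1})$ with $a\in\A$, $y\in I^{k+1}$, the image in $B_k=\bdr/I^{k+1}$ equals the image of $a$, which lies in $A_k$ (Remark~\ref{rem:ainf}(i)).

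For (a), I would apply Lemma~\ref{collem2} with $a=1$: it yields $z_{k,1}=p^{r_k}x\in\hokbar{k}$ for every $x\in\okbar$, hence $\okbar\subset p^{-r_k}\hokbar{k}$, and inverting $p$ gives $\hokbar{k}[p^{-1}]=\kbar$.

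For (b), Corollary~\ref{cor:thick} furnishes a canonical isomorphism $\qok{k}\xrightarrow{\sim}\qa{k}$ induced by the embedding above; its injectivity is exactly the equality $\hokbar{k}\cap p^nA_k=p^n\hokbar{k}$ inside $A_k$.

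The conclusion is then a quick chase: given $x\in\oku{k}$, view $x$ as an element of $A_k$; by (a) there exists $n$ with $p^nx\in\hokbar{k}$, and $p^nx\in p^nA_k$ trivially, so by (b) we obtain $p^nx=p^nz$ with $z\in\hokbar{k}$. Since $A_k\hookrightarrow B_k=A_k[p^{-1}]$ by Remark~\ref{rem:ainf}(i), $A_k$ is $p$-torsion-free, hence $x=z\in\hokbar{k}$. The only real obstacle is keeping the canonical embeddings $\hokbar{k},\oku{k}\subset\kbar\hookrightarrow B_k$ straight and checking that the images actually land in $A_k$, so that both the $p$-saturation from Corollary~\ref{cor:thick} and the $p$-torsion-freeness of $A_k$ can be invoked; once this is set up, neither (a) nor (b) alone suffices, but their combination is decisive.
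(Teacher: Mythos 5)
Your proof is correct and relies on the same ingredients as the paper's (Lemma~\ref{leminc}, Lemma~\ref{collem2}, and the canonical isomorphism from Corollary~\ref{cor:thick}); the paper packages the chase slightly differently, observing that $\hokbar{k}/p\hokbar{k}\cong\oku{k}/p\oku{k}$ forces multiplication by $p$ to be an isomorphism on the $p$-power-torsion quotient $\oku{k}/\hokbar{k}$, which is the same argument as your saturation-plus-torsion-freeness chase. One small imprecision in (a): the exponent $r_k$ in Lemma~\ref{collem2} depends on $x$ (through the minimal polynomial of $x$ over $K$), so the uniform inclusion $\okbar\subset p^{-r_k}\hokbar{k}$ is not literally correct, though the intended conclusion $\hokbar{k}[p^{-1}]=\kbar$ (for each $x$ there exists $n$ with $p^n x\in\hokbar{k}$) is exactly what Lemma~\ref{collem2} gives and is all you use.
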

\begin{proof}
We have $\hokbar{k}\subset\oku{k}$ and $\hokbar{k}/p\hokbar{k}\cong\oku{k}/p\oku{k}$. This implies that multiplication by $p$ on $\oku{k}/\hokbar{k}$, whose elements are killed by some powers of $p$, is an isomorphism. So we have the conclusion.
\end{proof}

By the above corollaries, we obtain (i).

Next, let us prove (ii). Well-definedness of $\partial^{(k)}$ is proved in the proof of Lemma~\ref{leminc}. Since we have Corollary~\ref{corequ}, we have only to prove 
\begin{lemma}\label{lem parsurj}
$\partial^{(k)}$ and $d^{(k)}$ are surjective.
\end{lemma}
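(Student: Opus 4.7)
The plan is to prove $\partial^{(k)}$ surjective by an explicit lifting construction via Corollary~\ref{cor:thick}, and then to derive surjectivity of $d^{(k)}$ by combining this with the $p^\infty$-torsion structure of $\hkahler{k}$.

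First I would prove $\partial^{(k)}$ surjective. Given $y \in I^k$, the description $J_k = I^k/I^{k+1} \cong \cp \otimes_{\ocp} \iplus^k/\iplus^{k+1}$ from Remark~\ref{rem:ainf}(iv) lets me clear denominators: pick $n \in \mathbb{N}$ and $z \in \iplus^k$ with $p^n y \equiv z \pmod{I^{k+1}}$. Corollary~\ref{cor:thick} then provides $w \in \hokbar{k}$ with $w \equiv z \pmod{p^n A_k}$, say $w = z + p^n a + j$ for some $a \in \A$ and $j \in \iplus^{k+1}$. The element $\xi := p^{-n} w \in \kbar$ satisfies $\xi = y + a + p^{-n} j$, and since $p^{-n} j \in p^{-n}\iplus^{k+1} \subset I^{k+1}$, we have $\xi \in \kbar \cap (\A + I^k) = \hokbar{k-1}$. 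Taking the lift $\tilde{\xi} := a$ yields $\partial^{(k)}(\xi) = y + p^{-n} j \equiv y \pmod{I^{k+1} + \iplus^k}$.

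For $d^{(k)}$ I would first observe that every element of $\image d^{(k)}$ is killed by some power of $p$: writing $x = a + i$ with $a \in \A$ and $i \in I^k$, clearing denominators of the image of $i$ in $J_k$ produces $n$ with $p^n i \in \iplus^k + I^{k+1}$, so $p^n x \in \A + I^{k+1}$ lies in $\kbar \cap (\A + I^{k+1}) = \hokbar{k} = \ker d^{(k)}$, giving $p^n d^{(k)}(x) = 0$. Since $\ker d^{(k)} = \ker \partial^{(k)}$, the restriction of $\iota$ to $\image d^{(k)}$ is injective, and by the first part it is also surjective onto $I^k/(I^{k+1} + \iplus^k)$, hence an isomorphism. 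Combining the $p^\infty$-torsion with the surjection $\hokbar{k-1}/p^n \cong A_{k-1}/p^n \twoheadrightarrow A_0/p^n = \okbar/p^n$ (Corollary~\ref{cor:thick}), I can approximate any $\okbar$-scalar $\lambda$ by $\mu \in \hokbar{k-1}$ modulo an annihilator of a given generator $\omega = d^{(k)}(x)$, so that $\lambda \omega = \mu \omega$. Together with the Leibniz rule $d^{(k)}(\mu x) = \mu d^{(k)}(x) + x d^{(k)}(\mu)$ and the isomorphism $\iota|_{\image d^{(k)}}$, this identifies $\lambda \omega$ with a unique element of $\image d^{(k)}$, showing $\image d^{(k)}$ is $\okbar$-stable and hence equal to $\hkahler{k}$.

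The main obstacle is the final step: the Leibniz identity alone only places the symmetric sum $\mu d^{(k)}(x) + x d^{(k)}(\mu) = d^{(k)}(\mu x)$ in $\image d^{(k)}$, not the individual summands, so $\hokbar{k-1}$-stability of $\image d^{(k)}$ is not immediate from Leibniz. What breaks the circularity is the combination of (i) the $p^\infty$-torsion of $\hkahler{k}$, which comes from the characterizations $\hokbar{k-1} = \kbar \cap (\A + I^k)$ and $\hokbar{k} = \kbar \cap (\A + I^{k+1})$ established in part (i) of Theorem~\ref{thm Col}, and (ii) the density-type surjection $\hokbar{k-1} \twoheadrightarrow \okbar/p^n$ from Corollary~\ref{cor:thick}, which together reduce the $\okbar$-scalar action to the $\hokbar{k-1}$-action modulo a $p^n$ killing the generator; the isomorphism $\iota|_{\image d^{(k)}}$ then pins down the result uniquely.
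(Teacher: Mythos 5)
Your proof of the surjectivity of $\partial^{(k)}$ is correct and takes a genuinely different route from the paper. The paper proceeds by applying $[p^n]$ to the short exact sequence $0 \to \hokbar{k}\to\hokbar{k-1}\to\mathrm{Im}\,\partial^{(k)}\to 0$, using Corollary~\ref{cor:thick} and Remark~\ref{rem:ainf}(iv) to identify $(\mathrm{Im}\,\partial^{(k)})[p^n]$ with $\iplus^k/(p^n\iplus^k+\iplus^{k+1})$ and hence with all of $I^k/(\iplus^k+I^{k+1})[p^n]$, then lets $n\to\infty$. You instead carry out a direct element-wise lift: clear denominators in $J_k\cong\cp\otimes\iplus^k/\iplus^{k+1}$, approximate by an element of $\hokbar{k}$ using Corollary~\ref{cor:thick}, and divide back by $p^n$. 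Both are valid; yours is more explicit, the paper's is more structural. (A small slip: your displayed formula $\xi = y+a+p^{-n}j$ omits the $-p^{-n}i$ term coming from $p^ny-z\in I^{k+1}$, but since this lies in $I^{k+1}$ it does not affect the conclusion modulo $I^{k+1}+\iplus^k$.)

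Your treatment of $d^{(k)}$ has a genuine gap, and you yourself have put your finger on it. You correctly establish that every element of $\mathrm{Im}\,d^{(k)}$ is $p$-power torsion, that $\ker d^{(k)}=\ker\partial^{(k)}$, and hence that $\iota|_{\mathrm{Im}\,d^{(k)}}$ is a bijection onto $I^k/(I^{k+1}+\iplus^k)$. The issue is the passage to $\okbar$-stability. After using density and torsion to replace $\lambda\in\okbar$ by $\mu\in\hokbar{k-1}$ with $\lambda\omega=\mu\omega$, the Leibniz identity gives only $\mu\,d^{(k)}(x)=d^{(k)}(\mu x)-x\,d^{(k)}(\mu)$, and the second summand is again of the same form — as you note. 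Your claimed resolution, that ``the isomorphism $\iota|_{\mathrm{Im}\,d^{(k)}}$ then pins down the result uniquely,'' is precisely where the circularity remains: to invoke the bijection to identify $\mu\omega$ with a specific element of $\mathrm{Im}\,d^{(k)}$, you must first know $\mu\omega\in\mathrm{Im}\,d^{(k)}$, and that is exactly the statement to be proved. Knowing $\iota(\mu\omega)=\iota(d^{(k)}(y))$ for some $y$ only gives $\mu\omega-d^{(k)}(y)\in\ker\iota$, and without injectivity of $\iota$ on all of $\hkahler{k}$ (equivalently, without surjectivity of $d^{(k)}$, which is what one wants) one cannot conclude the difference vanishes. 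The paper's argument here is also terse — it simply asserts that $\mathrm{Im}\,d^{(k)}\cong I^k/(\iplus^k+I^{k+1})$ is an $\okbar$-module and generates $\hkahler{k}$ over $\okbar$ — but it does not rely on the density-plus-Leibniz maneuver, which does not close the loop. If you wish to pursue your line of attack, you must supply an independent argument that $\okbar\cdot\mathrm{Im}\,d^{(k)}\subset\mathrm{Im}\,d^{(k)}$ (or equivalently that $\ker\iota=0$) that does not already presuppose what it is trying to prove.
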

\begin{proof}
First, we prove that $\partial^{(k)}$ is surjective. By an exact sequence
\[\xymatrix{
0\ar[r]& \oku{k}\ar[r]&\hokbar{k-1}\ar[r]&\mathrm{Im}\partial^{(k)}\ar[r]&0,\\
}\]
we can deduce an exact sequence
\[\xymatrix{
0\ar[r]&(\mathrm{Im}\partial^{(k)})[p^n]\ar[r]&\qa{k}\ar[r]&\qa{k-1}\ar[r]&0.\\
}\]
By the properties of $\A$, we have
\begin{align*}
(\mathrm{Im}\partial^{(k)})[p^n]\subset I^k/(\iplus^k+I^{k+1})[p^n] & \cong (\cp/\ocp\otimes_{\ocp}\iplus^{k}/\iplus^{k+1})[p^n]\cong\iplus^{k}/(p^n\iplus^k+\iplus^{k+1})\\
 & \cong\ker{(A_k/p^nA_k\to A_{k-1}/p^nA_{k-1})}\cong (\mathrm{Im}\partial^{(k)})[p^n]
\end{align*}
where the composition of these maps is identity. Hence we have $(\mathrm{Im}\partial^{(k)})[p^n]=I^k/(\iplus^k+I^{k+1})[p^n]$ and this implies the conclusion since $I^k/(\iplus^k+I^{k+1})$ is $p$-torsion.

Since $\mathrm{Im}d^{(k)}\cong\mathrm{Im}\partial^{(k)}=I^k/(I^k_{+}+I^{k+1})$ is an $\okbar$-module and generates $\threekahler{(k)}{\okbar}{\ok}$ over $\okbar$, $d^{(k)}$ is also surjective.  
\end{proof}

(iii) is already proved. (iv) is a direct consequence of (i).

\begin{remark}\label{rem:kah}
In the following, we canonically identify $\threekahler{(k)}{\okbar}{\ok}=I^{k}/(I^{k+1}+I_{+}^{k})=\cp/\ocp\otimes (I_{+}^k/I_{+}^{k+1})$, $V_p(\threekahler{(k)}{\okbar}{\ok})= J_k$ by Remark~\ref{rem:ainf} and Theorem~\ref{thm Col}. Note that $V_p(\threekahler{(k)}{\ol}{\ok})\subset J_k^{\gl}$.
\end{remark}

\subsection{Expansion in a power series ring}\label{min}
Let $K$ be a field of characteristic 0, $d\in\mathbb{N}$ and let $\mathfrak{m} \subset K[X_0,\dotsc,X_d]$ be a maximal ideal. Put $\km=\varprojlim_{k\in \mathbb{N}} K[X_0,\dotsc,X_d]/\mathfrak{m}^{k+1}$, which is a $(d+1)$-dimensional complete regular local ring, and let we put $L=K[X_0,\dotsc,X_d]/\mathfrak{m}$, $\pi_i=\mathrm{pr}(X_i)\in L$.

Choose a regular parameter $P_0,...,P_d\in \mathfrak{m} \km$  and a subset $S\subset\mathbb{N}^{d+1}$ such that $0\in S$ and $L=\oplus_{n\in S} K\pi^n$ and put $\Lambda_S=\oplus_{n\in S} KX^n\subset K[X_0,\dotsc,X_d]$.

\begin{definition}
Regard $L$ as a sub-$K$-algebra of $\km$ by the canonical $K$-embedding $L\to~\km$. For $x\in L$, we define $\lambda_{n,x,P}(X)\in~\Lambda_S\ (n\in\mathbb{N}^{d+1})$ by the expansion $x=\sum_{n\in\mathbb{N}^{d+1}}{\lambda_{n,x,P}(X)P^n(X)}$ in $K[X]_{\mathfrak{m}}$.
\end{definition}

\subsection{The fundamental properties of higher K\"{a}hler differentials}\label{fund property}
In this subsection, we will prove the following theorem:
\begin{theorem}\label{fund kah}
Let $k\in\mathbb{N}_{>0}$.
\begin{enuroman}
\item Let $L/K$ be a finite extension of local fields.
Then $\mu(\olkahler)\le \binom{d+k}{k}$.

\item For finite extensions $L_2/L_1/K$, the canonical morphism $\intring{L_2}\otimes \threekahler{(k)}{\mathcal{O}_{L_1}}{\mathcal{O}_{K}}\to \threekahler{(k)}{\mathcal{O}_{L_2}}{\mathcal{O}_{K}}$ is injective.
\end{enuroman}
\end{theorem}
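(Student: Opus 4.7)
Both parts exploit the identification $\Omega^{(k)}_{\mathcal{O}_{\overline K}/\mathcal{O}_K}\cong I^k/(I^{k+1}+I_+^k)$ from Theorem~\ref{thm Col}(ii) together with Corollary~\ref{gra}, which gives $\mathrm{gr}_{I_+}A\cong\mathcal{O}_{\mathbb{C}_p}[X_0,\ldots,X_d]$, so that $I_+^k/I_+^{k+1}$ is generated over $\mathcal{O}_{\mathbb{C}_p}$ by the $\binom{d+k}{k}$ monomials of degree $k$ in the $d+1$ regular generators of $I_+$. For part~(i), I would first bound $\mu(\Omega^1_{\mathcal{O}_L/\mathcal{O}_K})\le d+1$ via Nakayama: the conormal sequence for $L/K$ shows that $\Omega^1_{\mathcal{O}_L/\mathcal{O}_K}/\pi_L$ is generated by $d\pi_L$ together with generators of $\Omega^1_{k_L/k_K}$, and the latter has at most $d$ generators since a finite field extension preserves the $p$-basis cardinality. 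Next, I would construct a canonical $\mathcal{O}_L$-linear surjection $\varphi_L\colon \mathrm{Sym}^k_{\mathcal{O}_L}\bigl(\Omega^1_{\mathcal{O}_L/\mathcal{O}_K}\bigr)\twoheadrightarrow \Omega^{(k)}_{\mathcal{O}_L/\mathcal{O}_K}$ by the rule $dx_1\cdots dx_k\mapsto \prod_{i=1}^k(x_i-\tilde{x}_i)\bmod(I^{k+1}+I_+^k)$, for any lifts $\tilde{x}_i\in A$ with $x_i-\tilde x_i\in I$. Well-definedness requires checking that changing a lift alters the product only modulo $I^{k+1}+I_+^k$, and surjectivity follows from the power-series expansion of $\S\ref{min}$: given $x\in\mathcal{O}_L^{(k-1)}$, one expands $x$ in a regular system of parameters $\pi_0,\ldots,\pi_d$ of $\km$ and computes $\partial^{(k)}(x)$ as an explicit $\mathcal{O}_L$-linear combination of products of $v_i=\pi_i-[\widetilde{\pi_i}]$. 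Combining the $d+1$ bound with the combinatorics of symmetric monomials yields $\mu(\Omega^{(k)}_{\mathcal{O}_L/\mathcal{O}_K})\le\binom{d+k}{k}$.

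For part~(ii), I would proceed by induction on $k$; the base case $k=1$ is Lemma~\ref{exactness kahler}, since $\Omega^{(1)}\cong\Omega^1$. For the inductive step, both $\Omega^{(k)}_{\mathcal{O}_{L_1}/\mathcal{O}_K}$ and $\Omega^{(k)}_{\mathcal{O}_{L_2}/\mathcal{O}_K}$ sit canonically inside the common ambient module $\Omega^{(k)}_{\mathcal{O}_{\overline K}/\mathcal{O}_K}\cong I^k/(I^{k+1}+I_+^k)$, and the plan is to use the freeness of $\mathcal{O}_{L_2}$ over $\mathcal{O}_{L_1}$ to reduce injectivity to a statement about $\mathcal{O}_{L_1}$-linear relations. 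Concretely, writing $\mathcal{O}_{L_2}=\bigoplus_j\mathcal{O}_{L_1}e_j$ and expressing any element of $\mathcal{O}_{L_2}\otimes_{\mathcal{O}_{L_1}}\Omega^{(k)}_{\mathcal{O}_{L_1}/\mathcal{O}_K}$ uniquely as $\sum_j e_j\otimes\omega_j$, the vanishing of its image $\sum_j e_j\omega_j$ can be tested inside $\Omega^{(k)}_{\mathcal{O}_{\overline K}/\mathcal{O}_K}$ and translated, via the explicit monomial description coming from $\S\ref{min}$, into $\mathcal{O}_{L_1}$-linear relations on each $\omega_j$ separately, forcing $\omega_j=0$.

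The main obstacle will be the surjectivity of $\varphi_L$ in part~(i): it requires a careful analysis of how the expansion of $\S\ref{min}$ interacts with the filtrations by $I$ and $I_+$ on $A$, in particular a precise identification of which monomials in the $v_i$ arise from $\partial^{(k)}$ applied to a given element of $\mathcal{O}_L^{(k-1)}$. Once this surjectivity is established, the combinatorial bound in~(i) and the $\mathcal{O}_{L_1}$-linear-independence argument in~(ii) should both follow without further essential difficulty.
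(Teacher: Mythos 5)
Your approach diverges from the paper's in a way that introduces a genuine gap in part (i), and your sketch of part (ii) is close to the paper's but omits the technical core.

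The main problem is the proposed surjection $\varphi_L\colon \mathrm{Sym}^k_{\ol}(\Omega^1_{\ol/\ok})\to\Omega^{(k)}_{\ol/\ok}$, $dx_1\cdots dx_k\mapsto\prod_i(x_i-\tilde x_i)\bmod(I^{k+1}+I_+^k)$, with lifts satisfying only $x_i-\tilde x_i\in I$. This rule is not well defined. A lift with $\theta(\tilde x_i)=x_i$ is unique in $A$ only up to $I_+$; replacing $\tilde x_1$ by $\tilde x_1+a$ with $a\in I_+$ shifts the product by $-a\prod_{i\ge 2}(x_i-\tilde x_i)$, which lies in $I_+\cdot I^{k-1}=I^k$ (since $I=I_+B_{\mathrm{dR}}^+$). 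But $I^k\not\subset I^{k+1}+I_+^k$: the quotient $I^k/(I^{k+1}+I_+^k)$ is precisely $\Omega^{(k)}_{\okbar/\ok}\neq 0$. Compare this with the well-definedness of $\partial^{(k)}$ in Theorem~\ref{thm Col}(ii): there the lift is required to satisfy $x-\tilde x\in I^k$, so the ambiguity lies in $A\cap I^k=I_+^k$ (Remark~\ref{rem:ainf}(iii)), which is exactly what is killed in the target. Your ``any lifts with $x_i-\tilde x_i\in I$'' condition is the one appropriate for $k=1$ and does not propagate to a product of $k$ factors. A secondary issue: the Leibniz relation $d(xy)=x\,dy+y\,dx$ in $\Omega^1_{\ol/\ok}$ would have to be respected, and the error term there lands in $I_+\cdot I^{k-1}$ as well, so it fails for the same reason. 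Without a correct definition of $\varphi_L$ the count $\binom{d+k}{k}$ in part~(i) has no foundation, and as you note even granting a well-defined map the surjectivity would need a separate argument.

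The paper avoids all of this by deducing (i) from (ii). The real work is in the minimal expansion machinery of $\S\ref{min}$--$\S\ref{fund property}$: after reducing to Hypothesis~\ref{hyp} (replacing $K$ by the maximal unramified subextension of $L/K$, a step you should not skip), Proposition~\ref{gen} produces a regular system of parameters $P_0(\Pi),\ldots,P_d(\Pi)$ of $I_+$, the Proposition on minimal expansions shows that for $x\in L$ the expansion $x=\sum_n\lambda_{n,x,P}(\Pi)P^n(\Pi)$ is minimal, and the Corollary identifies $d^{(k)}(x)=\sum_{|n|=k}\lambda_{n,x,P}(\pi)\otimes P^n(\Pi)$ under $\Omega^{(k)}_{\okbar/\ok}=\cp/\ocp\otimes(I_+^k/I_+^{k+1})$, with the monomials $\{P^n(\Pi)\}_{|n|=k}$ forming an $\ocp$-basis of $I_+^k/I_+^{k+1}$. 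Because each $\lambda_{n,x,P}(\pi)\in L$, this yields a concrete embedding $\Omega^{(k)}_{\ol/\ok}\hookrightarrow(L/\ol)^{\oplus\binom{d+k}{k}}\hookrightarrow(\cp/\ocp)^{\oplus\binom{d+k}{k}}=\Omega^{(k)}_{\okbar/\ok}$. Part (ii) then follows from base changing this chain of inclusions by the flat extension $\okbar/\ol$ (and faithful flatness of $\okbar$ over $\mathcal{O}_{L_2}$ to descend), and part (i) follows at once from the classification of finitely generated torsion modules over a DVR applied to a submodule of $(p^{-n}\ol/\ol)^{\oplus\binom{d+k}{k}}$. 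Your sketch of (ii) gestures at the same monomial description but never isolates the key point that the coefficients $\lambda_{n,x,P}(\pi)$ lie in $L$ and give an honest $L/\ol$-valued coordinate system for $\Omega^{(k)}_{\ol/\ok}$; the ``induction on $k$'' you propose is also unnecessary here and is not how the injectivity is actually established.
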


\newcommand{\kmp}[1]{k_K^{p^{-#1}}}
\newcommand{\kp}{k_K^p}
\newcommand{\lp}{k_L^p}
\newcommand{\lmp}[1]{k_L^{p^{-#1}}}
\newcommand{\pibar}[1]{\overline{\pi_{#1}}}
\newcommand{\tbar}[1]{\overline{t_{#1}}}
\newcommand{\fbar}[1]{\overline{f_{#1}}}
\newcommand{\kkmp}{\mathbf{k}^{p^{-1}}}

For a while, we assume that a finite extension $L/K$ satisfies the following hypothesis:

\begin{hyp}\label{hyp}
$L/K$ has no unramified subextensions over $K$.
\end{hyp}

Note that this hypothesis is equivalent to the condition that $k_K$ is separably closed in $k_L$.
\begin{lemma}
Let $\mathbf{k}$ be a field of characteristic $p$ and let $\mathbf{l}/\mathbf{k}$ be a finite purely inseparable extension. Assume $\mathbf{l}\subset\kkmp$. Then the first fundamental sequence
\[\xymatrix{
0\ar[r]& \kkmp\otimes\threekahler{1}{\mathbf{l}}{\mathbf{k}} \ar[r]&\threekahler{1}{\kkmp}{\mathbf{k}}\ar[r] & \threekahler{1}{\kkmp}{\mathbf{l}}\ar[r] & 0 \\
}\]
is exact.
\end{lemma}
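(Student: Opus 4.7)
The sequence in question is the first fundamental sequence of K\"ahler differentials attached to the tower $\mathbf{k} \subset \mathbf{l} \subset \mathbf{k}^{p^{-1}}$, so exactness at the two right-hand terms (surjectivity of the second map and the composition being zero) is automatic. The entire content of the lemma is therefore the injectivity of
\[
\mathbf{k}^{p^{-1}} \otimes_{\mathbf{l}} \Omega^1_{\mathbf{l}/\mathbf{k}} \longrightarrow \Omega^1_{\mathbf{k}^{p^{-1}}/\mathbf{k}}.
\]
My plan is to exhibit compatible $p$-bases on both sides so that the map literally embeds a free $\mathbf{k}^{p^{-1}}$-basis of the source into part of a free $\mathbf{k}^{p^{-1}}$-basis of the target.

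First, because $\mathbf{l}/\mathbf{k}$ is finite and $\mathbf{l} \subset \mathbf{k}^{p^{-1}}$ forces $\mathbf{l}^p \subset \mathbf{k}$, I can choose a $p$-basis $s_1,\dots,s_r \in \mathbf{l}$ of $\mathbf{l}/\mathbf{k}$: by definition, the monomials $s^{\mathbf{n}}$ with $\mathbf{n} \in \{0,\dots,p-1\}^r$ form a $\mathbf{k}$-basis of $\mathbf{l}$, and accordingly $\{ds_1,\dots,ds_r\}$ is a free $\mathbf{l}$-basis of $\Omega^1_{\mathbf{l}/\mathbf{k}}$. Next I apply Frobenius: using that the $p$-th power map is additive in characteristic $p$, any $\mathbf{k}^p$-linear relation among the monomials $\prod (s_j^p)^{n_j}$ ($0 \le n_j < p$) would, after extracting $p$-th roots, give a $\mathbf{k}$-linear relation among the $s^{\mathbf{n}}$. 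Hence $\{s_j^p\}_{j} \subset \mathbf{k}$ is $p$-independent over $\mathbf{k}^p$, and I extend it by Zorn's lemma to a full $p$-basis $\{s_j^p\}_j \cup \{b_i\}_{i \in I}$ of $\mathbf{k}/\mathbf{k}^p$. Now form
\[
T = \{s_1,\dots,s_r\} \cup \{b_i^{1/p}\}_{i \in I} \subset \mathbf{k}^{p^{-1}};
\]
then $T^p = \{s_j^p\} \cup \{b_i\}$ is a $p$-basis of $\mathbf{k}/\mathbf{k}^p$, and one more application of Frobenius additivity (to test linear independence and spanning) shows that the monomials $\prod_{t \in T} t^{n_t}$ with $0 \le n_t < p$ form a $\mathbf{k}$-basis of $\mathbf{k}^{p^{-1}}$. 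Thus $T$ is a $p$-basis of $\mathbf{k}^{p^{-1}}/\mathbf{k}$, and $\Omega^1_{\mathbf{k}^{p^{-1}}/\mathbf{k}}$ is freely generated over $\mathbf{k}^{p^{-1}}$ by $\{ds_j\}_j \cup \{d(b_i^{1/p})\}_{i \in I}$.

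With these bases in place, the map in question sends the free $\mathbf{k}^{p^{-1}}$-basis $\{1 \otimes ds_j\}_j$ of $\mathbf{k}^{p^{-1}} \otimes_\mathbf{l} \Omega^1_{\mathbf{l}/\mathbf{k}}$ to the subset $\{ds_j\}_j$ of a free $\mathbf{k}^{p^{-1}}$-basis of $\Omega^1_{\mathbf{k}^{p^{-1}}/\mathbf{k}}$, hence is injective (indeed, split-injective, with complement freely generated by $\{d(b_i^{1/p})\}_i$, which visibly surjects onto $\Omega^1_{\mathbf{k}^{p^{-1}}/\mathbf{l}}$). The one step requiring care is the verification that a $p$-basis of $\mathbf{l}/\mathbf{k}$ completes through the detour ``$p$-th power, extend in $\mathbf{k}/\mathbf{k}^p$, $p$-th root'' to a $p$-basis of $\mathbf{k}^{p^{-1}}/\mathbf{k}$; this works precisely because the hypothesis $\mathbf{l} \subset \mathbf{k}^{p^{-1}}$ ensures $s_j^p \in \mathbf{k}$, so that each leg of the detour is well defined, and because Frobenius additivity makes the independence/spanning statements on each level equivalent.
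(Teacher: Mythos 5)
Your argument is correct, and it shares the high-level shape of the paper's proof: reduce to injectivity, choose a $p$-basis of $\mathbf{l}/\mathbf{k}$, show it is part of a $p$-basis of $\mathbf{k}^{p^{-1}}/\mathbf{k}$, and read off injectivity from the freeness of $\Omega^1$ on a $p$-basis. Where the two differ is in the extension step. The paper stays inside the tower $\mathbf{k}\subset\mathbf{l}\subset\mathbf{k}^{p^{-1}}$: it also picks a $p$-basis $\{y_\gamma\}$ of $\mathbf{k}^{p^{-1}}/\mathbf{l}$ and observes that the decompositions $\mathbf{l}=\bigoplus\mathbf{k}\,x^n$ and $\mathbf{k}^{p^{-1}}=\bigoplus\mathbf{l}\,y^m$ compose to $\mathbf{k}^{p^{-1}}=\bigoplus\mathbf{k}\,x^n y^m$, so $\{x_\lambda\}\cup\{y_\gamma\}$ is a $p$-basis of $\mathbf{k}^{p^{-1}}/\mathbf{k}$. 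You instead transport the problem to $\mathbf{k}/\mathbf{k}^p$ via Frobenius, extend $\{s_j^p\}$ to a $p$-basis there, and pull back by taking $p$-th roots. The paper's route is a bit shorter and needs no Frobenius bookkeeping, since it just uses the transitivity of $p$-bases in a tower (which holds because $\mathbf{l}^p\subset\mathbf{k}$ and $(\mathbf{k}^{p^{-1}})^p=\mathbf{k}\subset\mathbf{l}$). Your route has the mild advantages of reducing everything to the single, well-understood extension $\mathbf{k}/\mathbf{k}^p$ and of exhibiting the splitting of the sequence explicitly (via the complementary $p$-basis $\{b_i^{1/p}\}$), which the paper obtains only implicitly. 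Both are complete and correct.
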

\begin{proof}
We have only to prove the injectivity. Choose a $p$-basis $\{x_{\lambda}\}_{\lambda\in\Lambda}$ of $\mathbf{l}/\mathbf{k}$ and $p$-basis $\{y_{\gamma}\}_{\gamma\in\Gamma}$ of $\kkmp/\mathbf{l}$. Then, we have
\[
\mathbf{l}=\oplus_{n\in\mathbb{N}^{\oplus\Lambda},0\le n_{\lambda}<p}{\mathbf{k}x^n},\ \kkmp=\oplus_{m\in\mathbb{N}^{\oplus\Gamma},0\le m_{\gamma}<p}{\mathbf{l}y^m}.
\] 
So we have $\kkmp=\oplus_{n\in\mathbb{N}^{\oplus\Lambda},m\in\mathbb{N}^{\oplus\Gamma},0\le n_{\lambda},m_{\gamma}<p}{\mathbf{k}x^ny^m}$. This implies that $\{x_{\lambda}\}\cup \{y_{\gamma}\}$ forms a $p$-basis of $\kkmp/\mathbf{k}$. In particular, $\{x_{\lambda}\}$ is $p$-independent in $\mathbf{k}^{p^{-1}}/\mathbf{k}$. 
\end{proof}

\begin{corollary}
If $\dim_{k_L}{\threekahler{1}{k_L}{k_K}}=d$, then $\kmp{1}\subset k_L$.
\end{corollary}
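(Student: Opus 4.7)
The plan is to exploit the first fundamental exact sequence of K\"ahler differentials for the tower $\mathbb{F}_p\subset k_K\subset k_L$, together with the invariance of the imperfection degree under finite extensions, and then to read off from the vanishing of $dt_i$ (for a $p$-basis $\{t_i\}$ of $k_K$) that each $t_i$ has a $p$-th root in $k_L$.

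The first step is to establish $[k_L:k_L^p]=[k_K:k_K^p]=p^d$, so that $\Omega^1_{k_L/\mathbb{F}_p}$ is free of rank $d$ over $k_L$. Under Hypothesis~\ref{hyp}, $k_L/k_K$ is a finite purely inseparable extension; the identity then follows from the two expressions for $[k_L:k_K^p]$, namely $[k_L:k_L^p]\cdot[k_L^p:k_K^p]$ (using Frobenius to get $[k_L^p:k_K^p]=[k_L:k_K]$) and $[k_L:k_K]\cdot[k_K:k_K^p]$, which give $[k_L:k_L^p]=[k_K:k_K^p]=p^d$.

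The second step is a dimension count. In the right-exact sequence
\[
k_L\otimes_{k_K}\Omega^1_{k_K/\mathbb{F}_p}\xrightarrow{\phi}\Omega^1_{k_L/\mathbb{F}_p}\to\Omega^1_{k_L/k_K}\to 0,
\]
the source and middle are $d$-dimensional over $k_L$, so the hypothesis $\dim_{k_L}\Omega^1_{k_L/k_K}=d$ forces $\phi=0$. Fixing a $p$-basis $t_1,\dotsc,t_d$ of $k_K$, the elements $1\otimes dt_i$ generate the source, so $dt_i=0$ in $\Omega^1_{k_L/\mathbb{F}_p}$ for all $i$.

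The final step applies the standard fact that $d\alpha=0$ in $\Omega^1_{k_L/\mathbb{F}_p}$ if and only if $\alpha\in k_L^p$ (verified by expanding $\alpha$ in the decomposition $k_L=\bigoplus_{0\le I_\lambda<p}k_L^p\cdot y^I$ coming from any $p$-basis $\{y_\lambda\}$ of $k_L$ and comparing coefficients of the free basis $\{dy_\lambda\}$). This gives $t_i\in k_L^p$, hence $t_i^{1/p}\in k_L$, and since $k_K^{p^{-1}}=k_K(t_1^{1/p},\dotsc,t_d^{1/p})$ we conclude $k_K^{p^{-1}}\subset k_L$. There is no real obstacle: the whole argument boils down to the dimension invariance $[k_L:k_L^p]=[k_K:k_K^p]$ together with a one-line dimension count in the fundamental sequence, and the small point worth stating explicitly is the characterization of $\ker d$ for absolute differentials via a $p$-basis.
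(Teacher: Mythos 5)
Your proof is correct, but it routes around the preceding Lemma that the paper deliberately set up for exactly this purpose. The paper applies that Lemma to the tower $k_L/(k_K k_L^p)/k_L^p$: since $\Omega^1_{k_L/k_K}=\Omega^1_{k_L/k_Kk_L^p}$ has dimension $d$ by hypothesis and $\Omega^1_{k_L/k_L^p}$ has dimension $d$ as well (imperfection degree is preserved by finite extensions, which the paper leaves implicit), the left term $k_L\otimes\Omega^1_{k_Kk_L^p/k_L^p}$ in the now-exact sequence vanishes; a finite purely inseparable extension with trivial K\"{a}hler differentials is trivial, so $k_Kk_L^p=k_L^p$, giving $k_K\subset k_L^p$. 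You instead run the \emph{right-exact} fundamental sequence over $\mathbb{F}_p$, use the same degree count (which you spell out explicitly, a nice touch) to force the map $\phi$ to vanish, and then invoke the characterization $\ker(d:k_L\to\Omega^1_{k_L/\mathbb{F}_p})=k_L^p$ to conclude $t_i\in k_L^p$. Both proofs are one-line dimension counts once the setup is in place; the paper's version is slightly tighter in that it reuses the Lemma and avoids a separate appeal to the $\ker d=k^p$ fact, while yours is more self-contained and makes the implicit input $[k_L:k_L^p]=[k_K:k_K^p]$ explicit.
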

\begin{proof}
Since, by the lemma,
\[\xymatrix{
0 \ar[r] & k_L\otimes\threekahler{1}{k_K\lp}{\lp}\ar[r] &\threekahler{1}{k_L}{\lp} \ar[r] & \threekahler{1}{k_L}{k_K\lp}\ar[r] & 0\\ 
}\]
is exact and $\threekahler{1}{k_L}{k_K}=\threekahler{1}{k_L}{k_K\lp}$, we have $\threekahler{1}{k_K\lp}{\lp}=0$. Hence we have $k_K\lp=\lp$, i.e., $k_K\subset\lp$.
\end{proof}

Put $d'=\dim_{k_L}{\threekahler{1}{k_L}{k_K}}$. Then we can choose $\overline{t_{d'+1}},\dotsc,\overline{t_d}\in k_K$ such that $\dim_{k_L}{\threekahler{1}{k_L(\overline{t_{d'+1}}^{p^{-1}},\dotsc,\overline{t_{d}}^{p^{-1}})}{k_K}}=d$: In fact, by the lemma,
\[\xymatrix{
0\ar[r]&\kmp{1}k_L\otimes\threekahler{1}{k_L}{k_K\lp}\ar[r]&\threekahler{1}{\kmp{1} k_L}{(\kmp{1} k_L)^p}\ar[r]&\threekahler{1}{\kmp{1}k_L}{k_L}\ar[r]&0
}\]
is exact and $\threekahler{1}{k_L}{k_K}=\threekahler{1}{k_L}{k_K\lp}$. Since the canonical map $\kmp{1}k_L\otimes\threekahler{1}{\kmp{1}}{k_K}\to\threekahler{1}{\kmp{1}k_L}{k_L}$ is surjective, we can choose $\overline{t_{d'+1}},\dotsc,\overline{t_d}\in k_K$ such that $d\overline{t_{d'+1}}^{p^{-1}},\dotsc,d\overline{t_{d}}^{p^{-1}}$ forms a basis of $\threekahler{1}{\kmp{1}k_L}{k_L}$ and these elements satisfy the required property. 

Since $\overline{t_{d'+1}},\dotsc,\overline{t_d}$ is $p$-independent over $k_K$ by the above argument, choose $\overline{t_1},\dotsc,\overline{t_d}\in k_K$ such that $\overline{t_1},\dotsc,\overline{t_{d'}},\overline{t_{d'+1}},\dotsc,\overline{t_d}$ forms a $p$-basis of $k_K$. If we take a $p$-basis $\pibar{1},\dotsc,\pibar{d'}$ of $k_L/k_K$, then by the corollary, $k_K\subset (k_L(\tbar{d'+1}^{p^{-1}},\dotsc,\tbar{d}^{p^{-1}}))^p=\kp(\tbar{d'+1},\dotsc,\tbar{d})(\pibar{1}^p,\dotsc,\pibar{d'}^p)$ since $k_L=k_K(\pibar{1},\dotsc,\pibar{d'})$. In particular, we can choose $\fbar{j}\in\kp [X_1^p,\dotsc,X_{d'}^p,T_{d'+1},\dotsc,T_d]$ for  $1\le j \le d'$ with $\tbar{j}=\fbar{j}(\pibar{1},\dotsc,\pibar{d'},\tbar{d'+1},\dotsc,\tbar{d})$. Choose $r_1,\dotsc,r_{d'}\in\mathbb{N}$ such that $k_L=\oplus_{0\le n_j< p^{r_j}}{k_K\pibar{1}^{n_1}\dotsb \pibar{d'}^{n_{d'}}}$.

Let $t_j\in\ok$ for $0\le j\le d$ be a lift of $\tbar{j}\in k_K$, which forms a $p$-basis of $K$. Let $f_j\in\ok [X_1^p,\dotsc,X_{d'}^p,T_{d'+1},$
\noindent $\dotsc,T_{d}]$ for $1\le j\le d'$ be a lift of $\fbar{j}\in \kp [X_1^p,\dotsc,X_{d'}^p,T_{d'+1},\dotsc,T_{d}]$. Let $\pi_j\in\ol$ and $e_j\in\mathbb{N}$ for $0\le j\le d$ as 

\[
\pi_j =
\begin{cases}
\pi_L & j=0\ \text{and}\ e_{L/K}>1  \\
\pi_K & j=0\ \text{and}\ e_{L/K}=1  \\
\text{a lift of}\ \pibar{j} & 1\le j\le d' \\
t_j & j>d' 
\end{cases}
,\quad e_j = 
\begin{cases}
e_{L/K} & j=0\\
p^{r_j} & 1\le j\le d' \\
1 & j>d' 
\end{cases}
\]
and put $\Lambda=\oplus_{0\le n_j< e_j}{\ok X_0^{n_0}\dotsb X_{d'}^{n_{d'}}}\subset\ok [X_0,\dotsc,X_{d'}]$.

We use the following lemma in the construction below:
\begin{lemma}\label{lemstr}
$L=K(\pi_0,\pi_1,\dotsc,\pi_{d'})$ and $\ol=\oplus_{0\le n_j <e_j}{\ok\pi_0^{n_0}\dotsb \pi_{d'}^{n_{d'}}}$. For $x=\sum_{0\le n_j< e_j}{a_n\pi_0^{n_0}\dotsb \pi_{d'}^{n_{d'}}}\in\ol$ with $a_n\in\ok$,
\[
v_K(x)=\inf_{0\le n_j <e_j}{v_K(a_n\pi_0^{n_0})}.
\]
\end{lemma}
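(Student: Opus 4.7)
The plan is to prove the valuation formula first and then deduce the generation statement and the direct-sum decomposition from it. Throughout I would use that Hypothesis~\ref{hyp} makes $k_L/k_K$ purely inseparable, and that by the choice of the $r_j$ the monomials $\pibar{1}^{n_1}\dotsm\pibar{d'}^{n_{d'}}$ with $0\le n_j<e_j$ form a $k_K$-basis of $k_L$; consequently $\prod_{j=0}^{d'}e_j=e_{L/K}[k_L:k_K]=[L:K]$.

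First I would establish a residual case: for $y=\sum a_n\pi_1^{n_1}\dotsm\pi_{d'}^{n_{d'}}$ with $a_n\in\ok$ and the multi-indices running over $0\le n_j<e_j$ (for $1\le j\le d'$), one has $v_K(y)=\inf_n v_K(a_n)$. Indeed, if $m=\inf_n v_K(a_n)$ is finite, then dividing by $\pi_K^m$ and reducing modulo $\pi_L$ produces a nonzero $k_K$-linear combination of the basis $\pibar{1}^{n_1}\dotsm\pibar{d'}^{n_{d'}}$ of $k_L$, hence a unit of $\ol$, so $v_K(y)=m$.

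Next, for the general $x$ I would group by $n_0$: write $x=\sum_{n_0=0}^{e_0-1}\pi_0^{n_0}y_{n_0}$ with $y_{n_0}\in\ok[\pi_1,\dotsc,\pi_{d'}]$. Since $v_K(\pi_0)=1/e_0$ and $v_K(y_{n_0})\in\mathbb{Z}\cup\{\infty\}$ by the previous step, the valuations $v_K(\pi_0^{n_0}y_{n_0})$ for distinct $n_0\in\{0,\dotsc,e_0-1\}$ lie in pairwise distinct cosets of $\mathbb{Z}$ inside $\tfrac{1}{e_0}\mathbb{Z}$, so the ultrametric inequality yields
\[
v_K(x)=\min_{0\le n_0<e_0}v_K(\pi_0^{n_0}y_{n_0})=\inf_n v_K(a_n\pi_0^{n_0}).
\]
This handles $e_{L/K}=1$ and $e_{L/K}>1$ uniformly; in the ramified case the choice $\pi_0=\pi_L$ is precisely what makes the coset separation work.

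From the valuation formula, the $\prod_{j=0}^{d'}e_j=[L:K]$ monomials $\pi_0^{n_0}\dotsm\pi_{d'}^{n_{d'}}$ are $K$-linearly independent, so $M:=\oplus_{0\le n_j<e_j}\ok\pi_0^{n_0}\dotsm\pi_{d'}^{n_{d'}}$ is a free $\ok$-submodule of $\ol$ of rank $[L:K]$. In particular $M\otimes_{\ok}K=L$, proving $L=K(\pi_0,\dotsc,\pi_{d'})$. To upgrade the inclusion $M\subset\ol$ to an equality I would show $\ol=M+\pi_K\ol$ and then apply Nakayama's lemma to the finitely generated $\ok$-module $\ol$. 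Since $\dim_{k_K}\ol/\pi_K\ol=e_{L/K}[k_L:k_K]=\prod e_j$, it suffices to check $k_K$-linear independence of the reductions of the $\pi_0^{n_0}\dotsm\pi_{d'}^{n_{d'}}$ in $\ol/\pi_K\ol$: a relation $\sum a_n\pi_0^{n_0}\dotsm\pi_{d'}^{n_{d'}}\in\pi_K\ol$ forces $v_K(a_n\pi_0^{n_0})\ge 1$ for every $n$ with $a_n\neq 0$, hence $v_K(a_n)\ge 1-n_0/e_0$; since $v_K(a_n)\in\mathbb{Z}_{\ge 0}$ and $1-n_0/e_0\in(0,1]$ this forces $a_n\in\pi_K\ok$.

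I do not expect a serious obstacle; the only delicate point is the coset-separation step, which is what pins down the choice $\pi_0=\pi_L$ (rather than $\pi_K$) in the ramified case and keeps the residual variables $\pi_j$ ($j\ge 1$), which are units, from contributing fractional valuations.
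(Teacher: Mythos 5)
Your proof is correct and, for the valuation formula (the ``latter part''), uses exactly the two facts the paper invokes: the residual criterion that a $k_K$-combination of the monomials $\pibar{1}^{n_1}\dotsm\pibar{d'}^{n_{d'}}$ reduces to zero in $k_L=\ol/\pi_L\ol$ only if all coefficients do, and the coset-separation of the values $v_K(\pi_0^{n_0})=n_0/e_0$ modulo $\mathbb{Z}$. The one place you diverge is the first part of the lemma: the paper simply cites \cite[II, Proposition~2.4]{Fes} for $L=K(\pi_0,\dotsc,\pi_{d'})$ and the integral-basis decomposition $\ol=\oplus\ok\pi^n$, whereas you derive both from the valuation formula via $K$-linear independence, the count $\prod_{j=0}^{d'}e_j=e_{L/K}[k_L:k_K]=[L:K]$, a $\dim_{k_K}$ comparison in $\ol/\pi_K\ol$, and Nakayama. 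That self-contained derivation is sound (it implicitly uses $[L:K]=e_{L/K}f_{L/K}$ for finite extensions of CDVFs, valid here) and makes the lemma independent of the external reference, at the cost of a bit more bookkeeping; both routes are fine.
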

\begin{proof}
The first part is \cite[II, Proposition~2.4]{Fes}. The latter part follows from
\[
\pi_L\Bigm|\sum_{0\le n_j< e_j}{a_{n_0n_1\dotsb n_{d'}}\pi_1^{n_1}\dotsb\pi_{d'}^{n_{d'}}}\Longleftrightarrow \pi_K \bigm| a_{n_0n_1\dotsb n_{d'}}\ \text{for all }n_1,\dotsc,n_{d'} 
\]
and $v_K(\pi_0^{n_0})=n_0/e_0\in\left\{0,1/e_0,\dotsc,e_0-1/e_0\right\}$.
\end{proof}

Finally, put $P_j\in\ok [X_0,\dotsc,X_d]$ for $0\le j \le d$ as follows:

$\bullet\ j=0$: There exists a unique $g_0\in\Lambda$ such that $\pi_0^{e_0}=g_0(\pi)$. Put $P_0=X_0^{e_0}-g_0$. Note that $P_0\equiv X_0^{e_0}\mod{\pi_K\ok [X_0,\dotsc,X_d]}$ by Lemma~\ref{lemstr}.

$\bullet\ 1\le j\le d'$: There exists a unique $g_j\in\Lambda$ such that $f_j(\pi_1,\dotsc,\pi_{d'},t_{d'+1},\dotsc,t_{d})-t_j=g_j(\pi)$. Put $P_j=f_j(X_1,\dotsc,X_{d'},t_{d'+1},\dotsc,t_{d})-g_j(X_0,\dotsc,X_{d'})-t_j$. Note that $X_0|\ g_j(X)\mod{\pi_K\ok [X_0,\dotsc,X_d]}$, again by Lemma~\ref{lemstr}.

$\bullet\ j>d'$: Put $P_j=X_j-t_j$.

\newcommand{\til}[1]{\widetilde{#1}}
\newcommand{\bra}[1]{[#1]}
\newcommand{\bt}[1]{[\widetilde{#1}]}
\newcommand{\dbt}[1]{#1-[\widetilde{#1}]}

Let $\Pi_j$ be an element of $\A$ such that $\theta(\Pi_0)=\pi_0$ with $\Pi_0\in\aplus [\pi_K]$ if $e_{L/K}>1$, $\Pi_0=[\widetilde{\pi_K}]$ if $e_{L/K}=1$, $\theta(\Pi_j)=\pi_j$ for $1\le j\le d'$ (for example, $\Pi_j=[\til{\pi_j}]$) and $\Pi_j=[\til{t_j}]$ for $j>d'$. Note that if $k_K$ is perfect, the condition about $\Pi_0$ is not necessary since $\A=\aplus [\pi_K]$. 
\begin{proposition}\label{gen}
$P_0(\Pi),\dotsc,P_d(\Pi)$ is a generator of $I_{+}$.
\end{proposition}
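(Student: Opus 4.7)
The plan is a two-step argument: first, the easy inclusion $J := (P_0(\Pi), \dotsc, P_d(\Pi)) \subseteq \iplus$; second, the reverse inclusion via the graded ring description from Corollary~\ref{gra} combined with $\iplus$-adic Hausdorff completeness of $\A$.

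For $J \subseteq \iplus$, I apply $\theta$. For $j > d'$ one has $P_j(\Pi) = \bt{t_j} - t_j \in \iplus$ directly. For $0 \le j \le d'$, the polynomials $g_0$ and $g_j$ were constructed precisely so that $P_j(\pi_0, \dotsc, \pi_d) = 0$, whence $\theta(P_j(\Pi)) = 0$.

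For $\iplus \subseteq J$, I reduce to showing the images $\overline{P_j(\Pi)} \in \iplus / \iplus^2$ form an $\ocp$-basis. By Corollary~\ref{gra}, $\iplus/\iplus^2$ is $\ocp$-free of rank $d+1$ with basis $Y_0 := \pi_K - \bt{\pi_K}$ and $Y_i := t_i - \bt{t_i}$ for $1 \le i \le d$. Via the inclusion $A_1 \hookrightarrow B_1 = \bdr/\I^2$, so that the $Y_k$ form a $\cp$-basis of $\I/\I^2$, Taylor expansion around $\pi$ (using $P_j(\pi) = 0$) yields
\[
P_j(\Pi) \equiv \sum_{i=0}^{d} (\partial_i P_j)(\pi) \cdot (\Pi_i - \pi_i) \pmod{\I^2}.
\]
The Jacobian $((\partial_i P_j)(\pi))_{i,j}$ is block-diagonal: the lower-right $(d - d') \times (d - d')$ block is the identity (from $P_j = X_j - t_j$ for $j > d'$), and the off-diagonal blocks vanish since each $P_j$ with $j \le d'$ is independent of $X_i$ for $i > d'$. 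Similarly $\Pi_i - \pi_i = -Y_i$ for $i > d'$, so the problem reduces to analyzing the $(d'+1) \times (d'+1)$ top-left block and expressing $\Pi_i - \pi_i$ for $i \le d'$ in the basis $\{Y_0, \dotsc, Y_{d'}\}$. The shape $f_j \in \ok[X_1^p, \dotsc, X_{d'}^p, T_{d'+1}, \dotsc, T_d]$ forces $\partial_i f_j(\pi) \in p\ol$ for $1 \le i \le d'$, so modulo $p$ the top-left block depends only on the $g_0, \dotsc, g_{d'}$ and on $e_0 \pi_0^{e_0 - 1}$; the identities $\tbar{j} = \fbar{j}(\pibar{1}, \dotsc, \pibar{d'}, \tbar{d'+1}, \dotsc, \tbar{d})$ together with the earlier independent choice of $\tbar{d'+1}, \dotsc, \tbar{d}$ (extending a $p$-basis of $k_L/k_K$ to one of $k_K$) are exactly what forces the resulting change-of-basis matrix to have unit determinant in $\ocp^{\times}$.

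Once $\overline{P_0(\Pi)}, \dotsc, \overline{P_d(\Pi)}$ is an $\ocp$-basis of $\iplus/\iplus^2$, the polynomial ring isomorphism $\mathrm{gr}_{\iplus}\A \cong \ocp[X_0, \dotsc, X_d]$ of Corollary~\ref{gra}(i) implies $J + \iplus^{n+1} = \iplus$ for every $n \ge 1$. Iterating: for $x \in \iplus$ I inductively produce $j_k \in J \cdot \iplus^{k-1}$ with $x - \sum_{k \le N} j_k \in \iplus^{N+1}$; $\iplus$-adic Hausdorff completeness of $\A$ ensures the full series $\sum_k j_k$ converges to $x$, and grouping contributions by their leading factor $P_i(\Pi)$ writes $x = \sum_i B_i P_i(\Pi) \in J$ for some $B_i \in \A$ (each $B_i$ itself a convergent series in $\A$). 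The principal obstacle is the Jacobian determinant computation in the top-left block, where wild ramification makes $P_0'(\pi_0)$ and the individual $\Pi_i - \pi_i$ for $i \le d'$ non-invertible, yet the composite change-of-basis matrix is nonetheless a unit.
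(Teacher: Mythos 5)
Your reduction to showing that the classes $\overline{P_j(\Pi)}$ generate $\iplus/\iplus^2$ over $\ocp$ is a legitimate form of Nakayama (since $\iplus$ is finitely generated by Corollary~\ref{gra} and lies in the Jacobson radical of the $(p,\iplus)$-adically complete ring $\A$), and the iteration via $\mathrm{gr}_{\iplus}\A\cong\ocp[X_0,\dotsc,X_d]$ plus completeness, while redundant once Nakayama is invoked, is not wrong. The easy containment $J\subseteq\iplus$ via $\theta(P_j(\Pi))=P_j(\pi)=0$ is also fine. However, you have chosen a strictly harder quotient than the paper does: the paper applies Nakayama with the larger ideal $(\iplus,\pi_K)$, reducing to a matrix computation over $\ocp/\pi_K\ocp$, and its sublemma and every subsequent congruence are calibrated to that $\bmod\,\pi_K$ setting (e.g.\ $P_0(\Pi)\equiv\Pi_0^{e_0}$, $\Pi_j^p\equiv\bt{\pi_j}^p$, and the $f_j\equiv\sum b_m^pX^m$ reduction). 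Working over $\ocp$ itself, as you propose, deprives you of all those simplifications, and you never carry out the resulting determinant computation: your closing sentence explicitly labels it ``the principal obstacle'' and then asserts without argument that the change-of-basis matrix is a unit. That assertion is the entire content of the proposition, so as written the proof has a genuine gap.

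There is also a structural problem with your Taylor expansion. You expand $P_j(\Pi)$ around the point $\pi=(\pi_0,\dotsc,\pi_d)\in\kbar^{d+1}$, producing $\sum_i(\partial_iP_j)(\pi)(\Pi_i-\pi_i)\bmod I^2$. But $\pi_i\in\kbar$ is generally \emph{not} in $\A$, so $\Pi_i-\pi_i$ lies in $I$ but not in $\iplus$; the coefficients $(\partial_iP_j)(\pi)$ lie in $\ol\subset\cp$, not in $\ocp$. You thus obtain coordinates in $J_1=I/I^2$ over $\cp$, not the claimed coordinates in $\iplus/\iplus^2$ over $\ocp$, and passing from the former to the latter is exactly what requires care (the $Y_i$ are an $\ocp$-lattice inside the $\cp$-space $J_1$; a $\cp$-change of basis does not automatically preserve the lattice). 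The paper avoids this by Taylor-expanding around the Teichm\"uller lifts $\bt{\pi_i}$ and $\bt{t_k}$, which \emph{do} live in $\A$, so every difference $\pi_i-\bt{\pi_i}$, $t_k-\bt{t_k}$ stays in $\iplus$; its sublemma is the device that makes this bookkeeping work and simultaneously exploits the $\bmod\,\pi_K$ reduction. To repair your argument you would need to re-anchor the expansion at Teichm\"uller lifts (as the paper does) and then actually verify, $\bmod\,\iplus^2$ only, that the resulting $(d+1)\times(d+1)$ matrix over $\ocp$ is invertible --- a computation that the paper's proof demonstrates is delicate even after throwing in $\pi_K$ (witness the ideal $\mathfrak{j}$ and the auxiliary congruences) and that your sketch does not attempt.
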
  
\begin{proof}
Obviously $P_0(\Pi),\dotsc,P_d(\Pi)\in\iplus$, so we have only to prove that $P_0(\Pi),\dotsc,P_d(\Pi)$ generates $\iplus/(\iplus,\pi_K)\iplus$ by Nakayama's lemma. We identify $\iplus/(\iplus,\pi_K)\iplus$ with $(\iplus+\pi_K\A)/(\iplus^2+\pi_K\A)$ which is a finite free $\ocp/\pi_K\ocp$-module with a basis $\{[\til{\pi_K}],t_1-\bra{\til{t_1}},\dotsc,t_d-\bra{\til{t_d}}\}$. We will prove the assertion by calculating the coefficient matrix $C$ of $\{P_0(\Pi),\dotsc,P_d(\Pi)\}$ with respect to this basis.

Write $\Pi_0=\bra{\til{\pi_L}}+\varepsilon_0$, $\varepsilon_0\in\iplus$. Since $v_{\mathbb{E}}(\til{\pi_L}^{e_0})=v_{\mathbb{E}}(\til{\pi_K})$, we can write $\til{\pi_L}^{e_0}=\til{\pi_K}\til{u}$ for some $u\in\ocp^{\times}$. If $e_{L/K}>1$, we have 
\begin{align*}
P_0(\Pi) & \equiv\Pi_0^{e_0}\mod{\pi_K\A}\\
 & \equiv\bra{\til{\pi_L}}+e_0\bra{\til{\pi_L}}^{e_0-1}\varepsilon_0\mod{\iplus^2+\pi_K\A}\\
 & \equiv\bra{\til{u}}\bra{\til{\pi_K}}+e_0\bra{\til{\pi_L}}^{e_0-1}\varepsilon_0\equiv u\bra{\til{\pi_K}}+e_0\pi_L^{e_0-1}\varepsilon_0. \\
\end{align*}
If $e_{L/K}=1$, $P_0(\Pi)\equiv\bra{\til{\pi_K}}$.

We use the following sublemma for the calculation of $P_j(\Pi)$ for $1\le j\le d'$.

\begin{sublemma}
Let $f(X_1,\dotsc,X_n)\in\okbar[X_1,\dotsc,X_n]$ and $x_1,\dotsc,x_n\in\okbar$. Then 
\[
f(x_1,\dotsc,x_n)\equiv f(\bra{\til{x_1}},\dotsc,\bra{\til{x_n}})+\displaystyle\sum_{1\le j\le n}{\frac{\partial f}{\partial X_j}(x_1,\dotsc,x_n)(x_j-\bra{\til{x_j}})}\mod{\iplus^2}.
\]
Moreover, suppose we are given $\pi\in\okbar$ with $0<v_p(\pi)\le 1$ and $b_m\in\okbar\ (m\in\mathbb{N})$ of which all but finitely many is zero, such that $f\equiv\sum_{m}{b_m^pX^m}\mod{\pi\okbar[X_1,\dotsc,X_n]}$ and put $\til{f}=\sum_{m}{\bra{\til{b_m}}^pX^m}$. Then we have 
\[
f(x_1,\dotsc,x_n)\equiv\til{f}(\bt{x_1},\dotsc,\bt{x_n})+\sum_{1\le j\le n}{\frac{\partial f}{\partial X_j}(x_1,\dotsc,x_n)(x_j-\bt{x_j})}\mod{\iplus^2+\pi\A}.
\]
\end{sublemma}

Let us prove the sublemma. Since $x-\bt{x}\in\iplus$ for $x\in\okbar$, the first assertion is just Taylor expansion. Since $x^p\equiv\bt{x}^p+px^{p-1}(x-\bt{x})\mod{\iplus^2}$ for $x\in\okbar$, we have $f(\bt{x_1},\dotsc,\bt{x_n})\equiv\til{f}(\bt{x_1},\dotsc,\bt{x_n})$ $\mod{\iplus^2+\pi\A}$, which proves the latter assertion.

Since $\Pi^p_j\equiv\bt{\pi_j}^p\mod{\iplus^2+p\A}$, we have for $1\le j\le d'$,\[
f_j(\Pi_1,\dotsc,\Pi_{d'},t_{d'+1},\dotsc,t_d)\equiv f_j(\bt{\pi_1},\dotsc,\bt{\pi_{d'}},t_{d'+1},\dotsc,t_{d})\mod{\iplus^2+\pi_K\A}.
\]
Applying the sublemma to $f_j(X_1,\dotsc,X_{d'},t_{d'+1},\dotsc,t_{d})\in\okbar[X_1,\dotsc,X_{d'}]$, we see that the RHS is equivalent to $f_j(\pi_1,\dotsc,\pi_{d'},t_{d'+1},\dotsc,t_{d})$ modulo $\iplus^2+\pi_K\A$. Again, applying the sublemma to $f_j(X_1,\dotsc,X_{d'},T_{d'+1},\dotsc,T_d)\in\okbar[X_1,\dotsc,X_{d'},T_{d'+1},\dotsc,T_d]$ with $\pi=\pi_K$, we see that it is equivalent to
\[
\til{f_j}(\bt{\pi_1},\dotsc,\bt{\pi_{d'}},\bt{t_{d'+1}},\dotsc,\bt{t_d})+\sum_{d'+1\le k\le d}{\frac{\partial f_j}{\partial T_k}}(\bt{\pi_1},\dotsc,\bt{\pi_{d'}},\bt{t_{d'+1}},\dotsc,\bt{t_{d}})(t_k-\bt{t_k})
\]
modulo $\iplus^2+\pi_K\A$. Hence we have 
\[
P_j(\Pi)\equiv\til{f_j}(\bt{\pi_1},\dotsc,\bt{\pi_{d'}},\bt{t_{d'+1}},\dotsc,\bt{t_d})-g_j(\Pi_0,\dotsc,\Pi_{d'}) -t_j\mod{\iplus^2+(\pi_K,\dbt{t_{d'+1}},\dotsc,\dbt{t_d})\A}.
\]
By the definition of $f_j$, we have $\til{f_j}(\bt{\pi_1},\dotsc,\bt{\pi_{d'}},\bt{t_{d'+1}},\dotsc,\bt{t_d})-\bt{t_j}\in (p,\bt{\pi})\aplus$ for some $\pi\in\mathfrak{m}_{\cp}$. Let $\mathfrak{j}$ be the ideal of $\A$ generated by $\{\bt{x}|x\in\mathfrak{m}_{\cp}\}$, then
\[
P_j(\Pi)\equiv\bt{t_j}-t_j-g_j(\Pi)\mod{\iplus^2+(\pi_K,\dbt{t_{d'+1}},\dotsc,\dbt{t_d}})\A+\mathfrak{j}.
\]
Note that we have $\aplus[\pi_K]\cap\iplus=(\dbt{p},\dbt{\pi_K})\aplus[\pi_K]\subset \pi_K\A +\mathfrak{j}$: Indeed, when $x=f(\pi_K)\in\iplus$ with $f\in\aplus[X]$, $x=f(\pi_K)-f([\widetilde{\pi_K}])+f([\widetilde{\pi_K}])\in (\pi_K-[\widetilde{\pi_K}])\aplus [\pi_K]+\aplus\cap I_{+}\subset (\pi_K-[\widetilde{\pi_K}],p-[\widetilde{p}])\aplus [\pi_K]$. By the definition of $g_j$, $g_j(\Pi_0,\dotsc,\Pi_{d'})\in (\pi_K,\Pi_0)\A\subset\pi_K\A+\mathfrak{j}$ since $\Pi_0-\bt{\pi_L}=\varepsilon_0\in\aplus[\pi_K]\cap\iplus\subset\pi_K\A+\mathfrak{j}$. We have $(\pi_K,\dbt{t_{d'+1}},\dotsc,\dbt{t_d})\A+\mathfrak{j}=(\dbt{\pi_K},\dbt{t_{d'+1}},\dotsc,\dbt{t_d})\A+\mathfrak{j}$ and $P_j(\Pi)\equiv \bt{t_j}-t_j\mod{\iplus^2}+(\dbt{\pi_K},\dbt{t_{d'+1}},\dotsc,\dbt{t_{d}})\A+\mathfrak{j}$. Hence we can conclude 
\begin{align*}
P_j(\Pi)-(\bt{t_j}-t_j) & \in\iplus\cap \left(\iplus^2+(\dbt{\pi_K},\dbt{t_{d'+1}},\dotsc,\dbt{t_d})\A+\mathfrak{j}\right)\\
 & =\iplus^2+(\dbt{\pi_K},\dbt{t_{d'+1}},\dotsc,\dbt{t_d})\A+\mathfrak{j}\iplus,
\end{align*}
since $\mathfrak{j}\cap\iplus=(\bigcup_{x\in\mathfrak{m}_{\cp}} [\widetilde{x}]\A)\cap I_{+}=\bigcup_{x\in\mathfrak{m}_{\cp}} [\widetilde{x}]I_{+}=\mathfrak{j}\iplus$.

For $j>d'$, we have $P_j(\Pi)=\bt{t_j}-t_j$.

Therefore, the coefficient matrix $C\in \mathrm{M}_{d+1}(\ocp/\pi_K\ocp)$ of $\{P_0(\Pi),\dotsc,P_d(\Pi)\}$ for a basis $\{\bt{\pi_K},\dbt{t_1},\dotsc,\dbt{t_d}\}$ satisfies
\[C\equiv
\left(
\begin{array}{ccc}
 \text{unit} & * & 0 \\
 0 & -E_{d',d'} & 0 \\
0& *&-E_{d-d',d-d'}\\
\end{array}
\right){\mod{\mathfrak{m}_{\cp}}\mathrm{M}_{d+1}(\ocp/\pi_K\ocp)}.
\]
In particular, $C$ is invertible, hence $P_0(\Pi),\dotsc,P_d(\Pi)$ generates $(\iplus+\pi_K\A)/(\iplus^2+\pi_K\A)$.
\end{proof}

Let $\mathfrak{m}$ be the kernel of a surjection of $K$-algebras $K[X_0,\dotsc,X_d]\to L;\ X_j\mapsto \pi_j$. Use the notation of the previous subsection by identifying $L$ with $K[X]/\mathfrak{m}$. Let $f$ be the $K$-algebra homomorphism of equi-dimensional regular local rings
\[
f:K[X]_{\mathfrak{m}}\to\bdr ;\ X_j\longmapsto\Pi_j.
\]
Since $\cp\otimes\mathrm{gr}^{1}f: \cp\otimes\mathrm{gr}^{1}K[X]_{\mathfrak{m}}\to\mathrm{gr}^{1}\bdr$ is surjective, $P_0(X),\dotsc,P_d(X)$ is a regular parameter of $K[X]_{\mathfrak{m}}$. So, we use the notation of the previous subsection with $\Lambda_S=\Lambda$.

\begin{definition}
For $x\in \bdr$, put $s_k(x)=\mathrm{sup}\{m\in \mathbb{Z}\mid x\in \pi_{K}^{m}\A+I^{k+1}\}=w_k(\overline{x})\in \mathbb{Z}\,\cup\{\infty\}$. Let $Q_0,\dotsc,Q_d$ be a generator of $I_+$. A minimal expansion $($\'{e}criture minimale in \cite{Col}$)$ of $x$ in $\bdr$ is an expansion $x=\sum_{n\in \mathbb{N}^{d+1}}{\lambda_n Q^n}$ with $\lambda_n Q^n \in \pi_{K}^{s_{|n|}(x)}\hiplus{|n|}$.
\end{definition}
Note that $s_0(x)\ge s_1(x)\ge\dotsb \ge s_k(x)\ge\cdots$ for $x\in\bdr$ and $s_0(x)=[v_K(x)]$ for $x\in\kbar$.

\begin{proposition}
For $x\in L$, $x=\sum_{n\in\mathbb{N}^{d+1}}{\lambda_{n,x,P}(\Pi)P^n(\Pi)}$ is a minimal expansion of $x$ in $\bdr$.
\end{proposition}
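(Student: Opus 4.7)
My plan is to apply the already-defined $K$-algebra homomorphism $f:\km\to\bdr$, $X_j\mapsto\Pi_j$, to the formal expansion of $x$ in $\km$, and then verify the $\pi_K$-valuation condition required by the definition of a minimal expansion. Applying $f$ to $x=\sum_{n}\lambda_{n,x,P}(X)P^n(X)$ in $\km$ (valid in the $\mathfrak{m}$-adic completion) yields the displayed expansion in $\bdr$, since $f$ is continuous for the $\mathfrak{m}$-adic topology on the source and the $I$-adic topology on $\bdr$ and $f|_L$ agrees with the canonical inclusion $L\hookrightarrow\bdr$. By Proposition~\ref{gen}, $P^n(\Pi)\in\hiplus{|n|}$, so it remains to establish $\lambda_n(\Pi)\in\pi_K^{s_{|n|}(x)}\A$ for every $n$.

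Writing $\lambda_n=\sum_{0\le m_j<e_j}a_{n,m}X^m$ with $a_{n,m}\in K$, I first compute the $\pi_K$-valuation $w_K(\lambda_n(\Pi)):=\sup\{m\in\mathbb{Z}:\lambda_n(\Pi)\in\pi_K^m\A\}$. Since $\theta(\Pi^m)=\pi^m$ and, by Lemma~\ref{lemstr}, the family $\{\pi^m\}_{0\le m_j<e_j}$ reduces to a $k_K$-linearly independent set in $\ol/\pi_K\ol\hookrightarrow\ocp/\pi_K\ocp$, the monomials $\Pi^m$ are themselves $k_K$-linearly independent modulo $\pi_K\A$. Scaling $\lambda_n(\Pi)$ by $\pi_K^{-\inf_m v_K(a_{n,m})}$, the coefficients become elements of $\ok$ with at least one a unit, so reducing modulo $\pi_K\A$ yields a nonzero element; hence $w_K(\lambda_n(\Pi))=\inf_m v_K(a_{n,m})$.

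The crux is then the identity
\[
s_k(x)=\min_{|n|\le k}w_K(\lambda_n(\Pi))\quad\text{for every }k\in\mathbb{N},
\]
from which minimality follows at once: specializing to $k=|n|$ gives $s_{|n|}(x)\le w_K(\lambda_n(\Pi))$, hence $\lambda_n(\Pi)\in\pi_K^{s_{|n|}(x)}\A$ and $\lambda_n(\Pi)P^n(\Pi)\in\pi_K^{s_{|n|}(x)}\hiplus{|n|}$. The inequality $s_k(x)\ge\min_{|n|\le k}w_K(\lambda_n(\Pi))$ is immediate from the truncation $\bar x\equiv\sum_{|n|\le k}\lambda_n(\Pi)P^n(\Pi)\pmod{I^{k+1}}$, each of whose summands lies in $\pi_K^{w_K(\lambda_n(\Pi))}\A$. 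For the reverse inequality I would invoke Corollary~\ref{gra} together with Proposition~\ref{gen}: in $\mathrm{gr}_{\iplus}\A\cong\ocp[X_0,\dotsc,X_d]$ the class of $P^n(\Pi)$ in $\hiplus{|n|}/\hiplus{|n|+1}$ is a unit multiple of the monomial $X^n$, so distinct summands $\lambda_n(\Pi)P^n(\Pi)$ occupy distinct free rank-one $\ocp$-pieces of $\mathrm{gr}_{\iplus}^{|n|}A_k$ and their leading $\pi_K$-orders cannot cancel across different $n$. The main obstacle is precisely this non-cancellation step --- making the simultaneous $(\pi_K,\iplus)$-bigraded analysis of $\A$ rigorous --- after which the theorem follows formally.
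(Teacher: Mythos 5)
Your first step is correct and coincides with the paper's first observation: the computation $w_K(\lambda_n(\Pi))=\inf_m v_K(a_{n,m})$ via the $k_K$-linear independence of the monomials $\Pi^m$ ($0\le m_j<e_j$) modulo $\pi_K\A$ is exactly the paper's identity $v_K^G(\lambda)=[v_K(\lambda(\pi))]$ for $\lambda\in\Lambda$ (where $v_K^G(\sum a_mX^m)=\inf v_K(a_m)$), and the inequality $s_k(x)\ge\min_{|n|\le k}w_K(\lambda_n(\Pi))$ is indeed immediate.

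The reverse inequality is the whole content of the proposition, and you leave a genuine gap there. The justification you sketch --- that the summands $\lambda_n(\Pi)P^n(\Pi)$ occupy distinct free rank-one $\ocp$-pieces of the associated graded, so cancellation across different $n$ is impossible --- does not hold as stated, because $\lambda_n(\Pi)P^n(\Pi)$ is not concentrated in $\iplus$-degree $|n|$. The coefficient $\lambda_n(\Pi)\in\A$ has $\iplus$-degree-zero part $\theta(\lambda_n(\Pi))=\lambda_n(\pi)$ but generically $\lambda_n(\Pi)\neq\lambda_n(\pi)$, so $\lambda_n(\Pi)-\lambda_n(\pi)\in\iplus$ is nonzero and $\lambda_n(\Pi)P^n(\Pi)$ spreads across $\iplus$-degrees $\ge|n|$; its components in degrees $|n|+1,\dotsc$ can perfectly well interfere with the leading terms of $\lambda_{n'}(\Pi)P^{n'}(\Pi)$ for $|n'|>|n|$. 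Isolating the lowest degree where a unit leading coefficient first appears, and only then projecting to that single graded piece, is precisely an induction on $|n|$ --- which is what the paper does. It proves $v_K^G(\lambda_{n,x,P})\ge s_{|n|}(x)$ by induction on $k=|n|$: normalize $s_k(x)=0$; the inductive hypothesis (applicable because $s_j(x)\ge s_k(x)$ for $j<k$) gives $\lambda_n(\Pi)\in\A$ for $|n|<k$ and $x\in\hokbar{k}\subset\A+I^{k+1}$; hence $\sum_{|n|=k}\lambda_n(\Pi)P^n(\Pi)\in I^k\cap(\A+I^{k+1})=\iplus^k+I^{k+1}$; projecting to $I^k/(\iplus^k+I^{k+1})\cong\oplus_{|n|=k}(\cp/\ocp)P^n(\Pi)$ (Remark~\ref{rem:ainf}(iv) together with the freeness of $\iplus^k/\iplus^{k+1}$ on the basis $\{P^n(\Pi)\}_{|n|=k}$) then yields $\lambda_n(\pi)\in\ocp$, i.e.\ $v_K^G(\lambda_n)\ge 0$, for all $|n|=k$. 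This inductive reduction to the top graded piece is exactly the step your proposal is missing.
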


\newcommand{\absol}[1]{\mid{#1}\mid}
\newcommand{\aabsol}[1]{\mid\!{#1}\!\mid}
\newcommand{\aaabsol}{\mid\!\! n\!\!\mid}

\begin{proof}
For a polynomial $\lambda(X)=\sum{a_nX^n}\in K[X_0,\dotsc,X_d]$, put $v_{K}^{G}(\lambda(X))=\inf{v_K(a_n)}$. Note that, for $\lambda(X)\in\Lambda$, we have $v_K^G(\lambda(X))=[v_K(\lambda(\pi))]$ by Lemma~\ref{lemstr}, where $[*]$ denotes the integer part of $*$. We claim that $v_K^{G}(\lambda_{n,x,P}(X))\ge s_{\absol{n}}(x)$ for all $n$. By the definition of $v_K^{G}$, this claim proves the assertion. Let us prove this claim by induction on $k=\aaabsol$: When $k=0$, $\lambda_{0}(X)\in\Lambda$ satisfies $x=\lambda_{0,x,P}(\pi)$. Hence we have $s_0(x)=[v_K(x)]=v_K^G(\lambda_{0,x,P}(X))$. For $k>0$, by multiplying a power of $\pi_K$, we can assume $s_k(x)=0$. Since $s_0(x)\ge s_1(x)\ge\dotsb\ge s_{k}(x)\ge 0$ and induction hypothesis, we have $\lambda_{n,x,P}(\Pi)\in \A$ for $\aabsol{n}< k$ and $x\in\hokbar{k}=\kbar\cap (\A+I^{k+1})$. Hence we have 
\[
\sum_{\absol{n}=k}{\lambda_{n,x,P}(\Pi)P^n(\Pi)}=x-\sum_{\absol{n}<k}{\lambda_{n,x,P}(\Pi)P^n(\Pi)}-\sum_{\absol{n}>k}{\lambda_{n,x,P}(\Pi)P^n(\Pi)}\in I^k\cap (\A+I^{k+1})=\iplus^k+I^{k+1}
\]
where the last equality is from $\S 1.2$. Once we identify $\iplus^k/\iplus^{k+1}$ as a free $\ocp$-module with a basis $\{P^n(\Pi)\}_{\absol{n}=k}$, we have $I^k/(\iplus^k+I^{k+1})=\cp/\ocp\otimes_{\ocp}\iplus^{k}/\iplus^{k+1}=\oplus_{\absol{n}=k}(\cp/\ocp)P^n(\Pi)$ by Remark~\ref{rem:ainf}(iv). Thus we have $\lambda_{n,x,P}(\pi)\in\ocp$ for $|n|=k$ and $v_K^G(\lambda_{n,x,P}(X))=[v_K(\lambda_{n,x,P}(\pi))]\ge 0=s_{\absol{n}}(x)$.
\end{proof}

\begin{corollary}
Under the canonical identification $\threekahler{(k)}{\okbar}{\ok}=I^k/(\iplus^k+I^{k+1}) =\cp/\ocp\otimes (I_{+}^k/I_{+}^{k+1})$, for $x\in\hol{k-1}$, we have
\[
d^{(k)}(x)=\sum_{\mid n\mid=k}{\lambda_{n,x,P}(\pi)\otimes P^n(\Pi)}.
\]
\end{corollary}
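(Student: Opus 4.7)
The plan is to combine the minimal expansion from the previous proposition with the description of $d^{(k)}$ through the derivation $\partial^{(k)}$ provided by Theorem~\ref{thm Col}(ii), and then bookkeep carefully in the quotient $I^k/(I^{k+1}+\iplus^k)$.

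First I would write down the minimal expansion $x=\sum_{n\in\mathbb{N}^{d+1}}\lambda_{n,x,P}(\Pi)P^n(\Pi)$ guaranteed by the previous proposition. The point is to split this sum at $|n|=k$ and to produce a suitable lift $\tilde{x}\in\A$ of $x$ modulo $I^k$ so that $\partial^{(k)}(x)=x-\tilde{x}$ can be computed term by term. Since $x\in\hol{k-1}\subset\hokbar{k-1}$, we have $s_{k-1}(x)\ge 0$, hence $s_{|n|}(x)\ge 0$ for every $|n|\le k-1$ by monotonicity of $s_j$; the minimality of the expansion then gives $\lambda_{n,x,P}(\Pi)P^n(\Pi)\in\pi_K^{s_{|n|}(x)}\iplus^{|n|}\subset\iplus^{|n|}\subset\A$ for each $|n|<k$. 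Thus $\tilde{x}:=\sum_{|n|<k}\lambda_{n,x,P}(\Pi)P^n(\Pi)$ is a legitimate element of $\A$, and
\[
x-\tilde{x}=\sum_{|n|=k}\lambda_{n,x,P}(\Pi)P^n(\Pi)+\sum_{|n|>k}\lambda_{n,x,P}(\Pi)P^n(\Pi).
\]

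Next I would argue that the tail sum lies in $I^{k+1}$. For $|n|>k$ the minimal bound gives $\lambda_{n,x,P}(\Pi)P^n(\Pi)\in\pi_K^{s_{|n|}(x)}\iplus^{|n|}$, and since $I^{k+1}$ is an ideal of the $\cp$-algebra $\bdr$ it absorbs all powers of $\pi_K^{-1}$, so this piece vanishes in $I^k/(I^{k+1}+\iplus^k)$. Therefore
\[
\partial^{(k)}(x)=\sum_{|n|=k}\lambda_{n,x,P}(\Pi)P^n(\Pi)\pmod{I^{k+1}+\iplus^k}.
\]
Because $\theta(\Pi_j)=\pi_j$, each coefficient satisfies $\lambda_{n,x,P}(\Pi)\equiv\lambda_{n,x,P}(\pi)\pmod{I}$, and multiplying by $P^n(\Pi)\in\iplus^k\subset I^k$ the discrepancy lies in $I^{k+1}$. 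Hence modulo $I^{k+1}+\iplus^k$ we may replace each $\lambda_{n,x,P}(\Pi)$ by its reduction $\lambda_{n,x,P}(\pi)\in\cp$, obtaining
\[
\partial^{(k)}(x)=\sum_{|n|=k}\lambda_{n,x,P}(\pi)\,P^n(\Pi).
\]

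Finally I would invoke the canonical isomorphism $I^k/(I^{k+1}+\iplus^k)\cong\cp/\ocp\otimes_{\ocp}\iplus^k/\iplus^{k+1}$ from Remark~\ref{rem:ainf}(iv) to rewrite this as $\sum_{|n|=k}\lambda_{n,x,P}(\pi)\otimes P^n(\Pi)$, and conclude by Theorem~\ref{thm Col}(ii), which identifies $d^{(k)}$ with $\partial^{(k)}$ via $\iota$. There is no serious obstacle here: the only mildly delicate point is the bookkeeping verifying that the partial sum $\tilde{x}$ really sits inside $\A$, which is exactly what the minimal-expansion condition $\lambda_nQ^n\in\pi_K^{s_{|n|}(x)}\iplus^{|n|}$ together with the hypothesis $x\in\hol{k-1}$ guarantees.
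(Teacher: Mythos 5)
Your proof is correct and follows essentially the same route as the paper's: both take $\tilde{x}=\sum_{|n|\le k-1}\lambda_{n,x,P}(\Pi)P^n(\Pi)\in\A$ (justified by $s_{k-1}(x)\ge 0$ together with the minimal-expansion bounds), observe that $\partial^{(k)}(x)=x-\tilde{x}$ reduces to the $|n|=k$ part modulo $I^{k+1}+\iplus^k$, and replace $\lambda_n(\Pi)$ by $\lambda_n(\pi)$ using $\theta(\Pi_j)=\pi_j$. You simply make explicit the absorption of the $|n|>k$ tail into $I^{k+1}$ and the $\mod I$ reduction of coefficients, which the paper leaves implicit.
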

\begin{proof}
As we see in the above proof, for $x\in\hol{k-1}$, i.e., $s_{k-1}(x)\ge 0$, we have $\widetilde{x}=\sum_{\absol{n}\le k-1}{\lambda_{n,x,P}(\Pi)P^n(\Pi)}\in\A$. Hence we have 
\begin{align*}
d^{(k)}(x)=\partial^{(k)}(x) & \equiv x-\widetilde{x}\mod{I^{k+1}+\iplus^k}\\
 & \equiv\sum_{\absol{n}=k}{\lambda_{n,x,P}(\Pi)P^n(\Pi)}\equiv\sum_{\absol{n}=k}{\lambda_{n,x,P}(\pi)\otimes P^n(\Pi)}.
\end{align*}\end{proof}

\begin{proof}[Proof of Theorem~\ref{fund kah}]
Since we have $\hokbar{k}=\mathcal{O}_{\overline{K'}}^{(k)}$ and a canonical isomorphism $\threekahler{(k)}{\okbar}{\ok}\cong\threekahler{(k)}{{\mathcal{O}}_{\overline{K'}}}{\mathcal{O}_{K'}}$ for an unramified finite extension $K'/K$, we can assume that $L/K$ satisfies Hypothesis~\ref{hyp} by replacing $K$ by its maximal unramified extension in $L$. So, we use the notation as above. First prove (ii). To prove this, we have only to prove that the canonical map $\okbar\otimes\holkahler{k}\to \hkahler{k}$ is injective. We identify $\iplus^k/\iplus^{k+1}$ as a free $\ocp$-module with a basis $\{P^n(\Pi)\}_{\absol{n}=k}$, then we have $\holkahler{k}\subset (L/\ol)^{\oplus \binom{d+k}{k}}\subset (\cp/\ocp)^{\oplus \binom{d+k}{k}}=\threekahler{(k)}{\okbar}{\ok}$ by the above corollary and the fact $\lambda_{n,x,P}(\pi)\in L$. This implies the injectivity of the above morphism. Let us prove (i). Since $\holkahler{k}$ is finitely generated over $\ol$, we have $\olkahler\subset (L/\ol)^{\oplus \binom{d+k}{k}}[p^n]$ for some $n$. By the structure theorem of finitely generated modules over discrete valuation rings, we obtain $\mu (\holkahler{k})\le\mu (p^{-n}\ol/\ol)^{\oplus\binom{d+k}{k}}=\binom{d+k}{k}$.
\end{proof}

\section{Good modules}\label{sec:good}
Throughout this section, let $L/K$ be an algebraic extension of local fields. In this section, we will investigate modules of special form over $\ol$, called ``good modules'' which play a crucial role in this paper.

\begin{definition}
An $\ol$-module $M$ is good if there exists a direct system $\{L_{\lambda}\}_{\lambda\in\Lambda}$ consisting of finite subextensions of $L/K$ and a direct system $\{M_{\lambda}\}_{\lambda\in\Lambda}$ consisting of $\mathcal{O}_{L_{\lambda}}$-submodules of $M$ satisfying the following conditions$:$
\begin{enuroman}
\item Transitive maps are inclusions, i.e., we have $L_{\lambda}\subset L_{\lambda '}$ and $M_{\lambda}\subset M_{\lambda '}$ for $\lambda <\lambda '$ and $L=\cup L_{\lambda},M=\cup M_{\lambda}$. Moreover, for $\lambda <\lambda '$, the canonical morphism $\mathcal{O}_{L_{\lambda '}}\otimes M_{\lambda}\to M_{\lambda '}$ is injective.

\item $M_{\lambda}$ is an $\mathcal{O}_{L_{\lambda}}$-module of finite length for $\lambda\in\Lambda$ and $\sup_{\lambda}{\mu(M_{\lambda})}<\infty$.
\end{enuroman}
\noindent We call $\{M_{\lambda}\}_{\lambda\in\Lambda}$ a direct system of $M$. For a direct system $\{M_{\lambda}\}_{\lambda\in\Lambda}$ of $M$, put $n(M)=\sup_{\lambda}{\mu (M_{\lambda})}$ and define $(e_i^{\lambda})_{1\le i\le n(M)}\in\mathbb{Q}_{\ge 0}^{n(M)}$ in order that they satisfy
\[
M_{\lambda}\cong\oplus_{1\le i\le n(M)}{\mathcal{O}_{L_{\lambda}}/p^{e_i^{\lambda}}\mathcal{O}_{L_{\lambda}}},\ e_1^{\lambda}\ge e_2^{\lambda}\ge\dotsb\ge e_{n(M)}^{\lambda},
\] 
where $p^{e_i^{\lambda}}$ denotes any element in $\mathcal{O}_{L_{\lambda}}$ with its $p$-adic valuation $e_i^{\lambda}$. Then let us define $r(M)$ by $r(M)=\#\{i\mid \sup_{\lambda}{e_i^{\lambda}=\infty}\}$.
\end{definition}

\begin{example}\label{ex:good}
\begin{enuroman}
\item $(L/\ol)^{\oplus n}$ is a good $\ol$-module with $r(M)=n(M)=n$.

\item For $k\in\mathbb{N}_{>0}$, $M=\holkahler{k}$ is a good $\ol$-module with $r(M)\le n(M)\le \binom{d+k}{k}$ by Theorem~\ref{fund kah}.

\item If $e_{L/K}=\infty$, then $L/\mathfrak{m}_L$ is not a good module. In fact, if it were a good module, we would have $L/\mathfrak{m}_L\cong L/\ol$ by Theorem~\ref{str} below. However, the annihilator by $\mathfrak{m}_{L}$ of both sides are $\ol/\mathfrak{m}_{L}$ and $0$, respectively. 
\end{enuroman}
\end{example}

\begin{remark}\label{rem:good}
\begin{enuroman}
\item Goodness is stable under sub, base change and direct sum: For a good $\ol$-module $M$, sub $\ol$-modules of $M$ are good. For an algebraic extension $L'/L$, $\mathcal{O}_{L'}\displaystyle\otimes_{\ol} M$ is a good $\mathcal{O}_{L'}$-module. The direct sum of two good $\ol$-modules is also good.

\item $n(M)$ and $r(M)$ are invariants of $M$, i.e., independent of the choice of direct systems. They are also compatible with base change. Indeed, it is easy in the case of $n(M)$. In the case of $r(M)$, it follows from Theorem~\ref{str} below.
\end{enuroman}
\end{remark}

\begin{lemma}
Let $n,r\in\mathbb{N}$ and $\phi :(\ol/p^n\ol)^{\oplus r}\to (\ol/p^{n+1}\ol)^{\oplus r}$ be an injective $\ol$-module homomorphism. Then there exists $\psi\in\mathrm{Aut}_{\ol}(\ol/p^{n+1}\ol)^{\oplus r}$ making the following diagram commutative$:$

\[\xymatrix{
(\ol/p^{n}\ol)^{\oplus r} \ar[r]^(.47){\phi} \ar[dr]_{p} & (\ol/p^{n+1}\ol)^{\oplus r} \ar[d]^{\psi} \\
 & (\ol/p^{n+1}\ol)^{\oplus r} \\
}\]
where $p$ is the $\ol$-module homomorphism characterized by
\[
p(0,\dotsc,\stackrel{i}{\check{1}},\dotsc,0)=(0,\dotsc,\stackrel{i}{\check{p}},\dotsc,0)\ ,\ 1\le i\le r.
\]
\end{lemma}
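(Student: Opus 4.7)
The plan is to factor $\phi$ through the map $p$ of the statement plus an auxiliary automorphism of $M:=(\ol/p^n\ol)^{\oplus r}$, and then promote that automorphism to one of $N:=(\ol/p^{n+1}\ol)^{\oplus r}$. Since every element of $M$ is killed by $p^n$, the image of $\phi$ lies in $N[p^n]$, which a coordinate computation identifies with $pN$. Hence $\phi(e_i)=p\tilde g_i$ for some $\tilde g_i\in N$, determined uniquely modulo $N[p]=p^nN$; the reductions $g_i\in N/p^nN\cong M$ are well defined and give a matrix $B=(b_{ji})\in M_r(\ol/p^n\ol)$ whose $i$-th column is $g_i$. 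By construction $\phi$ factors as $M\xrightarrow{B}M\xrightarrow{p}N$, where the second arrow is the map $p$ of the lemma.

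The key step is to show $B$ is an automorphism. Since the map $p\colon M\to N$ above is itself injective, injectivity of $\phi$ forces injectivity of $B$. As $M$ is free of finite rank, the adjugate formula gives: $B$ is injective iff $\det B$ is a non-zero-divisor, and $B$ is an automorphism iff $\det B$ is a unit. It therefore suffices to check that in $\ol/p^n\ol$ every non-zero-divisor is a unit. For this, let $a\in \ol$ with $0<v_L(a)\le nv_L(p)$; using the discreteness of the value group in the finite case, or its density in the infinite algebraic case, one can find $b\in\ol$ with $nv_L(p)-v_L(a)\le v_L(b)<nv_L(p)$. Then $ab=0$ in $\ol/p^n\ol$ while $b\ne 0$, so $a$ is a zero divisor. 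Hence $\det B$ is a unit and $B$ is an automorphism of $M$.

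Finally, lift $B$ to any $\tilde B\in M_r(\ol/p^{n+1}\ol)$. Its determinant reduces modulo $p$ to a unit of $k_L$, hence is itself a unit, and $\tilde B$ defines an automorphism $\Phi$ of $N$ with $\Phi(e_i)$ some lift of $g_i$. Setting $\psi:=\Phi^{-1}$, we compute
\[
\psi(\phi(e_i))=\psi(p\Phi(e_i))=p\psi(\Phi(e_i))=pe_i,
\]
which is precisely $p(e_i)$, giving $\psi\circ\phi=p$. The delicate point is the injective-implies-invertible step for $B$: for a general local ring an injective endomorphism of a free module of finite rank need not be surjective, so the valuation-ring structure of $\ol$ is essential; everything else reduces to straightforward matrix bookkeeping.
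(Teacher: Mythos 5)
Your proof is correct. Both you and the paper factor $\phi$ through the $p^n$-torsion $pN=(p\ol/p^{n+1}\ol)^{\oplus r}$ of the target and then must show that the induced endomorphism of $(\ol/p^n\ol)^{\oplus r}$ is invertible; the interesting comparison is in how that invertibility is established. The paper first reduces to $L/K$ finite (by flat descent, implicitly, via the finitely many matrix entries), and then the factored map is an injection between $\ol$-modules of equal finite length, hence an isomorphism; the lift $A=pB$ then has $B\in GL_r(\ol)$. You instead stay with a possibly infinite $L$, view $B$ as a matrix over $\ol/p^n\ol$, and argue: injectivity of $B$ forces $\det B$ to be a non-zero-divisor (McCoy's theorem), and in $\ol/p^n\ol$ every non-zero-divisor is a unit, which you verify valuation-theoretically (indeed one can simply take $b=p^n/a$ for any representative $a$ of a nonzero non-unit, making the appeal to discreteness or density unnecessary). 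Your route avoids the reduction to the finite case and the length count at the cost of invoking McCoy; the paper's route avoids McCoy at the cost of the reduction step. One small slip: $\ol/p\ol$ is not $k_L$ when $L/K$ is ramified, so ``reduces modulo $p$ to a unit of $k_L$'' should read ``reduces modulo $p$ to a unit of $\ol/p\ol$'' (or, equivalently, $\det\tilde B\notin\mathfrak{m}_L/p^{n+1}\ol$, hence is a unit of the local ring $\ol/p^{n+1}\ol$); the intended conclusion is unaffected.
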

\begin{proof}
We can assume $L/K$ is finite. By the injectivity of $\phi$, we have a commutative diagram
\[\xymatrix{
(\ol/p^n\ol)^{\oplus r} \ar@{-->}[r]^(.45){\exists}  \ar[dr]_{\phi} & (p\ol/p^{n+1}\ol)^{\oplus r} \ar[d]^{\mathrm{inc}.} \\
 & (\ol/p^{n+1}\ol)^{\oplus r}, \\
}\]
where the dotted arrow is an isomorphism. Let $A\in \mathrm{M}_r(\ol)$ be a lift of $\phi$. Then, by the above diagram, we have $A=pB$ for some $B\in GL_r(\ol)$. Taking $\psi$ as the induced homomorphism by $B^{-1}$, we obtain the conclusion.
\end{proof}

\begin{corollary}\label{calculation good}
Let $\{M_n\}_{n\in\mathbb{N}}$ be a direct system of $\ol$-modules such that there exists $r\in\mathbb{N}$ with $M_n\cong (\ol/p^n\ol)^{\oplus r}$ for all $n$ and that the transitive maps $\{\phi_n:M_n\to M_{n+1}\}_{n\in\mathbb{N}}$ are all injective. Then
\[
\varinjlim_{n}{M_n}\cong (L/\ol)^{\oplus r}.
\] 
\end{corollary}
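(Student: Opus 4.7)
The plan is to use the preceding lemma to rigidify the direct system into a standard form whose colimit we can compute by hand. The lemma says that any injection $(\ol/p^n\ol)^{\oplus r} \hookrightarrow (\ol/p^{n+1}\ol)^{\oplus r}$ agrees, up to post-composition with an automorphism of the target, with the canonical multiplication-by-$p$ map. I will iterate this to replace the given system with the ``universal'' one.

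First I would build, by induction on $n$, an isomorphism $\alpha_n \colon M_n \xrightarrow{\sim} (\ol/p^n\ol)^{\oplus r}$ such that $\alpha_{n+1}\circ \phi_n = p\circ\alpha_n$, where $p$ denotes the canonical map of the lemma. For the base case, pick any isomorphism $\alpha_1$ (exists by hypothesis). For the inductive step, assume $\alpha_n$ is constructed, fix any isomorphism $\beta \colon M_{n+1} \xrightarrow{\sim} (\ol/p^{n+1}\ol)^{\oplus r}$, and apply the lemma to the injection $\beta \circ \phi_n \circ \alpha_n^{-1}$ to produce $\psi \in \mathrm{Aut}_{\ol}((\ol/p^{n+1}\ol)^{\oplus r})$ with $\psi\circ\beta\circ\phi_n\circ\alpha_n^{-1} = p$. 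Setting $\alpha_{n+1} = \psi\circ\beta$ gives the required commutativity.

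The collection $\{\alpha_n\}$ defines an isomorphism of direct systems $\{M_n,\phi_n\} \cong \{(\ol/p^n\ol)^{\oplus r},\,p\}$, so it suffices to compute the colimit of the right-hand side. This splits as a direct sum of $r$ copies of $\varinjlim_n (\ol/p^n\ol,\,p)$. For the latter, the commutative squares
\[
\xymatrix{
\ol/p^n\ol \ar[r]^{p} \ar[d]_{\cdot p^{-n}}^{\cong} & \ol/p^{n+1}\ol \ar[d]^{\cdot p^{-(n+1)}}_{\cong} \\
p^{-n}\ol/\ol \ar[r]^{\mathrm{inc}.} & p^{-(n+1)}\ol/\ol
}
\]
identify the system with the filtration $p^{-n}\ol/\ol$ of $L/\ol$; taking colimits yields $L/\ol$, and summing $r$ copies gives the claim.

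The only real content is the lemma that is already proved above, so there is no serious obstacle — the main thing to be careful about is the bookkeeping in the induction, in particular that the $\alpha_n$ are updated consistently so that the compatibility with $\phi_n$ holds on the nose rather than merely up to an automorphism that changes at each step.
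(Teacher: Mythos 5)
Your proof is correct and follows essentially the same approach as the paper's: inductively upgrade arbitrary isomorphisms $M_n\cong(\ol/p^n\ol)^{\oplus r}$ to ones compatible with the transition maps using the preceding lemma, then pass to the colimit of the standard system with multiplication-by-$p$ transitions. The only difference is that you spell out the final identification $\varinjlim(\ol/p^n\ol)^{\oplus r}\cong(L/\ol)^{\oplus r}$ via the maps $\cdot p^{-n}$, which the paper leaves as an evident equality.
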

\begin{proof}
Choose an $\ol$-isomorphism $\psi_1:M_1\to (\ol/p\ol)^{\oplus r}$ and define inductively $\ol$-isomorphisms $\psi_n:M_n\to (\ol/p^n\ol)^{\oplus r}$, which make the following diagram commutative:
\[\xymatrix{
M_n  \ar[r]^{\phi_n} \ar[d]^{\psi_n} & M_{n+1} \ar[d]^{\psi_{n+1}} \\
(\ol/p^n\ol)^{\oplus r} \ar[r]^(.47){p} & (\ol/p^{n+1}\ol)^{\oplus r}.\\
}\]
Then
\[
\varinjlim{\psi_n}:\varinjlim{M_n}\stackrel{\cong}{\to}\varinjlim(\ol/p^n\ol)^{\oplus r}=(L/\ol)^{\oplus r}.
\]
\end{proof}

\begin{lemma}
Let $A$ be a discrete valuation ring with uniformizer $\pi$, let $M_1\subset M_2$ be $A$-modules of finite length and let $e_i^j,1\le i\le n,$ be non-negative integers satisfying
\[
M_j\cong\oplus_{1\le i\le n}{A/{\pi}^{e_i^j}A},\ e_1^j\ge e_2^j\ge\dotsb\ge e_n^j.
\]
Then $e_i^1\le e_i^2$ for $1\le i\le n$.
\end{lemma}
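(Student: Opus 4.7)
The plan is to characterize the elementary divisors $e_i^j$ through the $\pi^m$-torsion filtration and then exploit functoriality of this filtration with respect to inclusions.

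First I would record the following elementary computation. For a finite length $A$-module $M$ with elementary divisors $e_1 \ge e_2 \ge \cdots \ge e_n \ge 0$ and residue field $k = A/\pi A$, direct calculation on each cyclic summand $A/\pi^{e_i}A$ gives $\dim_k M[\pi^m] = \sum_{i} \min(e_i, m)$, and hence
\[
\dim_k \bigl(M[\pi^m]/M[\pi^{m-1}]\bigr) = \#\{i : e_i \ge m\} \quad (m \ge 1),
\]
where $M[\pi^m] = \ker(\pi^m : M \to M)$.

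Next, given the inclusion $M_1 \subset M_2$, I would observe that $M_1[\pi^m] = M_1 \cap M_2[\pi^m]$, so the natural map $M_1[\pi^m]/M_1[\pi^{m-1}] \to M_2[\pi^m]/M_2[\pi^{m-1}]$ is injective (its kernel is $M_1 \cap M_2[\pi^{m-1}]/M_1[\pi^{m-1}] = 0$). Combined with the computation in the previous step, this yields
\[
\#\{i : e_i^1 \ge m\} \le \#\{i : e_i^2 \ge m\} \quad \text{for all } m \ge 1.
\]

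Finally, since the $e_i^j$ are arranged in decreasing order, the inequality $e_i^j \ge m$ is equivalent to $\#\{i' : e_{i'}^j \ge m\} \ge i$. Thus the inequality above shows $e_i^1 \ge m \Rightarrow e_i^2 \ge m$ for every $m$; specializing to $m = e_i^1$ gives $e_i^2 \ge e_i^1$, as desired.

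There is essentially no obstacle here; the only mildly non-obvious point is choosing the right invariant (the dimensions of the graded pieces of the $\pi$-torsion filtration) that is both computable from the elementary divisors and visibly monotone under submodules. An alternative would be induction on $\ell(M_2/M_1)$, reducing to the case where $M_2/M_1$ is simple, but the filtration argument is cleaner and avoids a separate case analysis.
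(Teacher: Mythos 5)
Your proof is correct and follows essentially the same route as the paper: both arguments hinge on the invariant $\dim_k\bigl(M[\pi^m]/M[\pi^{m-1}]\bigr)=\#\{i : e_i\ge m\}$ and the injectivity of the induced map $M_1[\pi^m]/M_1[\pi^{m-1}]\to M_2[\pi^m]/M_2[\pi^{m-1}]$, which the paper records via a commutative diagram with exact rows. You merely spell out the final deduction from the counting inequality to $e_i^1\le e_i^2$ a bit more explicitly than the paper does.
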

\begin{proof}
We have a commutative diagram
\[\xymatrix{
0 \ar[r] & M_1[\pi^{m-1}] \ar[r] \ar@{_{(}->}[d] & M_1[\pi^m] \ar[r] \ar@{_{(}->}[d]  & M_1[\pi^m]/M_1[\pi^{m-1}] \ar[r] \ar[d] & 0  \\
0 \ar[r] & M_2[\pi^{m-1}] \ar[r] & M_2[\pi^m] \ar[r] & M_2[\pi^m]/M_2[\pi^{m-1}] \ar[r] & 0  \\
}\]
with exact rows. Since $M_1[\pi^{m-1}]=M_1[\pi^{m}]\cap M_2[\pi^{m-1}]$, the right vertical map is also injective. Combining this with the equality $\dim_{k}{M_j[\pi^{m}]/M_j[\pi^{m-1}]}=\#\{i\mid e^j_i\ge m\}$, we obtain the assertion.
\end{proof}

\begin{corollary}\label{fundamental corollary}
Let $M$ be a good $\ol$-module with a direct system $\{M_{\lambda}\}$. Then we have $e^{\lambda}_i\le e^{\lambda '}_i$ for $\lambda <\lambda '$, $1\le i\le n(M)$.
\end{corollary}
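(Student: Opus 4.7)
The plan is to reduce directly to the preceding lemma by base-changing everything to a common ring. The essential obstruction is that $M_{\lambda}$ and $M_{\lambda'}$ are modules over different rings $\mathcal{O}_{L_{\lambda}}$ and $\mathcal{O}_{L_{\lambda'}}$; I would remove it by base-changing $M_{\lambda}$ up to $\mathcal{O}_{L_{\lambda'}}$ and comparing the two modules there.

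First, I would invoke condition (i) of the definition of a good module, which supplies an injective $\mathcal{O}_{L_{\lambda'}}$-module map $\mathcal{O}_{L_{\lambda'}} \otimes_{\mathcal{O}_{L_{\lambda}}} M_{\lambda} \hookrightarrow M_{\lambda'}$. Since the tensor product commutes with direct sums, the given decomposition of $M_{\lambda}$ yields
\[
\mathcal{O}_{L_{\lambda'}} \otimes_{\mathcal{O}_{L_{\lambda}}} M_{\lambda} \cong \bigoplus_{i=1}^{n(M)} \mathcal{O}_{L_{\lambda'}}/p^{e_i^{\lambda}}\mathcal{O}_{L_{\lambda'}}.
\]
Both sides of the injection are then $\mathcal{O}_{L_{\lambda'}}$-modules of finite length with at most $n(M)$ generators, which is exactly the input required by the preceding lemma.

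Next I would apply that lemma with $A = \mathcal{O}_{L_{\lambda'}}$ and $\pi = \pi_{L_{\lambda'}}$. Writing $e_{L_{\lambda'}}$ for the absolute ramification index of $L_{\lambda'}$, the $\pi_{L_{\lambda'}}$-adic elementary divisors of the left-hand module are the integers $e_{L_{\lambda'}} \cdot e_i^{\lambda}$ (integrality comes from $e_{L_{\lambda}} \mid e_{L_{\lambda'}}$), while those of $M_{\lambda'}$ are $e_{L_{\lambda'}} \cdot e_i^{\lambda'}$; both sequences are already non-increasing of common length $n(M)$ after padding shorter ones with zeros. The lemma then gives $e_{L_{\lambda'}} e_i^{\lambda} \le e_{L_{\lambda'}} e_i^{\lambda'}$ for every $i$, and dividing through by $e_{L_{\lambda'}}$ yields the claim.

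The only point that demands care is the bookkeeping in the last paragraph: converting between the $p$-adic normalization used to define $e_i^{\lambda}$ and the $\pi_{L_{\lambda'}}$-adic normalization needed to invoke the preceding lemma, and verifying that after tensoring the sorted sequences of elementary divisors line up in matching positions. I do not anticipate any real difficulty beyond this bookkeeping.
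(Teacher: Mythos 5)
Your proof is correct and matches the paper's intent; the paper leaves the corollary without an explicit argument because the reduction is exactly what you describe. Base-changing $M_{\lambda}$ to $\mathcal{O}_{L_{\lambda'}}$ via condition~(i) of goodness, rescaling the $p$-adic lengths $e_i^{\lambda}$ by the absolute ramification index $e_{L_{\lambda'}}$ to get integral $\pi_{L_{\lambda'}}$-adic elementary divisors, and then invoking the preceding lemma for the injection $\mathcal{O}_{L_{\lambda'}}\otimes_{\mathcal{O}_{L_{\lambda}}}M_{\lambda}\hookrightarrow M_{\lambda'}$ is precisely the expected route, and the bookkeeping you flag (sorted sequences of fixed length $n(M)$, padded with zeros) is already built into the definition of $(e_i^{\lambda})_i$.
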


\begin{theorem}[Structure theorem of good modules]\label{str}
Let $M$ be a good $\ol$-module.
\begin{enuroman}
\item For sufficiently large $m\in\mathbb{N}$, $M_{\mathrm{div}}=p^mM\cong (L/\ol)^{\oplus r(M)}$ and if $r(M)=n(M)$, then we can take $m=0$.

\item $M\cong (L/\ol)^{\oplus r}$ for some $r\in\mathbb{N}\Leftrightarrow r(M)=n(M)$.
\end{enuroman}
\end{theorem}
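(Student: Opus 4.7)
The plan is to establish (i) by applying Corollary~\ref{calculation good} to a suitable $\mathcal{O}_L$-directed system extracted from $\{M_\lambda\}$, and then to deduce (ii) formally. The key preliminary step is to convert the given $\mathcal{O}_{L_\lambda}$-direct system into a system of $\mathcal{O}_L$-submodules of $M$ by setting $\widetilde{M}_\lambda := \mathcal{O}_L \otimes_{\mathcal{O}_{L_\lambda}} M_\lambda$. Condition (i) of goodness, $\mathcal{O}_{L_{\lambda'}} \otimes_{\mathcal{O}_{L_\lambda}} M_\lambda \hookrightarrow M_{\lambda'}$, together with the filtered-colimit description $\mathcal{O}_L = \varinjlim_{\lambda' \geq \lambda} \mathcal{O}_{L_{\lambda'}}$, gives an injection $\widetilde{M}_\lambda \hookrightarrow M$; and since $M_\lambda \subseteq \widetilde{M}_\lambda \subseteq M$ and $M = \bigcup_\lambda M_\lambda$, also $M = \bigcup_\lambda \widetilde{M}_\lambda$. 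The decomposition of $M_\lambda$ transports to $\widetilde{M}_\lambda \cong \bigoplus_{i=1}^{n(M)} \mathcal{O}_L/p^{e_i^\lambda}\mathcal{O}_L$ as $\mathcal{O}_L$-modules.

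For (i), set $m := \max_{i > r(M)} \sup_\lambda e_i^\lambda < \infty$, finite by definition of $r(M)$; using Corollary~\ref{fundamental corollary} and directedness of $\Lambda$, choose $\lambda_0$ with $e_i^{\lambda_0} \geq m$ for every $i \leq r(M)$, so that $p^m \widetilde{M}_\lambda \cong \bigoplus_{i=1}^{r(M)} \mathcal{O}_L/p^{e_i^\lambda - m}\mathcal{O}_L$ for $\lambda \geq \lambda_0$. Extract a cofinal sequence $\lambda_1 \leq \lambda_2 \leq \cdots$ with $\lambda_n \geq \lambda_0$ and $e_{r(M)}^{\lambda_n} \geq m + n$, and put $N_n := (p^m \widetilde{M}_{\lambda_n})[p^n] \cong (\mathcal{O}_L/p^n\mathcal{O}_L)^{r(M)}$. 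The transitions $N_n \hookrightarrow N_{n+1}$ are injective, and $\bigcup_n N_n = p^m M$ because every element of $p^m M$ is $p$-power torsion and lies in some $p^m \widetilde{M}_\lambda$. Corollary~\ref{calculation good} then yields $p^m M \cong (L/\mathcal{O}_L)^{r(M)}$. This is $p$-divisible, so $p^m M \subseteq M_{\mathrm{div}}$; the reverse inclusion $M_{\mathrm{div}} = p^m M_{\mathrm{div}} \subseteq p^m M$ is automatic, whence $p^m M = M_{\mathrm{div}}$. When $r(M) = n(M)$ the set of bounded indices is empty and $m = 0$ is already admissible.

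Part (ii) is then almost formal. The direction $(\Leftarrow)$ follows from (i) with $m = 0$. For $(\Rightarrow)$, assuming $M \cong (L/\mathcal{O}_L)^r$, divisibility gives $M = M_{\mathrm{div}}$ and (i) forces $r = r(M)$. Taking $p$-torsion of $\widetilde{M}_\lambda \hookrightarrow M$ furnishes an injection $(\mathcal{O}_L/p\mathcal{O}_L)^{\mu(M_\lambda)} \hookrightarrow (\mathcal{O}_L/p\mathcal{O}_L)^r$ of free modules over the nonzero commutative ring $\mathcal{O}_L/p\mathcal{O}_L$; by McCoy's theorem (injectivity of an $R$-linear map $R^a \to R^b$ over a nonzero commutative ring $R$ forces $a \leq b$), $\mu(M_\lambda) \leq r$, so $n(M) \leq r = r(M) \leq n(M)$ and equality holds.

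The main obstacle is the cofinality bookkeeping for $\{\lambda_n\}$: one must arrange simultaneously $\lambda_n \geq \lambda_0$, the growth $e_{r(M)}^{\lambda_n} \geq m + n$, and genuine cofinality of $\{\lambda_n\}$ in $\Lambda$ so that $\bigcup_n N_n$ really exhausts $p^m M$. This alternation exploits both the directedness of $\Lambda$ and the unboundedness of $e_{r(M)}^\lambda$; the rest of the argument is essentially formal manipulation with the structure theorem for finitely generated modules over the DVRs $\mathcal{O}_{L_\lambda}$.
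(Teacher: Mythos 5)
Your overall strategy matches the paper's: reduce to the $p$-divisible piece $p^mM$ by the same choice of $m$, exhibit it as a union of $\ol$-modules of shape $(\ol/p^n\ol)^{\oplus r(M)}$, and invoke Corollary~\ref{calculation good}. Both parts, however, have a gap.

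\textbf{Part (i): the cofinal sequence.} You require a \emph{sequence} $\lambda_1\le\lambda_2\le\cdots$ that is cofinal in $\Lambda$, and the identity $\bigcup_n N_n=p^mM$ rests on it (given $x\in p^m\widetilde{M}_\lambda$, you need some $\lambda_n\ge\lambda$). Such a countable cofinal sequence need not exist. Since $L=\bigcup_\lambda L_\lambda$, a countable cofinal $\{\lambda_n\}$ would force $L=\bigcup_n L_{\lambda_n}$ to be a countable union of finite extensions; but in the imperfect residue field setting the paper is aimed at (e.g.\ $k_K=\mathbb{F}_p((s))$), $K^\times/(K^\times)^p$ is uncountable and $L$ can have uncountably many finite subextensions of $K$, so $\Lambda$ is uncountable with no countable cofinal subset. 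The paper sidesteps this by never asking the chosen sequence $\{\lambda_N\}$ to be cofinal; instead it uses the cofinal \emph{sub-posets} $\Lambda_N=\{\lambda\mid e_r^\lambda\ge N,\ \lambda>\lambda_N\}$ and proves the canonical map $\mathcal{O}_{L_\lambda}\otimes_{\mathcal{O}_{L_{\lambda_N}}}M_{\lambda_N}[p^N]\to M_\lambda[p^N]$ is an isomorphism for $\lambda\in\Lambda_N$ by comparing lengths of finite $\mathcal{O}_{L_\lambda}$-modules, so that $M[p^N]\cong\ol\otimes_{\mathcal{O}_{L_{\lambda_N}}}M_{\lambda_N}[p^N]$. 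Your argument is salvageable in the same spirit: for any $\lambda'$ with $\lambda'\ge\lambda_0$ and $e_{r(M)}^{\lambda'}\ge m+n$, pick $\mu\ge\lambda',\lambda_n$ and show $(p^m\widetilde{M}_{\lambda_n})[p^n]\hookrightarrow(p^m\widetilde{M}_\mu)[p^n]$ is an \emph{equality} by checking it at the finite level over $\mathcal{O}_{L_\mu}$ (equal lengths) before tensoring with $\ol$; this yields $(p^m\widetilde{M}_{\lambda'})[p^n]\subseteq N_n$ without cofinality. You should spell this out rather than invoke cofinality.

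\textbf{Part (ii) $(\Rightarrow)$: freeness for McCoy.} You assert $\widetilde{M}_\lambda[p]\cong(\ol/p\ol)^{\mu(M_\lambda)}$, but the invariant factors $e_i^\lambda\in\mathbb{Q}_{\ge0}$ (with denominator $e_{L_\lambda}$) can lie in $(0,1)$, and then the corresponding summand of $\widetilde{M}_\lambda[p]$ is $\ol/p^{e_i^\lambda}\ol$, a \emph{non-free} cyclic quotient of $\ol/p\ol$. So the source of the injection is not free and McCoy does not apply as stated. One fix: if $\mu(M_\lambda)>r$, set $e=e_{r+1}^\lambda>0$; then $\widetilde{M}_\lambda[p^{e}]\supseteq(\ol/p^{e}\ol)^{\oplus(r+1)}$ (the first $r+1$ factors, all with $e_i^\lambda\ge e$), and this injects into $M[p^{e}]\cong(\ol/p^{e}\ol)^{\oplus r}$; McCoy over the nonzero ring $\ol/p^{e}\ol$ gives $r+1\le r$, a contradiction. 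Alternatively — and this is what the paper presumably intends by ``we have only to prove (i)'' — once (i) gives $r(M)=r$, one gets $n(M)=r$ from Example~\ref{ex:good}(i) (which provides a direct system of $(L/\ol)^{\oplus r}$ with $n=r$) together with the invariance of $n(M)$ from Remark~\ref{rem:good}(ii), which is stated prior to and independently of Theorem~\ref{str}.
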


\begin{corollary}
Under the same hypothesis as above, we have
\[
T_pM\cong {\olhat}^{\oplus r(M)},\ V_pM\cong {\widehat{L}}^{\oplus r(M)},\ \dim_{\widehat{L}}{V_pM}=r(M).
\]
\end{corollary}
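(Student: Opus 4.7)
The plan is to reduce everything to computing the Tate module of $L/\ol$ and then apply Theorem~\ref{str}(i).

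First, I would use Theorem~\ref{str}(i) to write $M_{\mathrm{div}} = p^m M \cong (L/\ol)^{\oplus r(M)}$ for sufficiently large $m$. In particular, the quotient $M/M_{\mathrm{div}}$ is annihilated by $p^m$, so it is bounded $p$-torsion. Applying the snake lemma for multiplication by $p^n$ to the short exact sequence $0 \to M_{\mathrm{div}} \to M \to M/M_{\mathrm{div}} \to 0$ and passing to the inverse limit yields the left exact sequence
\[
0 \to T_p M_{\mathrm{div}} \to T_p M \to T_p(M/M_{\mathrm{div}}).
\]
Boundedness of $M/M_{\mathrm{div}}$ forces $T_p(M/M_{\mathrm{div}}) = 0$, hence $T_p M \cong T_p M_{\mathrm{div}} \cong T_p(L/\ol)^{\oplus r(M)}$.

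Next, I would identify $T_p(L/\ol)$ with $\olhat$. For each $n$, the isomorphism $\ol/p^n\ol \to p^{-n}\ol/\ol = (L/\ol)[p^n]$ sending $a \bmod p^n\ol$ to $p^{-n}a \bmod \ol$ transports the transition map $p \colon (L/\ol)[p^{n+1}] \to (L/\ol)[p^n]$ to the natural surjection $\ol/p^{n+1}\ol \twoheadrightarrow \ol/p^n\ol$. Hence
\[
T_p(L/\ol) = \varprojlim_n \ol/p^n\ol = \widehat{\ol} = \olhat,
\]
where the final equality uses that the $p$-adic topology on $\ol$ coincides with the subspace topology coming from the valuation $v_p$ on $\widehat{L}$.

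Combining these yields $T_p M \cong \olhat^{\oplus r(M)}$. Inverting $p$ gives $V_p M = \qp \otimes_{\zp} T_p M \cong \widehat{L}^{\oplus r(M)}$, since $\olhat[p^{-1}] = \widehat{L}$, and the $\widehat{L}$-dimension is then $r(M)$ by freeness.

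The only substantive point is the vanishing $T_p(M/M_{\mathrm{div}}) = 0$, which is immediate from the boundedness supplied by Theorem~\ref{str}(i); the rest is bookkeeping with the standard identifications of $T_p(L/\ol)$ and $\olhat$.
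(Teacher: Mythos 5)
Your argument is correct, and it supplies exactly the bookkeeping the paper leaves implicit: the paper states this corollary without proof as an immediate consequence of Theorem~\ref{str}. The reduction to $M_{\mathrm{div}}=p^mM$ via left-exactness of $T_p$ and the vanishing $T_p(M/M_{\mathrm{div}})=0$ for the bounded torsion quotient, followed by the standard identification $T_p(L/\ol)=\varprojlim_n\ol/p^n\ol=\olhat$, is the intended route.
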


\begin{proof}[Proof of Theorem~\ref{str}]
We have only to prove (i). First, let us show $p^mM\cong (L/\ol)^{\oplus r(M)}$ for some $m\in\mathbb{N}$. For $m\in\mathbb{N}$, $p^mM$ is good and $r(M)=r(p^mM)$. Moreover, by taking $m$ sufficiently large (for example, $\sup_{r(M)<i}{\sup_{\lambda}{e^{\lambda}_i}}\le m$), we have $r(p^mM)=n(p^mM)$. So it is enough to prove $M\cong (L/\ol)^{\oplus r}$ under the condition $r=r(M)=n(M)$.

Choose $\{\lambda_N\}_{N\in\mathbb{N}}\subset\Lambda$ such that $\ {\lambda}_0 < {\lambda}_1 <\dotsb$ and $\ e^{{\lambda}_N}_r\ge N$ (Use Corollary~\ref{fundamental corollary}). For $N\in\mathbb{N}$, put $\Lambda_{N}=\{\lambda\in\Lambda\mid e^{\lambda}_r\ge N,\ \lambda>\lambda_N\}$, then $\Lambda_N$ is cofinal with $\Lambda$ by Corollary~\ref{fundamental corollary}. Since we have a canonical isomorphism $\mathcal{O}_{L_{\lambda}}\otimes_{\mathcal{O}_{L_{\lambda_{N}}}} (M_{\lambda_N}[p^N])\cong M_{\lambda}[p^N]$ for $\lambda\in\Lambda_N$, there are canonical isomorphisms
\[
M[p^N]\cong\displaystyle\varinjlim_{\lambda\in\Lambda_N}{M_\lambda[p^N]}\cong\displaystyle\varinjlim_{\lambda\in\Lambda_N}{\mathcal{O}_{L_{\lambda}}\otimes_{\mathcal{O}_{L_{\lambda_{N}}}} (M_{\lambda_N}[p^N])}\cong\ol\otimes_{\mathcal{O}_{L_{\lambda_{N}}}}{(M_{\lambda_N}[p^N])}.
\]
Since $M$ is $p$-torsion, we have
\[
M\cong\displaystyle\varinjlim_{N\in\mathbb{N}}{M[p^N]}\cong\displaystyle\varinjlim_{N\in\mathbb{N}}{\ol\otimes (M_{\lambda_N}[p^N])}\cong (L/\ol)^{\oplus r},
\]
where the last isomorphism follows from Corollary~\ref{calculation good}.
\end{proof}

In the rest of this section, we prove the exactness of the functor $V_p$ under certain assumption of goodness. Recall (\cite[p.$413$]{Sch2}) that, for a short exact sequence of abelian groups
\[\xymatrix{
0 \ar[r] & M_1 \ar[r] & M_2 \ar[r] & M_3 \ar[r] & 0\ , \\
}\]
we have a long exact sequence

\[\xymatrix{
0 \ar[r] & M_1[p^n] \ar[r] & M_2[p^n] \ar[r] & M_3[p^n] \ar[r] & M_1/p^nM_1 \ar[r] & M_2/p^nM_2 \ar[r] & M_3/p^nM_3 \ar[r] & 0 \\
}\]

\noindent which constitutes an inverse system. If $\{M_i[p^n]\}_{n\in\mathbb{N}}$ satisfies  the Mittag-Leffler condition $($ML$)$ for $i=1,2,3$, then we have an exact sequence

\[\xymatrix{
0\ar[r] & T_pM_1 \ar[r]&T_pM_2 \ar[r] & T_pM_3 \ar[r] & \widehat{M_1} \ar[r] & \widehat{M_2} \ar[r] & \widehat{M_3} \ar[r] & 0,
}\]

\noindent where $\widehat{\ }$ denotes the $p$-adic Hausdorff completion. For an abelian group $M$, ML is satisfied for $\{M[p^n]\}_{n\in\mathbb{N}}$ if $p^mM=0$ for some $m\in\mathbb{N}$ or $M$ is $p$-divisible. Therefore, if $M$ is a good $\ol$-module, $\{M[p^n]\}_{n\in\mathbb{N}}$ also satisfies ML.

\begin{lemma}\label{exactness vp}
Let
\[\xymatrix{
0 \ar[r] & M_1 \ar[r] & M_2 \ar[r] & M_3 \ar[r] & 0 \\
}\]
be an exact sequence of $\ol$-modules with $M_2$ good. Then
\[\xymatrix{
0 \ar[r] & V_pM_1 \ar[r] & V_pM_2 \ar[r] & V_pM_3 \ar[r] & 0 \\
}\]
is exact.
\end{lemma}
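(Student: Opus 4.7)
The plan is to reduce the problem to showing that the cokernel of $T_p M_2 \to T_p M_3$ is killed by a fixed power of $p$; left-exactness of the $V_p$ sequence will then be automatic from flatness of $\qp$ over $\zp$, and the cokernel statement forces surjectivity after inverting $p$.

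First I would observe that by Remark~\ref{rem:good}(i), the submodule $M_1$ of the good module $M_2$ is itself good. Applying Theorem~\ref{str}(i) to $M_1$, there is an $m \in \mathbb{N}$ with $p^m M_1 = M_{1,\mathrm{div}}$ $p$-divisible. Since $p \cdot M_{1,\mathrm{div}} = M_{1,\mathrm{div}}$, this gives $p^n M_1 = p^m M_1$ for every $n \ge m$, so the inverse system $\{M_1/p^n M_1\}_n$ stabilizes to the $p^m$-torsion group $M_1/p^m M_1$, and hence $\widehat{M_1}$ is itself annihilated by $p^m$.

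Next, the snake lemma applied to multiplication by $p^n$ on the given short exact sequence yields the six-term exact sequence of inverse systems
\[0 \to M_1[p^n] \to M_2[p^n] \to M_3[p^n] \xrightarrow{\delta_n} M_1/p^n M_1 \to M_2/p^n M_2 \to M_3/p^n M_3 \to 0.\]
I split it into $0 \to M_1[p^n] \to M_2[p^n] \to A_n \to 0$ and $0 \to A_n \to M_3[p^n] \to C_n \to 0$, where $A_n = \ker \delta_n$ and $C_n = \mathrm{Im}(\delta_n) \subseteq M_1/p^n M_1$. Since $M_1$ is good, $\{M_1[p^n]\}_n$ satisfies ML (as recalled just before the lemma), so $R^{1}\varprojlim M_1[p^n] = 0$, and taking $\varprojlim$ of the first sequence gives an exact $0 \to T_p M_1 \to T_p M_2 \to \varprojlim A_n \to 0$. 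Taking $\varprojlim$ (left exact) of the second gives $0 \to \varprojlim A_n \to T_p M_3 \to \varprojlim C_n$, and since $\varprojlim$ preserves inclusions, $\varprojlim C_n \hookrightarrow \widehat{M_1}$. Combining, $\mathrm{Im}(T_p M_2 \to T_p M_3) = \varprojlim A_n$ and the cokernel of $T_p M_2 \to T_p M_3$ embeds in the $p^m$-torsion group $\widehat{M_1}$.

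Finally, tensor with $\qp$ over $\zp$. Flatness preserves left exactness, giving the exact sequence $0 \to V_p M_1 \to V_p M_2 \to V_p M_3$, and the cokernel of $V_p M_2 \to V_p M_3$ equals $\qp \otimes_{\zp}$ of a $p^m$-torsion group, hence is zero. The main (and only non-routine) step is the bound on $\widehat{M_1}$ via Theorem~\ref{str}: without goodness of $M_1$, the inverse limit $\varprojlim C_n$ could carry a non-torsion piece and obstruct surjectivity on the right after inverting $p$.
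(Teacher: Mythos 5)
Your proof is correct and rests on the same essential ideas as the paper's: goodness of $M_1$ (via Remark~\ref{rem:good}(i) and Theorem~\ref{str}) forces $\widehat{M_1}$ to be bounded $p$-power torsion, and one obtains a four-term exact sequence $0 \to T_pM_1 \to T_pM_2 \to T_pM_3 \to \widehat{M_1}$ to tensor with $\qp$. The only difference is tactical: the paper first replaces $(M_1,M_2,M_3)$ by $(M_1\cap p^nM_2, p^nM_2, p^nM_3)$ so that all three $\{M_i[p^n]\}$ satisfy Mittag-Leffler and the recalled six-term lemma can be invoked directly, whereas you split the snake-lemma system by hand and observe that only $R^1\varprojlim M_1[p^n]=0$ is actually needed — a slightly leaner use of the hypotheses, but the same argument in substance.
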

\begin{proof}
Since $V_p$ is left exact by the definition, we have only to prove the surjectivity. Since $V_p(p^nM)=V_pM$ for an abelian group $M$, we can assume $M_2$ is $p$-divisible by replacing $M_1$ by $M_1\cap p^nM_2$, $M_2$ and $M_3$ by $p^nM_2$, $p^nM_3$ for sufficiently large $n$. Since $M_3$ is $p$-divisible and $M_1$, $M_2$ are good, we have an exact sequence
\[\xymatrix{0 \ar[r]& T_pM_1 \ar[r] & T_pM_2 \ar[r] & T_pM_3 \ar[r] & \widehat{M_1}.
}\]
Since we have $\widehat{M_1}\cong M_1/p^nM_1$ for sufficiently large $n$, we obtain the conclusion by tensoring the above sequence with $\qp$.
\end{proof}

\begin{corollary}\label{four term lemma}
Let
\[\xymatrix{
0 \ar[r] & M_1 \ar[r] & M_2 \ar[r] & M_3 \ar[r] & M_4 \ar[r] & 0 \\
}\]
be an exact sequence of $\ol$-modules. If $M_2$ is good and $p^mM_4=0$ for some $m\in\mathbb{N}$, then
\[\xymatrix{
0 \ar[r] & V_pM_1 \ar[r] & V_pM_2 \ar[r] & V_pM_3 \ar[r] & 0  \\}\]
is exact.
\end{corollary}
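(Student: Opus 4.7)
The plan is to split the four-term exact sequence into two short exact sequences through the image of the middle map, apply Lemma~\ref{exactness vp} to one of them, and handle the other using the vanishing of $V_p$ on bounded $p$-torsion modules.

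Set $N = \ker(M_3 \to M_4) = \image{M_2 \to M_3}$. Then we have short exact sequences
\[
0 \to M_1 \to M_2 \to N \to 0, \qquad 0 \to N \to M_3 \to M_4 \to 0.
\]
The first one has $M_2$ good in the middle, so Lemma~\ref{exactness vp} applies and gives an exact sequence
\[
0 \to V_pM_1 \to V_pM_2 \to V_pN \to 0.
\]

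It remains to show that the natural inclusion $V_pN \hookrightarrow V_pM_3$ (which comes from left-exactness of $V_p$ applied to $0 \to N \to M_3$) is an isomorphism. Since $p^mM_4=0$, the group $T_pM_4$ is annihilated by $p^m$, so $V_pM_4 = 0$; left-exactness of $V_p$ applied to $0 \to p^mM_3 \to M_3 \to M_4 \to 0$ then gives $V_p(p^mM_3) \stackrel{\cong}{\to} V_pM_3$. Because $p^mM_3 \subset N \subset M_3$, functoriality of $V_p$ produces
\[
V_p(p^mM_3) \hookrightarrow V_pN \hookrightarrow V_pM_3,
\]
whose composition is the isomorphism just described; hence $V_pN \to V_pM_3$ is surjective, and therefore an isomorphism. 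Composing with the preceding short exact sequence yields
\[
0 \to V_pM_1 \to V_pM_2 \to V_pM_3 \to 0,
\]
as required.

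The only nontrivial step is the identification $V_pN \cong V_pM_3$, and this is purely formal once one knows $V_p$ vanishes on bounded $p$-torsion modules; the real content has already been absorbed into Lemma~\ref{exactness vp}, which provides the crucial surjectivity of $V_pM_2 \to V_pN$.
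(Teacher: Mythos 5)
Your splitting strategy is the right one (and is presumably what the paper intends, since no separate proof is given), but the middle step contains a genuine error. The sequence
\[
0 \to p^m M_3 \to M_3 \to M_4 \to 0
\]
is \emph{not} exact in general: the kernel of $M_3 \to M_4$ is $N = \image{M_2 \to M_3}$, which contains $p^m M_3$ but need not equal it. For instance, if $M_3 = \mathbb{Z}/p \oplus \mathbb{Z}/p$ surjects onto $M_4 = \mathbb{Z}/p$ via projection onto the first factor, then $N$ is a copy of $\mathbb{Z}/p$ while $pM_3 = 0$; such data do arise from a four-term sequence satisfying the hypotheses (take $M_1 = 0$, $M_2 = N$, which is of finite length and hence good when $L=K$). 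So you cannot deduce $V_p(p^m M_3) \cong V_p M_3$ by applying left-exactness of $V_p$ to that sequence.

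The conclusion $V_p(p^m M_3) = V_p M_3$ is nonetheless true as a general fact about abelian groups (it is exactly the identity ``$V_p(p^n M) = V_p M$'' that the paper invokes in the proof of Lemma~\ref{exactness vp}), so once you replace your justification by this fact the sandwich argument $V_p(p^m M_3) \hookrightarrow V_p N \hookrightarrow V_p M_3$ is repaired. Better still, the whole detour through $p^m M_3$ is unnecessary: you already have the genuinely exact sequence $0 \to N \to M_3 \to M_4 \to 0$, and since $V_p$ is left exact and $V_p M_4 = 0$, left-exactness applied to this sequence directly yields the isomorphism $V_p N \xrightarrow{\cong} V_p M_3$ in one step.
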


\section{Main theorem}\label{sec:main}
\begin{definition}
For $k\in \mathbb{N}$, put $\mathcal{A}_{k}=\varprojlim_{n}{\hokbar{k-1}/p^n\hokbar{k}}$, i.e., the topological closure of $\hokbar{k-1}$ in $\BB{k}$ and let $d^{(k)}:\mathcal{A}_k\to \hkahler{k}$ be the canonical extension of $d^{(k)}:\hokbar{k-1} \to \hkahler{k}$ by continuity.
\end{definition}

We call a sequence $\{x_n\}\subset \BB{k}$ is $\BB{k}$-Cauchy if this sequence is a Cauchy sequence in $\BB{k}$.

\begin{lemma}
Let $k\in \mathbb{N}$.
\begin{enuroman}
\item $J_k\subset \mathcal{A}_k$.

\item There exists a commutative diagram:
\[\xymatrix{
0 \ar[r] & I_{+}^k/I_{+}^{k+1}  \ar[d]^{\mathrm{inc}.} \ar[r]^(.6){\mathrm{inc}.} & J_k  \ar[d]^{\mathrm{inc}.} \ar[r]^(.3){\mathrm{pr}.} & I^k/(I^{k+1}+I_{+}^{k}) \ar[d]^{\iota^{-1}}  \ar[r]  &  0    \\
0 \ar[r] & \AAA{k} \ar[r]^{\mathrm{inc}.} & \mathcal{A}_k \ar[r]^(.45){d^{(k)}} & \hkahler{k} \ar[r]  &  0   \\
}\]
with exact rows. $(\iota$ is the isomorphism in Theorem~\ref{thm Col}$(\mathrm{ii}).)$
\end{enuroman}
\end{lemma}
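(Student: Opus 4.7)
My strategy is to identify $\mathcal{A}_k$ explicitly as the $\AAA{k}$-submodule $\AAA{k}+J_k$ of $\BB{k}$, from which the exact sequence in (ii) follows formally via
\[
(\AAA{k}+J_k)/\AAA{k} \cong J_k/(\AAA{k}\cap J_k) = J_k/(\hiplus{k}/\hiplus{k+1}) = \II{k}/(\II{k+1}+\hiplus{k}) = \hkahler{k},
\]
using $\AAA{k}\cap J_k = \hiplus{k}/\hiplus{k+1}$ (Remark~\ref{rem:ainf}(iv)) and Theorem~\ref{thm Col}(ii) together with Remark~\ref{rem:kah}. The top row of the diagram is just the standard short exact sequence coming from these identifications.

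For part (i), fix $j\in J_k$ and $n\in\mathbb{N}$. Because $J_k$ is a $\cp$-vector space by Remark~\ref{rem:ainf}(iv), it is $p$-divisible, so I can write $j = p^n j'$ with $j'\in J_k$. The surjectivity of $d^{(k)}=\partial^{(k)}$ (Theorem~\ref{thm Col}(ii), (iii)) then produces $x\in\hokbar{k-1}$ and $\tilde x\in\A$ with $x-\tilde x\in \II{k}$ and $(x-\tilde x)-j' \in \II{k+1}+\hiplus{k}$; reducing modulo $\II{k+1}$ in $\BB{k}$ yields $j' = x - \tilde x - u$ for some $u\in \hiplus{k}/\hiplus{k+1}\subset \AAA{k}$. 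Multiplying by $p^n$, and using that $\hokbar{k-1}$ is an $\ok$-subring of $\okbar$ (in particular $p^n x\in\hokbar{k-1}$), I obtain
\[
j = p^n x - p^n(\tilde x+u) \in \hokbar{k-1}+p^n\AAA{k}.
\]
Since $n$ is arbitrary, $j$ lies in the closure of $\hokbar{k-1}$ in $\BB{k}$, namely in $\mathcal{A}_k$.

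For part (ii), I first establish $\mathcal{A}_k = \AAA{k}+J_k$. The inclusion $\mathcal{A}_k\subset \AAA{k}+J_k$ follows from $\hokbar{k-1}\subset\AAA{k}+J_k$ (by Theorem~\ref{thm Col}(i), since $\hokbar{k-1}=\kbar\cap(\A+\II{k})$ projects into $\AAA{k}+J_k$ in $\BB{k}$) together with the fact that $\AAA{k}+J_k$ is open, hence closed, in $\BB{k}$. For the reverse, $\AAA{k}\subset\mathcal{A}_k$ comes from the $p$-adic density $\hokbar{k}\subset\AAA{k}$ (Theorem~\ref{thm Col}(i)) combined with $\hokbar{k}\subset\hokbar{k-1}$, and $J_k\subset\mathcal{A}_k$ is part (i). I then define $d^{(k)}:\mathcal{A}_k\to\hkahler{k}$ by $a+j\mapsto [j]\in \II{k}/(\II{k+1}+\hiplus{k})$; this is well-defined because the ambiguity in the decomposition lies in $\AAA{k}\cap J_k = \hiplus{k}/\hiplus{k+1}$, which vanishes in $\hkahler{k}$, and it agrees with the original $d^{(k)}$ on $\hokbar{k-1}$ via Theorem~\ref{thm Col}(ii) (the decomposition $x=\tilde x+(x-\tilde x)$ realizes $\partial^{(k)}$). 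The kernel is visibly $\AAA{k}$, surjectivity is inherited from Theorem~\ref{thm Col}(iii) via $\hokbar{k-1}\subset\mathcal{A}_k$, and the two squares commute tautologically from the definitions of the vertical inclusions.

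The main obstacle is (i): neither the surjectivity of $d^{(k)}$ alone (which only gives approximation modulo $\hiplus{k}$) nor the $p$-divisibility of $J_k$ alone (which says nothing about $\hokbar{k-1}$) suffices by itself. The key is that the rescaling $j = p^n j'$ converts an approximation of $j'$ modulo $\hiplus{k}$ into an approximation of $j$ modulo $p^n \AAA{k}$, so that the two ingredients combine to give the required $p$-adic approximation by elements of $\hokbar{k-1}$.
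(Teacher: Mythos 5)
Your proposal is correct, and it departs from the paper's proof in both parts in a way worth noting. For part (i), the paper takes a $B_k$-Cauchy sequence $\{x_n\}\subset\kbar$ converging to $x\in J_k$ (available by Theorem~\ref{thm Col}(iv)), observes that this sequence converges to $0$ in $B_{k-1}$, and uses Theorem~\ref{thm Col}(iv) again to conclude $x_n\in\hokbar{k-1}$ for $n\gg 0$. You instead combine the $p$-divisibility of $J_k$ with the surjectivity of $\partial^{(k)}$, rescaling $j=p^nj'$ so that an approximation of $j'$ modulo the image of $\hiplus{k}$ becomes an approximation of $j$ modulo $p^n\AAA{k}$; this trades the density statement of Theorem~\ref{thm Col}(iv) for the surjectivity statement (iii), and is a clean alternative. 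For part (ii), the paper obtains the lower exact row by taking $\varprojlim_n$ of the system $0\to\hokbar{k}/p^n\hokbar{k}\to\hokbar{k-1}/p^n\hokbar{k}\to\hkahler{k}\to 0$, whereas you first establish the explicit description $\mathcal{A}_k=\AAA{k}+J_k$ inside $B_k$ and then read off the exact sequence from $\AAA{k}\cap J_k=\hiplus{k}/\hiplus{k+1}$. Your version makes the identification $\mathcal{A}_k=\AAA{k}+J_k$ explicit (which the paper never states but implicitly uses when verifying commutativity of the right square), at the cost of invoking (i) as an ingredient for (ii) — though the paper's proof of the right square also tacitly uses (i), so this is not a loss. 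One small caveat: the equality $\AAA{k}\cap J_k=\hiplus{k}/\hiplus{k+1}$ does not literally appear in Remark~\ref{rem:ainf}(iv); it follows by a short induction from Remark~\ref{rem:ainf}(iii) (and the injectivity in (i)), and the paper uses it without citation as well, so your citation should really point to Remark~\ref{rem:ainf}(iii) or simply note the computation $\A\cap I^k=\hiplus{k}$.
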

\begin{proof}
(i) Let $x\in J_k$. Then, by Theorem~\ref{thm Col}(iv), there exists a $B_k$-Cauchy sequence $\{x_n\}\subset\kbar$ which converges to $x$. Then, this sequence converges to zero when viewed as a sequence in $B_{k-1}$. Hence, by Theorem~\ref{thm Col}(iv), $x_n$ is contained in $\mathcal{O}_{\kbar}^{(k-1)}$ for sufficiently large $n$. Hence $x=\lim{x_n}$ is contained $\mathcal{A}_k$.

(ii) The exactness of the upper horizontal arrow is obvious. The lower horizontal arrow is obtained by taking the inverse limit of the exact sequence
\[\xymatrix{
0 \ar[r] & \mathcal{O}_{\kbar}^{(k)}/p^n\mathcal{O}_{\kbar}^{(k)} \ar[r] & \mathcal{O}_{\kbar}^{(k-1)}/p^n\mathcal{O}_{\kbar}^{(k)} \ar[r] & \threekahler{(k)}{\okbar}{\ok}  \ar[r] & 0, \\
}\]
hence it is also exact. The commutativity of the left square is obvious and it suffices to prove the commutativity of the right square. For $x\in J_k$, write $x=\lim{x_n}$ with $x_n\in \hokbar{k-1}=\kbar\cap (\A+I^k)$. Write $x_n=y_n+z_n$, $y_n\in \A$, $z_n\in I^k$, then we have $\iota\circ d^{(k)}(x)=z_n \mod{I^{k+1}+I_{+}^k}$ for $n\gg 0$. Since $x-\overline{x_n}\in \AAA{k}$ for $n\gg 0$, we have $x-\overline{z_n}=x-\overline{x_n}+\overline{y_n}\in \AAA{k}\cap J_k=I_{+}^k/I_{+}^{k+1}$ where $\Bar{\ }\Bar{\ }$ denotes $\mod{I^{k+1}}$.
\end{proof}

Put again $d^{(k)}:J_k\to \hkahler{k}$ the restriction of $d^{(k)}:\mathcal{A}_k\to \hkahler{k}$ to $J_k$.

\begin{corollary}
$d^{(k)}:J_k\to \hkahler{k}$ is a surjective $\ocp$-module homomorphism.
\end{corollary}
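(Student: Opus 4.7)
The plan is to read this corollary off directly from the commutative diagram produced in the preceding lemma. I would first observe that the right-hand vertical arrow $\iota^{-1}$ is an isomorphism and the top row is exact, so the projection $\mathrm{pr}\colon J_k \twoheadrightarrow I^k/(I^{k+1}+\hiplus{k})$ is surjective. By commutativity of the right square of that diagram, the composition $\iota^{-1}\circ\mathrm{pr}$ coincides with the restriction of $d^{(k)}\colon\mathcal{A}_k\to\hkahler{k}$ to $J_k$, and the surjectivity follows at once.

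For the module-theoretic assertion, I would note that $J_k$ carries a canonical $\ocp$-module structure (via $B_k/I=\cp$), and $\hkahler{k}$ is visibly an $\ocp$-module too. The projection $\mathrm{pr}$ and the isomorphism $\iota^{-1}$ are $\ocp$-linear essentially by construction, so their composition is $\ocp$-linear. If instead one prefers to check the $\ocp$-linearity directly from the derivation $d^{(k)}$, one uses the Leibniz rule: for $a\in A_k$ reducing to $\bar a\in\ocp$ and $x\in J_k$ represented by an element of $I^k$, one has $d^{(k)}(ax)=a\,d^{(k)}(x)+x\,d^{(k)}(a)$, and the second term lands in $I^{2k}/(I^{k+1}+\hiplus{k})$, which vanishes once $k\ge 1$; hence $d^{(k)}(ax)=\bar a\,d^{(k)}(x)$.

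The main obstacle here is essentially nonexistent: all of the substantive content (the existence of the continuous extension of $d^{(k)}$, the identification $\iota$, and the exact sequences of the two rows) has been absorbed into the preceding lemma, and this corollary is merely a naming of the composition along the lower-right of that diagram.
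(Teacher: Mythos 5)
Your proposal is correct and matches the paper's intent: the corollary is stated without proof precisely because it is read off from the commutative diagram with exact rows in the preceding lemma, exactly as you spell out (surjectivity from exactness of the top row and the isomorphism $\iota$, $\ocp$-linearity because $\mathrm{pr}$ and $\iota$ are $\ocp$-linear). A small simplification of your Leibniz alternative: exactness of the bottom row already forces $d^{(k)}(A_k)=0$, so the cross term $x\,d^{(k)}(a)$ vanishes outright without appealing to $I^{2k}\subset I^{k+1}$.
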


\begin{definition}
For $k\in \mathbb{N}$, put $\lhat{k}=\varprojlim_{n}{L/p^n\hol{k}}$ $(resp.\ \widehat{L}_{\infty}=\varprojlim_{n,k}{L/p^n\hol{k}})$, i.e., the topological closure of $L$ in $\BB{k}$ $(resp.\ \bdr)$. Note that $\lhat{0}$ is just $\widehat{L}$.
\end{definition}

Note that for $x\in J_k\cap \lhat{k}$, we have a $\BB{k}$-Cauchy sequence $\{x_n\}$ such that $x=\lim{x_n}$ with $x\in \hol{k-1}$ as in the proof of previous lemma. Let us note the following lemma:

\begin{lemma}
Let $F$ be a non-archimedean complete valuation field and let $F_0$ be a dense subfield of $F$. Let $V$ be a complete topological $F$-vector space and let $V_0$ be a sub $F_0$-vector space of $V$ which is closed in $V$. Then $V_0$ is also a sub $F$-vector space of $V$.
\end{lemma}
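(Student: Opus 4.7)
The plan is to show directly that $V_0$ is stable under multiplication by arbitrary $a \in F$, exploiting density of $F_0$ in $F$, joint continuity of scalar multiplication on the topological vector space $V$, and closedness of $V_0$.

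More concretely, fix $v \in V_0$ and $a \in F$. Since $F_0$ is dense in $F$, I can choose a sequence $\{a_n\}_{n \in \mathbb{N}} \subset F_0$ with $a_n \to a$ in $F$. Each $a_n v$ lies in $V_0$ because $V_0$ is an $F_0$-vector space. By the definition of a topological $F$-vector space the scalar multiplication map $F \times V \to V$ is (at least separately) continuous; applied to the convergent sequence $a_n \to a$ with the fixed vector $v$ this yields $a_n v \to a v$ in $V$. Since $V_0$ is closed in $V$, the limit $a v$ belongs to $V_0$. Thus $V_0$ is closed under multiplication by elements of $F$, and since it is already an additive subgroup, it is an $F$-subspace of $V$.

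There is essentially no obstacle: the argument is a two-line density-plus-continuity argument, and the non-archimedean and completeness hypotheses on $F$ and $V$ are not actually used except insofar as they are needed to make sense of the statement (they ensure that $V$ is an honest topological $F$-vector space with continuous scalar action). The only item to be careful about is invoking continuity of the map $a \mapsto a v$ for fixed $v$, which is a standard consequence of $V$ being a topological $F$-module.
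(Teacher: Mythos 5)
Your proof is correct. The paper actually states this lemma without proof (it is treated as a routine observation used immediately to conclude that $J_k\cap\widehat{L}_k$ is an $\widehat{L}$-vector space), so there is no paper argument to compare against; your density-plus-continuity-plus-closedness argument is exactly the expected one. You are also right that the completeness hypotheses on $F$ and $V$ are not used in the argument itself---they are present because they hold in the application and because the word ``complete'' is part of the ambient setup of the paper; density of $F_0$ in $F$ and continuity of the map $a\mapsto av$ for fixed $v$ (which follows from $V$ being a topological $F$-vector space) are all that is needed. One minor remark: since $F$ carries a valuation it is metrizable, so working with sequences $a_n\to a$ in $F$ is legitimate; but even without first countability of $V$ the argument goes through, because a sequence $\{a_nv\}\subset V_0$ converging to $av$ already places $av$ in the closure of $V_0$, which equals $V_0$.
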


In particular, $J_k\cap\lhat{k}$ is an $\widehat{L}$-vector space and we have a well-defined $\mathcal{O}_{\widehat{L}}$-module homomorphism $d^{(k)}:J_k\cap \lhat{k}\to (\olkahler)_{\mathrm{div}}$. The image is contained in $d^{(k)}(\hol{k-1})$.

\begin{definition}
For $k\in \mathbb{N}_{>0},\ L/K$ is said to be de Rham at level $k$ if $d^{(k)}(\hol{k-1})$ contains $\divi{\olkahler}$ and put $\drcoh{k}=\divi{\olkahler}/d^{(k)}(J_k\cap \lhat{k})$.
\end{definition}

\begin{remark}\label{rem:dr}
\begin{enuroman}
\item The definition of de Rham at level $k$ in this paper corresponds to that of de Rham at level $k+1$ in \cite{IZ}. Our numbering is natural as we will see in Theorem~\ref{thm:main}.     

\item $L/K$ is  de Rham at level $k$ if and only if $\drcoh{k}=0$. In fact, assume that $L/K$ is de Rham at level $k$ and let $\omega\in\divi{\threekahler{(k)}{\ol}{\ok}}$. Then we can take $\omega_n\in\divi{\threekahler{(k)}{\ol}{\ok}}$ and $x_n\in\mathcal{O}_{L}^{(k-1)}$ such that $\omega_0=\omega,\ \omega_n=p\omega_{n+1}$ and $\omega_n=d^{(k)}(x_n)$. Then, since $d^{(k)}(px_{n+1}-x_n)=p\omega_{n+1}-\omega_n=0,\ px_{n+1}-x_n$ is contained in $\mathcal{O}_{L}^{(k)}$. Hence we have $p^{n+1}x_{n+1}-p^nx_n\in p^n\mathcal{O}_{L}^{(k)}$ and so $\{p^nx_n\}$ is a $B_k$-Cauchy sequence. On the other hand, since $p^nx_n\in p^n\mathcal{O}_{L}^{(k-1)}$, $\{p^nx_n\}$ converges to zero in $B_{k-1}$. So we have $d^{(k)}(x)=\omega$ with $x=\lim{p^nx_n}\in J_k\cap\widehat{L}_{k}$. 

By this argument and the $\ol$-linearity of $d^{(k)}:J_k\cap\widehat{L}_k\to\divi{\threekahler{k}{\ol}{\ok}}$, we can also prove $\drcoh{k}=0$ if $\mathrm{Im}(d^{(k)}:\mathcal{O}_{L}^{(k-1)}\to \threekahler{(k)}{\ol}{\ok})_{\mathrm{div}}$ generates $\divi{\threekahler{(k)}{\ol}{\ok}}$ over $\ol$.
\end{enuroman}
\end{remark}

\begin{lemma}\label{valuationw}
Let $L/K$ be de Rham at level $k$. Then there exists a constant $m_k\in \mathbb{N}$, which depends only on $L$, satisfying the following$:$

 For $x\in \mathrm{Im}(\lhat{k}\to \lhat{k-1})$, there exists a lift $x'\in \lhat{k}$ of $x$ such that $w_k(x')\ge w_{k-1}(x)-m_k$. 
\end{lemma}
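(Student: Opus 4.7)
The lemma is a strictness (open mapping) statement for the continuous $K$-linear map $\lhat{k}\to\lhat{k-1}$ of $p$-adic Banach $K$-algebras; the de Rham hypothesis enters to guarantee that $J_k\cap\lhat{k}$ is rich enough to supply the corrections needed for a lift with bounded valuation.

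Given $x\in\mathrm{Im}(\lhat{k}\to\lhat{k-1})$ with $n=w_{k-1}(x)$, pick any lift $y_0\in\lhat{k}$ and a lift $z\in A_k$ of $x/\pi_K^n\in A_{k-1}$. Setting $j:=y_0-\pi_K^n z\in J_k$, every lift of $x$ in $\lhat{k}$ has the form $y_0-j'$ with $j'\in J_k\cap\lhat{k}$, and the desired inequality $w_k(y_0-j')\ge n-m_k$ is equivalent to $j-j'\in\pi_K^{n-m_k}(A_k\cap J_k)=\pi_K^{n-m_k}(I_+^k/I_+^{k+1})$. So the problem becomes: approximate $j\in J_k$ by $j'\in J_k\cap\lhat{k}$ in the $\pi_K$-adic sense, with error uniformly controlled in $n$. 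As a non-uniform first step, by continuity of $d^{(k)}:\mathcal{A}_k\to\hkahler{k}$ and the de Rham hypothesis, $d^{(k)}(j)=d^{(k)}(y_0)$ lies in $\divi{\olkahler}=d^{(k)}(J_k\cap\lhat{k})$, so some $j'\in J_k\cap\lhat{k}$ satisfies $d^{(k)}(j-j')=0$, i.e.\ $j-j'\in I_+^k/I_+^{k+1}=A_k\cap J_k$; this produces a lift with $w_k\ge 0$.

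To obtain the uniform bound $w_k\ge n-m_k$, I would fix once and for all (depending only on $L$) a finite family inside $J_k\cap\lhat{k}$ consisting of elements $\omega_1,\dots,\omega_r$ whose Tate-module images form an $\widehat{L}$-basis of $V_p(\divi{\olkahler})\cong\widehat{L}^r$ (using $\divi{\olkahler}\cong(L/\ol)^{\oplus r}$ from Theorem~\ref{str}), together with $\mathcal{O}_{\widehat{L}}$-generators of $(I_+^k/I_+^{k+1})\cap\lhat{k}$. Each of these has a specific finite $w_k$-valuation, and $m_k$ is chosen sufficiently large to absorb these. For each $j$ as above, one reads off $\widehat{L}$-coefficients of $d^{(k)}(j)$ in the chosen basis of $V_p(\divi{\olkahler})$ and uses them to build $j'\in J_k\cap\lhat{k}$ approximating $j$; the assumption $w_{k-1}(x)=n$ translates into a lower bound on the $\widehat{L}$-valuations of these coefficients, yielding $w_k(j-j')\ge n-m_k$. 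The principal obstacle is this uniform estimate, which amounts to Banach's open mapping theorem applied to the continuous $\widehat{L}$-linear surjection of Banach spaces obtained by passing to Tate modules, made accessible because $V_p(\divi{\olkahler})$ is finite-dimensional over $\widehat{L}$ and $J_k\cap\lhat{k}$ is a closed $\widehat{L}$-subspace of the $\cp$-Banach space $J_k$.
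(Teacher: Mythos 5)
Your setup is the right one, and it matches the paper's at the level of the key exact diagram: decompose a lift as an integral part plus a $J_k$-piece $j$, and reduce to approximating $j$ by some $j'\in J_k\cap\lhat{k}$ modulo $\pi_K^{n-m_k}(I_{+}^k/I_{+}^{k+1})$. But there are two concrete problems with how you carry it out.

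First, the assertion that $d^{(k)}(j)=d^{(k)}(y_0)$ lies in $\divi{\olkahler}$ is unjustified. What the de Rham hypothesis gives you (via Remark~\ref{rem:dr}) is $\divi{\olkahler}=d^{(k)}(J_k\cap\lhat{k})$, i.e.\ the divisible part is hit; it does not say that $d^{(k)}$ of an arbitrary element of $\lhat{k}$ is automatically divisible. Since $y_0$ approximates elements of $\hol{k-1}$ and $\hkahler{k}$ is discrete, one only gets $d^{(k)}(y_0)\in\olkahler$. In general $\olkahler$ has a nontrivial bounded-torsion part, so your ``non-uniform first step'' does not go through. The paper's fix is precisely the definition of $m_k$: since $\olkahler$ is a good $\ol$-module (Example~\ref{ex:good}), Theorem~\ref{str} gives $\pi_K^{m_k}\olkahler=\divi{\olkahler}$ for $m_k$ large enough, and then $\pi_K^{m_k}d^{(k)}(x')$ always lies in $\divi{\olkahler}=d^{(k)}(J_k\cap\lhat{k})$. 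Choosing $x''\in J_k\cap\lhat{k}$ with $\pi_K^{m_k}d^{(k)}(x'')=\pi_K^{m_k}d^{(k)}(x')$ (by building a $\pi_K$-divisible tower and passing to the limit), one obtains $\pi_K^{m_k}d^{(k)}(x'-x'')=0$, hence $w_k(x'-x'')\ge -m_k$.

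Second, and more importantly, the ``uniform estimate'' you treat as the principal obstacle is not an obstacle at all and requires no open mapping argument. Because $w_k(\pi_K^na)=w_k(a)+n$, you may normalize to $w_{k-1}(x)=0$ at the outset; your general-$n$ bookkeeping (and the appeal to Banach's theorem for the Tate-module surjection) is then superfluous. After normalizing, the desired bound is exactly $w_k(x'-x'')\ge -m_k$, which the construction above yields directly, with $m_k$ fixed once and for all from the module structure of $\olkahler$. That is the paper's whole proof: it is purely algebraic and needs none of the quantitative Banach-space input you were reaching for. In short, the structural constant $m_k$ defined via good modules already encodes the uniformity; after you reduce to $n=0$ and multiply by $\pi_K^{m_k}$, there is nothing left to estimate.
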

\begin{proof}
Take $m_k$ satisfying $\pi_K^{m_k}\olkahler=\divi{\olkahler}$. Since $w_k(\pi_K^na)=w_k(a)+n$ for all $k$, we can assume $w_{k-1}(x)=0$. Let $x'\in \lhat{k}$ be a lift of $x$. Choose $\{x_n\}\subset J_k\cap \lhat{k}$ such that $\pi_K^{m_k}d^{(k)}(x')=\pi_K^{m_k}d^{(k)}(x_0)$, $d^{(k)}(x_n)=\pi_Kd^{(k)}(x_{n+1})$. Then $\{\pi_K^nx_n\}$ is $\BB{k}$-Cauchy and we have $x''=\lim{\pi_K^nx_n}\in J_k\cap\lhat{k}$. We have $\pi_K^{m_k}d^{(k)}(x')=\pi_K^{m_k}d^{(k)}(x'')$, so the modified lift $x'-x''$ satisfies the required condition. 
\end{proof}

In particular, a $\BB{k-1}$-Cauchy sequence of $L$ lifts to a $\BB{k}$-Cauchy sequence of $\lhat{k}$. Hence we have

\begin{corollary}\label{de Rham is surj}
Under the same assumption as above, the canonical map $\lhat{k}\to \lhat{k-1}$ is surjective.
\end{corollary}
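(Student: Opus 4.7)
\medskip
\noindent\textbf{Proof proposal.} The plan is to start from a sequence in $L$ that approximates a given element of $\lhat{k-1}$, lift successive differences to $\lhat{k}$ with controlled $w_k$-valuation via Lemma~\ref{valuationw}, and sum the lifts.

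Concretely, given $y\in\lhat{k-1}$, since $L$ is dense in $\lhat{k-1}$ I would pick $\{y_n\}\subset L$ with $y_n\to y$ in $B_{k-1}$, so that $w_{k-1}(y_{n+1}-y_n)\to\infty$. Each difference $y_{n+1}-y_n$ lies in $L\subset\lhat{k}$, so its image in $\lhat{k-1}$, namely itself, sits in the image of the canonical map $\lhat{k}\to\lhat{k-1}$. Lemma~\ref{valuationw} then supplies a lift $z_n\in\lhat{k}$ of $y_{n+1}-y_n$ satisfying $w_k(z_n)\ge w_{k-1}(y_{n+1}-y_n)-m_k$, hence $w_k(z_n)\to\infty$. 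Setting $\tilde{y}_N=y_0+\sum_{n=0}^{N-1}z_n\in\lhat{k}$ (with $y_0\in L$ viewed inside $\lhat{k}$), the sequence $\{\tilde{y}_N\}$ is $B_k$-Cauchy, and by completeness of $\lhat{k}$ (it is closed in the complete space $B_k$) it converges to some $\tilde{y}\in\lhat{k}$. By construction $\tilde{y}_N$ projects to $y_N$ in $\lhat{k-1}$, so by continuity of the projection $\tilde{y}$ projects to $\lim y_N=y$, proving surjectivity.

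I do not anticipate any genuine obstacle: the entire content of the de Rham hypothesis has already been absorbed into Lemma~\ref{valuationw} and its uniform constant $m_k$. The remaining steps are a telescoping argument plus the routine identification of $\pi_K$-adic and $p$-adic topologies on $A_k$ coming from $\pi_K^{e_K}=p\cdot(\text{unit})$, which is what legitimates the convergence of $\sum z_n$ in $B_k$.
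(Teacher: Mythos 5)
Your proof is correct and is exactly the argument the paper has in mind: the paper simply remarks, just before the corollary, that Lemma~\ref{valuationw} lets one lift a $B_{k-1}$-Cauchy sequence of $L$ to a $B_k$-Cauchy sequence in $\lhat{k}$, and your telescoping construction with the lifts $z_n$ of $y_{n+1}-y_n$ is the standard way to make that remark precise.
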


\begin{theorem}[Main theorem]\label{thm:main}
For $k\in\mathbb{N}_{>0}$, the followings are equivalent$:$
\begin{enuroman}
\item For $1\le n\le k$, $\lhat{n}=\BB{n}^{\gl}$.  

\item For $1\le n\le k$, $\jl{n}=J_n^{\gl}$.

\item For $1\le n\le k$, $V_p(\holkahler{n})=J_n^{\gl}$ and $L/K$ is de Rham at level $n$.
\end{enuroman}
\end{theorem}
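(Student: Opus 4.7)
The plan is to prove the three equivalences by closing the cycle (i) $\Rightarrow$ (ii) $\Leftrightarrow$ (iii) $\Rightarrow$ (i), running an outer induction on the level $n\le k$ whose base case $n=0$ is furnished by the Ax--Sen--Tate theorem $\widehat{L}=\cp^{\gl}$ (Theorem~\ref{ax}).

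The implication (i) $\Rightarrow$ (ii) is immediate: intersecting $\lhat{n}=B_n^{\gl}$ with $J_n$ gives $\jl{n}=B_n^{\gl}\cap J_n=J_n^{\gl}$. The implication (iii) $\Rightarrow$ (i) proceeds by induction on $n$. The de Rham hypothesis in (iii), combined with Corollary~\ref{de Rham is surj}, yields a short exact sequence $0\to\jl{n}\to\lhat{n}\to\lhat{n-1}\to 0$, which I would compare with the left-exact sequence $0\to J_n^{\gl}\to B_n^{\gl}\to B_{n-1}^{\gl}$ arising from $0\to J_n\to B_n\to B_{n-1}\to 0$. Using the hypothesis $\jl{n}=J_n^{\gl}$ on the left and the inductive identification $\lhat{n-1}=B_{n-1}^{\gl}$ on the right, a five-lemma diagram chase forces $\lhat{n}=B_n^{\gl}$.

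The substance of the theorem is the equivalence (ii) $\Leftrightarrow$ (iii), which I would carry out at each fixed level $n$. Here I would use the exact sequence $0\to\iplus^n/\iplus^{n+1}\to J_n\stackrel{d^{(n)}}{\to}\threekahler{(n)}{\okbar}{\ok}\to 0$ from the lemma preceding the theorem, together with the identification $V_p(\threekahler{(n)}{\okbar}{\ok})=J_n$ from Remark~\ref{rem:kah}. The structure theorem for good modules (Theorem~\ref{str} and the ensuing corollary) identifies $V_p(\holkahler{n})\cong\widehat{L}^{r(\holkahler{n})}$ as a subspace of $J_n^{\gl}$, mapping isomorphically under $d^{(n)}$ onto the divisible part $\divi{\holkahler{n}}$. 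On the other side, $d^{(n)}:\jl{n}\to\divi{\holkahler{n}}$ is surjective precisely when $L/K$ is de Rham at level $n$ (Remark~\ref{rem:dr}(ii)). Comparing kernels inside $\jl{n}\cap(\iplus^n/\iplus^{n+1})$ versus $(\iplus^n/\iplus^{n+1})^{\gl}$ and the respective images, I would show that $\jl{n}=J_n^{\gl}$ is equivalent to both parts of (iii) holding simultaneously at level $n$.

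The main obstacle I anticipate lies in the (ii) $\Leftrightarrow$ (iii) step: one must pin down the $\gl$-invariants of $\iplus^n/\iplus^{n+1}$, a free $\ocp$-module whose Galois action generalizes the Tate twist to the imperfect residue case through the $p$-basis of $K$, and reconcile the $V_p$-viewpoint on $\threekahler{(n)}{\ol}{\ok}$ with the topological Cauchy-completion viewpoint on $\jl{n}$. The exactness of $V_p$ on good modules (Lemma~\ref{exactness vp} and Corollary~\ref{four term lemma}) should let me push the short exact sequence of Kähler differentials through $V_p$ without loss of information. Once this comparison is in hand, the remaining implications follow by formal diagram chases.
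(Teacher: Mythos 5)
Your outer skeleton is the same as the paper's: prove (i)~$\Rightarrow$~(ii), establish (ii)~$\Leftrightarrow$~(iii), and get (ii)+(iii)~$\Rightarrow$~(i) from Corollary~\ref{de Rham is surj}, Theorem~\ref{ax} and a diagram chase on the two short exact sequences $0\to J_n\cap\lhat{n}\to\lhat{n}\to\lhat{n-1}\to 0$ and $0\to J_n^{\gl}\to B_n^{\gl}\to B_{n-1}^{\gl}$. The implication (i)~$\Rightarrow$~(ii) by intersecting with $J_n$ is correct, and the closing implication is fine once (ii) and (iii) are known together. But the central step (ii)~$\Leftrightarrow$~(iii) is where you have a real gap, and the tool you reach for (exactness of $V_p$ on good modules) is not the one the paper actually uses here.

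Your plan is to ``compare kernels inside $\jl{n}\cap(\iplus^n/\iplus^{n+1})$ versus $(\iplus^n/\iplus^{n+1})^{\gl}$'' in the exact sequence $0\to\iplus^n/\iplus^{n+1}\to J_n\to\threekahler{(n)}{\okbar}{\ok}\to 0$. This would force you to actually determine $(\iplus^n/\iplus^{n+1})^{\gl}$, a Galois-invariants computation whose difficulty you yourself flag. The paper sidesteps that computation entirely by a dimension count that hinges on Lemma~\ref{easy}: the $\olhat$-linear map $d^{(n)}:\jl{n}\to\divi{\holkahler{n}}\cong(\widehat{L}/\olhat)^{\oplus m'}$ (with $m'=\dim_{\widehat{L}}V_p(\holkahler{n})$) is lifted to an $\widehat{L}$-linear $D^{(n)}:\jl{n}\to\widehat{L}^{\oplus m'}$. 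Since $\ker D^{(n)}$ is an $\widehat{L}$-subspace hence $p$-divisible, and $\ker D^{(n)}\subset\ker d^{(n)}\subset\iplus^n/\iplus^{n+1}$ has no nonzero divisible elements (it is a free $\ocp$-module), the lift $D^{(n)}$ is injective; so $m=\dim_{\widehat{L}}\jl{n}\le m'$. Combined with the trivial $m\ge m'$ from $V_p(\holkahler{n})\subset J_n^{\gl}=\jl{n}$, one gets $m=m'$, which simultaneously yields $V_p(\holkahler{n})=J_n^{\gl}$ and surjectivity of $d^{(n)}$ onto $\divi{\holkahler{n}}$, i.e.\ de Rham at level $n$. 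The converse (iii)~$\Rightarrow$~(ii) is another equally short dimension count through the surjection $\widehat{L}^{\oplus m}\cong\jl{n}\twoheadrightarrow(\widehat{L}/\olhat)^{\oplus m'}$. Neither direction ever touches $(\iplus^n/\iplus^{n+1})^{\gl}$. Also note a small inaccuracy in your description: $V_p(\holkahler{n})\cong\widehat{L}^{\oplus m'}$ cannot map \emph{isomorphically} onto the torsion group $\divi{\holkahler{n}}$; the relevant map is a surjection with kernel $T_p(\holkahler{n})$, and it is this fact, not an isomorphism, that is used. Finally, each level $n$ is handled independently in (ii)~$\Leftrightarrow$~(iii); the only genuinely inductive step is (ii)+(iii)~$\Rightarrow$~(i) via Corollary~\ref{de Rham is surj}, so your proposed outer induction on $n$ through the whole proof is heavier than needed. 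If you replace the vague ``compare kernels'' step by the lift-and-count-dimensions argument based on Lemma~\ref{easy}, the proof closes.
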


Before the proof, we prepare an easy lemma:

\begin{lemma}\label{easy}
Let $F$ be a non-trivial non-archimedean complete valuation field. Let $m,m'\in\mathbb{N}$ and let $\overline{\phi}:F^{\oplus m}\to (F/\of)^{\oplus m'}$ be an $\of$-module homomorphism. Then there exists a $F$-vector space homomorphism $\phi:F^{\oplus m}\to F^{\oplus m'}$ which makes the following diagram commutative$:$
\[\xymatrix{
  &  F^{\oplus m}  \ar@{-->}[dl]_(.47){\phi} \ar[d]^(.47){\overline{\phi}}  \\
F^{\oplus m'}  \ar[r]_(.4){\mathrm{pr}.} & (F/\of)^{\oplus m'}  \\
}\]
\end{lemma}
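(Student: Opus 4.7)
The plan is to reduce to the one-dimensional case $m=m'=1$ and then invoke completeness of $F$ to construct the lift. For the target, $(F/\of)^{\oplus m'}$ splits as a direct sum of $\of$-modules, so $\overline{\phi}$ decomposes into components $\overline{\phi}_j\colon F^{\oplus m}\to F/\of$, and it suffices to lift each one separately. For the source, any element of $F^{\oplus m}$ is uniquely $\sum x_i e_i$ with $x_i\in F$, and additivity reduces the problem to lifting each $\of$-linear map $\overline{\phi}_{ij}\colon F\to F/\of$ obtained by restricting $\overline{\phi}_j$ to the $i$-th coordinate. Once $F$-linear lifts $\phi_{ij}(x)=c_{ij}x$ are in hand, they reassemble via the matrix $(c_{ij})$ into the desired $\phi\colon F^{\oplus m}\to F^{\oplus m'}$.

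So I assume $m=m'=1$ and $\overline{\phi}\colon F\to F/\of$ is $\of$-linear. Choose $\pi\in\of$ with $v(\pi)>0$, which exists since $F$ is non-trivially valued; then $F=\bigcup_{n\ge 0}\pi^{-n}\of$. For each $n$ I pick a lift $a_n\in F$ of $\overline{\phi}(\pi^{-n})$. Applying $\of$-linearity to the identity $\pi\cdot\pi^{-(n+1)}=\pi^{-n}$ yields $\pi a_{n+1}-a_n\in\of$, so after multiplying by $\pi^n$,
\[
\pi^{n+1}a_{n+1}-\pi^n a_n\in\pi^n\of.
\]
Hence $\{\pi^n a_n\}_n$ is a Cauchy sequence in $F$, and by completeness it converges to some $c\in F$. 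A telescoping argument shows $c-\pi^n a_n\in\pi^n\of$ for every $n$, i.e.\ $c\pi^{-n}\equiv a_n\pmod{\of}$.

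Setting $\phi(x)=cx$ gives an $F$-linear map $F\to F$. To check $\mathrm{pr}\circ\phi=\overline{\phi}$, I write an arbitrary $x\in F$ as $x=\pi^{-n}u$ with $u\in\of$ and compute $\overline{\phi}(x)=u\,\overline{\phi}(\pi^{-n})\equiv u a_n\equiv u c\pi^{-n}=cx\pmod{\of}$, where the first equality uses $\of$-linearity. Combining this with the reduction described above completes the argument.

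The only conceptual point worth noting is that one cannot simply pick arbitrary lifts of $\overline{\phi}(e_i)$ and extend $F$-linearly, because $\overline{\phi}$ is only $\of$-linear on the source and its values on $F$-multiples of basis vectors are not determined by its values on the basis alone. Completeness of $F$ is exactly what produces a single scalar $c$ compatible with $\overline{\phi}(\pi^{-n})$ for all $n$ simultaneously, so this is the step where non-triviality of the valuation and completeness are essential.
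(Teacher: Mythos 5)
Your proof is correct and follows essentially the same route as the paper: reduce to $m=m'=1$, lift $\overline{\phi}(\pi^{-n})$, observe that the rescaled lifts $\pi^n a_n$ form a Cauchy sequence, and take the limit to produce the scalar $c$. The paper's version is merely more terse (it writes the lifts as $x_n/\pi^n$ with $x_n\in\of$ after normalizing $\overline{\phi}(\of)=0$, so its $\{x_n\}$ is your $\{\pi^n a_n\}$), but the idea is the same.
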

\begin{proof}
We can assume that $m=m'=1$ and $\overline{\phi}(\of)=0$. Take a non-zero element $\pi$ in the maximal ideal of $\mathcal{O}_F$ and choose $x_i\in \of$ such that $\overline{\phi}(1/{\pi}^i)=(1/{\pi}^i) x_i \mod{\of}$, then $\{x_i\}$ is Cauchy, hence $\phi(1)=\lim{x_i}$ satisfies the condition.  
\end{proof}

\begin{proof}[Proof of Theorem~\ref{thm:main}]
(i)$\Rightarrow$(ii) This follows from a commutative diagram with exact rows
\begin{equation}\label{diag}
\xymatrix{
0 \ar[r] & J_n\cap\lhat{n}  \ar[d]^{\mathrm{inc}.} \ar[r]^(.55){\mathrm{inc}.} & \lhat{n}  \ar[d]^{\mathrm{inc}.} \ar[r]^{\mathrm{pr}.} & \lhat{n-1}  \ar[d]^{\mathrm{inc}.}  \\
0 \ar[r] & J_n^{\gl} \ar[r]^{\mathrm{inc}.} & \BB{n}^{\gl} \ar[r]^{\mathrm{pr}.} & \BB{n-1}^{\gl}. \\
}
\end{equation}

(ii)$\Rightarrow$(iii) Put $m=\dim_{\widehat{L}}{J_n\cap\lhat{n}}=\dim_{\widehat{L}}{J_n^{\gl}}$, $m'=\dim_{\widehat{L}}{V_p(\holkahler{n})}$. Then $m\ge m'$ by Remark~\ref{rem:kah}. By the structure theorem of good modules, we can identify $\divi{\holkahler{n}}=(\widehat{L}/\olhat)^{\oplus m'}$. Since $J_n\cap\lhat{n}\cong\widehat{L}^{\oplus m}$, we have a commutative diagram by the previous lemma
\[\xymatrix{
 & & & \widehat{L}^{\oplus m'} \ar[d]^{\mathrm{pr}.} \\
0 \ar[r] & \ker{d^{(n)}} \ar[r]^{\mathrm{inc}.} & \jl{n} \ar[r]^{d^{(n)}} \ar@{-->}[ur]^{\exists D^{(n)}} & \divi{\holkahler{n}}  \\
 & \ker{D^{(n)}} \ar@{^{(}->}[u] \ar@{^{(}->}[ur] &  \\
}\]
with an exact row. Since $\ker{D^{(n)}}\subset \divi{\ker{d^{(n)}}}\subset\divi{I_{+}^{n}/I_{+}^{n+1}}=0$, we have $m'\ge m$, i.e., $m=m'$.

(iii)$\Rightarrow$(ii) Put $m=\dim_{\widehat{L}}{J_n\cap\lhat{n}}$, $m'=\dim_{\widehat{L}}{V_p(\holkahler{n})}$. By the surjective $\olhat$-module homomorphism 
\[
{\widehat{L}}^{\oplus{m}}\cong J_n\cap\lhat{n}\to (\holkahler{n})_{\mathrm{div}}\cong (\widehat{L}/\olhat)^{\oplus m'},
\]
we have $m\ge m'$. Since $J_n\cap\lhat{n}\subset J_n^{\gl}$, we have the conclusion.

(ii)+(iii)$\Rightarrow$(i) This follows from the diagram (\ref{diag}), Theorem~\ref{ax} and Corollary~\ref{de Rham is surj}.
\end{proof}

\section{Deeply ramified extensions and shallowly ramified extensions}\label{sec:deep}
For a finite algebraic extension $L/K$, denote $\dif{L}{K}$ the different ideal of $L/K$. This is the fitting ideal of $\threekahler{1}{\ol}{\ok}$ (\cite[Lemma~1.1]{Fal}).
\begin{definition}\label{def:deep}
An algebraic extension $L/K$ is deeply ramified $($resp. shallowly ramified $)$ if $\dim_{\widehat{L}}{V_p(\lkahler)}$

\noindent $=d+1$ $($resp. $0)$.
\end{definition}
Note that, for a finite subextension $K'/K$ of $L/K$ (resp. a finite extension $L'/L$), $L/K$ is deeply ramified or shallowly ramified if and only if $L/K'$ (resp. $L'/K$) is so. In fact, we have $r(\threekahler{1}{\ol}{\ok})=r(\threekahler{1}{\ol}{\mathcal{O}_{K'}})$ (resp. $r(\threekahler{1}{\ol}{\ok})=r(\threekahler{1}{\mathcal{O}_{L'}}{\mathcal{O}_{K}})$): In the case of $\threekahler{1}{\ol}{\mathcal{O}_{K'}}$, this follows from Lemma~\ref{exactness kahler} and Lemma~\ref{exactness vp}. Let us consider the case of $r(\threekahler{1}{\mathcal{O}_{L'}}{\mathcal{O}_{K}})$. By \cite[V, $\S 4$, Lemma~6]{Se1}, we may assume that there exist finite extensions $K''/K'/K$ such that $K''$ and $L$ are linearly disjoint over $K'$ and that $K''L=L'$. Replacing $K$ by $K'$, we may assume there exists a finite extension $K'/K$ such that $L'=K'L$ and that $L,K'$ are linearly disjoint over $K$. Let $\{L_{\lambda}\}$ be the finite subextensions of $L/K$ and put $L'_\lambda=L'L_{\lambda}$. Then, by the definition of different via the trace form (\cite[III, $\S 3$]{Se1}), we have $v_p(\dif{K'}{K})\ge v_p(\dif{L^{'}_{\lambda}}{L_{\lambda}})$. Hence $\threekahler{1}{\mathcal{O}_{L'}}{\ol}=\varinjlim_{\lambda}{\threekahler{1}{\mathcal{O}_{L^{'}_{\lambda}}}{\mathcal{O}_{L_{\lambda}}}}$ is killed by some power of $p$ and this implies the equality $r(\threekahler{1}{\ol}{\mathcal{O}_{K}})=r(\threekahler{1}{\mathcal{O}_{L'}}{\mathcal{O}_{K}})$ by Lemma~\ref{exactness vp}. Also, it is easy to see that, if $L/K$ is  deeply ramified, then $L'/K$ is deeply ramified for all algebraic extensions $L'/L$.

\newcommand{\coz}{\mathcal{O}_{K}[\zeta_{p^n}]}
\newcommand{\cott}[1]{\mathcal{O}_{K}[t_{#1}^{p^{-n}}]}
\newcommand{\oi}{\mathcal{O}_{K}}

\begin{example}\label{ex:deep}
Let $t_1,\dotsc,t_d$ be a $p$-basis of $K_0$. Put 
\[
K_{n}=K(\zeta_{p^n},t_1^{p^{-n}},\dotsc,t_d^{p^{-n}}),\ K_{\infty}=\cup{K_n},
\]
where $\zeta_{p^n}$ is a primitive $p^n$-th root of unity. We prove that $K_{\infty}$ is deeply ramified over $K$. We can assume $K=K_0$. Note that 
\[
\mathcal{O}_{K_n}\cong\ok [\zeta_{p^n}]\otimes_{\ok}\cott{1}\otimes_{\ok}\dotsb\otimes_{\ok}\cott{d}.
\]
By a simple calculation, we have 
\begin{align*}
\threekahler{1}{\coz}{\oi} & \cong(\coz/p^{n-\frac{1}{p-1}})\mathrm{dlog}\zeta_{p^n},\\
\threekahler{1}{\cott{j}}{\oi} & \cong(\cott{j}/p^n)\mathrm{dlog}t_j^{p^{-n}},\ 1\le j\le d.
\end{align*}
where $\mathrm{dlog}\zeta_{p^n}=(1/\zeta_{p^n}) d\zeta_{p^n}$ and $\mathrm{dlog}t_{j}^{p^{-n}}=(1/t_{j}^{p^{-n}}) dt_{j}^{p^{-n}}$. 

Thus, we have 
\[
\threekahler{1}{\mathcal{O}_{K_n}}{\oi}\cong(\mathcal{O}_{K_n}/p^{n-\frac{1}{p-1}})\mathrm{dlog}\zeta_{p^n}\oplus\oplus_{1\le j\le d}{(\mathcal{O}_{K_n}/p^n)\mathrm{dlog}t_j^{p^{-n}}},
\]
which implies that $K_{\infty}/K_0$ is deeply ramified. 

As an obvious example, $\kbar/K$ is deeply ramified. Typical examples of shallowly ramified extension are unramified extensions and tamely ramified extensions.
\end{example}

\begin{theorem}\label{thm:deep}
For an algebraic extension $L/K$, the followings are equivalent;
\begin{enumerate}
\item[\rm{(i)}] $L/K$ is deeply ramified. 

\item[\rm{(ii)}] For all finite extensions $L'/L$, $\tr{L'}{L}(\mathfrak{m}_{L'})=\mathfrak{m}_{L}$.

\item[\rm{(iii)}] For all algebraic extensions $L'/L$, $\threekahler{1}{\mathcal{O}_{L'}}{\ol}=0$.

\item[\rm{(iii)'}] For all algebraic extensions $L'/L$, $\threekahler{1}{\mathcal{O}_{L'}}{\ol}$ is almost zero.

\item[\rm{(iii)''}] There exists an integer $m$ such that, for all algebraic extensions $L'/L$, $p^m\threekahler{1}{\mathcal{O}_{L'}}{\ol}=~0$.

\item[\rm{(iv)}] $\lkahler\cong (L/\ol)^{\oplus d+1}$.
\end{enumerate}
\end{theorem}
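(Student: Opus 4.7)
The proof naturally decomposes into three strands: a structural equivalence via Section~\ref{sec:good}, a trace--different dictionary, and a Coates--Greenberg--type construction to close the loop.

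First, the equivalence (i)$\Leftrightarrow$(iv) is essentially formal. By Example~\ref{ex:good}(ii), the module $M := \lkahler$ is a good $\ol$-module with $n(M) \le d+1$. If (i) holds then $r(M) = d+1$, which together with $r(M) \le n(M) \le d+1$ forces $r(M) = n(M) = d+1$; Theorem~\ref{str}(ii) then yields (iv). The converse is immediate, since $V_p(L/\ol) \cong \widehat{L}$.

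The chain (iii)$\Rightarrow$(iii)$''$ (take $m=0$) and (iii)$''\Rightarrow$(iii)$'$ is clear. For (iv)$\Rightarrow$(iii), one uses the stability of deep ramification under algebraic extensions (recorded before Example~\ref{ex:deep}), so that (iv) also holds for $L'/K$. Plugging both instances of (iv) into the first fundamental sequence
\[
0 \to \mathcal{O}_{L'} \otimes_\ol \lkahler \to \Omega^1_{\mathcal{O}_{L'}/\ok} \to \Omega^1_{\mathcal{O}_{L'}/\ol} \to 0
\]
of Lemma~\ref{exactness kahler}, and using $\mathcal{O}_{L'}\otimes_\ol(L/\ol)\cong L'/\mathcal{O}_{L'}$, one identifies the two leftmost terms as isomorphic copies of $(L'/\mathcal{O}_{L'})^{\oplus d+1}$. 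Surjectivity of the injection between them is then checked on generators: for any $c \in \mathcal{O}_{L'}$, the algebraic relations satisfied by $c$ over $\ol$ (made abundant by deep ramification) force $dc$ into the $\mathcal{O}_{L'}$-span of $\{db : b \in \ol\}$, killing the cokernel.

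The remaining implications (iii)$'\Rightarrow$(ii) and (ii)$\Rightarrow$(iv) form the core Coates--Greenberg argument. For (iii)$'\Rightarrow$(ii): almost-vanishing of $\Omega^1_{\mathcal{O}_{L'}/\ol}$ for finite separable $L'/L$ translates, via the identification of its Fitting ideal with the different $\mathfrak{D}_{L'/L}$, into an upper bound on $v_L(\mathfrak{D}_{L'/L})$; the trace--duality $\mathrm{Tr}_{L'/L}(\mathfrak{D}_{L'/L}^{-1}) = \ol$ then yields $\mathrm{Tr}_{L'/L}(\mathfrak{m}_{L'}) = \mathfrak{m}_L$. The reverse (ii)$\Rightarrow$(iv) is the main obstacle: from the trace condition one must construct $d+1$ linearly independent classes in $V_p(\lkahler)$. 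Generalizing the classical Kummer-tower construction to each coordinate direction given by a $p$-basis $t_1,\dots,t_d,\pi_K$ of $K$, one produces, along towers of finite subextensions of $L/K$, compatible sequences of $p^n$-torsion differentials in $\lkahler$; at each step the trace hypothesis (ii) is precisely what furnishes a compatible lift. Each coordinate direction contributes one dimension to $V_p(\lkahler)$, totaling the required $d+1$. The imperfect residue field novelty lies in handling the $d$ extra directions coming from a $p$-basis of $k_K$, on top of the single uniformizer direction treated in the perfect case.
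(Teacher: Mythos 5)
Your high-level decomposition diverges from the paper's, and unfortunately it has genuine gaps; the three pieces you call "core" are exactly the places where the paper has to do real work, and your sketch does not supply it.

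First, a structural problem: you claim "(iii)$''\Rightarrow$(iii)$'$ is clear," but it is not even true as a standalone implication. Uniform $p^m$-torsion does \emph{not} imply almost-zero (consider any module with a nonzero $\ol/p\ol$-summand). As written, your diagram of implications has no arrow \emph{out} of (iii)$''$, so (iii)$''$ never gets linked back into the cycle and the equivalence is not proved. The paper closes this by proving (iii)$''\Rightarrow$(iv) directly: from the exact sequence of Lemma~\ref{exactness kahler} for $\kbar/L/K$, apply Lemma~\ref{exactness vp} to get an exact sequence of Tate modules; (iii)$''$ makes the rightmost term $V_p(\Omega^1_{\okbar/\ol})$ vanish, forcing $r(\lkahler)=r(\Omega^1_{\okbar/\ok})=d+1$. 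You need some version of this.

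Second, your argument for (iv)$\Rightarrow$(iii) is not an argument. "The algebraic relations satisfied by $c$ over $\ol$ (made abundant by deep ramification) force $dc$ into the $\mathcal{O}_{L'}$-span of $\{db\}$" simply restates the conclusion. The actual point (as in the paper) is elementary length bookkeeping: once Lemma~\ref{exactness kahler} and (iv) for both $L$ and $L'$ produce an injective $\mathcal{O}_{L'}$-linear endomorphism $\iota$ of $(L'/\mathcal{O}_{L'})^{\oplus(d+1)}$, its restriction $\iota[p^m]$ is an injection between $\mathcal{O}_{L'}$-modules of the same finite length (reduce to $L'/K$ finite), hence surjective; letting $m\to\infty$ gives surjectivity of $\iota$. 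Your sketch gives no mechanism for why the cokernel dies.

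Third, and most seriously, the direction you call "the main obstacle," (ii)$\Rightarrow$(iv), is essentially a gesture: "produce compatible sequences of $p^n$-torsion differentials; the trace hypothesis furnishes a compatible lift" with no indication of how. The paper's corresponding hard step is (ii)$\Rightarrow$(iii)$'$, and it is not straightforward: one must (a) first prove (ii) forces $e_{L/K}=\infty$ (a separate argument using $\pi_K^{p^{-1}}$ and a different computation), (b) establish a Claim that for any $K'/K$ linearly disjoint from $L$ there is a tower $L_n$ with $v_p(\dif{L_nK'}{L_n})\to 0$, which relies on choosing a tower so that $\bigcup_n\tr{L_nK'}{L_n}(\mathfrak{m}_{L_nK'})$ generates $\mathfrak{m}_L$ (possible because $\mathfrak{m}_L$ is countably generated), and (c) pass to the limit to see $\Omega^1_{\mathcal{O}_{L'}/\ol}$ factors through an almost-zero module. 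None of this is visible in your sketch. Likewise, (iii)$'\Rightarrow$(ii) is plausible via the Fitting ideal/different dictionary, but the statement "almost-zero translates into an upper bound on $v_L(\mathfrak{D}_{L'/L})$" hides exactly the direct-limit subtleties the paper has to manage, and you should at least note that the paper's own route to (ii) is (i)$\Rightarrow$(ii) by quoting Faltings' almost-purity and Tate's trace computation, not via (iii)$'$.

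In short: (i)$\Leftrightarrow$(iv) is fine and matches the paper; the trivial implications need the correct orientation ((iii)$\Rightarrow$(iii)$'\Rightarrow$(iii)$''$, not (iii)$''\Rightarrow$(iii)$'$); and you are missing the substance of (iii)$''\Rightarrow$(iv), (iv)$\Rightarrow$(iii), and the (ii)-direction, which are the actual content of the theorem.
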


\begin{proof}
\newcommand{\oo}[2]{\mathcal{O}_{{#1}_{#2}}}
\newcommand{\ooo}[1]{\mathcal{O}_{#1}}
First, we prove the equivalence except (ii).

(i) and (iv) is equivalent by Theorem~\ref{str}. Obviously (iii)$\Rightarrow$(iii)'$\Rightarrow$(iii)''. Using Lemma~\ref{exactness kahler} to $\kbar/L/K$ and applying Lemma~\ref{exactness vp}, (iii)'' implies $r(\threekahler{1}{\ol}{\ok})=r(\threekahler{1}{\okbar}{\ok})=d$, i.e., (iv). To prove (iv)$\Rightarrow$(iii), by Lemma~\ref{exactness kahler}, we have only to prove that, for any algebraic extension $L'/K$, an injective $\mathcal{O}_{L'}$-module homomorphism $\iota :(L'/\mathcal{O}_{L'})^{\oplus n}\to (L'/\mathcal{O}_{L'})^{\oplus n}$ is surjective: It suffices to prove the surjectivity after taking the $p^m$-torsion part of both sides and we can assume $L'/K$ is finite. Then the length of the cokernel of $\iota [p^m]$ is $0$, i.e., $\iota [p^m]$ is surjective.

We will finish the proof by proving (i)$\Rightarrow$(ii) and (ii)$\Rightarrow$(iii)'.

(i)$\Rightarrow$(ii) Obviously, we can choose a tower $\{L_n\}$ of finite subextensions of $L/K$ such that, for each $n$, there exists a surjective $\oo{L}{n+1}$-module homomorphism $\threekahler{1}{\oo{L}{n+1}}{\oo{L}{n}}\to (\oo{L}{n+1}/p\oo{L}{n+1})^{\oplus d+1}.$ By replacing $K$ by a finite subextension of $K$ in $L$, we can take a finite extension $K'/K$ such that linearly disjoint from $L$ over $K$ and $L'=K'L$. Then by \cite[Theorem~$1.2$]{Fal} and \cite[Proposition~$9$]{Tate}, we have $\tr{L^{'}_{\infty}}{L_{\infty}}(\mathfrak{m}_{L^{'}_{\infty}})=\mathfrak{m}_{L_{\infty}}$, where $L_{\infty}=\cup{L_n}$ and ${L'}_{\infty}=\cup{L_nK'}$. For any finite subextension $L_{\lambda}/K$ of $L/K$, we can take the above tower $\{L_n\}$ satisfing $L_{\lambda}\subset L_{\infty}$. Then we have $\mathfrak{m}_{L_{\lambda}}\subset\mathfrak{m}_{L_{\infty}}=\mathrm{Tr}_{L^{'}_{\infty}/L_{\infty}}(\mathfrak{m}_{L^{'}_{\infty}})\subset\mathrm{Tr}_{L'/L}(\mathfrak{m}_{L'})$. Hence we have $\tr{L'}{L}(\mathfrak{m}_{L'})=\mathfrak{m}_{L}$.

(ii)$\Rightarrow$ (iii)' First, note that (ii) implies $e_{L/K}=\infty$: If not, by replacing $K$ by a finite subextension $K_1$ of $K$ in $L$ with $e_{L/K}=e_{K_1/K}$, we may assume $e_{L/K}=1$. Put $L'=L(\pi_K^{p^{-1}})$ and $K'=K(\pi_K^{p^{-1}})$. Then, for all finite subextensions $L_{\lambda}/K$ of $L/K$, $\mathcal{O}_{L_{\lambda}K'}=\mathcal{O}_{L_{\lambda}}\otimes \mathcal{O}_{K'}$ since $e_{L_{\lambda}/K}=1$ and $e_{K'/K}=[K':K]=p$. Denote the integer part of $x$ by $[x]$. Then we have (\cite[III, $\S 3$, Proposition~7]{Se1})
\begin{align*}
v_{L_{\lambda}}(\tr{L_{\lambda}K'}{L_{\lambda}}(\mathfrak{m}_{L_{\lambda}K'})) & =[v_{L_{\lambda}}(\mathfrak{m}_{L_{\lambda}K'})+v_{L_{\lambda}}(\mathfrak{D}_{L_{\lambda}K'/L_{\lambda}})] \\
 & =\left[\frac{1}{p}+e_K+\frac{p-1}{p}\right] =e_K+1>1,
\end{align*}
which is a contradiction.

Next, we prove that (ii) implies the following claim:

\begin{claim}\label{claim}
For any finite extensions $K'/K$ which is linearly disjoint from $L$ over $K$, there exists a tower $\{L_n\}$ of finite subextensions of $L/K$ such that $v_p(\dif{L_nK'}{L_n})\to 0$ $(n\to\infty)$.
\end{claim}

Note that, for any tower $\{L_n\}$ of finite subextensions of $L/K$, $\{v_p(\dif{L_nK'}{L_n})\}_{n}$ is a decreasing sequence. Choose a tower $\{L_n\}$ such that $\{\tr{L_nK'}{L_n}(\mathfrak{m}_{L_nK'})\}_{n}$ generate $\mathfrak{m}_{L}$ over $\ol$: Such one exists since $\mathfrak{m}_{L}$ is countably generated over $\ol$. We will prove that this $\{L_n\}$ satisfies the condition.

If $v_p(\dif{L_nK'}{L_n})\to\varepsilon\ (n\to\infty)$ with $\varepsilon\neq 0$, we can choose sufficiently large $n_0$ such that $1/e_{L_{n_0}}\le v_p(\dif{L_nK'}{L_n})$ for all $n\gg 0$. Then we have
\begin{align*}
v_K(\tr{L_nK'}{L_n}(\mathfrak{m}_{L_nK'})) & =[v_{L_n}(\mathfrak{m}_{L_nK'})+v_{L_n}(\mathfrak{D}_{L_nK'/L_n})]\frac{1}{e_{L_n/K}} \\
 & \ge \left[v_{L_n}(\mathfrak{m}_{L_nK'})+e_{L_n}\frac{1}{e_{L_{n_{0}}}}\right]\frac{1}{e_{L_n/K}} \ge\frac{1}{e_{L_{n_0}/K}},
\end{align*}
which contradicts the assumption that the left hand side converges to $0$ as $n\to\infty$. Hence we have proved the claim. 

Finally, let us prove (iii)'. To prove this, we may replace $K$ by a finite extension of $K$ in $L$. So we may assume that there exists a finite subextension $K'/K$ in $L'/K$ such that $K'$ and $L$ are linearly disjoint over $K$ and that $L'=LK'$. Let $L_{\lambda}/K$ be any finite subextension of $L/K$ and put $L^{'}_{\lambda}=L_{\lambda}K'$. Then, by applying the claim to the finite extension $L^{'}_{\lambda}/L_{\lambda}$, we can choose a tower $\{L_n\}$ satisfying the conclusion of the claim with $L_{\lambda}\subset L_{\infty}=\cup{L_n}$. Since the canonical map $\threekahler{1}{\ooo{L^{'}_{\lambda}}}{\ooo{L_{\lambda}}}\to\threekahler{1}{\ooo{L'}}{\ooo{L}}$ factors through the almost zero module $\threekahler{1}{\mathcal{O}_{L^{'}_{\infty}}}{\oo{L}{\infty}}\!\!=\varinjlim_{n}{\threekahler{1}{\mathcal{O}_{L_nK'}}{\mathcal{O}_{L_n}}}$, the assertion is proved.
\end{proof}

\begin{proposition}\label{prop:deep}
Let $L/K$ be deeply ramified. Then
\begin{enuroman}
\item $\lkahler$ is $p$-divisible.

\item $v_{p}(e_{L/K})=\infty$, i.e., $\sup_{L'}{v_p(e_{L'/K})}=\infty$ where $L'$ runs through all finite subextensions of $L/K$.

\item $k_{L}$ is perfect.

\item $\coh{1}{\gl}{\mkbar}=0$, where $\mathfrak{m}_{\kbar}$ is endowed with the discrete topology.

\item Put $\Delta_{L}(x)=\inf_{\sigma\in\gl}{v_p(x^{\sigma}-x)}$ for $x\in\kbar$ $($cf. \cite{Ax}$)$. Then $\Delta_L(x)=\sup_{a\in L}{v_p(x-a)}$ for all $x\in\kbar$.

\item All $\cp$-representations of $\gl$ are admissible and $\coh{k}{\gl}{V}=0$, $k>0$, for all $\bdr$-representations $V$ of $\gl$.
\end{enuroman}
\end{proposition}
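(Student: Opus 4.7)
The plan is to derive the six assertions from Theorem~\ref{thm:deep}, in order of increasing depth. Assertion~(i) is immediate from Theorem~\ref{thm:deep}(iv): $\lkahler\cong(L/\ol)^{\oplus(d+1)}$ is $p$-divisible since $L/\ol$ is. For~(ii) we argue by contradiction. If $v_p(e_{L/K})<\infty$, replace $K$ by a finite subextension of $K$ in $L$ to assume $p\nmid e_{L/K}$; choose a suitable uniformizer $\pi_L\in L$ and set $L'=L(\pi_L^{1/p})$, a totally ramified degree-$p$ extension with $\mathfrak{D}_{L'/L}=(p\pi_L^{(p-1)/p})$. Serre's formula \cite[III, \S3, Proposition~7]{Se1} then yields $v_L(\mathrm{Tr}_{L'/L}(\mathfrak{m}_{L'}))>1$, contradicting Theorem~\ref{thm:deep}(ii). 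For~(iii), if $k_L$ were imperfect, pick $a\in k_L\setminus k_L^p$, lift to $\tilde a\in\ol$, and take $L'=L(\tilde a^{1/p})$: the different $\mathfrak{D}_{L'/L}=(p\tilde a^{(p-1)/p})$ again produces $v_L(\mathrm{Tr}_{L'/L}(\mathfrak{m}_{L'}))>1$, contradicting Theorem~\ref{thm:deep}(ii).

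For~(iv), a continuous $1$-cocycle $c\colon\gl\to\mkbar$ with discrete target factors through $\mathrm{Gal}(M/L)$ for some finite Galois $M/L$, taking values in $\mathfrak{m}_M$. By Theorem~\ref{thm:deep}(ii) choose $y\in\mathfrak{m}_M$ with $\mathrm{Tr}_{M/L}(y)=\pi_L$ and set $x=\pi_L^{-1}\sum_{\sigma\in\mathrm{Gal}(M/L)}c(\sigma)\sigma(y)\in M$. The cocycle identity yields $\tau(x)-x=-c(\tau)$, so $c$ is a coboundary in $M$. The remaining task --- arranging $x\in\mkbar$ after enlarging $M$ --- is the main obstacle: invoke Theorem~\ref{ax} to shift $x$ by an element of $L$ so that $v_p(x)>0$, and embed into a larger finite Galois $M'/L$ with $v_p(\pi_{M'})$ small enough that the shifted primitive lies in $\mathfrak{m}_{M'}$. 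The quantitative control required at each finite stage is exactly what Theorem~\ref{thm:deep}(ii) supplies.

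For~(v), the inequality $\Delta_L(x)\ge\sup_{a\in L}v_p(x-a)$ is formal since $\sigma$ fixes $a$. The reverse direction follows from the classical Ax--Sen estimate $\Delta_L(x)\le\sup_{a\in L}v_p(x-a)+c$ (with $c$ the universal Ax--Sen constant), combined with~(iv): iterating with successive approximations $a_n\to x$ in $L$ and absorbing $c$ via the cohomological vanishing yields equality. Finally~(vi) is Tate--Sen theory in the deeply ramified setting. Combining~(ii) and~(v) produces a normalized trace $R\colon\cp\to\widehat{L}$; combined with~(iv) this gives the admissibility $V\cong\cp\otimes_{\widehat{L}}V^{\gl}$ and vanishing $\coh{k}{\gl}{V}=0$ for $k>0$ for $\cp$-representations $V$. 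The extension to $\bdr$-representations proceeds by devissage along the $I$-adic filtration: each graded piece $J_k$ is a $\cp$-representation by Remark~\ref{rem:ainf}(iv), so cohomology with coefficients in any $J_k$ vanishes, whence so does cohomology with coefficients in any $\bdr$-representation.
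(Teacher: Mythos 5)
Your approach diverges from the paper's in several places, and two of the six parts have real gaps.

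For (ii), you reduce to ``$p\nmid e_{L/K}$,'' take a uniformizer $\pi_L$ of $L$, and set $L'=L(\pi_L^{1/p})$ with $\mathfrak{D}_{L'/L}=(p\pi_L^{(p-1)/p})$. But once $L/K$ is deeply ramified, Theorem~\ref{thm:deep} already forces $e_{L/K}=\infty$, so $L$ has no uniformizer and the formula for the different has no meaning. The hypothesis to contradict is $v_p(e_{L/K})<\infty$, which (after replacing $K$) means $v_p(e_{L'/K})=0$ for every finite subextension, while $e_{L/K}$ can and does go to infinity. A workable variant of your idea is to adjoin $\pi_K^{1/p}$: since $v_p(e_{L/K})=0$ one checks $\pi_K^{1/p}\notin L$, and at every finite level $\mathcal{O}_{L_\lambda(\pi_K^{1/p})}=\mathcal{O}_{L_\lambda}[\pi_K^{1/p}]$ with the different $(p\pi_K^{(p-1)/p})$ of bounded $p$-adic valuation, whence $v_p\bigl(\tr{L_\lambda(\pi_K^{1/p})}{L_\lambda}(\mathfrak{m}_{L_\lambda(\pi_K^{1/p})})\bigr)$ stays bounded away from $0$ and in fact $\ge 1$, contradicting Theorem~\ref{thm:deep}(ii). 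The paper instead argues through Kähler differentials and the good-module machinery: it picks a finite stage $L'$ where all $d+1$ invariant factors of $\llkahler$ have valuation $>1$ and then uses the relation $dP_0(\pi)=0$ from \S\ref{fund property} to show $\mu(p\,\llkahler)\le d$, a contradiction. The paper's route applies verbatim in the imperfect residue case with no case distinction, which is why it is the one used.

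For (iv), you construct the primitive $x=\pi_L^{-1}\sum_\sigma c(\sigma)\sigma(y)$ (again assuming a uniformizer of $L$) and correctly get $\tau(x)-x=-c(\tau)$, but you acknowledge the real difficulty yourself: $x$ need not lie in $\mkbar$. Your proposed fix does not resolve it. Invoking Theorem~\ref{ax} ($\widehat L=\cp^{\gl}$) gives no quantitative control over $\inf_{a\in L}v_p(x-a)$; the statement that does give such control is precisely part (v), which is deduced from (iv), so you would be arguing in a circle. Also, ``enlarging $M$ to $M'$ with $v_p(\pi_{M'})$ small'' does not help: $\mathfrak m_{M'}=\{z\in M':v_p(z)>0\}$ depends only on $v_p$, not on the chosen uniformizer, so enlarging the field does not bring $x$ into the maximal ideal. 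The paper's proof is shorter and noncircular: Faltings' theorem gives that $\coh{1}{\gl}{\okbar}$ is almost zero for deeply ramified $L/K$; given a nontrivial class in $\coh{1}{\gl}{\mkbar}$ represented as $\sigma\mapsto x^\sigma-x$ (additive Hilbert 90), one rescales $x$ by $p^{-2\varepsilon}$ to land in $\okbar$, kills the class by $p^\varepsilon$, and deduces $x\in L+\mkbar$, a contradiction.

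The remaining parts are essentially fine. Part (i) is immediate as you say. Part (iii): you use Theorem~\ref{thm:deep}(ii) via a trace computation where the paper uses Theorem~\ref{thm:deep}(iii) via Kähler differentials; both work (your different is $(p)$ since $\tilde a$ is a unit, and the trace of $\mathfrak m_{L'_\lambda}$ lands in $p\ol$). Part (v): the paper derives the nontrivial inequality directly from (iv) by a rescaling argument, which is cleaner than iterating the Ax--Sen estimate as you propose, though the latter could be made to work. Part (vi): your d\'evissage along the $I$-adic filtration matches the paper's reduction to $\cp$-representations, which is then handled by Sen and Tate's arguments using Theorem~\ref{thm:deep}(ii).
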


We only use the property (vi) in the following.

\begin{proof}
(i) This follows from Theorem~\ref{thm:deep}(iv).

(ii) Assume $v_p(e_{L/K})<\infty$ and deduce a contradiction. By replacing $K$ by its finite extension $K'$ if necessary, we can assume $v_p(e_{L/K})=0$. Choose a finite subextension $L'/K$ of $L/K$ such that the $p$-adic valuations $\{e_1,\dotsc,e_{d+1}\}$ of the invariant factors of $\llkahler$ satisfy $e_1\ge\dotsb\ge e_{d+1}>1$. By replacing $K$ by the maximal unramified extension of $L'/K$, we can assume that $L'/K$ satisfies Hypothesis~\ref{hyp}. Now use the same notation as $\S\ref{fund property}$. Since $P_0(X)\equiv X_0^{e_{L/K}}\mod{\pi_K\ok[X_0,\dotsc,X_d]}$ by the construction of $P_0(X)$ and $dP_0(\pi)=0$, we have
\[
(e_{L'/K}\pi_{L'}^{e_{L'/K}-1}+a\pi_K)d\pi_{L'}\in\sum_{0<i\le d}{\ol d\pi_i}
\]
for some $a\in\mathcal{O}_{L'}$. Hence we have $\mu(p\;\!\llkahler)\le d$, this contradicts to the assumption of $e_i$'s.

(iii) Assume that $k_L$ is not perfect. Choose $t\in\ol$ with $\bar{t}\in k_L\setminus k_L^p$ and put $L'=L(t^{p^{-1}})$. Then one can prove $\mathcal{O}_{L'}=\ol [t^{p^{-1}}]$, in particular $\threekahler{1}{\mathcal{O}_{L'}}{\ol}\neq 0$, which contradicts to Theorem~$\ref{thm:deep}$(iii).

(iv) By \cite[Theorem~2.4]{Fal}, $\coh{1}{\gl}{\okbar}$ is almost zero. If $\coh{1}{\gl}{\mathfrak{m}_{\kbar}}\neq 0$, we have $x\in\kbar$ such that the $1$-cocycle $s$ defined by $x$ is not a $1$-coboundary. Choose $\varepsilon\in v_p(L)$ such that $\inf_{\sigma\in\gl}{v_p(x^{\sigma}-x)}>2\varepsilon>0$. If we consider the $1$-cocycle defined by $x/p^{2\varepsilon}$, this cocycle class in $\coh{1}{\gl}{\okbar}$ is killed by $p^{\varepsilon}$, so we have $x-p^{\varepsilon}x'\in L$ for some $x'\in\okbar$. Hence $s$ is a $1$-coboundary, which contradicts to the definition of $s$. 

(v) Assume that there exists an element $x\in\kbar$ with $\Delta_L(x)>\sup_{a\in L}{v_p(x-a)}$. By multiplying with some element in $L\setminus\{0\}$ if necessary, we may assume the inequalities $\inf\nolimits_{\sigma\in\gl}{v_p(x^{\sigma}-x)}>0>\sup\nolimits_{a\in L}{v_p(x-a)}$. Then, since the $1$-cocycle defined by $x$ is killed in $\coh{1}{\gl}{\mkbar}$ by (iv), there exists $a\in L$ such that $x-a\in\mathfrak{m}_{\kbar}$. this contradicts to the above inequality.

(vi) We can reduces to the case that $V$ is a $\cp$-representation. Since all $\cp$-representations of $\gl$ are admissible by Theorem~\ref{thm:deep}(ii) and the argument of \cite[Proposition~4]{Sen2}, it suffices to prove the equality $\coh{k}{\gl}{\cp}=0$ for $k>0$. We can prove this by the argument in \cite[(3.2)]{Tate}, using Theorem~\ref{thm:deep}(ii).
\end{proof}

\begin{remark}
Let $\{G_K^{a}\}_{a\in\mathbb{Q}_{\ge 0}}$ be the ramification filtration of Abbes-Saito (\cite{AS}). An algebraic extension $L/K$ has a finite conductor if $L\subset \kbar^{G_K^{a}}$ for some $a$. If $L$ has a finite conductor, then $L/K$ is shallowly ramified by \cite[Proposition~7.3]{AS}. In classical case, i.e., $k_K$ is perfect, the converse is also true (\cite[p.143]{CG}). In imperfect residue field case, the author does not know this is true. For other properties of deeply ramified extensions and geometric applications in classical case, see \cite{CG}. 
\end{remark}

\begin{theorem}\label{thm:more}
Let $L/K$ be a deeply ramified extension. Put $I_k=I/I^{k+1}\subset\BB{k}$.
\begin{enuRoman}
\item In addition to the conditions of Theorem~\ref{thm:main}, the following are equivalent:
\begin{enumerate}
\item[\rm{(i)'}] $\lhat{k}=B_k^{\gl}$.

\item[\rm{(ii)'}]  For $1\le n\le k$, $J_n\cap\lhat{n}$ generates $J_n$.

\item[\rm{(iii)'}] For $1\le n\le k$, $V_p(\threekahler{(n)}{\ol}{\ok})=J_n^{\gl}$ and $d^{(n)}:\mathcal{O}_{L}^{(n-1)}\to\threekahler{(n)}{\ol}{\ok}$ is surjective.

\item[\rm{(iv)}] For $1\le n\le k$, $I_n\cap\lhat{n}=I_n^{\gl}$.

\item[\rm{(iv)'}] For $1\le n\le k$, $I_n\cap\lhat{n}$ generates $I_n$.

\item[\rm{(v)}] $I_k\cap\lhat{k}=I_k^{\gl}$.

\item[\rm{(v)'}] $I_k\cap\lhat{k}$ generates $I_k$.
\end{enumerate}

\item The followings are equivalent:
\begin{enumerate}
\item[\rm{(i)}] $\lhat{\infty}=(\bdr)^{\gl}$.

\item[\rm{(ii)}] For all $k\in\mathbb{N}$, $\lhat{k}=\BB{k}^{\gl}$.

\item[\rm{(iii)}] $I\cap\lhat{\infty}=I^{\gl}$.

\item[\rm{(iii)'}] $I\cap\lhat{\infty}$ generates $I$.
\end{enumerate}
\end{enuRoman}
\end{theorem}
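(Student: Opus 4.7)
The entire theorem rests on Proposition~\ref{prop:deep}(vi): under deep ramification, every $\cp$-representation of $\gl$ is admissible and $\mathrm{H}^i(\gl,V)=0$ for $i\ge 1$ and every $\bdr$-representation $V$. Applied to $0\to J_n\to B_n\to B_{n-1}\to 0$ and $0\to I_n\to B_n\to\cp\to 0$, this yields short exact sequences of $\gl$-invariants; applied to the finite-dimensional $\cp$-representations $J_n$ and $I_n$, it gives admissibility, so $\cp\otimes_{\widehat{L}}J_n^{\gl}\xrightarrow{\sim}J_n$ (hence $\dim_{\widehat{L}}J_n^{\gl}=\binom{d+n}{n}$) and similarly $\cp\otimes_{\widehat{L}}I_n^{\gl}\xrightarrow{\sim}I_n$.

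For Part~(I), the equivalences among the ``equals'' and ``generates'' variants reduce to a dimension count: an $\widehat{L}$-subspace $V\subseteq J_n^{\gl}$ satisfies $\cp\cdot V=J_n$ iff $\dim_{\widehat{L}}V\ge\binom{d+n}{n}$, iff $V=J_n^{\gl}$, giving (ii)$\Leftrightarrow$(ii)'; the same argument with $I_n^{\gl}$ (filtered by the $J_m^{\gl}$'s for $m\le n$, and completed by a five-lemma over the short exact sequences above) yields (v)$\Leftrightarrow$(v)' and then (iv)$\Leftrightarrow$(iv)' by applying it at each level. The equivalence (iii)$\Leftrightarrow$(iii)' uses Theorem~\ref{str}: the shared hypothesis $V_p(\holkahler{n})=J_n^{\gl}$ forces $r(\holkahler{n})=\binom{d+n}{n}=n(\holkahler{n})$ (since $r\le n\le\binom{d+n}{n}$ by Example~\ref{ex:good}(ii) and $r=\dim_{\widehat{L}}V_p$ by the corollary following Theorem~\ref{str}), so $\holkahler{n}$ is itself $p$-divisible; thus ``$L/K$ is de Rham at level $n$'' and ``$d^{(n)}$ is surjective'' become one and the same condition, by Remark~\ref{rem:dr}(ii).

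For the Banach conditions, the two compatible short exact sequences
\[
0\to I_n\cap\lhat{n}\to\lhat{n}\to\widehat{L}\to 0,\qquad 0\to I_n^{\gl}\to B_n^{\gl}\to\widehat{L}\to 0
\]
(with $\lhat{n}\subseteq B_n^{\gl}$ and right-hand terms equal via Ax--Sen--Tate plus $H^1$-vanishing) show $\lhat{n}=B_n^{\gl}$ iff $I_n\cap\lhat{n}=I_n^{\gl}$, giving (i)$\Leftrightarrow$(iv) and (i)'$\Leftrightarrow$(v). Finally, (i)'$\Rightarrow$(i) via the projection $B_k\to B_n$: it maps $\lhat{k}$ into $\lhat{n}$ (continuous image of closure) and, by $H^1$-vanishing on the filtered kernel $I^{n+1}/I^{k+1}$, maps $B_k^{\gl}$ onto $B_n^{\gl}$, forcing $B_n^{\gl}\subseteq\lhat{n}\subseteq B_n^{\gl}$. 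Part~(II) is parallel: (i)$\Leftrightarrow$(ii) because invariants commute with inverse limits ($\bdr=\varprojlim B_k$ and $\lhat{\infty}=\varprojlim\lhat{k}$), and (i)$\Leftrightarrow$(iii)$\Leftrightarrow$(iii)' follows from the same exact-sequence and admissibility argument, with $\mathrm{H}^1(\gl,I)=\varprojlim\mathrm{H}^1(\gl,I/I^{n+1})=0$ obtained by Mittag--Leffler on the surjective inverse system.

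The main technical obstacle is chaining together the surjections of closed subspaces $\lhat{k}\to\lhat{n}$ and $(\bdr)^{\gl}\to B_k^{\gl}$: the first uses Corollary~\ref{de Rham is surj} and hence implicitly de Rham-ness at intermediate levels, so one first closes the loop (iii)$\Leftrightarrow$(iii)' and invokes Theorem~\ref{thm:main} to extract de Rham-ness from whichever condition is being assumed before completing the Banach-level equivalences.
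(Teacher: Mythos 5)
Your proposal correctly identifies the main tools (Proposition~\ref{prop:deep}(vi) for admissibility and $\mathrm{H}^1$-vanishing, the corollary after Theorem~\ref{str} for $r=\dim_{\widehat{L}}V_p$, Remark~\ref{rem:dr} to turn de~Rham-ness into surjectivity when $\holkahler{n}$ is $p$-divisible), and your treatment of (ii)$\Leftrightarrow$(ii)', (iv)$\Leftrightarrow$(iv)', (v)$\Leftrightarrow$(v)' and (iii)$\Leftrightarrow$(iii)' matches the paper's in spirit (the paper phrases the ``equals vs.\ generates'' step as Nakayama; your dimension count is the same content).

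There is, however, a genuine gap in the way you connect the new conditions back to the Banach-level statements. You assert (i)$\Leftrightarrow$(iv) and (i)'$\Leftrightarrow$(v) ``from the two compatible short exact sequences'', but the top row $0\to I_n\cap\lhat{n}\to\lhat{n}\to\widehat{L}\to 0$ is exact \emph{only if} $\lhat{n}\to\widehat{L}$ is surjective, which is precisely Corollary~\ref{de Rham is surj} and therefore requires de~Rham-ness at levels $1,\dotsc,n$. You acknowledge this and say to ``invoke Theorem~\ref{thm:main} to extract de~Rham-ness from whichever condition is being assumed'' --- but if the assumption is (iv) or (v), you have not exhibited any of Theorem~\ref{thm:main}'s three conditions, so you cannot invoke it yet; as written this is circular. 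The missing ingredient is a direct algebraic bridge from (iv)/(v)/(v)' into the Theorem~\ref{thm:main} equivalences. The paper supplies it with (v)'$\Rightarrow$(ii)': if $I_k\cap\lhat{k}$ generates $I_k$, then $(I_n\cap\lhat{n})^n\subset J_n\cap\lhat{n}$ generates $I_n^n=J_n$, with no de~Rham hypothesis needed. (An alternative fix in your framework: from (iv), one gets $J_n^{\gl}=J_n\cap I_n^{\gl}=J_n\cap I_n\cap\lhat{n}=J_n\cap\lhat{n}$, i.e.\ condition (ii) of Theorem~\ref{thm:main}, directly; and (v)$\Rightarrow$(iv) follows by projecting $I_k^{\gl}=I_k\cap\lhat{k}$ to $B_n$ and using $\mathrm{H}^1$-vanishing on $I^{n+1}/I^{k+1}$. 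Either of these must be stated explicitly.) The paper's chosen cycle (ii)$\Rightarrow$(iv)$\Rightarrow$(v)$\Rightarrow$(v)'$\Rightarrow$(ii)' is ordered exactly so that de~Rham-ness is always available when Corollary~\ref{de Rham is surj} is invoked.

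The same issue recurs in Part~(II): you claim (i)$\Leftrightarrow$(ii) ``because invariants commute with inverse limits'', but this only gives (ii)$\Rightarrow$(i). The direction (i)$\Rightarrow$(ii) is not immediate since $\lhat{\infty}\to\lhat{k}$ need not be surjective a priori; the paper routes it through (i)$\Rightarrow$(iii)$\Rightarrow$(iii)'$\Rightarrow$(ii), where (iii)'$\Rightarrow$(ii) uses the surjection $I\to I_k$ to feed condition (v)' of Part~(I) for every $k$. You should make this explicit as well.
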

\begin{proof}
(I) (i)$\Rightarrow$(i)' is obvious and (i)'$\Rightarrow$(i) follows from the surjectivity of $B_n^{\gl}\to B_{n-1}^{\gl}$.

(ii)$\Leftrightarrow$(ii)' A direct consequence of Proposition~\ref{prop:deep}(vi). 

(iii)$\Rightarrow$(iii)' follows from Theorem~\ref{fund kah}, Theorem~\ref{str} and Proposition~\ref{prop:deep}(vi) and (iii)'$\Rightarrow$(iii) is obvious. 

(ii)$\Rightarrow$(iv) We will prove 
\[
I_n^i\cap\lhat{n}=(I_n^i)^{\gl}\ \mathrm{for}\ 1\le i\le n
\]
by induction on $n$: In the case $n=1$, there is nothing to prove. For general $n$, we use descending induction on $i$, starting from $i=n$: In the case $i=n$, there is nothing to prove. For general $i$, the conclusion follows from the following commutative diagram with exact rows
\[\xymatrix{
0 \ar[r] & I_n^i\cap\lhat{n} \ar[r] \ar[d] & I_n^{i-1}\cap\lhat{n} \ar[r] \ar[d] & J_{i-1}\cap\lhat{i-1} \ar[d] \ar[r] & 0 \\
0 \ar[r] & (I_n^i)^{\gl} \ar[r] & (I_n^{i-1})^{\gl} \ar[r] & J_{i-1}^{\gl} \ar[r] & 0, \\
}\]
where the surjection in the upper row follows from Corollary~\ref{de Rham is surj}.

(iv)$\Rightarrow$(iv)', (v)$\Rightarrow$(v)' Since the projection $I_n^{\gl}\to I_1^{\gl}$ is surjective and $I_1\cong\cp\otimes (I_1^{\gl})$, we have the conclusion by Nakayama's lemma.

(iv)$\Rightarrow$(v), (iv)'$\Rightarrow$(v)' Obvious.

(v)'$\Rightarrow$(ii)' From $(I_n\cap\lhat{n})^n\subset{J_n}\cap\lhat{n}$ and  $I_n^n=J_n$.

\noindent (II) (i)$\Rightarrow$(iii)  Obvious.

(iii)$\Rightarrow$(iii)' We have the canonical surjection $I^{\gl}=\varprojlim{(I_k^{\gl})}\to I_1^{\gl}$ and the canonical isomorphism $I_1\cong\cp\otimes (I_1^{\gl})$. From these, we see that $I^{\gl}$ generates $I$ by Nakayama's lemma.

(iii)'$\Rightarrow$(ii) Since the canonical projection $I\to I_k$ is surjective, the condition (v)' of (I) holds for all $k\in\mathbb{N}$.

(ii)$\Rightarrow$(i) \xymatrix{(\bdr)^{\gl}=\varprojlim_{k}{\BB{k}^{\gl}}=\varprojlim_{k}{\lhat{k}}=\varprojlim_{k}{\varprojlim_{n}{L/p^n\hol{k}}}=\varprojlim_{k,n}{L/p^n\hol{k}}=\lhat{\infty}.}
\end{proof}

\begin{corollary}\label{cor:deep}
Let $L/K$ be deeply ramified. If $\BB{k}^{\gl}=\lhat{k}$ $($resp.\ $\lhat{\infty}=(\bdr)^{\gl})$, then we have $\widehat{L'}_{k}=\BB{k}^{G_{L'}}$ $($resp. $\widehat{L'}_{\infty}=(\bdr)^{G_{L'}})$ for all algebraic extensions $L'/L$.  
\end{corollary}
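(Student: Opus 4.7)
The plan is to reduce both claims to the ``generation'' characterisations inside Theorem~\ref{thm:more}, and then to observe that those characterisations manifestly persist under enlarging $L$ to $L'$.

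First I would note that, by the observation immediately following Definition~\ref{def:deep}, any algebraic extension of a deeply ramified extension is again deeply ramified; hence $L'/K$ is deeply ramified and Theorem~\ref{thm:more} applies to both $L/K$ and $L'/K$. This is the only place where the deeply ramified hypothesis on $L$ is used; everything afterwards is formal.

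For the $\BB{k}$-statement, applying Theorem~\ref{thm:more}(I) to $L/K$, the hypothesis $\lhat{k}=\BB{k}^{\gl}$ (condition (i)) is equivalent to condition (v)$'$, i.e.\ $I_k\cap\lhat{k}$ generates the ideal $I_k\subset \BB{k}$. Since $L\subset L'$ forces $\lhat{k}\subset\widehat{L'}_k$ for closures inside $\BB{k}$, we deduce $I_k\cap\widehat{L'}_k\supset I_k\cap\lhat{k}$, so the larger intersection also generates $I_k$. Applying Theorem~\ref{thm:more}(I) now to $L'/K$ in the reverse direction, the validity of (v)$'$ for $L'$ yields $\widehat{L'}_k=\BB{k}^{G_{L'}}$. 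The $\bdr$-version is entirely parallel via Theorem~\ref{thm:more}(II): $\lhat{\infty}=(\bdr)^{\gl}$ is equivalent to $I\cap\lhat{\infty}$ generating the ideal $I\subset\bdr$, a property transparently inherited by $I\cap\widehat{L'}_{\infty}$, which by (II) applied to $L'/K$ forces $\widehat{L'}_{\infty}=(\bdr)^{G_{L'}}$.

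I do not anticipate any genuine obstacle here; all of the substantive content is already packaged in Theorem~\ref{thm:more}, whose raison d'\^etre is to rephrase equality of fixed points as a statement about ideal generation, a property which is stable under passage to a larger base subfield. The only minor point to verify is the inclusion $\lhat{k}\subset\widehat{L'}_k$ (and $\lhat{\infty}\subset\widehat{L'}_{\infty}$) of $p$-adic closures, which follows at once from monotonicity of the topological closure applied to $L\subset L'$ inside $\BB{k}$ (resp.\ $\bdr$).
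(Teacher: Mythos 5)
Your proof is correct and matches the paper's intended argument (the paper states the corollary immediately after Theorem~\ref{thm:more} without further comment, so the only plausible proof is exactly the reduction you give). You correctly use the remark following Definition~\ref{def:deep} that $L'/K$ remains deeply ramified, pass to condition (v)$'$ (resp.\ (iii)$'$), observe that ideal generation is inherited under enlarging $I_k\cap\lhat{k}$ to $I_k\cap\widehat{L'}_k$ via monotonicity of topological closure, and then read the equivalence back in the other direction for $L'/K$.
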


\begin{theorem}\label{thm shal}
Let $L/K$ be shallowly ramified.
\begin{enuroman}
\item $\lhat{k}=B_k^{\gl}$ and $\widehat{L}_{\infty}=(\bdr)^{\gl}$. 

\item The valuations $\{w_k\!\!\mid_{L}\}$ of $L$ are equivalent to each other. In particular, we have canonical isomorphisms 
\[
\widehat{L}\cong\lhat{k}\cong {\widehat{L}}_{\infty}
\]
as topological rings.
\end{enuroman}
\end{theorem}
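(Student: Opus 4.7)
The plan is to combine the structure theorem for good $\ol$-modules (Theorem~\ref{str}) with the equivalence in the main theorem (Theorem~\ref{thm:main}), after first upgrading the shallow ramification hypothesis to all higher K\"ahler differentials.

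I would begin by proving that when $L/K$ is shallowly ramified, every $\threekahler{(n)}{\ol}{\ok}$ is annihilated by some power of $p$. The case $n=1$ is immediate: shallow ramification says $r(\threekahler{(1)}{\ol}{\ok}) = 0$, and by Theorem~\ref{str} a good $\ol$-module with $r = 0$ is bounded. For $n \geq 2$ I would argue by induction, using the surjection of $\ocp$-modules $\iplus/\iplus^2 \otimes_{\ocp} \iplus^{n-1}/\iplus^n \twoheadrightarrow \iplus^n/\iplus^{n+1}$ coming from the polynomial-algebra identification $\mathrm{gr}_{\iplus} \A \cong \ocp[X_0,\ldots,X_d]$ of Corollary~\ref{gra}. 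Tensoring with $\cp/\ocp$ and restricting to the $\ol$-submodule $\threekahler{(n)}{\ol}{\ok} \subset \threekahler{(n)}{\okbar}{\ok}$ bounds $r(\threekahler{(n)}{\ol}{\ok})$ in terms of the invariants for $\threekahler{(1)}{\ol}{\ok}$ and $\threekahler{(n-1)}{\ol}{\ok}$, both of which vanish by induction. Theorem~\ref{str} then forces $\threekahler{(n)}{\ol}{\ok}$ to be bounded.

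Part (ii) is a quick consequence. From the embedding $\hol{n-1}/\hol{n} \hookrightarrow \threekahler{(n)}{\ol}{\ok}$, boundedness by $p^{m_n}$ gives $p^{m_n}\hol{n-1} \subset \hol{n}$, and iterating, $p^{m_1+\cdots+m_k}\ol \subset \hol{k} \subset \ol$. This translates into inequalities $w_0(x) - e_K(m_1+\cdots+m_k) \leq w_k(x) \leq w_0(x)$ for $x \in L$, so the restricted valuations $w_k|_L$ are all equivalent to $v_K|_L = w_0|_L$. Consequently the topology on $L$ induced from $B_k$ or $\bdr$ coincides with the $p$-adic topology, yielding the canonical topological isomorphisms $\widehat{L} \cong \lhat{k} \cong \widehat{L}_{\infty}$.

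For part (i) I would invoke Theorem~\ref{thm:main} via condition (iii). Boundedness kills $V_p(\threekahler{(n)}{\ol}{\ok})$ and $(\threekahler{(n)}{\ol}{\ok})_{\mathrm{div}}$, so $L/K$ is trivially de Rham at every level, and condition (iii) collapses to $J_n^{\gl} = 0$ for $1 \leq n \leq k$. This is the main obstacle. I would analyze $J_1 = \cp \otimes_{\ocp}(\iplus/\iplus^2)$ as a $G_K$-representation: Corollary~\ref{gra} exhibits $J_1$ via the $d+1$ generators $\pi_K - [\widetilde{\pi_K}], t_1 - [\widetilde{t_1}], \ldots, t_d - [\widetilde{t_d}]$, on which the Galois action is given by Kummer-type characters coming from the $p$-power-root towers of the uniformizer and of a $p$-basis. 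Shallow ramification forbids $L$ from absorbing any of these Kummer towers, whence $J_1^{\gl} = 0$; the higher vanishings $J_n^{\gl} = 0$ then follow by induction from the surjective multiplication $J_1 \otimes_{\cp} J_{n-1} \twoheadrightarrow J_n$. Finally, once $\lhat{k} = B_k^{\gl}$ holds for every $k$, the identity $\widehat{L}_{\infty} = (\bdr)^{\gl}$ follows by passing to the inverse limit.
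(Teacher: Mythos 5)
Your plan correctly identifies what is needed from Theorem~\ref{thm:main} and Lemma~\ref{valuationw}, but the two technical claims you rest the argument on are exactly where the substance lies, and neither is established.

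\textbf{The boundedness induction does not work.} For the base case $n=1$, shallow ramification does give $r(\lkahler)=0$, hence boundedness by Theorem~\ref{str}. But for $n\ge 2$ there is no map relating $\threekahler{(1)}{\ol}{\ok}$, $\threekahler{(n-1)}{\ol}{\ok}$, and $\threekahler{(n)}{\ol}{\ok}$. The surjection $\iplus/\iplus^2\otimes_{\ocp}\iplus^{n-1}/\iplus^n\twoheadrightarrow\iplus^n/\iplus^{n+1}$ is a statement about the graded ring of $\A$; it does not restrict to the $\ol$-submodules, which are defined as the images of the derivations $d^{(n)}$ whose \emph{domains} $\hol{n-1}$ change with $n$. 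Indeed, for $x\in\hokbar{n-1}$ and $y\in\hokbar{m-1}$ one only gets $xy-\tilde x\tilde y\in I^{\min(n,m)}$, so the product $xy$ need not lie in $\hokbar{n+m-1}$, and there is no multiplication $\threekahler{(1)}{\ol}{\ok}\otimes\threekahler{(n-1)}{\ol}{\ok}\to\threekahler{(n)}{\ol}{\ok}$ to run an induction on $r$. In the paper, boundedness of $\holkahler{n}$ is a \emph{consequence} of $J_n^{\gl}=0$ (since $V_p(\holkahler{n})\subset J_n^{\gl}$ and $r(M)=\dim V_pM$), not an input to proving it.

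\textbf{The vanishing $J_k^{\gl}=0$ is asserted, not proved, and the induction step is false.} You argue $J_1^{\gl}=0$ on the grounds that shallow ramification ``forbids $L$ from absorbing the Kummer towers.'' But $J_1$ is not a direct sum of characters: from the matrix in the paper's proof of Theorem~\ref{cohnonram}, $J_1$ is a nonsplit extension $0\to\cp(1)\to J_1\to\cp^{\oplus d}\to 0$, so $(\cp^{\oplus d})^{\gl}=\widehat L^{\oplus d}\neq 0$ and the vanishing of $J_1^{\gl}$ requires both $\cp(1)^{\gl}=0$ and injectivity of the connecting map $\widehat L^{\oplus d}\to\coh{1}{\gl}{\cp(1)}$. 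Establishing that injectivity is precisely what the paper's Lemmas~\ref{fund nonram}, \ref{tate}, \ref{lem:Tatedif} and Corollary~\ref{calc coh simple} (the Tate-style trace estimates and the computation of $\coh{1}{G_{L^{(j+1)}/L^{(j)}}}{\widehat{L^{(j+1)}}}$) are for. Moreover, the proposed inductive step -- deducing $J_n^{\gl}=0$ from $J_1^{\gl}=0$ via the surjection $J_1\otimes J_{n-1}\twoheadrightarrow J_n$ -- is invalid: taking $G$-invariants is not right exact, and a quotient of a representation with no invariants can easily have invariants (e.g.\ $\cp(1)\oplus\cp(-1)\twoheadrightarrow\cp$). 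The paper instead handles $J_k=\mathrm{Sym}^k_{\cp}J_1$ by filtering via the partial order $\succ$ on monomials and showing each graded piece $V_{n'}$ has no $G_{L^{(d)}/L}$-invariants, which in turn relies on the explicit description of $\coh{1}{G_{L^{(d)}/L^{(0)}}}{\widehat{L^{(d)}}}$ as $\oplus_j\widehat{L^{(0)}}[s_j]$. These vanishings constitute Theorem~\ref{cohnonram}, the actual core of the proof, and they cannot be shortcut by formal symmetric-algebra manipulations.

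Once $J_k^{\gl}=0$ is in hand, your route from there to (i) and (ii) is essentially the paper's: condition (iii) of Theorem~\ref{thm:main} holds trivially, giving (i); injectivity of $\lhat{k}\to\lhat{k-1}$ together with Lemma~\ref{valuationw} gives (ii).
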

The proof reduces to the following theorem:
\newcommand{\okp}{\mathcal{O}_{K'}}
\newcommand{\olp}{\mathcal{O}_{L'}}
\newcommand{\op}[1]{\mathcal{O}_{#1}}
\newcommand{\mmu}[1]{{#1}^{\mu}}
\newcommand{\mn}[1]{{#1}^n}
\newcommand{\mnlam}[1]{{#1}^n_{\lambda}}
\newcommand{\mlam}[1]{{#1}_{\lambda}}
\newcommand{\mmulam}[1]{{#1}_{\lambda}^{\mu}}
\newcommand{\kah}[2]{{\Omega}^{1}_{#1/#2}}
\def\hsymb#1{\mbox{\strut\rlap{\smash{\Huge$#1$}}\quad}}

\begin{theorem}\label{cohnonram}
If $L/K$ is shallowly ramified, then $J_k^{\gl}=0$ for all $k\in\mathbb{N}$.
\end{theorem}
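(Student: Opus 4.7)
My plan is to prove $J_k^{G_L}=0$ by induction on $k\ge 1$, combining the exactness of $V_p$ on good modules (Lemma~\ref{exactness vp}) with the structure theorem and the identification $J_k=V_p(\threekahler{(k)}{\okbar}{\ok})$ from Remark~\ref{rem:kah}.

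For the base case $k=1$, I apply Lemma~\ref{exactness kahler} to the tower $\kbar/L/K$ to obtain the short exact sequence of $\okbar$-modules with semilinear $\gl$-action
\[
0\longrightarrow \okbar\otimes_{\ol}\lkahler\longrightarrow \Omega^{1}_{\okbar/\ok}\longrightarrow \Omega^{1}_{\okbar/\ol}\longrightarrow 0.
\]
Because $L/K$ is shallowly ramified, $V_p(\lkahler)=0$, so by Theorem~\ref{str} the module $\lkahler$ is annihilated by some $p^{m}$; the same is then true of $\okbar\otimes_{\ol}\lkahler$. Since $\Omega^{1}_{\okbar/\ok}$ is a good $\okbar$-module (Example~\ref{ex:good}(ii)), Lemma~\ref{exactness vp} applies and yields an isomorphism of $\gl$-modules
\[
J_1 \;=\; V_p(\Omega^{1}_{\okbar/\ok})\;\xrightarrow{\;\sim\;}\; V_p(\Omega^{1}_{\okbar/\ol}).
\]
Taking $\gl$-invariants reduces the base case to proving $V_p(\Omega^{1}_{\okbar/\ol})^{\gl}=0$. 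For this I would imitate the proof of the implication (ii)$\Rightarrow$(iii) of Theorem~\ref{thm:main}: any element of this $\widehat{L}$-vector space lifts to a $B_1$-Cauchy sequence in $\kbar$, and $\gl$-invariance together with Ax--Sen--Tate (Theorem~\ref{ax}) places the limit inside $\widehat{L}$; the only such element killed in $B_0$ is $0$. The shallow ramification enters precisely to guarantee that the first-order obstruction $V_p(\okbar\otimes\lkahler)$ vanishes, so there is no room for a non-trivial invariant.

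For the inductive step, I first establish the auxiliary fact that $r(\holkahler{k})=0$ for all $k\ge 1$: the equality $\hol{k-1}/\hol{k}\cong \holkahler{k}$ (a consequence of Theorem~\ref{thm Col}(iii) and the definition of $\hol{k}$) combined with the fact that $\holkahler{1}=\lkahler$ is bounded $p$-torsion lets one propagate boundedness up the filtration using the good module formalism (Theorem~\ref{fund kah}, Example~\ref{ex:good}(ii)). Given this, I generalize Lemma~\ref{exactness kahler} to level $k$ by induction, tensoring the first fundamental sequence for $\okbar^{(k-1)}/\hol{k-1}/\ok$ with $\okbar$ over $\okbar^{(k-1)}$ and using the definition $\hkahler{k}=\okbar\otimes_{\okbar^{(k-1)}}\Omega^{1}_{\okbar^{(k-1)}/\ok}$. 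Running the same $V_p$-exactness argument as in the base case reduces $J_k^{\gl}=0$ to a vanishing of $\gl$-invariants on $V_p$ of the relative $k$-th Kähler module, which is then handled by the same Cauchy-sequence/Ax--Sen--Tate argument.

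The main obstacle I anticipate is the last step in each case: showing that $V_p(\Omega^{(k)}_{\okbar/\ol})^{\gl}=0$ cleanly, without circularly invoking Theorem~\ref{thm shal}(ii). The naive attempt to reduce to the ``same statement with $\widehat{L}$ as the base field'' does not work, because $V_p(\Omega^{(k)}_{\okbar/\ol})$ is the analogue of $J_k$ for $\widehat{L}$ and the question repeats itself. The right way, I think, is to exploit the explicit minimal-expansion description from \S\ref{min}--\S\ref{fund property}: an invariant $x$ admits a presentation $x=\sum_{|n|=k}\lambda_{n,x,P}(\pi)\otimes P^n(\Pi)$, and the shallow ramification of $L/K$ forces each coefficient $\lambda_{n,x,P}(\pi)$ to lie in $\ol$ up to controlled error, so that $x$ lifts to $A_k$ and thereby vanishes modulo $I^{k+1}$; the surjectivity statements of Theorem~\ref{thm Col} and the compatibility of $w_k$ with the $p$-adic valuation of $L$ would do the work.
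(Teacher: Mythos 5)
Your reduction in the base case is tautological. The short exact sequence $0\to\okbar\otimes_{\ol}\lkahler\to\Omega^1_{\okbar/\ok}\to\Omega^1_{\okbar/\ol}\to 0$ is $\gl$-equivariant, and so is the resulting isomorphism $J_1\cong V_p(\Omega^1_{\okbar/\ol})$. Taking $\gl$-invariants therefore just re-presents $J_1^{\gl}$ as $V_p(\Omega^1_{\okbar/\ol})^{\gl}$; you have not removed any of the hard content of the theorem. The subsequent ``Cauchy-sequence plus Ax--Sen--Tate'' step then does not close the gap: Ax--Sen--Tate (Theorem~\ref{ax}) controls $B_0^{\gl}=\cp^{\gl}$, but an element of $J_1^{\gl}$ dies in $B_0$ by definition, so the theorem says nothing about it. Knowing that a $\gl$-invariant element of $J_1$ is a $B_1$-limit of elements of $\kbar$ does not place its ``coefficients'' in $\widehat{L}$; that conclusion is equivalent to $J_1\cap\widehat{L}_1=J_1^{\gl}$, which is (a piece of) what you are trying to prove, and is exactly the circularity you flag at the end. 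The appeal to the proof of (ii)$\Rightarrow$(iii) of Theorem~\ref{thm:main} is not available here: that argument takes $J_n\cap\lhat{n}=J_n^{\gl}$ as a hypothesis and deduces dimension equalities; it does not produce the hypothesis. The same gap recurs at each inductive step, so the proposal never actually computes a single group of $\gl$-invariants, and the fallback suggestion via minimal expansions is too vague to evaluate (minimal expansions are defined for elements of $\kbar$, whereas an invariant of $J_k$ need not come from $\kbar$ in any controlled way).

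For contrast, the paper's proof of Theorem~\ref{cohnonram} is genuinely computational and does not proceed through the $V_p$-exactness formalism at all. It fixes the explicit tower $K^{(j)}=K_0(\mu_{p^\infty})(\widetilde{t_1},\dotsc,\widetilde{t_j})$, sets $L^{(j)}=K^{(j)}L$, and uses a Tate-style normalized-trace construction (Lemma~\ref{fund nonram}, Lemma~\ref{lem:Tatedif}, Corollary~\ref{calc coh simple}) to compute $\coh{0}{G_{L^{(0)}/L}}{\widehat{L^{(0)}}(n)}=0$ for $n\neq 0$ and $\coh{1}{G_{L^{(d)}/L^{(0)}}}{\widehat{L^{(d)}}}=\oplus_j \widehat{L^{(0)}}[s_j]$; shallow ramification of $L/K$ is what makes $K^{(j)}$ and $L$ (and hence the intermediate pieces) linearly disjoint so that this machinery applies. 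It then writes $J_1=\oplus_j\cp v_j$ with an explicit upper-triangular Galois matrix involving $\chi$ and the cocycles $s_j$, realizes $J_k=\mathrm{Sym}^k_{\cp}J_1$, and kills invariants by a filtration/minimality argument on multi-indices. None of this is recovered or replaced by your proposal; the descent through Kähler differentials is a correct but contentless reformulation of the statement, and the needed cohomological input never appears.
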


Let us prove Theorem~\ref{thm shal}, admitting Theorem~\ref{cohnonram}: The equality $\lhat{k}=B_k^{\gl}$ follows from Theorem~\ref{thm:main}. Since $\ker{(\lhat{k}\to\lhat{k-1})}=J_k\cap\lhat{k}\subset J_k^{\gl}=0$, the canonical projection $\lhat{k}\to\lhat{k-1}$ is injective. So, the equivalence of semi-valuations $\{w_k\!\!\mid_{L}\}$ is a direct consequence of Lemma~\ref{valuationw}.

Before the proof of Theorem~\ref{cohnonram}, we need some lemmas. For a while, let $L/K$ be a general algebraic extension.
\begin{nonotation}
Fix $\zeta_{p^n}$ a primitive $p^n$-th root of unity with $\zeta_{p^{n+1}}^p=\zeta_{p^n}$ and put $\varepsilon=(1,\zeta_p,\zeta_{p^2},\dotsc)\in\eplus$. For $x\in\okbar$, put $L(\widetilde{x})=\bigcup_{n}{L(\widetilde{x}^{(n)})}$. For $x\in\ok\setminus\{ 0\}$ and $\widetilde{x}$ as before, let $s_{\widetilde{x}}:\gk\to\zp$ be the map such that $\sigma (\widetilde{x})/\widetilde{x}=\varepsilon^{s_{\widetilde{x}}(\sigma)}$ for $\sigma\in\gk$. $\chi$ denotes the cyclotomic character and $\mu_{p^{\infty}}$ denotes the set of $p$-power roots of unity.
\end{nonotation}

\begin{lemma}\label{fund nonram}
Let $K',L/K$ be linearly disjoint algebraic extensions. Let $\{K^{\mu}\}\ (resp.\ \{L_{\lambda}\})$ be finite subextensions of $K'$ $(resp.\ L)$ over $K$ and put $L_{\lambda}^{\mu}=K^{\mu}\mlam{L}$. Assume $r(\kah{\okp}{\ok})+r(\kah{\ol}{\ok})\le r(\okbar\otimes_{\mathcal{O}_{K'}}\kah{\okp}{\ok}+\okbar\otimes_{\ol}\kah{\ol}{\ok})$. Then there exists a constant $C$ satisfying
\[
0\le v_p(\dif{\mmu{K}}{K})-v_p(\dif{\mmulam{L}}{\mlam{L}})\le C.
\]
Moreover, we have the equality
\[
r(\kah{\okp}{\ok})+r(\kah{\ol}{\ok})=r(\kah{\olp}{\ok}),
\]
where $L'=K'L$.
\end{lemma}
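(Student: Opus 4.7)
Set $M^{\okbar}:=\okbar\otimes_{\okp}\threekahler{1}{\okp}{\ok}$ and $N^{\okbar}:=\okbar\otimes_{\ol}\lkahler$, viewed as submodules of $\threekahler{1}{\okbar}{\ok}$ via Lemma~\ref{exactness kahler}. The key step is to use the hypothesis to show that $M^{\okbar}\cap N^{\okbar}$ is bounded $p$-torsion: the surjection $M^{\okbar}\oplus N^{\okbar}\twoheadrightarrow M^{\okbar}+N^{\okbar}$ combined with Lemma~\ref{exactness vp} gives the reverse inequality $r(M^{\okbar}+N^{\okbar})\le r(M^{\okbar})+r(N^{\okbar})$, so the hypothesis forces equality; applying $V_{p}$ to $0\to M^{\okbar}\cap N^{\okbar}\to M^{\okbar}\oplus N^{\okbar}\to M^{\okbar}+N^{\okbar}\to 0$ then yields $r(M^{\okbar}\cap N^{\okbar})=0$, and Theorem~\ref{str} provides an integer $C_{0}$ with $p^{C_{0}}(M^{\okbar}\cap N^{\okbar})=0$. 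I then pull this bound back to finite levels. Setting $M_{\lambda\mu}:=\op{L_{\lambda}^{\mu}}\otimes\threekahler{1}{\op{K^{\mu}}}{\ok}$ and $N_{\lambda\mu}:=\op{L_{\lambda}^{\mu}}\otimes\threekahler{1}{\op{L_{\lambda}}}{\ok}$ inside $\threekahler{1}{\op{L_{\lambda}^{\mu}}}{\ok}$, the transition maps in this direct system are injective by Lemma~\ref{exactness kahler}, intersections commute with filtered colimits, and $\okbar$ is faithfully flat over $\olp$ (being a filtered union of finite free extensions); so each $M_{\lambda\mu}\cap N_{\lambda\mu}$ embeds into $M^{\okbar}\cap N^{\okbar}$ and is annihilated by $p^{C_{0}}$. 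Being a submodule of $M_{\lambda\mu}$, which has at most $d+1$ generators over $\op{L_{\lambda}^{\mu}}$ by Theorem~\ref{fund kah}, it satisfies $v_{p}(\mathrm{Fitt}_{0}(M_{\lambda\mu}\cap N_{\lambda\mu}))\le(d+1)C_{0}=:C$.

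For (i), assembling the two instances of Lemma~\ref{exactness kahler} (for $\ok\subset\op{L_{\lambda}}\subset\op{L_{\lambda}^{\mu}}$ and $\ok\subset\op{K^{\mu}}\subset\op{L_{\lambda}^{\mu}}$) produces the four-term exact sequence
\[
0\to M_{\lambda\mu}\cap N_{\lambda\mu}\to M_{\lambda\mu}\to\threekahler{1}{\op{L_{\lambda}^{\mu}}}{\op{L_{\lambda}}}\to\threekahler{1}{\op{L_{\lambda}^{\mu}}}{R_{\lambda\mu}}\to 0,
\]
where $R_{\lambda\mu}\subset\op{L_{\lambda}^{\mu}}$ is the subring generated by $\op{K^{\mu}}$ and $\op{L_{\lambda}}$. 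Additivity of $v_{p}\circ\mathrm{Fitt}_{0}$, together with the identity $v_{p}(\mathrm{Fitt}_{0}M_{\lambda\mu})=v_{p}(\dif{K^{\mu}}{K})$, yields
\[
v_{p}(\dif{K^{\mu}}{K})-v_{p}(\dif{L_{\lambda}^{\mu}}{L_{\lambda}})=v_{p}(\mathrm{Fitt}_{0}(M_{\lambda\mu}\cap N_{\lambda\mu}))-v_{p}(\mathrm{Fitt}_{0}\threekahler{1}{\op{L_{\lambda}^{\mu}}}{R_{\lambda\mu}}).
\]
The lower bound $\ge 0$ is the classical inequality $\dif{L_{\lambda}^{\mu}}{L_{\lambda}}\supseteq\dif{K^{\mu}}{K}\op{L_{\lambda}^{\mu}}$ for linearly disjoint base change; the upper bound follows because the cokernel term is nonnegative, bounding the LHS by $v_{p}(\mathrm{Fitt}_{0}(M_{\lambda\mu}\cap N_{\lambda\mu}))\le C$.

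For (ii), the same nonnegativity simultaneously forces $v_{p}(\mathrm{Fitt}_{0}\threekahler{1}{\op{L_{\lambda}^{\mu}}}{R_{\lambda\mu}})\le C$ uniformly in $(\lambda,\mu)$, so the cokernel in the four-term sequence is annihilated by a fixed power of $p$. Passing to the colimit, the canonical map $\olp\otimes\threekahler{1}{\okp}{\ok}\to\threekahler{1}{\olp}{\ol}$ has both kernel and cokernel killed by a fixed power of $p$; Lemma~\ref{exactness vp} then shows that $V_{p}$ takes it to an isomorphism, whence $r(\threekahler{1}{\olp}{\ol})=r(\threekahler{1}{\okp}{\ok})$ by base-change invariance (Remark~\ref{rem:good}(ii)). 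Combining this with Lemma~\ref{exactness kahler} for $\ok\subset\ol\subset\olp$ and additivity of $r$ on the resulting short exact sequence gives the claimed equality $r(\threekahler{1}{\olp}{\ok})=r(\lkahler)+r(\threekahler{1}{\okp}{\ok})$. The main obstacle will be the careful identification of the cokernel in the four-term sequence as $\threekahler{1}{\op{L_{\lambda}^{\mu}}}{R_{\lambda\mu}}$ and the appeal to the classical nonnegativity of $v_{p}(\dif{K^{\mu}}{K})-v_{p}(\dif{L_{\lambda}^{\mu}}{L_{\lambda}})$, since the latter does not follow purely from the K\"ahler-differential bookkeeping.
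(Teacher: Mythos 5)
Your proposal is correct and follows essentially the same strategy as the paper: identify the intersection $M^{\okbar}\cap N^{\okbar}$ (equivalently $\okbar\otimes_{\olp}M$, the kernel of $\olp\otimes\threekahler{1}{\okp}{\ok}\to\threekahler{1}{\olp}{\ol}$), use Lemma~\ref{exactness vp} together with the hypothesis to show $r=0$ and hence bounded $p$-torsion, bound the finite-level Fitting ideals, and conclude via additivity of $v_p\circ\mathrm{Fitt}_0$ in the four-term sequence and the classical different inequality. Two cosmetic points: the explicit identification of the cokernel as $\threekahler{1}{\op{L_{\lambda}^{\mu}}}{R_{\lambda\mu}}$ is unnecessary (only $v_p(\mathrm{Fitt}_0)\ge 0$ is used, which is automatic for any finite-length module), and at the final step the clean reference is Corollary~\ref{four term lemma} rather than Lemma~\ref{exactness vp} alone, since one would otherwise need to know that $\threekahler{1}{\olp}{\ol}$ is itself good.
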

\begin{proof}
Let us prove the first part. The LHS of the asserted inequalities is from the linearly disjointness (see the argument after Definition~\ref{def:deep}). Put $M,N$ $(\mathrm{resp}.\ \mmulam{M},\ \mmulam{N})$ the kernel and cokernel of the canonical morphism of $\olp\otimes\kah{\okp}{\ok}\to\kah{\olp}{\ol}$ $(\mathrm{resp}.\ {{\mathcal{O}}_{\mmulam{L}}\otimes\kah{\mathcal{O}_{\mmu{K}}}{\ok}}\to\kah{{\mathcal{O}}_{\mmulam{L}}}{{\mathcal{O}}_{\mlam{L}}})$.

Put $\Omega=\okbar\otimes_{\mathcal{O}_{K'}}\kah{\okp}{\ok}\oplus\okbar\otimes_{\ol}\kah{\ol}{\ok}$, $\Omega^{'}=\okbar\otimes_{\mathcal{O}_{K'}}\kah{\okp}{\ok}+\okbar\otimes_{\ol}\kah{\ol}{\ok}$ and denotes the fitting ideals of $N_{\lambda}^{\mu}$, $M_{\lambda}^{\mu}$ by $\mathcal{I}(N_{\lambda}^{\mu})$, $\mathcal{I}(M_{\lambda}^{\mu})$. Applying Lemma~\ref{exactness vp} to an exact sequence

\[\xymatrix{
0\ar[r]&\okbar\otimes_{\mathcal{O}_{L'}}M\ar[r]&\Omega\ar[r]&\Omega'\ar[r]&0\\
}\]

\noindent with $\Omega\to\Omega^{'}; (\omega_1,\omega_2)\mapsto\omega_1-\omega_2$, we have $r(\okbar\otimes_{\mathcal{O}_{L'}}M)=r(M)=0$. Since $\{\mmulam{M}\}$ is a direct system of $M$, we have a constant $C$ such that $v_p(\mathcal{I}(\mmulam{M}))<C$. By an exact sequence 
\[\xymatrix{
0 \ar[r]& \mmulam{M} \ar[r] & {\mathcal{O}}_{\mmulam{L}}\otimes\kah{{\mathcal{O}}_{\mmu{K}}}{\ok} \ar[r] & \kah{{\mathcal{O}}_{\mmulam{L}}}{{\mathcal{O}}_{\mlam{L}}} \ar[r] & \mmulam{N} \ar[r] & 0, \\
}\]
\noindent we have the first assertion. Since we have the inequality $v_p(\mathcal{I}(\mmulam{N}))\le v_p(\mathcal{I}(\mmulam{M}))$ by the first assertion and the above exact sequence, the last assertion is from Corollary~\ref{four term lemma}.
\end{proof}

For a while, let $K_0$ is an absolutely unramified local field and assume $K=K_0(\zeta_{p^{n_0}})$ for some $n_0>1$ $(\mathrm{Case}~1)$ or $K=K_0(\widetilde{t}^{(n_0)})$ $(\mathrm{Case}~2)$ for some $n_0\ge 0$ where $t$ is an element of $K_0$ such that $\bar{t}\in k_{K_0}\setminus k_{K_0}^p$. Let us put
\[
K^n=
\begin{cases}
K_0({\zeta}_{p^{n_0+n}}) & (\mathrm{Case}~1) \\
K_0({\widetilde{t}}^{(n_0+n)}) & (\mathrm{Case}~2) \\
\end{cases}
\]
and $K'=\cup K^n$.
\begin{lemma}\label{tate}
$v_p(\dif{K^n}{K})=n.$
\end{lemma}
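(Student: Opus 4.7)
My plan is to reduce to a one-step computation via multiplicativity of the different: iterating
\[
\dif{K^n}{K}=\dif{K^n}{K^{n-1}}\cdot\dif{K^{n-1}}{K}\mathcal{O}_{K^n}
\]
reduces the claim to showing $v_p(\dif{K^{i+1}}{K^i})=1$ for $0\le i<n$. Let $\alpha_i$ denote the obvious generator of $K^{i+1}/K^i$ --- namely $\alpha_i=\zeta_{p^{n_0+i+1}}$ in Case~1 and $\alpha_i=\widetilde{t}^{(n_0+i+1)}$ in Case~2 --- so that $\alpha_i^p=a_i$ where $a_i\in\mathcal{O}_{K^i}$ is either $\zeta_{p^{n_0+i}}$ or $\widetilde{t}^{(n_0+i)}$. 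Note $a_i$ is a unit in both cases: $\zeta_{p^{n_0+i}}$ is a root of unity, and $\widetilde{t}^{(n_0+i)}$ is a $p^{n_0+i}$-th root of the unit $t$ (a unit because $\bar{t}\ne 0$).

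First I would verify $[K^{i+1}:K^i]=p$ in each case. In Case~1 this is the standard cyclotomic degree computation, which transfers from $\mathbb{Q}_p$ to $K_0$ because $K_0/\mathbb{Q}_p$ is unramified. In Case~2, the $p$-independence of $\bar{t}$ in $k_{K_0}$ makes $\bar\alpha_i$ not a $p$-th power in $k_{K^i}$, so $k_{K^{i+1}}/k_{K^i}$ is purely inseparable of degree $p$; since $\alpha_i$ is a global unit, $e_{K^{i+1}/K^i}=1$, and $[K^{i+1}:K^i]=ef=p$. Consequently the minimal polynomial of $\alpha_i$ over $K^i$ is exactly $f_i(X)=X^p-a_i$.

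Once I establish the monogenicity $\mathcal{O}_{K^{i+1}}=\mathcal{O}_{K^i}[\alpha_i]$ --- classical for cyclotomic towers in Case~1, and in Case~2 following from $e_{K^{i+1}/K^i}=1$ together with the fact that $\bar\alpha_i$ generates the residue extension --- the different is generated by $f_i'(\alpha_i)=p\alpha_i^{p-1}$, a unit multiple of $p$, giving $v_p(\dif{K^{i+1}}{K^i})=v_p(p)=1$. Summing over $i$ yields $v_p(\dif{K^n}{K})=n$. The only mildly subtle point is the monogenicity in Case~2, where the residue extension is inseparable; this is handled by observing that $\mathcal{O}_{K^i}[\alpha_i]$ is a local $\mathcal{O}_{K^i}$-subalgebra of $\mathcal{O}_{K^{i+1}}$ sharing the same uniformizer and surjecting onto the residue field $k_{K^{i+1}}$, hence equal to $\mathcal{O}_{K^{i+1}}$.
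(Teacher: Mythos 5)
Your proof is correct. Let me compare it to the paper's. The paper's proof is a one-liner: it invokes multiplicativity of the different to reduce to a comparison with $K_0$, namely $v_p(\dif{K^n}{K})=v_p(\dif{K^n}{K_0})-v_p(\dif{K}{K_0})$, and then reads off both terms from the explicit computation of $\Omega^1_{\mathcal{O}_{K_m}/\mathcal{O}_{K_0}}$ in Example~\ref{ex:deep}, using Faltings' identification of the different with the Fitting ideal of $\Omega^1$. You also start from multiplicativity, but instead telescope through the tower $K=K^0\subset K^1\subset\dotsb\subset K^n$ and compute each single-step different $\dif{K^{i+1}}{K^i}$ directly via the classical formula $(f_i'(\alpha_i))$ for a monogenic extension with minimal polynomial $f_i(X)=X^p-a_i$. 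This is a genuinely different organization: yours is more self-contained (it does not lean on the $\Omega^1$ computation in Example~\ref{ex:deep} or on the Fitting-ideal description of the different), at the cost of having to argue monogenicity of each step — which you correctly handle, including the Case~2 subtlety where the residue extension is purely inseparable of degree $p$ and $e=1$; the Nakayama-type argument ($\mathcal{O}_{K^i}[\alpha_i]+\pi\mathcal{O}_{K^{i+1}}=\mathcal{O}_{K^{i+1}}$ implies equality since $\mathcal{O}_{K^{i+1}}$ is finite over the complete local ring $\mathcal{O}_{K^i}$) makes the last sentence of your proof rigorous. Both proofs ultimately amount to the same elementary calculation, so the paper's reference to Example~\ref{ex:deep} and your direct step-by-step computation buy the same thing; yours is longer on the page but exposes the mechanism, while the paper's is shorter by re-using a computation it already displayed.
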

\begin{proof}
Since, for finite extensions $L_1/L_2/L_3/K$, we have $\dif{L_1}{L_3}=\dif{L_1}{L_2}\dif{L_2}{L_3}$ (\cite[III, $\S 4$, Proposition~8 ]{Se1}), this is a direct consequence of Example~\ref{ex:deep}.
\end{proof}

\begin{lemma}\label{lem:Tatedif}
Let $K,K^n,K'$ be as above and let $L/K$ be an algebraic extension such that the extensions $K',L/K$ satisfy all the assumptions in Lemma~\ref{fund nonram}. Let $\{L_{\lambda}\}$ be the set of finite subextensions of $L/K$ and put $L^n_{\lambda}=K^nL_{\lambda}$, $L^n=K^nL$, $L'=K'L$. Let $\sigma$ be an element of $\gk$ such that $\sigma |_{K^1}\neq\mathrm{id}_{K^1}$ and let $|\cdot |$ be the $p$-adic norm. Then we have the following.
\begin{enuroman}
\item $v_p(\dif{\mnlam{L}}{\mlam{L}})=v_p(\dif{\mn{K}}{K})+b_{\lambda}^{n}$, where $\{b_{\lambda}^n\}_{n}$ is a decreasing sequence and $\{b_{\lambda}^n\}_{n,\lambda}$ is bounded. 

\item $|\tr{L_{\lambda}^{n+1}}{L_{\lambda}^{n}}(x)|\le {|p|}^{1+b_{\lambda}^{n+1} -b_{\lambda}^{n}}|x|$ for $x\in L_{\lambda}^{n+1}$.

\item $| x-p^{-1}\tr{L^{n+1}_{\lambda}}{L^{n}_{\lambda}}(x)|\le |p|^{-1}| x^{{\sigma}^{p^n}}-x|$ for $x\in L^{n+1}_{\lambda}$.

\item If we put $t_{L'/L}=\varinjlim_{n}{{[L^n:L]}^{-1}\tr{L^n}{L}}$, then there exists a constant $C_1$ such that 
\[
|t_{L'/L}(x)-x|\le C_1| x^{\sigma}-x| .
\] 
\end{enuroman}
\end{lemma}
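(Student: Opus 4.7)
The plan is to prove (i) by combining Lemma~\ref{fund nonram} with Lemma~\ref{tate} and the divisibility of differents under base change, deduce (ii) from the defining property of the codifferent, and then handle (iii)--(iv) by Tate's normalised-trace argument applied along the tower $\{L^n_\lambda\}_n$.

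Concretely, set $b_\lambda^n:=v_p(\dif{L^n_\lambda}{L_\lambda})-n$, so that the identity in (i) is tautological by Lemma~\ref{tate}. Boundedness $-C\le b_\lambda^n\le 0$ is exactly Lemma~\ref{fund nonram} applied to the linearly disjoint pair $K',L/K$. For monotonicity, use $b_\lambda^{n+1}-b_\lambda^n=v_p(\dif{L^{n+1}_\lambda}{L^n_\lambda})-1$ (multiplicativity of the different in towers) and observe that $L^{n+1}_\lambda=L^n_\lambda K^{n+1}$ with $L^n_\lambda$ and $K^{n+1}$ linearly disjoint over $K^n$; consequently a generator of $\dif{K^{n+1}}{K^n}$ annihilates $\Omega^1_{\mathcal{O}_{L^{n+1}_\lambda}/\mathcal{O}_{L^n_\lambda}}$, giving $v_p(\dif{L^{n+1}_\lambda}{L^n_\lambda})\le 1$. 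Part (ii) then follows from the standard trace bound $\tr{M}{N}(\dif{M}{N}^{-1}\mathcal{O}_M)\subset\mathcal{O}_N$ applied to $M=L^{n+1}_\lambda,N=L^n_\lambda$, with $v_p(\dif{L^{n+1}_\lambda}{L^n_\lambda})=1+b_\lambda^{n+1}-b_\lambda^n$ by (i).

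For (iii), the hypothesis $\sigma|_{K^1}\ne\mathrm{id}$ makes $\sigma|_{K'}$ a topological generator of $\mathrm{Gal}(K'/K)\cong\zp$ in both Case~1 (cyclotomic) and Case~2 (Kummer). Identify $\mathrm{Gal}(L^{n+1}_\lambda/L^n_\lambda)$ with $\mathrm{Gal}(K^{n+1}/K^n)$ via linear disjointness, lift $\sigma^{p^n}|_{K^{n+1}}$ to a generator $\tau\in\mathrm{Gal}(L^{n+1}_\lambda/L^n_\lambda)$, write $\tr{L^{n+1}_\lambda}{L^n_\lambda}(x)=\sum_{i=0}^{p-1}\tau^i(x)$, and expand
\[
p^{-1}\tr{L^{n+1}_\lambda}{L^n_\lambda}(x)-x=p^{-1}\sum_{i=1}^{p-1}\sum_{j=0}^{i-1}\tau^j(\tau(x)-x),
\]
which is bounded ultrametrically by $|p|^{-1}|\tau(x)-x|=|p|^{-1}|\sigma^{p^n}(x)-x|$ once one checks that $\tau$ and $\sigma^{p^n}$ act identically on $L^{n+1}_\lambda$ (they induce the same element of $\mathrm{Gal}(K^{n+1}/K^n)$).

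Finally, (iv) is a telescoping of (iii) up the tower. For $x\in L^n_\lambda$ put $x_k=p^{-(n-k)}\tr{L^n_\lambda}{L^k_\lambda}(x)$, so $x_n=x$, $x_0=t_{L'/L}(x)$, and $x_k=p^{-1}\tr{L^{k+1}_\lambda}{L^k_\lambda}(x_{k+1})$. By (iii), $|x_k-x_{k+1}|\le|p|^{-1}|\sigma^{p^k}(x_{k+1})-x_{k+1}|$, and the factorisation $\sigma^{p^k}-1=(\sigma-1)\sum_{j=0}^{p^k-1}\sigma^j$ gives $|\sigma^{p^k}(x_{k+1})-x_{k+1}|\le|\sigma(x_{k+1})-x_{k+1}|$. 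Commuting $\sigma$ through the iterated trace and invoking the iterated form of (ii),
\[
|\tr{L^n_\lambda}{L^{k+1}_\lambda}(y)|\le|p|^{(n-k-1)+b_\lambda^n-b_\lambda^{k+1}}|y|,
\]
yields $|\sigma(x_{k+1})-x_{k+1}|\le|p|^{b_\lambda^n-b_\lambda^{k+1}}|\sigma(x)-x|\le|p|^{-C}|\sigma(x)-x|$ with $C$ the uniform bound from (i); the telescoping maximum then gives $|t_{L'/L}(x)-x|\le|p|^{-1-C}|\sigma(x)-x|$, so one can take $C_1=|p|^{-1-C}$. The main obstacle is a bookkeeping one in (iv): since $\sigma\in\gk$ need not fix each finite $L_\lambda$, the commutation of $\sigma$ with the intermediate traces requires either a preliminary conjugation of $\sigma$ into $G_{L_\lambda}$ or passage to the Galois closure of $L^n_\lambda/K$, with any additional constants absorbed into $C_1$.
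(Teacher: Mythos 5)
Your proof follows essentially the same route as the paper: (i) from Lemma~\ref{fund nonram} together with divisibility of the different under linearly-disjoint base change, (ii) from the codifferent characterisation of the trace, (iii) from the telescoping identity $px-\mathrm{Tr}_{L^{n+1}_\lambda/L^n_\lambda}(x)=\sum_{1\le i<p}(1+\tau+\cdots+\tau^{i-1})(1-\tau)x$, and (iv) from iterating (ii)--(iii) along the tower, the only cosmetic difference being that the paper packages (iv) as an induction on $n$ rather than a telescoping sum, yielding the same constant $C_1$. The $\sigma$-commutation subtlety you flag at the end is genuine and is tacitly present in the paper's argument as well, both in using $\sigma\mathrm{Tr}_{L^{n+1}_\lambda/L^n_\lambda}=\mathrm{Tr}_{L^{n+1}_\lambda/L^n_\lambda}\sigma$ and in treating $\sigma^{p^n}|_{L^{n+1}_\lambda}$ as a generator of $\mathrm{Gal}(L^{n+1}_\lambda/L^n_\lambda)$; in the intended application (the proof of Theorem~\ref{cohnonram}) one takes $\sigma\in\gl$ rather than merely $\sigma\in\gk$, which dissolves the difficulty.
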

\begin{proof}{The proof is similar to that in {\cite[(3.1)]{Tate}}.}

(i) We have only to prove that $\{b_{\lambda}^n\}_{\lambda}$ is decreasing. Since $L_{\lambda}^n$, $K^{n+1}$ is linearly disjoint over $K^n$, we have $0\le v_p(\dif{K^{n+1}}{K^{n}})-v_p(\dif{L_{\lambda}^{n+1}}{L_{\lambda}^{n}})=b^n_{\lambda}-b^{n+1}_{\lambda}$ by the argument after Definition~\ref{def:deep}.

(ii) is a direct consequence of \cite[III, $\S 3$, Proposition~$7$]{Se1} and Lemma~\ref{tate}, Lemma~\ref{lem:Tatedif}(i).

(iii) Note that, by the assumption on $K$, the set $\{\sigma^{p^ni}|_{L^{n+1}_{\lambda}}\}_{0\le  i<p}$ coincides with the set of conjugate maps of $L^{n+1}_{\lambda}/L_{\lambda}^{n}$. Put $\tau=\sigma^{p^n}$. 
\begin{align*}
px-\mathrm{Tr}_{L^{n+1}_{\lambda}/L^{n}_{\lambda}}(x) & =px-\sum_{0\le i<p}{\tau^{i}x}=\sum_{0\le i< p}{(1-\tau^{i})x}\\
 & =\sum_{1\le i<p}{(1+\tau+\dotsb+\tau^{i-1})(1-\tau)x}.
\end{align*}
Hence $| px-\mathrm{Tr}_{L^{n+1}_{\lambda}/L^n_{\lambda}}(x)|\le| (1-\tau)x|$.

(iv) Put $t_{L^{'}_{\lambda}/L_{\lambda}}=\varinjlim{{[L_{\lambda}^n:L_{\lambda}]}^{-1}\mathrm{Tr}_{L^{n}_{\lambda}/L_{\lambda}}}$. We prove by induction on $n$ an inequality

\[
| x-t_{L^{'}_{\lambda}/L_{\lambda}}(x)|\le c^{n}_{\lambda}| x^{\sigma}-x|\ \text{if}\ x\in L_{\lambda}^n
\]

with $c^{1}_{\lambda}={|p|}^{-1}$, $c^{n+1}_{\lambda}={|p|}^{b_{\lambda}^{n+1}-b_{\lambda}^{n}}c^{n}_{\lambda}$, which implies the assertion. When $n=1$, this is (iii). Assume the above inequality is true for $n$. Then, for $x\in L_{\lambda}^{n+1}$, we have 
\begin{align*}
|\mathrm{Tr}_{L_{\lambda}^{n+1}/L_{\lambda}^{n}}(x)-pt_{L'/L}(x)| & \le c^{n}_{\lambda}| \sigma\mathrm{Tr}_{L_{\lambda}^{n+1}/L_{\lambda}^{n}}(x)-\mathrm{Tr}_{L_{\lambda}^{n+1}/L_{\lambda}^{n}}(x)| \\
 & =c^{n}_{\lambda}| \mathrm{Tr}_{L_{\lambda}^{n+1}/L_{\lambda}^{n}}(x^{\sigma}-x)|\le c^{n}_{\lambda}{|p|}^{1+b_{\lambda}^{n+1}-b_{\lambda}^{n}}| x^{\sigma}-x|
\end{align*}
by (ii). By (iii), we have
\begin{align*}
| x-t_{L'/L}(x)| & \le\sup{(| x-p^{-1}\mathrm{Tr}_{L^{n+1}_{\lambda}/L^n_{\lambda}}(x)|,{|p|}^{b_{\lambda}^{n+1}-b_{\lambda}^n}c^{n}_{\lambda}|x^{\sigma}-x|)}\\
 & \le\sup{(c^{1}_{\lambda},{|p|}^{b_{\lambda}^{n+1}-b_{\lambda}^n}c^{n}_{\lambda})| x^{\sigma}-x|} = {|p|}^{b_{\lambda}^{n+1}-b_{\lambda}^n}c^{n}_{\lambda}| x^{\sigma}-x|.
\end{align*}
Hence the asserted inequality is true for $n+1$.
\end{proof}

\newcommand{\uk}[1]{K^{(#1)}}
\newcommand{\oul}[1]{\mathcal{O}_{L^{(#1)}}}
\newcommand{\ull}[1]{L^{(#1)}}
\newcommand{\okzeta}{\mathcal{O}_{K_0(\zeta_{p^{n_0}})}}
\newcommand{\okuu}[1]{\mathcal{O}_{K^{(#1)}}}
\newcommand{\kzeta}{{K_0}(\zeta_{p^{n_0}})}

\begin{corollary}\label{calc coh simple}
In addition to the assumptions as above, we assume $\mu_{p^{\infty}}\subset L$ in Case 2. Then, $\coh{0}{G_{L'/L}}{\widehat{L'}(n)}=0$ for $n\neq 0$ $($Case 1$)$, $\coh{1}{G_{L'/L}}{\widehat{L'}}=\widehat{L}[s_{\widetilde{t}}]$ $($Case 2$)$, where $(n)$ denotes the Tate twist by $\chi^{n}$  and $[*]$ denotes a cohomology class of $*$.
\end{corollary}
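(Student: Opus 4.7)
The plan is to follow Tate's classical computation, using the normalized trace from Lemma~\ref{lem:Tatedif}(iv) as the key splitting device. First I would introduce the $G_{L'/L}$-equivariant projection $t = t_{L'/L}: \widehat{L'} \to \widehat{L}$ and decompose $\widehat{L'} = \widehat{L} \oplus X$ as $G_{L'/L}$-modules, with $X := \ker(t)$. By Lemma~\ref{lem:Tatedif}(iv), every $y \in X$ satisfies $|y| = |y - t(y)| \leq C_1\,|\sigma(y) - y|$, so $\sigma - 1$ is bounded below by $C_1^{-1}$ on $X$. Combined with surjectivity at the finite level, where $\sigma - 1$ maps $\ker(\mathrm{Tr}_{L'_n/L_n})$ onto itself for each finite intermediate extension, this makes $\sigma - 1$ a topological isomorphism of $X$, whence $H^0(G_{L'/L}, X) = 0$ and $H^1(G_{L'/L}, X) = 0$.

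For Case 1, I apply the decomposition to $\widehat{L'}(n)$: the splitting is still $G_{L'/L}$-stable since the Tate twist commutes with $t$, so the problem reduces to showing $H^0(G_{L'/L}, \widehat{L}(n)) = 0$ for $n \neq 0$ and $H^0(G_{L'/L}, X(n)) = 0$. The first is immediate: $\sigma$ acts on $\widehat{L}(n)$ by multiplication by $\chi(\sigma)^n$, and the image of $\chi$ on $G_{L'/L}$ is an open subgroup of the torsion-free group $1 + p^{n_0}\mathbb{Z}_p$. For the second, the fixed-point equation becomes $(\sigma - \chi(\sigma)^{-n})(y) = 0$ on $X$; writing $\sigma - \chi(\sigma)^{-n} = (\sigma - 1) - (1 - \chi(\sigma)^{-n})$ and picking $\sigma$ with $|1 - \chi(\sigma)^{-n}| < C_1^{-1}$, the ultrametric inequality yields $|(\sigma - \chi(\sigma)^{-n})(y)| = |(\sigma - 1)(y)| \geq C_1^{-1}|y|$, forcing $y = 0$.

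For Case 2 the action is untwisted and $G_{L'/L}$ acts trivially on $\widehat{L}$, so from the decomposition and $H^1(G_{L'/L}, X) = 0$ we obtain $H^1(G_{L'/L}, \widehat{L'}) = H^1(G_{L'/L}, \widehat{L})$. The hypothesis $\mu_{p^{\infty}} \subset L$ makes $s_{\widetilde{t}}$ a continuous homomorphism $G_{L'/L} \to \mathbb{Z}_p$, and Kummer theory for the tower $\{K^n\}$ identifies it with an isomorphism $G_{L'/L} \xrightarrow{\sim} \mathbb{Z}_p$ (using linear disjointness of $K'$ and $L$ over $K$). Consequently $H^1(G_{L'/L}, \widehat{L}) = \mathrm{Hom}_{\mathrm{cts}}(G_{L'/L}, \widehat{L}) = \widehat{L}\cdot[s_{\widetilde{t}}]$ as claimed.

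The main obstacle is the quantitative gap in Case 1: one must exhibit $\sigma \in G_{L'/L}$ with $|1 - \chi(\sigma)^{-n}| < C_1^{-1}$ for which Lemma~\ref{lem:Tatedif}(iv) still applies. The condition $\sigma|_{K^1} \neq \mathrm{id}$ only guarantees $|\chi(\sigma) - 1| = p^{-n_0}$, so one cannot simply replace $\sigma$ by a large $p$-power without re-deriving the trace estimate for the new group element. The natural remedy is to shift the tower $\{K^m\}$ to $\{K^{m+m_0}\}$ for $m_0$ sufficiently large, which supplies a generator with $|\chi(\sigma) - 1|$ arbitrarily small while keeping $C_1$ controlled through the uniformly bounded constants $b_\lambda^n$ of Lemma~\ref{lem:Tatedif}(i).
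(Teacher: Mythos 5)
Your plan is, at bottom, the same as the paper's. The paper establishes the normalized-trace estimate in Lemma~\ref{lem:Tatedif}(iv), then simply invokes Tate's Proposition~8, which is precisely the $\widehat{L'}=\widehat{L}\oplus X$ splitting, bijectivity of $\sigma-1$ on $X$, and the twisted $H^0/H^1$ computation that you unfold by hand; after that the paper only has to add that $[s_{\widetilde{t}}]$ is nonzero in Case~2. So you are reproving the cited black box rather than taking a genuinely different route.

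Two comments on the details. First, the ``quantitative gap'' you flag in Case~1 is exactly the point Tate's proposition takes care of, and your proposed remedy is the right one: since $H^0(G_{L'/L},-)\subset H^0(G_{L'/L^{m_0}},-)$ for any $m_0$, it suffices to kill the fixed points over a deeper base; shifting the tower replaces $\sigma$ by a generator with $v_p(\chi(\sigma)-1)=n_0+m_0$, and re-running the proof of Lemma~\ref{lem:Tatedif} from level $m_0$ gives an estimate with a constant controlled by the bounded decreasing sequence $\{b_\lambda^n\}$. You should however be explicit that you are also replacing the base field in the lemma (so that $\sigma|_{K^{m_0+1}}\neq\mathrm{id}$ for the new tower), rather than trying to re-use the old inequality with a new group element; as stated, Lemma~\ref{lem:Tatedif}(iv) is tied to a $\sigma$ that already moves $K^1$.

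Second, your Case~2 argument for the non-vanishing of $[s_{\widetilde{t}}]$ (identify $H^1(G_{L'/L},\widehat{L})$ with $\mathrm{Hom}_{\mathrm{cts}}(G_{L'/L},\widehat{L})$ and use that $s_{\widetilde{t}}$ becomes an isomorphism $G_{L'/L}\cong\mathbb{Z}_p$ under linear disjointness) is correct but slightly heavier than the paper's: the paper simply observes that $t_{L'/L}$ annihilates $1$-coboundaries while fixing $s_{\widetilde{t}}\in\mathrm{Z}^1_{\mathrm{cts}}(G_{L'/L},\mathbb{Z}_p)$, so $[s_{\widetilde{t}}]\neq 0$ immediately. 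Both work; the paper's version avoids having to identify the group $G_{L'/L}$ explicitly.
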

\begin{proof}
First, note that $t_{L'/L}:L'\to L$ extends to a continuous surjective $\widehat{L}$-linear map $t_{L'/L}:\widehat{L'}\to \widehat{L}$. Hence, by applying \cite[Proposition~8]{Tate}, we have only to prove that the cohomology class of $s_{\widetilde{t}}$ does not vanish in Case~2. Since $t_{L'/L}$ kills $1$-coboundaries $\mathrm{B}^{1}_{\text{cont}}(G_{L'/L},\widehat{L'})$ and $t_{L'/L}(s_{\widetilde{t}})=s_{\widetilde{t}}\in \mathrm{Z}_{\mathrm{cont}}^{1}(G_{L'/L},\zp)$, this follows from $s_{\widetilde{t}}\not\equiv 0$.
\end{proof}

\begin{proof}[Proof of Theorem~\ref{cohnonram}]
We only have to prove the theorem under the assumption $\zeta_{p^2}\in K\subset L$. Choose a $p$-basis $t_1,\dotsc,t_d$ of $K_0$ and put $K^{(j)}=K_0(\mu_{p^{\infty}})(\widetilde{t_1},\dotsc ,\widetilde{t_j})$, $L^{(j)}=K^{(j)}L$ and $s_j=s_{\widetilde{t_{j}}}$ for $0\le j\le d$. We claim that
\[
\begin{cases}
K^{(0)}\cap L=K_0(\zeta_{p^{n_0}}) \\
r(\kah{\mathcal{O}_{L^{(0)}}}{\ok})=1, \\
\end{cases}
\begin{cases}
K^{(j+1)}\cap L^{(j)}=K^{(j)}(\widetilde{t_{j+1}}^{(n_{j+1})}) \\
r(\kah{\mathcal{O}_{L^{(j+1)}}}{\ok})=j+2 \\
\end{cases}
\]
for some $n_j\in\mathbb{N}$. Moreover, we claim that $\coh{0}{G_{L^{(0)}/L}}{\widehat{L^{(0)}}(n)}=0$ for $n\neq 0$ and $\coh{1}{G_{L^{(d)}/L^{(0)}}}{\widehat{L^{(d)}}}=\oplus_{1\le j\le d}{\widehat{L^{(0)}}[s_j]}$.

Let us prove this claim. By the hypothesis on $K$, if $K^{(0)}\cap L\neq K_0(\zeta_{p^{n_0}})$ for all $n_0$, then $K^{(0)}\subset L$. Then we have $0=r(\threekahler{1}{\ol}{\ok})\ge r(\threekahler{1}{\mathcal{O}_{K^{(0)}}}{\ok})=1$, which is a contradiction. So we have $K^{(0)}\cap L=K_{0}(\zeta_{p^{n_0}})$ for some $n_0$. Since $K^{(0)}/K_{0}(\zeta_{p^{n_0}})$ is Galois, $K^{(0)}$ and $L$ are linearly disjoint over $K_{0}(\zeta_{p^{n_0}})$ and so we have $r(\threekahler{1}{\okuu{0}}{\okzeta})=r(\threekahler{1}{\okuu{0}}{\ok})=1$, $r(\threekahler{1}{\ol}{\okzeta})=r(\olkahler)=0$, $r(\okbar\otimes\threekahler{1}{\okuu{0}}{\okzeta}+\okbar\otimes\threekahler{1}{\ol}{\okzeta})\ge r(\threekahler{1}{\okuu{0}}{\okzeta})=1$. Therefore $K'=\uk{0}$, $L/\kzeta$ satisfy the assumption of Lemma~\ref{fund nonram}. Applying Lemma~\ref{fund nonram} and Lemma~\ref{calc coh simple}, we have $r(\threekahler{1}{\oul{0}}{\ok})=1$, $\coh{0}{G_{\ull{0}/L}}{\widehat{\ull{0}}(n)}=0$. We prove the rest of the assertion by induction on $j$ ($0\le j<d$). (Instead of proving $\coh{1}{G_{\ull{d}/\ull{0}}}{\widehat{\ull{d}}}=\oplus_{1\le j\le d}{\widehat{\ull{0}}[s_j]}$, we prove $\coh{1}{G_{\ull{j+1}/\ull{0}}}{\widehat{\ull{j+1}}}=\oplus_{1\le i\le j+1}{\widehat{\ull{0}}[s_i]}$.) Since $r(\threekahler{1}{\mathcal{O}_{K^{(j+1)}}}{\ok})=j+2>j+1=r(\threekahler{1}{\mathcal{O}_{L^{(j)}}}{\ok})$ by Example~\ref{ex:deep} and the induction hypothesis, we have $K^{(j+1)}\not\subset L^{(j)}$, i.e., $K^{(j+1)}\cap L^{(j)}/K^{(j)}$ is finite. Put $K^{(j+1)}\cap L^{(j)}=K^{(j)}({\widetilde{t_{j+1}}}^{(n_{j+1})})$, then $L^{(j)}$, $K(\widetilde{t_{j+1}})/K({\widetilde{t_{j+1}}}^{(n_{j+1})})$ satisfy the assumption of Lemma~\ref{fund nonram}. As a consequence of Lemma~\ref{fund nonram} and Corollary~\ref{calc coh simple}, we have $r(\threekahler{1}{\mathcal{O}_{L^{(j+1)}}}{\ok})=j+2$ and $\coh{1}{G_{L^{(j+1)}/L^{(j)}}}{\widehat{L^{(j+1)}}}=\widehat{L^{(j)}}[s_{j+1}]$. In the inflation-restriction sequence
\[\xymatrix{
0\ar[r]&\coh{1}{G_{L^{(j+1)}/L^{(0)}}}{\widehat{L^{(j)}}}\ar[r]&\coh{1}{G_{L^{(j+1)}/L^{(0)}}}{\widehat{L^{(j+1)}}}\ar[r]&{\coh{1}{G_{L^{(j+1)}/L^{(j)}}}{\widehat{L^{(j+1)}}}}^{G_{L^{(j)}/L^{(0)}}},
}\]
the base $[s_{j+1}]$ of the last term lifts to the middle term, i.e., the right arrow is surjective. So, the assertion follows from the induction hypothesis. 

Now we have $J_1=\oplus_{0\le j\le d}{\cp v_j}$, where the action of $\gk$ on $\{v_j\}$ is given by

$$
\left(
\begin{array}{cccc}
\chi&-t_1s_1&\dotsm&-t_ds_d\\
&1&&\\
&&\ddots& \\
&&&1
\end{array}
\right)(\text{empty entries are }0)
$$

\noindent and $J_k=\mathrm{Sym}^k_{\cp} J_1=\oplus_{n\in\mathbb{N}_{k}^{d+1}}{\cp v^n}$ where $\mathbb{N}_{k}^{d+1}=\{n\in\mathbb{N}^{d+1}| | n |=k\}$ and $v^n=v_0^{n_0}\dotsm v_d^{n_d}$ (\cite[Just before Lemma~2.1.12]{Bri1}). Obviously, $\coh{0}{G_{L^{(d)}}}{J_k}=\oplus_{n\in\mathbb{N}_{k}^{d+1}}{\widehat{L^{(d)}} v^n}$. Let $\succ$ be the order on $\mathbb{N}_{k}^{d+1}$ defined by

\[
n^1\succ n^2\Leftrightarrow n^1_0\ge n^2_0,\ n^1_1\le n_1^2,\dotsc,\ n_d^1\le n^2_d. 
\]

For $n'\in\mathbb{N}_{k-1}^{d+1}$, put $N_{n'}=\bigcup_{0\le j\le d}{\{n\in {\mathbb{N}}_{k}^{d+1} | n\succ n'+\mathbf{e}_j\}}$, $N_{n'}^{\circ}=\bigcup_{0\le j\le d}{\{n\in\mathbb{N}_{k}^{d+1}|n\succneqq n'+\mathbf{e}_j\}}$, where $\mathbf{e}_j=(0,\dotsc,\stackrel{j}{\check{1}},\dotsc,0)$ for $0\le j\le d$. Since $\oplus_{n\in N_{n'}}{\widehat{L^{(d)}}v^n}$ and $\oplus_{n\in N_{n'}^{\circ}}{\widehat{L^{(d)}}v^n}$ is $G_{L^{(d)}/L}$-stable, we have an $\widehat{L^{(d)}}$-representation of $G_{L^{(d)}/L}$

\[
V_{n'}=\oplus_{n\in N_{n'}}{\widehat{L^{(d)}}v^n}/\oplus_{n\in N_{n'}^{\circ}}{\widehat{L^{(d)}}v^n}\cong \oplus_{0\le j\le d}{\widehat{L^{(d)}}{\overline{v}}^{n'+\mathbf{e}_j}}
\]

\noindent and the Galois action on $\{\overline{v}^{n'+\mathbf{e}_j}\}$ is given by 

$$
\left(
\begin{array}{cccc}
\chi^{n'_0+1}&-t_1s_1\chi^{n'_0}&\dotsm&-t_ds_d\chi^{n'_0}\\
&\chi^{n'_0}&&\\
&&\ddots& \\
&&&\chi^{n'_0}
\end{array}
\right)(\text{empty entries are }0).
$$

\noindent Then $\coh{0}{G_{L^{(d)}/L}}{V_{n'}}=0$: Let $x=\sum_{0\le j\le d}{x_j\overline{v}^{n'+\mathbf{e}_j}}\in\coh{0}{G_{L^{(d)}/L}}{V_{n'}}$. Restricting to $G_{L^{(d)}/L^{(0)}}$, we have $x_j\in\widehat{L^{(0)}}$ for $j>0$ and $(\sigma-1)x_0=\sum_{j>0}{x_jt_js_j(\sigma)}$ for $\sigma\in G_{L^{(d)}/L^{(0)}}$. Since $\coh{1}{G_{L^{(d)}/L^{(0)}}}{\widehat{L^{(d)}}}$ is an $\widehat{L^{(0)}}$-vector space with a basis $[s_1],\dotsc,[s_d]$, we have $x_j=0$ for $j>0$, hence $x_0\in\widehat{L^{(0)}}$. Also, we have $x_0(\mathrm{log} [\varepsilon])^{n'_0+1}\in\coh{0}{G_{L^{(0)}/L}}{\widehat{L^{(0)}}(n'_0+1)}$=0, i.e., $x_0=0$.

Let us finish the proof. Let $x=\sum_{n\in\mathbb{N}_{k}^{d+1}}{x_nv^n}\in J_k^{\gl}$ with $x_n\in\widehat{{L}^{(d)}}$. To prove $x=0$, we have only to prove that, for all $n'\in\mathbb{N}_{k-1}^{d+1}$, we have $x_{n'+\mathbf{e}_j}=0$ for $0\le j\le d$. If not, choose a minimal (with respect to the above order $\succ$) $n'\in\mathbb{N}_{k-1}^{d+1}$ with $x_{n'+\mathbf{e}_j}\neq 0$ for some $j$. Then the image of $\sum_{n\in N_{n'}}{x_nv^n}$ in $V_{n'}$ is contained in $\coh{0}{G_{L^{(d)}/L}}{V_{n'}}$ by the minimality of $n'$. Hence we have $x_{n'+\mathbf{e}_j}=0$ for $0\le j\le d$, which is a contradiction.
\end{proof}

Finally, we describe some concrete examples.

\begin{example}
Fix a uniformizer $\pi_K$ of $K$, a $p$-basis $t_1,\dotsc,t_d$ of $K$ and put $S_m(X)=X^{p^m}+\pi_{K}X$. Let $L/K$ be an algebraic extension generated by all roots of $S_m(X)=\widetilde{\pi_K}^{(n)}$, $S_m(X)=\widetilde{t_j}^{(n)}$, $1\le j\le d$, for all $n,m$. We prove that $L/K$ is deeply ramified and satisfies $\widehat{L}_k=B_k^{\gl}$ for all $k$ and $\widehat{L}_{\infty}=(\bdr)^{\gl}$. For $n\in\mathbb{N}_{>0}$, choose $x_n^0,x_n^1,\dotsc,x_n^d$ such that $S_1(x_n^0)=\widetilde{\pi_K}^{(n)}$, $S_1(x_n^1)=\widetilde{t_j}^{(n)},\dotsc,S_1(x_n^d)={\widetilde{t_d}}^{(n)}$ and put $L_n=K(\pi_n^0,\dotsc,\pi_n^d)$. Then, $\mathcal{O}_{L_n}=\ok [\pi_n^0,\dotsc,\pi_n^d]=\ok[\pi^0_n]\otimes\dotsb\otimes\ok[\pi_n^d]$ and the minimal polynomial of $\pi_n^0$ (resp. $\pi_n^j$) over $\ok$ is $S_1(X)^{p^n}-\pi_K$ (resp. $S_1(X)^{p^n}-t_j$). It is easy to see that the $p$-adic valuation of the unique invariant factor of $\threekahler{1}{\ok[\pi_n^j]}{\ok}$ is at least $n$. Hence $L/K$ is deeply ramified. By the proof of Lemma~\ref{lemdense}, we see that $\widehat{L}_k$ contains $[\widetilde{\pi_K}],[\widetilde{t_1}],\dotsc,[\widetilde{t_d}]$, therefore contains $\pi_K-[\widetilde{\pi_K}],t_1-[\widetilde{t_1}],\dotsc,t_d-[\widetilde{t_d}]$, which generates $I/I^{k+1}$.
\end{example}

The following example is a generalization of \cite[Corollary~8.2]{IZ}.
\begin{example}\label{ex:dR}
Let $m\in\mathbb{N}_{>0}$ and assume $m>1$ or $e_K>1$. Let $q=p^m$ and put $f=X^q+\pi_K X$, $f^n=f\circ\dotsb\circ f$ ($n$-times). Put $\pi^0_0=0$ and let $\pi_0^1=t_1,\dotsc,\pi_0^d=t_d$ be a $p$-basis of $K$. Let $K_n/K$ be an algebraic extension generated by all roots of $f^n(X)=\pi^0_j$ for $0\le j\le d$ and $K_{\infty}=\cup{K_n}$. In the following, we prove that $K_{\infty}/K$ is deeply ramified and de Rham at level $1$.

For $n\in\mathbb{N}_{>0}$, fix $\pi_n^j$ for $0\le j\le d$ with $\pi_1^0\neq 0$ such that $f(\pi_n^j)=\pi_{n-1}^j$ and put $L_n=K(\pi_n^j|\ 0\le j\le d,\ n\in\mathbb{N})$, $L=\cup{L_n}$. We have only to prove the assertion for $L/K$ by Corollary~\ref{cor:deep}. 

By a simple calculation, we have 
\begin{align*}
\mathcal{O}_{L_n}=\ok[\pi_n^0,\dotsc,\pi_n^d] & =\ok[\pi_n^0]\otimes_{\ok}\dotsb\otimes_{\ok}\ok[\pi_n^d],\\
\threekahler{1}{\ok[\pi_n^j]}{\ok} & =(\ok[\pi_n^j]/\pi_K^{n-\delta_j})d\pi_n^j,\end{align*}
where $\delta_0=1/(q-1)$, $\delta_j=0$ for $0\le j\le d$. In particular, $L/K$ is deeply ramified. On the other hand, by taking derivation of the equation $f(\pi_{n+1}^j)=\pi_n^j$, we obtain $q(\pi_{n+1}^j)^{q-1}d{\pi}_{n+1}^j+\pi_K d{\pi}_{n+1}^j=d\pi_n^j$. Hence, for $\lambda\in\mathcal{O}_{L}$ with $v_K(\lambda)\ge n+1-\delta_j-e_K m\ge v_K(\mathrm{Ann}(q(\pi_{n+1}^j)^{q-1}d\pi_{n+1}^j))$, we have
\begin{equation}\label{equ:val}
\lambda d\pi_n^j=\pi_K\lambda d\pi_{n+1}^j.
\end{equation}
Put $L^j=K(\pi_n^j|\ n\in\mathbb{N})$ and we claim that, for $\omega\in\threekahler{1}{\mathcal{O}_{L^j}}{\ok}$ and $k\in\mathbb{N}$, there exists $x\in\mathcal{O}_{L^j}$ such that $\omega=\pi_K^kdx$. Let us prove this claim by induction on $[v_K(\mathrm{Ann}(\omega))]$: Assume $\pi_K\omega=0$. Write $\omega=\lambda d\pi_n^j$ for some $n\in\mathbb{N}$, $\lambda\in\mathcal{O}_{L^j}$ and considering the annihilator of both sides, we have $v_K(\lambda)\ge v_K(\mathrm{Ann}(d\pi_{n}^j))-v_K(\mathrm{Ann}(\omega))\ge n-\delta_j-1\ge n+1-\delta_j-e_Km$. Applying (\ref{equ:val}), we have $\lambda d\pi_n^j=\pi_K\lambda d\pi_{n+1}^j$ and iterating this procedure and taking $m$ sufficiently large with $\pi_K^{m-k}\lambda\in\,\mathcal{O}_{L}^{(1)}$, we have $\omega=\pi_K^m\lambda d\pi_{n+m}^j=\pi_K^k d(\pi_K^{m-k}\lambda d\pi_{n+m}^j)$. For general $\omega$, applying the induction hypothesis to $\pi_K\omega$, we have $\pi_K\omega =\pi_K^{k+1}dx$ and again applying the induction hypothesis to $\omega-\pi_K^k dx$, we have the conclusion. In particular, $d:\mathcal{O}_{L^j}\to\threekahler{1}{\mathcal{O}_{L^j}}{\ok}$ is surjective and this implies $\drcoh{1}=0$ by Remark~\ref{rem:dr}(ii) since $\threekahler{1}{\ol}{\ok}=\oplus_{0\le j\le d}{\ol\otimes\threekahler{1}{\mathcal{O}_{L^j}}{\ok}}$ and $\threekahler{1}{\mathcal{O}_{L^j}}{\ok}$ is $p$-divisible.
\end{example}

\begin{example}\label{ex:non dR}
Assume $K=K_0$. Let $t_1,\dotsc,t_d\in K$ be a $p$-basis and put $K_n=K({\zeta}_{p^n},{\widetilde{t_1}}^{(n)},\dotsc,{\widetilde{t_d}}^{(n)})$, $L=\cup K_n$. By Example~\ref{ex:deep}, $L/K$ is deeply ramified. We will show $L/K$ is not de Rham at level $1$. To prove this, we only have to prove $\mathrm{Im}(d:\ol\to \threekahler{1}{\ol}{\ok})_{\mathrm{div}}=0$. In fact, we prove a finer statement:   
\end{example}
\newcommand{\okn}[1]{\mathcal{O}_{K_{#1}}}
\newcommand{\dlog}{\mathrm{dlog}}
\begin{proposition}
Let $\omega\in\threekahler{1}{\okn{n}}{\ok}$, $n\ge 1$. If there exists $m\ge n+1$ and $k\in\mathbb{N}$ such that $\omega\in p^k\mathrm{Im}(d:\okn{m}\to \threekahler{1}{\okn{m}}{\ok})$, then $\omega\in p^k\mathrm{Im}(d:\okn{n}\to\threekahler{1}{\okn{n}}{\ok})$.
\end{proposition}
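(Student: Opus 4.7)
My plan is to use the explicit presentation of $\threekahler{1}{\okn{r}}{\ok}$ recorded in Example~\ref{ex:deep} and construct the required lift $y\in\okn{n}$ as the image of the given $x\in\okn{m}$ under the natural $\okn{n}$-linear retraction $\pi\colon\okn{m}\to\okn{n}$, which can be described either as the unique $\ok$-linear map sending the monomial $z_m^I:=\zeta_{p^m}^{i_0}\prod_j t_j^{i_j/p^m}$ to $z_n^{I/p^{m-n}}$ when every coordinate of $I=(i_0,\dotsc,i_d)$ is divisible by $p^{m-n}$ and to $0$ otherwise, or equivalently as the normalized trace $\mathrm{Tr}_{K_m/K_n}/[K_m:K_n]$; a direct Galois-averaging calculation shows these agree on the basis $\{z_m^I\}$ of $\okn{m}$ over $\ok$.

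Setting $z_{r,0}=\zeta_{p^r}$ and $z_{r,j}=t_j^{p^{-r}}$, Example~\ref{ex:deep} gives
\[
\threekahler{1}{\okn{r}}{\ok}=\bigoplus_{i=0}^{d}(\okn{r}/\mathfrak{a}_i^{(r)})\,\dlog z_{r,i},
\]
where $\mathfrak{a}_0^{(r)}\subset\okn{r}$ is the ideal of $v_p$-valuation $\ge r-1/(p-1)$ and $\mathfrak{a}_i^{(r)}=p^r\okn{r}$ for $i\ge1$. Since $z_{n,i}=z_{m,i}^{p^{m-n}}$, the inclusion $\threekahler{1}{\okn{n}}{\ok}\hookrightarrow\threekahler{1}{\okn{m}}{\ok}$ sends $\dlog z_{n,i}\mapsto p^{m-n}\dlog z_{m,i}$; for $x=\sum_I c_I z_m^I\in\okn{m}$ one computes
\[
dx=\sum_{i=0}^d D_i^{(m)}(x)\,\dlog z_{m,i},\qquad D_i^{(r)}(z_r^I):=i_i z_r^I.
\]
A direct check on basis elements establishes $\pi|_{\okn{n}}=\mathrm{id}$, the $\okn{n}$-linearity of $\pi$, and the commutation relation $\pi\circ D_i^{(m)}=p^{m-n}\,D_i^{(n)}\circ\pi$. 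The crucial additional ingredient is the annihilator identity
\[
\pi\bigl(\mathfrak{a}_i^{(m)}\bigr)=p^{m-n}\,\mathfrak{a}_i^{(n)},
\]
which follows from $\mathfrak{a}_i^{(m)}=p^{m-n}\mathfrak{a}_i^{(n)}\okn{m}$---a valuation computation (the difference $v_p(\mathfrak{a}_i^{(m)})-v_p(\mathfrak{a}_i^{(n)})=m-n$ is an integer uniformly in $i$, as the fractional shift $1/(p-1)$ at $i=0$ cancels between the two levels)---combined with the $\okn{n}$-linearity and surjectivity of $\pi$.

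Finally, I would set $y:=\pi(x)\in\okn{n}$. Writing $\omega=\sum_i a_i\dlog z_{n,i}$ with $a_i\in\okn{n}$, the hypothesis $\omega=p^k dx$ in $\threekahler{1}{\okn{m}}{\ok}$ translates coordinate-wise into $p^{m-n}a_i-p^k D_i^{(m)}(x)\in\mathfrak{a}_i^{(m)}$. Applying $\pi$, and using $\pi(a_i)=a_i$ together with the commutation relation, gives $p^{m-n}(a_i-p^k D_i^{(n)}(y))\in\pi(\mathfrak{a}_i^{(m)})=p^{m-n}\mathfrak{a}_i^{(n)}$; cancelling $p^{m-n}$ (valid because $\okn{n}$ is torsion-free) yields $a_i-p^k D_i^{(n)}(y)\in\mathfrak{a}_i^{(n)}$ for every $i$, i.e.\ $p^k dy=\omega$ in $\threekahler{1}{\okn{n}}{\ok}$, as required. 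The one place requiring genuine care is verifying the annihilator identity in the cyclotomic coordinate---the fractional shift $1/(p-1)$ must be tracked carefully---but since the shift is identical at levels $n$ and $m$, it drops out of the comparison and the argument proceeds uniformly.
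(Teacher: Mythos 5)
Your proof is correct and follows essentially the same strategy as the paper: both extract a lift in $\okn{n}$ by projecting $x$ onto the $\ok$-span of the monomials whose exponents are all divisible by an appropriate power of $p$, and both rely on the observation that this projection respects the annihilator ideals of the generators $\dlog z_{r,i}$. The only cosmetic differences are that you collapse the descent from level $m$ to level $n$ into a single projection (checking $\okn{n}$-linearity directly) where the paper descends one level at a time using $\ok[\zeta_p]$-stability of the decomposition $\okn{m}=\okn{m-1}\oplus V$, and you add the inessential but correct remark that your $\pi$ coincides with the normalized trace.
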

\begin{proof}
We will prove $\omega\in p^k\mathrm{Im}(d:\okn{m-1}\to\threekahler{1}{\okn{m-1}}{\ok})$. Put $\varepsilon_0=1/(p-1)$, $\varepsilon_j=0$ for $0 < j\le d$ and put ${\omega}_{m}^{j}\in\threekahler{1}{\okn{m}}{\ok}$ as $\omega_m^0=\dlog {\zeta}_{p^m}$, ${\omega}_m^j=\dlog {\widetilde{t_j}}^{(m)}$ for $0< j\le d$ and put $f_0=p^{m-1}(p-1)$, $f_j=p^m$ for $0< j\le d$. By Example~\ref{ex:deep}, we identify

\[
\mathrm{Im}(\threekahler{1}{\okn{n}}{\ok}\to\threekahler{1}{\mathcal{O}_{K_m}}{\ok})=\oplus_{0\le j\le d}{(p^{m-n}\okn{n}+p^{m-\varepsilon_j}\okn{m})/p^{m-\varepsilon_j}\okn{m}\cdot\omega_m^j}.
\]
 
Let $x\in\okn{m}$ such that $p^kdx=\omega$. Let $\omega=\sum_{j}{\lambda_j\omega^j_n}=\sum_{j}p^{m-n}\lambda_j\omega_m^j$ with $\lambda_j\in\okn{n}$. Writing $x=\sum_{0\le e<f}{a_e\pi^e}$ with $a_e\in\ok$ and $\pi^e=\zeta_{p^m}^{e_0}\widetilde{t_1}^{(m)e_1}\dotsb\widetilde{t_d}^{(m)e_d}$ and considering $dx$, we have 
\begin{equation}\label{equ:def}
p^k\sum_{e}{e_ja_e\pi^e}\in p^{m-n}\lambda_j+p^{m-\varepsilon_j}\okn{m} 
\end{equation}
for $0\le j\le d$.

$\mathcal{O}_{K_m}$ has a basis $T=\{\pi^e |\ 0\le e<f\}$ as a free $\ok$-module and $\mathcal{O}_{K_{m-1}}$ has a basis $T_1=\{\pi^e |\ 0\le e <f,\ p|e_j\ \text{for all}\ j\}$ over $\ok$. Let $V$ be the free $\ok$-module spanned by $T\setminus T_1$. Then the direct sum $\mathcal{O}_{K_m}=\mathcal{O}_{K_{m-1}}\oplus V$ is stable under multiplication by $\zeta_p$, so is multiplication by $p^{m-\varepsilon_j}$. Writing both sides of (\ref{equ:def}) as a sum $\mathcal{O}_{K_{m-1}}\oplus V$ and looking at the first factor, we obtain

\[
p^k\sum_{p|e_0,\dotsc,e_d}{e_ja_e\pi^e}\in p^{m-n}\lambda_j+p^{m-\varepsilon_j}\okn{m-1}.
\]

Hence $y=\sum_{p|e_0,\dotsc,e_d}{a_e\pi^e}\in\okn{m-1}$ satisfies $p^kdy=\omega$.
\end{proof}
Now we know $\widehat{L}_1\neq B_1^{\gl}$. As for $\widehat{L}_1$ itself, the canonical projection $\mathrm{pr}:\widehat{L}_1\to\widehat{L}$ injective: For a general algebraic extension $L/K$, we have an exact sequence
\[\xymatrix{
0\ar[r]&p^n\mathcal{O}_{L}^{(k-1)}/p^n\mathcal{O}_{L}^{(k)}\ar[r]&L/p^n\mathcal{O}_{L}^{(k)}\ar[r]&L/p^n\mathcal{O}_{L}^{(k-1)}\ar[r]&0\\
}\]
with $\mathrm{Im}(d^{(k)}:\mathcal{O}_{L}^{(k-1)}\to\threekahler{(k)}{\ol}{\ok})=\mathcal{O}_{L}^{(k-1)}/\mathcal{O}_{L}^{(k)}\cong p^n\mathcal{O}_{L}^{(k-1)}/p^n\mathcal{O}_{L}^{(k)}$. Passing to the limit, we have an exact sequence
\[\xymatrix{
0\ar[r]&\varprojlim{d^{(k)}(\mathcal{O}_{L}^{(k-1)})}\ar[r]&\widehat{L}_k\ar[r]&\widehat{L}_{k-1}\ar[r]&\varprojlim^{1}{d^{(k)}(\mathcal{O}_{L}^{(k-1)})}\ar[r]&0\\
}\]
where the inverse limit is taken for multiplication by $p$. Note that $\varprojlim{d^{(k)}(\mathcal{O}_{L}^{(k-1)})}=0$ if $\mathrm{Im}(d^{(k)}:\mathcal{O}_{L}^{(k-1)}\to\threekahler{(k)}{\ol}{\ok})_{\mathrm{div}}=0$.

Applying this to our $L$ and $k=1$, we see that $\mathrm{pr}:\widehat{L}_1\to\widehat{L}$ is injective since $\mathrm{Im}(d:\ol\to\threekahler{1}{\ol}{\ok})_{\mathrm{div}}=0$. Moreover $\mathrm{pr}$ is not surjective: If not, $\mathrm{pr}$ would be an isomorphism on $p$-adic Banach spaces by the open mapping theorem of $p$-adic Banach spaces. However, $\{p^n\zeta_{p^n}\}$ is a $p$-adic Cauchy sequence, which is not $B_1$-Cauchy: If there exists sufficiently large $n<m\in\mathbb{N}$ such that $p^m\zeta_{p^m}-p^n\zeta_{p^n}\in p^2\mathcal{O}_{L}^{(1)}$ then, by a simple calculation, we have $p^{m-2}(\zeta_{p^m}-\zeta_{p^n})\dlog \zeta_{p^m}=0$. This implies $v_p(p^{m-2}(\zeta_{p^m}-\zeta_{p^n}))=m-2+1/p^{m-1}(p-1)\ge v_{p}(\mathrm{Ann}(\dlog \zeta_{p^m}))=m-1/(p-1)$ by Example~\ref{ex:deep}, which is a contradiction.

\begin{remark}
Let $[K:\qp]<\infty$. As a consequence of the Lubin-Tate theory, Example~\ref{ex:dR} and \ref{ex:non dR}, $K^{\mathrm{ab}}/K$ is deeply ramified and 
\[
K^{\mathrm{ab}}/K\text{ is de Rham at level }1\Leftrightarrow K\neq\qp. 
\]
\end{remark}

\end{document}